\newfont{\cyrr}{wncyr10}
\newcommand{\Norm}{\operatorname{N}}
\newcommand{\Minspec}{\operatorname{Minspec}}
\newcommand{\Maxspec}{\operatorname{Maxspec}}
\newcommand{\id}{\operatorname{id}}
\newcommand{\coker}{\operatorname{coker}}
\def\WPic{\mathrm{WPic}}
\def\ee{e}
\def\m{{\mathfrak m}}
\def\zzzz{{p}}
\def\fff{{f}}
\def\L{\mathfrak{p}}
\def\Cl{\mathbf{Cl}}
\def\Z{\mathbb{Z}}
\def\Q{\mathbb{Q}}
\def\F{\mathbb{F}}
\def\R{\mathbb{R}}
\def\C{\mathbb{C}}
\def\MM{{\mathfrak{M}}}
\def\p{{\mathfrak{p}}}
\def\r{{\mathbf{r}}}
\def\ff{{\mathbf{f}}}
\def\cc{{I}}
\def\dd{{d}}
\def\h{{h}}
\def\SS{{\mathfrak{S}}}
\def\Q{{\mathbb Q}}
\def\Z{{\mathbb Z}}
\def\C{{\mathbb C}}
\def\R{{\mathbb R}}
\def\F{{\mathbb F}}
\def\N{\mathrm{N}}
\def\Spec{\mathrm{Spec}}
\def\Tr{\mathrm{Tr}}
\def\Rhom{\mathrm{Rhom}}
\def\Aut{\mathrm{Aut}}
\def\Hom{\mathrm{Hom}}
\def\fchar{\mathrm{char}}
\def\dim{\mathrm{dim}}
\def\lcm{\mathrm{lcm}}
\def\log{\mathrm{log}}
\def\det{\mathrm{det}}
\def\rk{\mathrm{rank}}
\def\rank{\mathrm{rank}}
\def\I{{\mathcal I}}
\def\O{{\mathcal O}}
\def\a{{\mathfrak a}}
\def\b{{\mathfrak b}}
\def\ccc{{\mathfrak c}}
\def\pile#1#2{\genfrac{}{}{0pt}{1}{#1}{#2}}
\def\F{{\mathbb F}}
\def\Tr{\mathrm{Tr}}
\def\trace{\mathrm{Tr}}
\def\tt{\mathrm{trace}}
\def\onto{\twoheadrightarrow}
\def\isom{\xrightarrow{\sim}}
\newcommand{\bigperp}{%
  \mathop{\mathpalette\bigp@rp\relax}%
  \displaylimits
}
\newcommand{\bigp@rp}[2]{%
  \vcenter{
    \m@th\hbox{\scalebox{\ifx#1\displaystyle2.1\else1.3\fi}{$#1\perp$}}
  }%
}
\newtheorem{thm}{Theorem}
\numberwithin{thm}{section}
\newtheorem{lem}[thm]{Lemma}
\newtheorem{cor}[thm]{Corollary}
\newtheorem{prop}[thm]{Proposition}
\theoremstyle{definition}
\newtheorem{defn}[thm]{Definition}
\newtheorem{notation}[thm]{Notation}
\newtheorem{algorithm}[thm]{Algorithm}
\newtheorem{ex}[thm]{Example}
\newtheorem{exs}[thm]{Examples}
\newtheorem{rem}[thm]{Remark}
\newtheorem{rems}[thm]{Remarks}
\numberwithin{equation}{thm}
\title
[Testing isomorphism of lattices over CM-orders]
{Testing isomorphism of lattices over CM-orders}
\author[H.\ W.\ Lenstra, Jr.]{H.\ W.\ Lenstra, Jr.}
\address{Mathematisch Instituut, Universiteit Leiden, The Netherlands}
\email{hwl@math.leidenuniv.nl}
\author[A.\ Silverberg]{A.\ Silverberg}
\address{Department of Mathematics, University of California, Irvine, CA 92697}
\email{asilverb@uci.edu}
\subjclass[2010]{11Y16 (primary), 68W30 (secondary)}
\keywords{lattices, orders, complex multiplication}
\thanks{Support for the research was provided by the Alfred P.~Sloan Foundation and the National Science Foundation.}
\begin{document}

\begin{abstract} 
A {\it CM-order\/} is a reduced order
equipped with an involution that mimics complex conjugation. The {\it Witt-Picard group\/} of such an order is a certain group of ideal classes that is closely related to the ``minus part'' of the class group. We present a deterministic polynomial-time algorithm for the following problem, which may be viewed as a special case of the principal ideal testing problem: given a CM-order, decide whether two given elements of its Witt-Picard group are equal. In order to prevent coefficient blow-up, the algorithm operates with lattices rather than with ideals. An important ingredient is a technique introduced by Gentry and Szydlo in a cryptographic context. Our application of it to lattices over CM-orders hinges upon a novel existence theorem for auxiliary ideals, which we deduce from a result of Konyagin and Pomerance in elementary number theory.
\end{abstract}

\maketitle

\section{Introduction}
\label{introsect}

An {\em order} is a commutative ring of which the additive group is isomorphic to
$\Z^n$ for some $n\in\Z_{\ge 0}$. 
We call $n$ the {\em $\Z$-rank} of the order.
In algorithms, we shall specify an order by a system
$(b_{ijk})_{i,j,k=1}^n$ of integers with the property that,
for some $\Z$-basis $\alpha_1,\ldots,\alpha_n$ of the order,
one has 
$\alpha_i \alpha_j = \sum_{k=1}^n b_{ijk} \alpha_k$ for all $1 \le i,j\le n$.

\begin{defn}
\label{CMorddef}
A {\em CM-order} $A$ is an order 
such that:
\begin{enumerate}
\item
$A$ has no non-zero nilpotent elements (i.e., $A$ is reduced),  
and
\item
$A$ is equipped with an 
automorphism $x \mapsto \bar{x}$ of $A$ such that 
$\psi(\bar{x})=\overline{\psi(x)}$
for all $x\in A$ and all ring homomorphisms $\psi : A \to \C$.
\end{enumerate}
\end{defn}

One can show that each CM-order has exactly  
one such automorphism, and 
it satisfies $\bar{\bar{x}} = x$ for all $x$
(see Lemma \ref{invollem} below).
In algorithms one specifies an automorphism of an order by means
of its matrix on the same $\Z$-basis $\alpha_1,\ldots,\alpha_n$ that
was used for the $b_{ijk}$.
\begin{exs}
\label{CMordexs}
Examples of CM-orders (see also Definition \ref{CMalgdef} and Examples \ref{CMordsexs}) 
include the following:
\begin{enumerate} 
\item
rings of integers of CM-fields (in particular, cyclotomic number fields),
\item
 group rings $\Z[G]$ for finite abelian
groups $G$, with $\bar{\sigma} = \sigma^{-1}$ for $\sigma\in G$,
\item
the rings $\Z\langle G\rangle = \Z[G]/(u+1)$ occurring in \cite{LwS},
where $G$ is a finite abelian group, $u\in G$ has order $2$,
and $\bar{\sigma} = \sigma^{-1}$ for $\sigma\in G$.
\end{enumerate}
\end{exs}

We show that CM-orders are easy to recognize. 
In Algorithm \ref{CMorddetalg} we give 
a deterministic polynomial-time algorithm that, given 
an order $A$, decides whether it has an automorphism that makes
it into a CM-order, and if so computes that automorphism.

Suppose $A$ is an order. We denote the $\Q$-algebra $A\otimes_\Z \Q$ by $A_\Q$.
We write $(A^+_\Q)_{\gg 0}$ for the set of all $w\in A_\Q$ with the
property that $\psi(w)\in\R_{>0}$ for each ring homomorphism
$\psi : A_\Q \to \C$; this is a subgroup of the group $A_\Q^\ast$ of units
of $A_\Q$.
By a {\em fractional $A$-ideal} we mean
a finitely generated sub-$A$-module $I$
of $A_\Q$ that spans $A_\Q$ as a $\Q$-vector space.
An {\em invertible} fractional $A$-ideal is a
fractional $A$-ideal $I$ such that there is a
fractional $A$-ideal $J$ with $IJ = A$,
where $IJ$ is the fractional $A$-ideal generated by
the products of elements from $I$ and $J$.

We next state our main result, which says that,
in a special case, principal ideal testing can be done in
polynomial time.

\begin{thm}
\label{mainIwthm}
There is a deterministic polynomial-time algorithm that given a CM-order $A$,
a fractional $A$-ideal $I$, and an element 
$w \in (A^+_\Q)_{\gg 0}$ satisfying $I\bar{I} = Aw$,
decides whether there exists $v\in A_\Q$ such that $I=Av$
and $v\bar{v}=w$, and if so computes such an element $v$.
\end{thm}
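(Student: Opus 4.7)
My plan is to adapt the Gentry--Szydlo lattice-reduction technique to general CM-orders, using the novel auxiliary-ideal existence theorem flagged in the abstract to control bit complexity across the iteration.

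First I would reduce the decision problem to a search. Since $A$ is reduced, it has exactly $n$ ring homomorphisms $\psi\colon A\to\C$, and the CM-involution extends to complex conjugation under each $\psi$. The hypothesis $I\bar I = Aw$ with $w\in (A^+_\Q)_{\gg 0}$ forces any candidate $v$ to satisfy $|\psi(v)|^2 = \psi(w)$ for every $\psi$, so $v$ sits on an explicit product of spheres in $A_\R := A_\Q\otimes_\Q\R$ whose diameter is polynomial in the bit length of $w$. Any two valid $v$'s differ by a unit $\epsilon \in A^\ast$ with $\epsilon\bar\epsilon = 1$; in each CM-field factor of $A_\Q$ such an $\epsilon$ is a root of unity, so the ambiguity lies in a finite, explicitly enumerable torsion subgroup. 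It therefore suffices to produce one candidate $v \in I$ of the expected Minkowski profile and verify $v\bar v = w$; failure to find one certifies non-existence.

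The computational engine is iterated squaring. Set $I_0 := I$, $w_0 := w$, and $I_{k+1} := I_k^{\,2}$, $w_{k+1} := w_k^{\,2}$; the invariant $I_k\overline{I_k} = Aw_k$ is preserved, and if $v$ exists then $v^{2^k}$ generates $I_k$. Working in the rescaled Minkowski inner product $\langle x, y\rangle_k := \sum_\psi \psi(x)\overline{\psi(y)}/\psi(w_k)$, the element $v^{2^k}$ lies on the unit torus, and an LLL-plus-rounding step on the rescaled lattice extracts $v^{2^k}$ up to a root of unity. One then descends the iteration: the identity $v^{2^{k+1}} = (v^{2^k})^2$ combined with the $w$-profile pins $v$ down modulo the finite torsion ambiguity, which is cleared by enumeration. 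A polynomial number of squaring rounds in the input size suffices.

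The main obstacle is the threat of doubly-exponential bit-length growth in the intermediate $I_k$: a naive basis for $I^{2^k}$ has entries whose size doubles each round. To suppress this, after each squaring I would multiply $I_k$ by an auxiliary fractional $A$-ideal $\mathfrak{a}_k$ of polynomially bounded norm whose class in the Picard group of $A$ neutralizes the non-principal part of the class of $I_k$, so that $I_k\mathfrak{a}_k$ admits a compact integral representation on which LLL can run without blow-up. The existence of such $\mathfrak{a}_k$, with prescribed ideal-class coset and polynomial norm bound simultaneously, is not automatic; proving it is the novel auxiliary-ideal existence theorem advertised in the abstract, and is deduced from the Konyagin--Pomerance estimate on the density of smooth numbers in short intervals, transported to the arithmetic of CM-orders. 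I expect this auxiliary-ideal construction---made algorithmically effective and threaded consistently through the $O(\log)$ squaring rounds---to be the technical heart of the argument, with the surrounding LLL, rounding, and torsion-enumeration steps following more standard patterns once the size control is in place.
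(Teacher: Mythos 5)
Your proposal has several genuine gaps, the most serious of which is the step ``an LLL-plus-rounding step on the rescaled lattice extracts $v^{2^k}$ up to a root of unity.'' LLL only returns a $2^{O(n)}$-approximation to a shortest vector, and in the rescaled lattice the target $v^{2^k}$ (of norm $\rk_\Z(A)$) need not be recoverable from the lattice alone. The whole point of the Gentry--Szydlo mechanism is that one must first localize the target to a \emph{coset} modulo an auxiliary ideal $\a$ of large norm, so that the coset contains at most one vector of the required length and a CVP-style reduction finds it; and the only reason one knows which coset to look in is the congruence $v^{k(\a)}\equiv e_\a^{k(\a)}\bmod \a$, which holds because the exponent $k(\a)$ is chosen divisible by the exponent of $(A/\a)^\ast$. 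Plain powers $2^k$ carry no such congruence information, so your iteration gives you no handle on where $v^{2^k}$ sits. Relatedly, you use the auxiliary ideals for the wrong purpose: coefficient blow-up is not controlled by multiplying $I_k$ into a principal class (finding a polynomial-norm ideal in a \emph{prescribed} ideal class deterministically is exactly the obstruction that forces Kirchner's version to assume GRH or randomness), but simply by representing each power as an invertible $A$-lattice, whose LLL-reduced Gram matrix and structure constants are bounded in terms of $|\Delta_{A/\Z}|$ alone. The Konyagin--Pomerance input is used for something else entirely: to guarantee, for every prime $\ell>n^2$, a maximal ideal of tiny residue characteristic with $\N(\p)\not\equiv 1\bmod\ell$, which forces the final $\gcd$ of the exponents $k(\a)$ to have only small prime factors.

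Two further steps do not survive scrutiny. First, ``the ambiguity lies in a finite, explicitly enumerable torsion subgroup \dots cleared by enumeration'' is not a polynomial-time step: $\mu(A)$ can be too large to write down in polynomial time, which is precisely why the correct argument works with a single coset representative and never enumerates short vectors. Second, the descent from $v^{2^k}$ to $v$ is where most of the real work lies: one must extract $p$-th roots inside the order, which requires passing to a $\Z/p\Z$-graded order $B=\Lambda/(y-1)\Lambda$ built from the tensor algebra, computing $\mu(B)$ with the roots-of-unity algorithm, and using the grading to locate a degree-one root of unity. Your proposal compresses this into ``the $w$-profile pins $v$ down modulo torsion,'' which is not an argument. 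The high-level frame (reduce to finding a short generator, control bit-length, use an auxiliary-ideal existence result from Konyagin--Pomerance) points in the right direction, but the three load-bearing mechanisms --- coset localization via exponents killing $(A/\a)^\ast$, the $\gcd$ trick to reduce to small prime exponents, and the graded-order root extraction --- are all missing or replaced by steps that fail.
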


More generally, we show:

\begin{thm}
\label{mainIwthmcor}
There is a deterministic polynomial-time algorithm that given a CM-order $A$,
fractional $A$-ideals $I_1$ and $I_2$, and elements 
$w_1,w_2 \in (A^+_\Q)_{\gg 0}$ satisfying $I_1\overline{I_1} = Aw_1$
and $I_2\overline{I_2} = Aw_2$,
decides whether there exists $v\in A_\Q$ such that $I_1=vI_2$
and $w_1 =v\bar{v}w_2$, and if so computes such an element $v$.
\end{thm}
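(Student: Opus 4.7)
\emph{Proof proposal.} The plan is a direct reduction to Theorem~\ref{mainIwthm}: from the two pairs $(I_1,w_1)$ and $(I_2,w_2)$ I would construct a single pair $(I,w)$ whose solvability is equivalent to that of the original problem. Since $I_2\bar{I_2}=Aw_2$, the ideal $I_2$ is invertible with $I_2^{-1}=w_2^{-1}\bar{I_2}$, so I set
\[
I:=w_2^{-1}I_1\bar{I_2}\quad(=I_1I_2^{-1}),\qquad w:=w_1w_2^{-1}.
\]
Both are produced in deterministic polynomial time by standard lattice arithmetic: applying the given involution to a $\Z$-basis of $I_2$, multiplying two fractional ideals (e.g.\ via Hermite normal form), and scaling by the unit $w_2^{-1}\in A_\Q^\ast$ (obtained by inverting the regular-representation matrix of $w_2$).

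Before invoking Theorem~\ref{mainIwthm} I would verify its hypothesis on $(A,I,w)$. Every ring homomorphism $\psi\colon A_\Q\to\C$ sends $w$ to $\psi(w_1)/\psi(w_2)\in\R_{>0}$, so $w\in(A_\Q^+)_{\gg 0}$. Moreover $\psi(\bar w_j)=\overline{\psi(w_j)}=\psi(w_j)$ for all $\psi$, and because $A$ is reduced the $\Q$-algebra $A_\Q$ is a product of number fields, whence the joint map $A_\Q\hookrightarrow\prod_\psi\C$ is injective; this forces $\bar w_j=w_j$ for $j=1,2$. Consequently
\[
I\bar I=(w_2\bar w_2)^{-1}(I_1\bar I_1)(\bar I_2 I_2)=w_2^{-2}\cdot Aw_1\cdot Aw_2=Aw.
\]

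I would then run the algorithm of Theorem~\ref{mainIwthm} on $(A,I,w)$ and output its verdict verbatim. Correctness rests on the following equivalence for $v\in A_\Q$: the conditions ``$I=Av$ and $v\bar v=w$'' hold jointly if and only if ``$I_1=vI_2$ and $w_1=v\bar v w_2$'' do. Indeed, from $I_1=vI_2$ and $w_1=v\bar v w_2$ one gets $I=w_2^{-1}(vI_2)\bar I_2=w_2^{-1}v\cdot Aw_2=Av$ and $v\bar v=w_1/w_2=w$; conversely, from $I=Av$, multiplication by $I_2$ together with $\bar I_2 I_2=Aw_2$ yields $I_1=vI_2$, while $v\bar v=w$ gives $w_1=ww_2=v\bar v w_2$. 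Since all intermediate objects retain polynomial bit-size thanks to the explicit formula $I_2^{-1}=w_2^{-1}\bar I_2$ and standard HNF reduction, no serious obstacle arises in the reduction itself: the entire difficulty of Theorem~\ref{mainIwthmcor} is concentrated in Theorem~\ref{mainIwthm}.
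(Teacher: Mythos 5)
Your reduction is correct and is essentially the paper's own argument: the paper obtains Theorem \ref{mainIwthmcor} from Algorithm \ref{algorLM} together with Theorem \ref{I1I2isom}, and that algorithm likewise divides the two classes in $\WPic(A)$ — there by forming $L_{(I_1,w_1)}\otimes_A \overline{L_{(I_2,w_2)}}$ and testing it against the standard $A$-lattice — while your pair $(w_2^{-1}I_1\overline{I_2},\,w_1w_2^{-1})$ differs from the pair $(I_1\overline{I_2},\,w_1w_2)$ implicit there only by the principal pair $(Aw_2^{-1},\,w_2^{-1}\overline{w_2^{-1}})$. Your verifications ($\bar w_2=w_2$ via injectivity of $A_\Q\hookrightarrow\prod_\psi\C$, the identity $I\bar I=Aw$, and the two-way equivalence for $v$) are all sound, so the proposal stands.
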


See the very end of this paper for proofs of 
Theorems \ref{mainIwthm} and \ref{mainIwthmcor}.

The set of all pairs $(I,w)$ as in Theorem \ref{mainIwthm} is a
multiplicative group (see Section \ref{WPsect}), and
$
\{ (Av,v\bar{v}) : v\in A_\Q^\ast \}
$
is a subgroup.
Writing $\WPic(A)$ for the quotient group, 
Theorem \ref{mainIwthmcor} provides an
efficient equality test in $\WPic(A)$.
The set of principal invertible fractional $A$-ideals
$
\{ Av : v\in A_\Q^\ast \}
$
is a subgroup of the set of all invertible fractional $A$-ideals;
write $\Cl(A)$ for the quotient group,
and write
$\Cl^-(A)$ for the subgroup of classes $[I]\in\Cl(A)$
for which 
$I\bar{I}$ is principal.
We can show that the group homomorphism 
$\WPic(A) \to \Cl^-(A)$ sending the class of $(I,w)$
to the class of $I$ is almost an isomorphism in the sense
that both its kernel and its cokernel are annihilated by $2$
(Theorem \ref{wpickercoker} below).
Hence we can efficiently do an equality test in a group that is
closely related to the ``minus part'' of the class group of a
CM-order.

To obtain these results, we view our fractional $A$-ideals as 
lattices with an $A$-module structure. 
This allows us to avoid blow-up of the coefficients with respect to
a $\Z$-basis, when ideals are repeatedly multiplied together.

By a {\em lattice}, or {\em integral lattice}, 
we mean a finitely generated free abelian group $L$ 
equipped with a positive definite symmetric
$\Z$-bilinear map
$\langle \, \cdot \, , \,  \cdot \,  \rangle : L \times L \to \Z$;
this map will be referred to as the {\em inner product}.
A lattice is specified by means of the matrix
$(\langle b_i,b_j \rangle)_{i,j=1}^m$ for some $\Z$-basis $b_1,\ldots,b_m$ of $L$.

Let $A$ be a CM-order. 
By an {\em $A$-lattice} we mean a lattice $L$ that is
given an $A$-module structure with the property that for all
$a\in A$ and $x,y\in L$ one has
$\langle ax,y \rangle = \langle x,\bar{a}y \rangle$.
One specifies an $A$-lattice by specifying it as a lattice and
listing the system of $nm^2$ integer coefficients that
express $\alpha_ib_j$ on $b_1,\ldots,b_m$, with the
$\Z$-bases $(\alpha_i)_{i=1}^n$ for $A$ and $(b_j)_{j=1}^m$ for $L$
being as above.
An {\em $A$-isomorphism} $f: L \to M$ of $A$-lattices is an
isomorphism of $A$-modules with 
$\langle f(x),f(y) \rangle = \langle x,y \rangle$ for all
$x,y\in L$; such an isomorphism is specified by its
matrix on the $\Z$-bases for $L$ and $M$ that are used.
An example of an $A$-lattice is the $A$-module $A$ itself,
with inner product $( a,b ) = \Tr(a\bar{b})$;
here $\Tr : A \to\Z$ is the trace function of $A$ as a
$\Z$-algebra. This $A$-lattice is called the {\em standard}
$A$-lattice. 

Deciding whether two lattices are isomorphic is a notorious algorithmic problem. 
Our results here and in \cite{LwS} show that the problem admits a 
polynomial-time solution if the lattices are equipped with sufficient structure.

\begin{thm}
\label{mainthm}
There is a deterministic polynomial-time algorithm that, given a CM-order $A$
and an $A$-lattice $L$, decides whether or not $L$ is $A$-isomorphic
with the standard $A$-lattice, and if so, computes such an $A$-isomorphism.
\end{thm}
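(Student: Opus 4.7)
The idea is to reduce Theorem~\ref{mainthm} to Theorem~\ref{mainIwthm} by interpreting the $A$-lattice $L$ as a fractional $A$-ideal equipped with a scaling datum in $(A^+_\Q)_{\gg 0}$, and translating the existence of an $A$-lattice isomorphism $L \isomap A$ into the existence of an appropriate principal generator.

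First I would decide whether $L_\Q := L \otimes_\Z \Q$ is a free $A_\Q$-module of rank one. Because $A$ is reduced, $A_\Q$ is semisimple and hence a finite product of number fields; standard deterministic polynomial-time algorithms compute its primitive idempotents, read off the $A_\Q$-module structure of $L_\Q$, and either produce an $A_\Q$-linear isomorphism $\varphi: A_\Q \isomap L_\Q$ or certify that none exists. In the latter case output ``no''. Otherwise set $I = \varphi^{-1}(L) \subset A_\Q$, a fractional $A$-ideal.

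Next, pull the inner product of $L$ back along $\varphi$ to a positive-definite $\Q$-bilinear form on $A_\Q$ satisfying $\langle ax, y\rangle = \langle x, \bar{a} y\rangle$. Non-degeneracy of the trace pairing $(x,y)\mapsto\Tr(x\bar{y})$ on $A_\Q$ yields a unique $\gamma\in A_\Q$ with
\[
\langle \varphi(x), \varphi(y)\rangle_L = \Tr(\gamma x \bar{y}) \quad\text{for all } x,y \in A_\Q,
\]
computable by inverting the trace Gram matrix on a $\Q$-basis of $A_\Q$, and positivity forces $\gamma\in(A^+_\Q)_{\gg 0}$. Set $w=\gamma^{-1}$. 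An $A$-lattice isomorphism $L \isomap A$ then amounts to an element $v \in A_\Q$ with $I = Av$ such that the resulting generator $\varphi(v)$ of $L$ is inner-product compatible with $1\in A$; unwinding the identity $\Tr(\gamma v\bar{v} a\bar{b}) = \Tr(a\bar{b})$ for all $a,b\in A$ and again using non-degeneracy of the trace pairing, this forces $v\bar{v} = w$. Any such $v$ satisfies $I\bar{I} = Aw$, so test this identity and output ``no'' if it fails.

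If $I\bar{I} = Aw$, invoke Theorem~\ref{mainIwthm} on $(A, I, w)$; if it returns $v$, output the $A$-lattice isomorphism $L \to A$ sending $\varphi(av)\mapsto a$. The preprocessing is a bounded linear-algebra computation inside $A_\Q$ and presents no real difficulty; the genuinely hard work---controlling the size of lattice data across repeated multiplications via the Gentry--Szydlo technique---is entirely packaged inside Theorem~\ref{mainIwthm}, and is where I would expect the main obstacle to lie.
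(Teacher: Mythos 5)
Your reduction is mathematically sound as far as it goes: passing from $L$ to a pair $(I,w)$ with $L\cong L_{(I,w)}$, and translating ``$A$-lattice isomorphism $L\isom A$'' into ``$\exists v$ with $I=Av$ and $v\bar v=w$'', is exactly the content of Theorem \ref{I1I2isom}, and the preprocessing you describe is indeed routine linear algebra. The problem is that the reduction runs in the wrong direction relative to the paper's logic and is therefore circular. In this paper, Theorem \ref{mainIwthm} is not an independent ingredient: it is \emph{deduced} from Theorem \ref{mainthm} (via Theorem \ref{I1I2isom} and Algorithm \ref{mainalgor}) at the very end of the paper, and no unconditional deterministic polynomial-time proof of it exists elsewhere (the cited work of Kirchner needs GRH or randomness). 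So by invoking Theorem \ref{mainIwthm} you have only re-established that the two statements are equivalent; you have not proved either one.

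Your closing remark that ``the genuinely hard work \dots is entirely packaged inside Theorem \ref{mainIwthm}'' inverts the paper's architecture: the authors pass \emph{from} ideals \emph{to} lattices precisely because repeated ideal multiplication blows up coefficients, whereas invertible $A$-lattices admit LLL-reduced bases of size bounded solely in terms of $A$ (Proposition \ref{LLLlem}). The actual proof of Theorem \ref{mainthm} is Algorithm \ref{mainalgor}: test invertibility of $L$ (Algorithm \ref{elemprop2}), reduce to connected $A$ via primitive idempotents, construct auxiliary good ideals $\a$ with controlled exponents $k(\a)$ using the Konyagin--Pomerance input (Theorem \ref{newLinnik}, Algorithms \ref{usablesetsalg} and \ref{Usetalgor}), locate the coset of a short vector in $L^{\otimes k}$ with $k=\gcd\{k(\a)\}$ (Algorithm \ref{ingredient4}), and then strip the small prime factors of $k$ one at a time using the graded order $B=\Lambda/(\nu-1)\Lambda$ and the roots-of-unity algorithm (Algorithm \ref{almostmainalgor}). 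None of that machinery appears in your proposal, so the core of the theorem remains unproved.
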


The algorithm and the proof are given in Section \ref{mainalgorsect}.
An imprecise summary is as follows. 
Finding an $A$-isomorphism as in Theorem \ref{mainthm} is equivalent to
finding a ``short'' vector in $L$.
Using a suitable tensor power $L^m$, one can force a short vector
to lie in a certain coset of $L^m$ modulo $\a L^m$.
Here $\a$ is an auxiliary ideal of $A$ that is chosen to have large norm,
which enables us to recover the short vector itself.
If one can do this for $m_1$ and $m_2$, then they combine into a
short vector in $L^{\gcd(m_1,m_2)}$.
Ultimately, one obtains a short vector in $L^k$, where the final $\gcd$ $k$
has relatively small prime factors.
Removing these one by one, one finds the desired short vector in $L$.

As a corollary of Theorem \ref{mainthm} we obtain the following result (with
{\em invertible} defined as in Definition \ref{Alatticeinvertdef}), from which
Theorem \ref{mainIwthmcor} follows.
\begin{thm}
\label{mainthmcor}
There is a deterministic polynomial-time algorithm that given a CM-order $A$
and invertible $A$-lattices $L$ and $M$, decides whether or not $L$
and $M$ are isomorphic as $A$-lattices, and if so, exhibits 
such an $A$-isomorphism.
\end{thm}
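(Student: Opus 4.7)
The plan is to reduce to Theorem~\ref{mainthm} by replacing the pair $(L,M)$ with a single $A$-lattice whose isomorphism class with the standard $A$-lattice encodes the isomorphism class $L\cong M$. The intuition is the same as in the ideal formulation: an invertible $A$-lattice $M$ should have an ``inverse'' $M^\vee$, and $L\cong M$ as $A$-lattices if and only if $L\otimes_A M^\vee\cong A$ as $A$-lattices, after which Theorem~\ref{mainthm} can be applied directly.

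First I would unpack the meaning of \emph{invertible} (Definition~\ref{Alatticeinvertdef}) to produce, from $M$, an $A$-lattice $M^\vee$ playing the role of the inverse. The natural candidate is $M$ regarded with its conjugate $A$-action (i.e., $a\cdot x := \bar a x$) and the same underlying abelian group and inner product, possibly rescaled by the element $w\in(A^+_\Q)_{\gg 0}$ for which the generically Hermitian form on $M$ satisfies the compatibility $M\otimes_A M^\vee\simeq A\cdot w^{-1}$. Because $M$ is invertible, such a $w$ exists and can be computed: embed $M$ in $M_\Q=M\otimes\Q$, identify $M_\Q$ with $A_\Q$ via a chosen generator, and read off $w$ from the inner product. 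This is a polynomial-time construction given the explicit matrix data specifying the $A$-lattice.

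Next I would form the tensor product $N := L\otimes_A M^\vee$ as an $A$-module and equip it with the inner product defined on pure tensors by $\langle x\otimes y,\,x'\otimes y'\rangle := \langle x,x'\rangle_L \cdot \langle y,y'\rangle_{M^\vee}$ and extended bilinearly, with suitable rescaling by $w$ to arrange that, \emph{when $L\cong M$}, $N$ becomes isometric to the standard $A$-lattice $A$ with inner product $(a,b)\mapsto\Tr(a\bar b)$. Concretely, a $\Z$-basis of $N$ is obtained from the given $\Z$-bases of $L$ and $M$; the $A$-module structure is inherited, and the hypothesis $\langle ax,y\rangle=\langle x,\bar a y\rangle$ on both factors guarantees that $N$ is an $A$-lattice in the sense required. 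The key equivalence to verify is: $L\cong_A M$ if and only if $N\cong_A A$ (standard). The ``only if'' is clear — an isomorphism $L\isomap M$ induces an isometry $N\isomap M\otimes_A M^\vee\isomap A$ by invertibility. For ``if'', an isometry $N\isomap A$ composed with the evaluation/contraction $M\otimes_A(M\otimes_A M^\vee)\to M$ produces an $A$-lattice isomorphism $L\isomap M$.

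Finally, feed $N$ to the algorithm of Theorem~\ref{mainthm}. If it reports that $N$ is not isomorphic to the standard $A$-lattice, then $L\not\cong_A M$. If it returns an isomorphism $\varphi\colon N\isomap A$, then back-track through the contraction in the previous paragraph to produce an explicit $A$-lattice isomorphism $L\isomap M$, using only linear algebra over $\Z$. The main obstacle I expect is the bookkeeping in Steps~2–3: choosing the rescaling by $w$ so that $N$ lands on the standard lattice on the nose (not merely up to a positive scalar in $A^+_\Q$), and verifying that the matrix data for $N$ has size polynomial in the input. Both are manageable because $M$ is invertible and because inner products and tensor constructions behave well under the compatibility $\langle ax,y\rangle=\langle x,\bar a y\rangle$; the remaining work is routine.
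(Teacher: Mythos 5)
Your overall strategy is exactly the paper's: form the ``quotient'' lattice $N=L\otimes_A\overline{M}$ (your $M^\vee$ is the conjugate lattice $\overline{M}$), decide whether $N$ is $A$-isomorphic to the standard $A$-lattice via Theorem~\ref{mainthm}, and if so contract back through $\overline{M}\otimes_A M\to A$ to recover an isomorphism $M\isomap L$. This is Algorithm~\ref{algorLM}, resting on Proposition~\ref{invertequivcor}(iv).

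There is, however, a genuine defect in your construction of the lattice structure on $N$. The formula $\langle x\otimes y,\,x'\otimes y'\rangle:=\langle x,x'\rangle_L\cdot\langle y,y'\rangle_{M^\vee}$ is the tensor product of quadratic forms over $\Z$; it is not well defined on $L\otimes_A M^\vee$ (one would need $\langle ax,x'\rangle\langle y,y'\rangle=\langle x,x'\rangle\langle ay,y'\rangle$ to respect the relation $ax\otimes y=x\otimes ay$, which fails), and even on pure tensors it gives the wrong values: already for $L=M=A$ one gets $\langle 1\otimes 1,1\otimes 1\rangle=n^2$ rather than $\Tr(1)=n$, and the discrepancy $\Tr(a\bar a)\Tr(b\bar b)$ versus $\Tr(a\bar a\,b\bar b)$ is not a constant factor, so no rescaling by $w$ repairs it. The correct recipe (Proposition~\ref{invertequivcor}(i)--(ii)) is to multiply the $A$-valued Hermitian pairings, $\varphi_{L\otimes_A\overline{M}}\bigl((x\otimes\bar y)\otimes\overline{(x'\otimes\bar y')}\bigr)=\varphi_L(x\otimes\overline{x'})\cdot\overline{\varphi_M(y\otimes\overline{y'})}$, and only then take a single trace $\Tr_{A/\Z}$ to get the $\Z$-valued inner product; with this definition no auxiliary element $w$ is needed, $N$ is automatically an invertible $A$-lattice, and the equivalence $L\cong_A M\iff N\cong_A A$ holds on the nose. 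One also needs the polynomial-time computability of this tensor product with controlled coefficient size, which the paper supplies via Theorem~\ref{LMmultalg} and Proposition~\ref{LLLlem}; your proposal takes this for granted. With these corrections your argument becomes the paper's proof.
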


Theorems \ref{mainthm} and \ref{mainthmcor} generalize the main results of \cite{LwS},
which concerned the special case $A = \Z\langle G\rangle$ mentioned 
in Example \ref{CMordexs}(iii).
While the proofs are  
different from those in \cite{LwS}, since the general strategies are similar 
we structured this paper so that in broad outline our
proofs  
follow the same logical order as that of \cite{LwS}, which was
devoted to the case $A = \Z\langle G\rangle$.

One important difference between the present paper and   \cite{LwS}
lies in the manner in which auxiliary ideals of $A$ are constructed.
In the case $A = \Z\langle G\rangle$, we could use
Linnik's theorem for this purpose (see Section 18 of \cite{LwS}),
but for general $A$ this cannot be done.
Here we show that the following result suffices.

\begin{thm}
\label{newLinnik}
Let $A$ be an order of $\Z$-rank $n \ge 1$, and let $\ell$
be a prime number with $\ell > n^2$.
Then there exists a maximal ideal $\p$ of $A$ that contains
a prime number $p \le 4(1+(\log \, n)^2)$
and that satisfies $\#(A/\p) \not\equiv 1\bmod{\ell}$.
\end{thm}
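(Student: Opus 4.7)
The plan is to combine a straightforward algebraic reduction with a theorem of Konyagin and Pomerance as the nontrivial input.

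\textbf{Step 1 (reduction).}  For any rational prime $p$ and any maximal ideal $\p$ of $A$ lying above $p$, the residue field $A/\p$ is a finite extension $\F_{p^f}$ of $\F_p$.  Since $A \cong \Z^n$ additively, $A/pA$ is an $\F_p$-algebra of dimension $n$, so $f \le n$.  Thus $\#(A/\p) = p^f$, so the congruence $\#(A/\p) \equiv 1 \pmod{\ell}$ reduces to $p^f \equiv 1 \pmod{\ell}$; this is impossible when $p = \ell$, and when $p \ne \ell$ it forces $\ord_{\ell}(p) \mid f$ and hence $\ord_{\ell}(p) \le n$.  Consequently it suffices to exhibit a prime $p \le 4(1+(\log\, n)^2)$ with either $p = \ell$ or $\ord_{\ell}(p) > n$: any such $p$ is a non-unit in $A$ (multiplication by $p$ on the free $\Z$-module $A$ has determinant $p^n \ne \pm 1$), so $pA$ is contained in some maximal ideal $\p$ of $A$, which gives the required ideal.

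\textbf{Step 2 (case split).}  Write $B = 4(1+(\log\, n)^2)$.  If $\ell \le B$, I take $p = \ell$ and Step~1 finishes the proof.  Otherwise $\ell > B$ and the task becomes: find a prime $p \le B$ with $\ord_{\ell}(p) > n$.  A direct count yields
\[
\#\{x \in (\Z/\ell\Z)^\times : \ord(x) \le n\} \;\le\; \sum_{d=1}^{n} \phi(d) \;\le\; n^2 \;<\; \ell,
\]
so by cardinality \emph{some} residue class modulo $\ell$ has order $> n$; the remaining content is that such a residue is realised by a prime of size at most $B$.

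\textbf{Step 3 (the Konyagin--Pomerance input).}  The leap from ``some residue has large order'' to ``some small prime has large order'' is precisely what the theorem of Konyagin and Pomerance supplies under the hypothesis $\ell > n^2$.  I expect this to be the main obstacle: the pigeonhole bound above is far too weak to pin down a small prime, and the K--P argument---analysing how primes up to $B$ can and cannot be distributed through the (at most $n$) cyclic subgroups of $(\Z/\ell\Z)^\times$ of order $\le n$---is what rules out the possibility that every prime $p \le B$ already lies in such a low-order subgroup.  Once the prime $p$ is produced, feeding it into Step~1 yields the desired maximal ideal $\p$ and completes the proof.
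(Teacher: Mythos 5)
Your Step 1 reduction is sound as stated, but it deliberately aims at a stronger arithmetic target than the paper needs, and Step 3 does not actually hit that target. You require a prime $p \le 4(1+(\log n)^2)$ with $\ord_\ell(p) > n$, so that \emph{every} maximal ideal above $p$ (whose residue degree $f$ satisfies $f \le n$) has $p^f \not\equiv 1 \bmod \ell$. The Konyagin--Pomerance theorem you invoke is only a lower bound for the count of smooth numbers, $\psi(x,y) > x^{1-\log\log x/\log y}$; it says nothing directly about orders of primes modulo $\ell$. What the paper extracts from it (Proposition \ref{Hope2}) is strictly weaker than what you need: since $\psi(n^2, b(n)) > n$, if every integer $h \le b(n)$ satisfied $h^n \equiv 1 \bmod \ell$, then the more than $n$ smooth numbers up to $n^2$ --- pairwise distinct mod $\ell$ because $\ell > n^2$ --- would all be roots of $x^n = 1$ in $\F_\ell$, a contradiction. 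This yields a prime $p \le b(n)$ with $p^n \not\equiv 1 \bmod \ell$, i.e.\ $\ord_\ell(p) \nmid n$, \emph{not} $\ord_\ell(p) > n$. The counting argument cannot be pushed to your stronger conclusion: if all small primes merely had order $\le n$ (without those orders dividing $n$), the smooth numbers would land in the subgroup of order $\lcm$ of those individual orders, which can be far larger than $n$, and no contradiction arises. Whether a small prime of order $> n$ always exists is exactly the kind of question the paper itself flags as heuristic/open territory in its discussion following Proposition \ref{Hope1}.

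The missing idea that makes the weaker condition $p^n \not\equiv 1 \bmod \ell$ suffice is the paper's composition-series step: choose a chain $A = \a_0 \supsetneq \a_1 \supsetneq \cdots \supsetneq \a_m = pA$ with simple quotients $\a_{i-1}/\a_i \cong A/\L_i$. Then $\prod_i \N(\L_i) = \#(A/pA) = p^n \not\equiv 1 \bmod \ell$, so some single factor $\N(\L_i) \not\equiv 1 \bmod \ell$, and that $\L_i$ is the required maximal ideal. Your proof picks one arbitrary maximal ideal above $p$ and therefore must control $p^f$ for every possible $f \le n$, which forces the unprovable order condition. Replace Step 1's target by ``$p^n \not\equiv 1 \bmod \ell$'' together with the composition-series argument, and replace Step 3 by the smooth-number pigeonhole above, and the proof closes.
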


It is remarkable that the upper bound $4(1+(\log \, n)^2)$ on $p$ in 
Theorem \ref{newLinnik} depends on $A$ only through its $\Z$-rank $n$, 
and that it is so small. One may actually conjecture that Theorem \ref{newLinnik} 
remains true with $4(1+(\log \, n)^2)$ replaced by $5$;
we give a heuristic argument after the proof of Proposition \ref{Hope1} below. 
For the elementary proof of Theorem \ref{newLinnik}, 
see the proof of Proposition \ref{Hope1}, which relies on
a result of Konyagin and Pomerance \cite{KP}.

The price that we pay for the very small upper bound on $p$
in Theorem \ref{newLinnik} 
is that we have to work with ideals $\a$ of $A$ that are
not necessarily generated by elements of $\Z$.
This leads to a number of technical difficulties
(see for example Sections \ref{lowerbdssect}, \ref{Hope2sect}, \ref{Hope2sectbis},
and  \ref{applyauxprssect}) 
that were not present in \cite{LwS}.
Applying Theorem \ref{newLinnik} instead of Linnik's theorem 
in the case $A = \Z\langle G\rangle$, one may expect
to obtain a dramatically lower run time exponent than the
one achieved in \cite{LwS}.

Another difference between this paper and \cite{LwS} is that,
in order to preserve integrality, we replaced
the ``scaled trace map'' $t$ (from Definition 6.2 of  \cite{LwS})
by the trace map $\Tr$ given before Theorem \ref{mainthm}.
As a consequence, the inner product $( \, \, \, , \,\, \,  )$
used for the standard $A$-lattice in this paper is,
in the special case $A = \Z\langle G\rangle$, equal to $n$
times the inner product used in \cite{LwS}, where
$n = (\# G)/2$. 
For similar reasons, the definition of
an {\em invertible} $A$-lattice (see Definition \ref{Alatticeinvertdef}) requires
more care than in  \cite{LwS}. 
We needed to redefine {\em short vector} (Definition \ref{shortdef}),
and the short vectors now behave differently.
What remains true is that an $A$-lattice is $A$-isomorphic to the
standard $A$-lattice if and only if it is invertible and has a short vector.
However, the group of roots of unity in $A$ now 
might be too large to even write down in polynomial time, so the
set of short vectors in $L$ and thus the set of 
all $A$-isomorphisms from $L$ to $A$ might be too large to enumerate.

Any choices and recommendations that we make, especially those concerning the selection of auxiliary ideals, are intended to optimize the efficiency of our proofs rather than of our algorithm.

Our work on this subject was inspired by an algorithm
of Gentry and Szydlo (Section 7 of \cite{GS}), and is related to our work on
lattices with symmetry \cite{LenSil,LwS}. 
In this paper we give 
the details for the proofs of the
results announced in our 2013 workshop on this subject
\cite{2013workshop}; see especially \cite{CMordersVideo}. 
In \cite{Kirchner}, P.\ Kirchner gave a 
version of our Theorem \ref{mainIwthm} that,
due to the inapplicability of Linnik's theorem for general CM-orders, 
either assumes the generalized Riemann hypothesis
or allows probabilistic algorithms.

The setting in this paper is applicable to the setting
considered by Garg, Gentry, and Halevi in \cite{GGH} 
where the CM-order $A$ is a cyclotomic ring $\Z[\zeta_m]$,
to the setting considered by Gentry and Szydlo where the order is
$\Z[X]/(X^m-1)$, and to the orders $\Z[X]/(X^m+1)$ used
for fully homomorphic encryption.

\subsection{Overview of algorithm for Theorem \ref{mainthm}}
The algorithm starts by testing whether the given $A$-lattice $L$ 
is invertible. Then it computes the primitive idempotents of $A$, 
in order to decompose $A$ as a product of connected rings and
reduce the problem to the case where $A$ is connected.
We work in a $\Z$-graded extended tensor algebra
$\Lambda = \bigoplus_{i\in\Z} L^{\otimes i}$.
Let $n = \rk_\Z(A)$.
We make use of Theorem \ref{newLinnik}
to construct a finite set of ``good'' ideals $\a$ of $A$, and
for each $\a$ a positive integer $k(\a)$ divisible by the exponent of
the group $(A/\a)^\ast$, such that every prime divisor of $k = \gcd \{ k(\a) \}$
is at most $n^2$.
Next, for each good ideal $\a$ one tries to find
a short vector $z_\a\in L^{\otimes k(\a)}$
such that for every short vector $z$ of $L$ one has 
$z^{\otimes k(\a)} = z_\a$; 
if this fails, one concludes that $L$ is not $A$-isomorphic to the
standard $A$-lattice (and terminates).
We then use the Euclidean algorithm to construct from the $z_\a$ a vector
$w\in L^{\otimes k}$ such that if $L$ has a short vector $z$ then $z^{\otimes k}=w$.
If $p_1,\ldots, p_m$ are the prime divisors of $k$ with multiplicity,
we use our results on graded orders from \cite{Kronecker}
and our results on roots of unity in orders from \cite{RoU} 
to either obtain a short vector $z_1$ in $L^{\otimes k/p_1}$, then
a short vector $z_2$ in $L^{\otimes k/(p_1p_2)}$, and so on,
until one obtains a short vector in $L$, or else prove that
$L$ has no short vector.
If the algorithm produces a short vector $z$ in $L$,
then the map $A \to L$, $a\mapsto a z$ is an $A$-isomorphism,
and otherwise no $A$-isomorphism exists.

\subsection{Structure of the paper}
In Sections  \ref{CMalgssect}--\ref{Alatticesect} 
we give background and results about
CM-orders and 
$A$-lattices.
In Section \ref{redbasautsect}
we obtain bounds for LLL-reduced bases of
invertible lattices (Proposition \ref{LLLlem})
that allow us to show that the Witt-Picard group is finite
and that our algorithms run in polynomial time.
In Section \ref{latticecosetssect}
we show how to find the unique ``short'' vector in a
suitable lattice coset, when such a vector exists. 
In Section \ref{regsect}
we characterize short vectors in $A$-lattices.
In Section \ref{lowerbdssect} we give conditions under
which we can easily apply the results in Section \ref{latticecosetssect}.
In Section \ref{ideallatsect}
we relate $A$-lattices to fractional $A$-ideals,
and in Section \ref{invAlatsect} we give results
on invertible $A$-lattices.
In Section \ref{shorttensorsect} we study short vectors
in invertible $A$-lattices; in particular, we show that
an $A$-lattice is $A$-isomorphic to the
standard one
if and only if it is invertible and has a short vector.
In Section \ref{WPsect} we study
the Witt-Picard group of $A$.
Section \ref{multexpinvlatsect} deals with 
multiplying and exponentiating invertible $A$-lattices.
In Section \ref{tensorsect} we introduce the extended tensor algebra $\Lambda$,
which is a single algebraic structure
that comprises all rings and lattices occurring in our main algorithm.
Sections \ref{Hope2sect} and \ref{Hope2sectbis}
are the heart of the paper, and consist of
finding the auxiliary ideals.
In Section \ref{applyauxprssect} we give algorithms that
make use of our choice of auxiliary ideals; we use
these algorithms as subroutines for 
our main algorithm, which is given in Section \ref{mainalgorsect}.

\subsection{Notation}
As usual, $\Z$, $\Q$, $\R$, and $\C$ denote respectively the 
ring of integers, and fields of rational numbers,
real numbers, and complex numbers.
Suppose $B$ and $C$ are commutative rings.
Let $\Rhom(B,C)$ denote the
set of ring homomorphisms from $B$ to $C$, 
let $\Spec(B)$ denote the set of prime ideals of $B$,
and
let $\mu(B)$ denote the group of roots of unity of $B$.
If $\p\in\Spec(B)$, let $B_\p$ denote the localization of $B$ at $\p$
and let $\N(\p) = \#(B/\p)$.
If  $A$ is an order, 
let $\Minspec(A)$ denote the set of minimal prime
ideals of $A$ and let $\Maxspec(A)$ denote the set of maximal 
ideals of $A$.
If $R$ is a commutative ring and $B$ and $C$ are $R$-algebras,
let $\Rhom_R(B,C)$
denote the set of $R$-algebra homomorphisms from $B$ to $C$, and
if $D$ is a $\Z$-module let 
$D_R = D\otimes_\Z R$.

\subsection*{Acknowledgments}
We thank 
all the participants of the 2013 Workshop on Lattices with Symmetry,
and especially Daniele Micciancio for his interest in our work.
We also thank Abtien Javanpeykar for his help with Theorem \ref{newLinnik}.

\section{CM-fields and CM-algebras}
\label{CMalgssect}

By a {\em classical CM-field} we 
will mean a totally imaginary quadratic extension of a
totally real number field. 
We define a {\em CM-field} to be any subfield of 
a classical CM-field.
A number field is a CM-field if and only if it is either a classical CM-field or totally real (by Lemma 18.2(iv) on p.~122 of \cite{ShTan}).

\begin{defn}
\label{CMalgdef}
A {\em CM-algebra} is a commutative $\Q$-algebra $E$ such that:
\begin{enumerate}
\item
$\dim_\Q(E) < \infty$,
\item
$E$ has no non-zero nilpotent elements,
\item
$E$  is equipped with an automorphism $x \mapsto \bar{x}$ 
such that $\psi(\bar{x})=\overline{\psi(x)}$
for all $x\in E$ and all 
$\psi \in\Rhom(E,\C)$.
\end{enumerate}
\end{defn}

\begin{rem}
\label{CMalgrems}
It follows from Lemma 18.2(i) on p.~ 122 of \cite{ShTan} that 
a finite dimensional commutative $\Q$-algebra 
$E$ is a CM-algebra if and only if 
all elements of $E$ are separable and
$E/\m$ is a CM-field for all $\m\in\Spec(E)$.
In other words, a finite dimensional commutative $\Q$-algebra
is a CM-algebra if and only if it is a product of finitely many
CM-fields.
In particular, the CM-algebras that are fields are exactly the CM-fields.
\end{rem}

\begin{rem}
\label{CMalgpos}
If $E$ is a CM-algebra and $x\in E$, then 
$\Tr_{E/\Q}(x\bar{x})>0$ for all $x\in E\smallsetminus \{0\}$.
\end{rem}

\begin{lem}
\label{QRquadformlem}
Suppose $V$ is a finite-dimensional $\Q$-vector space, $f : V \to \Q$ is
a quadratic form, and $f_\R : V_\R \to \R$ is the $\R$-linear extension of $f$.
Then $f$ is positive definite if and only if $f_\R$ is positive definite.
\end{lem}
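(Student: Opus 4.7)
The ``if'' direction of the equivalence is immediate: $V \setminus \{0\}$ is contained in $V_\R \setminus \{0\}$, and $f_\R$ restricts to $f$ on $V$, so positive definiteness of $f_\R$ at once implies that of $f$.

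For the converse, the strategy is to diagonalize $f$ over $\Q$ and then read off positive definiteness of $f_\R$ from the diagonal entries. Let $b : V \times V \to \Q$ be the associated symmetric $\Q$-bilinear form, defined by $b(x,y) = \tfrac12 (f(x+y) - f(x) - f(y))$, so that $f(x) = b(x,x)$. I will construct, inductively, a $\Q$-basis $e_1, \ldots, e_n$ of $V$ with $b(e_i, e_j) = 0$ for $i \ne j$. Suppose $W \subseteq V$ is a nonzero subspace on which $f$ is positive definite. Pick any $v \in W \setminus \{0\}$; then $b(v, v) = f(v) > 0$, so the linear functional $b(v, \cdot)|_W$ is nonzero, and $W = \Q v \oplus W'$ where $W' = \ker\bigl(b(v, \cdot)|_W\bigr)$. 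The restriction of $f$ to $W'$ is again positive definite, so the induction closes and produces the desired orthogonal basis.

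In this basis, writing $a_i := f(e_i) \in \Q$, one has $f(\sum_i x_i e_i) = \sum_i a_i x_i^2$. Since $f$ is positive definite and each $e_i$ is nonzero, $a_i > 0$ for all $i$. The very same formula $f_\R(\sum_i x_i e_i) = \sum_i a_i x_i^2$ holds for real scalars $x_i$, and this is a sum of positive multiples of squares, vanishing only when all $x_i = 0$. Hence $f_\R$ is positive definite on $V_\R$.

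The only step requiring any care is the $\Q$-orthogonalization, but positive definiteness of $f$ guarantees the nonvanishing needed at each stage, so the argument goes through without any appeal to deeper input such as Sylvester's law of inertia or a density/continuity argument.
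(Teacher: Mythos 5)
Your proof is correct and follows essentially the same route as the paper: both diagonalize $f$ over $\Q$, observe that positive definiteness forces the diagonal coefficients to be positive, and then use the same basis over $\R$. You merely spell out the orthogonalization step and the easy ``if'' direction that the paper leaves implicit.
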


\begin{proof}
Diagonalize $f$ over $\Q$, so $f(x) = \sum_{i=1}^n a_ix_i^2$ 
where the $x_i$ are the coordinates of $x$ on some $\Q$-basis of $V$ and all $a_i \in\Q$.
Then $f$ is positive definite if and only if all $a_i>0$.
Using the same basis for $V_\R$ over $\R$ now gives the desired result.
\end{proof}

The following result will be used to prove Proposition \ref{Bgraded}.
It generalizes Lemma 2 on p.~37 of \cite{ShTan}, which dealt with the
case where $E$ is a number field.

\begin{prop}
\label{involCMprop}
Suppose $E$ is a finite dimensional commutative $\Q$-algebra, 
$\rho\in\Aut(E)$, 
and $\Tr_{E/\Q}(x\rho(x))>0$ for all $x\in E\smallsetminus \{0\}$.
Then:
\begin{enumerate}
\item
$\Tr_{E_\R/\R}(x\rho(x)) > 0$ for all $x\in E_\R \smallsetminus  \{0\}$,
\item 
$\rho(\rho(x))=x$ for all $x\in E$,
\item
and $E$ is a CM-algebra with $\rho$ serving as $\bar{\,\,}$.
\end{enumerate}
\end{prop}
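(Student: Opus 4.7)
The plan is to establish (i) immediately from Lemma \ref{QRquadformlem}, and then reduce (ii) and (iii) to the number-field case already handled in \cite{ShTan}. For (i), the map $q : E \to \Q$ defined by $q(x) = \Tr_{E/\Q}(x\rho(x))$ is a $\Q$-valued quadratic form, whose associated symmetric bilinear form is $(x,y) \mapsto \Tr_{E/\Q}(x\rho(y) + y\rho(x))$; its $\R$-linear extension to $E_\R$ is $x\mapsto\Tr_{E_\R/\R}(x\rho(x))$, where $\rho$ is extended $\R$-linearly. Lemma \ref{QRquadformlem} applied to $q$ yields (i) directly.

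For (ii) and (iii), I would first show that $E$ is reduced. The nilradical $N\subset E$ is $\rho$-stable (since $\rho$ is a ring automorphism), and for any $x\in N$ the element $x\rho(x)$ is nilpotent, so multiplication by it is a nilpotent $\Q$-endomorphism of $E$, of trace zero. The positivity hypothesis then forces $x=0$, so $N=0$ and $E$ decomposes as a product $\prod_{i=1}^r F_i$ of number fields, with primitive idempotents $e_1,\ldots,e_r$. I would then show that $\rho$ preserves each factor: if $\rho(e_i)=e_j$ with $j\neq i$, then $e_i\rho(e_i)=e_ie_j=0$, so $\Tr_{E/\Q}(e_i\rho(e_i))=0$, contradicting positivity (as $e_i\neq 0$). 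Hence $\rho(e_i)=e_i$ and $\rho$ restricts to an automorphism $\rho_i$ of $F_i$.

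Finally, for any $y \in F_i\smallsetminus\{0\}$, viewed as an element of $E$ with zero components elsewhere, one has $\Tr_{F_i/\Q}(y\rho_i(y)) = \Tr_{E/\Q}(y\rho(y)) > 0$, so the pair $(F_i,\rho_i)$ satisfies the hypothesis of Lemma 2 on p.~37 of \cite{ShTan}. That lemma yields $\rho_i^2=\id_{F_i}$ and forces $F_i$ to be either totally real (with $\rho_i=\id$) or a classical CM-field with $\rho_i$ as its intrinsic complex conjugation; in either case $F_i$ is a CM-field in the sense of Section \ref{CMalgssect}. Combining over $i$ gives $\rho^2=\id_E$, which is (ii), and by Remark \ref{CMalgrems} $E$ is a CM-algebra. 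For (iii) of Definition \ref{CMalgdef}, any $\psi\in\Rhom(E,\C)$ has a maximal ideal as kernel and so factors through a unique $F_{i_0}$; because the intrinsic complex conjugation on a CM-field agrees with $z\mapsto\bar z$ under every embedding into $\C$, one obtains $\psi(\rho(x))=\overline{\psi(x)}$ for all $x\in E$, completing the proof.

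The only step of real substance is the invocation of Lemma 2 of \cite{ShTan}, which handles the single-field case; the main piece of new work is the idempotent argument showing $\rho$ respects the decomposition $E=\prod F_i$, and this is where I expect readers to want the most care, although it follows cleanly from the orthogonality $e_ie_j=0$ for $i\neq j$ combined with the positivity hypothesis.
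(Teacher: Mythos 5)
Your proposal is correct, but it takes a genuinely different route from the paper's proof. The paper does not decompose $E$ over $\Q$ at all: after establishing (i) and reducedness (by the same nilradical/trace argument you give), it passes to $E_\R\cong\R^r\times\C^s$, shows by the orthogonality-plus-positivity argument that $\rho$ fixes each primitive idempotent of $E_\R$ (so $\rho$ acts componentwise on the archimedean factors), and then rules out $\rho$ being the identity on any $\C$-factor by computing $\Tr_{E_\R/\R}(x\rho(x))=-2<0$ for $x=\sqrt{-1}\,\alpha_j$. This is in effect a self-contained reproof of the field case, and it uses (i) essentially, since both the idempotents of $E_\R$ and the element $\sqrt{-1}\,\alpha_j$ need not lie in $E$. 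You instead decompose $E=\prod_i F_i$ over $\Q$, run the idempotent argument with the $\Q$-rational idempotents (so you only need positivity over $\Q$, not (i)), and then outsource the single-field case to Lemma 2 on p.~37 of \cite{ShTan} --- which is legitimate, since the paper itself states that Proposition \ref{involCMprop} generalizes exactly that lemma. What each approach buys: yours is shorter and cleanly modular given the external reference, and it isolates the one genuinely new ingredient (that $\rho$ respects the product decomposition); the paper's is self-contained and yields directly the explicit description of $\rho$ on $E_\R$ (identity on real factors, conjugation on complex factors), from which (iii) is immediate without tracing $\psi\in\Rhom(E,\C)$ through a factor field. Both arguments are sound.
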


\begin{proof}
By Lemma \ref{QRquadformlem} we have (i).

If $y$ is a nilpotent element of $E$, then
$y\rho(y)$ is nilpotent, so $\Tr_{E/\Q}(y\rho(y))=0$,
so $y=0$ by our hypothesis. Thus, $E$ is reduced.

We have $E \hookrightarrow E_\R 
= \R^r \times \C^s$ for some $r,s\in\Z_{\ge 0}$,
and $\rho$ extends to an automorphism of $E_\R$ as an $\R$-algebra.
For $1 \le j \le r+s$, let 
$\alpha_j = (0,\ldots,0,1,0,\ldots,0) \in \R^r \times \C^s=E_\R$ 
with $1$ in the $j$-th position.
We claim that $\rho({\alpha_j})=\alpha_j$ for all $j$.
If not, then since the $\alpha_j$'s are exactly the primitive idempotents of $E_\R$ 
we have $\rho({\alpha_j})=\alpha_k$ for some $k\neq j$, so
$0 < \Tr_{E_\R/\R}(\alpha_j\rho(\alpha_j)) = 
\Tr_{E_\R/\R}(\alpha_j\alpha_k) = 0$, a contradiction.
Thus $\rho$ acts componentwise, and is the identity on each
$\R$ and either the identity or complex conjugation on each $\C$.
In particular, $\rho(\rho(x))=x$ for all $x\in E_\R$, and we have (ii).

If $\rho$ is the identity on the $j$-th $\C$, then
letting $x=\sqrt{-1}\alpha_j$ we have 
$$
\Tr_{E_\R/\R}(x\rho(x))=\Tr_{E_\R/\R}(-\alpha_j) = -2 < 0,
$$ 
a contradiction.
It follows that $\psi(\rho(x))=\overline{\psi(x)}$
for all $\psi \in\Rhom(E,\C)$ and all 
$x\in E$, giving (iii).
\end{proof}

The next algorithm will be used in Algorithm \ref{CMorddetalg}.
For the input, 
a degree $n$ field $F$ is specified
(as in \cite{Qalgs})
by listing a
system of ``structure constants'' $a_{ijk}\in\Q$, for $i,j,k\in\{ 1,2,\ldots,n\}$,
that determine the multiplication in the sense that for some
$\Q$-basis $\{\alpha_1,\alpha_2,\ldots,\alpha_n\}$ of $F$ 
one has $\alpha_i\alpha_j = \sum_{k=1}^n a_{ijk}\alpha_k$ for all $i,j$.
Elements of $F$ are then represented by their vector of coordinates
on that basis.

\begin{algorithm}
\label{CMtestalgor}
Given a number field $F$, the algorithm decides whether $F$ is
a CM-field, and if so computes $\bar{\,\,} \in\Aut(F)$.

Steps:
\begin{enumerate}
\item
Compute $\Aut(F)$.
\item
For all $\sigma\in\Aut(F)$ with $\sigma^2=\id_F$ in succession
compute $\Tr_{F/\Q}(\alpha_i\cdot \sigma(\alpha_j))$ for the given
$\Q$-basis $\{ \alpha_1,\ldots,\alpha_n\}$ of $F$ and test whether
for all $k\in\{ 1,2,\ldots,n\}$ we have
$\det((\Tr_{F/\Q}(\alpha_i\cdot \sigma(\alpha_j)))_{i,j=1}^k) > 0$.
If not, pass to the next $\sigma$ or if there is no next $\sigma$
terminate with ``no''.
If yes, terminate with ``yes'' and $\bar{\,\,} = \sigma$.
\end{enumerate}
\end{algorithm}

\begin{prop}
\label{CMtestpf}
Algorithm \ref{CMtestalgor} is correct and runs in  polynomial time.
\end{prop}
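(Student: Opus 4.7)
My approach is to verify correctness and polynomial running time, relying on Proposition~\ref{involCMprop} and Sylvester's criterion for positive-definiteness.

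For correctness, I would first observe that for any $\sigma \in \Aut(F)$ with $\sigma^2 = \id_F$, the pairing $(x,y) \mapsto \Tr_{F/\Q}(x\sigma(y))$ is $\Q$-bilinear and symmetric: since the trace is $\Aut(F)$-invariant, $\Tr_{F/\Q}(x\sigma(y)) = \Tr_{F/\Q}(\sigma(x)\sigma^2(y)) = \Tr_{F/\Q}(\sigma(x)y)$. Its Gram matrix on $\alpha_1,\ldots,\alpha_n$ is exactly the matrix assembled in Step~(ii), so by Sylvester's criterion the positivity of all $n$ leading principal minors is equivalent to this form being positive definite, i.e.\ to $\Tr_{F/\Q}(x\sigma(x)) > 0$ for every nonzero $x\in F$. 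If the algorithm outputs ``yes'' with some $\sigma$, Proposition~\ref{involCMprop}(iii) then makes $F$ into a CM-algebra with $\sigma$ serving as the canonical involution; because $F$ is a field, Remark~\ref{CMalgrems} identifies this with $F$ being a CM-field, and $\sigma$ is the required $\bar{\,\,}$. Conversely, if $F$ is a CM-field, then by Remark~\ref{CMalgrems} it is a CM-algebra, and the involution $x\mapsto\bar{x}$ of Definition~\ref{CMalgdef} provides an element of $\Aut(F)$ of order dividing $2$ (by the involutivity remark following Definition~\ref{CMorddef}, which carries over verbatim to the CM-algebra setting) whose trace form is positive definite by Remark~\ref{CMalgpos}; the test in Step~(ii) therefore succeeds at least for $\sigma = \bar{\,\,}$, so the algorithm outputs ``yes'', and uniqueness of the CM-involution forces any successful $\sigma$ to equal $\bar{\,\,}$.

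For the running time, Step~(i) admits a deterministic polynomial-time algorithm: pick a primitive element $\theta$ of $F/\Q$, compute its minimal polynomial $\mu_\theta\in\Q[x]$, factor $\mu_\theta$ in $F[x]$ using LLL-based polynomial factorization over number fields, and recover each $\Q$-algebra automorphism as the map sending $\theta$ to one of the (at most $n$) roots $\theta'\in F$ of $\mu_\theta$. Since $\#\Aut(F)\le[F:\Q]=n$, Step~(ii) performs at most $n$ iterations, each consisting of routine $\Q$-linear algebra---forming an $n\times n$ matrix of traces and evaluating its $n$ leading principal minor determinants---all of bit-size polynomial in the input.

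The main obstacle, if one had to reprove everything from scratch, would be producing the deterministic polynomial-time subroutine for $\Aut(F)$; modulo that (standard) ingredient, the rest of the argument is a clean application of Sylvester's criterion and Proposition~\ref{involCMprop}.
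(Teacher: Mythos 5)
Your proof is correct and follows essentially the same route as the paper's: Sylvester's criterion applied to the trace Gram matrix, Proposition~\ref{involCMprop} for the ``yes'' direction, the positivity of the trace form on a CM-field for the converse, and a standard polynomial-time computation of $\Aut(F)$ (the paper cites \S 2.9 of \cite{AlgNoThy}, which is the LLL-based method you sketch). The only point worth making explicit is that Sylvester's criterion certifies positive definiteness of the form over $\R$, and the equivalence with positivity on $F\smallsetminus\{0\}$ is exactly Lemma~\ref{QRquadformlem}, which the paper invokes at that step.
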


\begin{proof}
Let $f_\sigma : F \to \Q$ be the quadratic form
$f_\sigma(x) = \Tr_{F/\Q}(x\sigma(x))$.
Then $f_\sigma$ is positive definite if and only if $(f_\sigma)_\R$ is positive definite,
by Lemma \ref{QRquadformlem}.
Further, $(f_\sigma)_\R$ is positive definite if and only if the matrix
$A = (\Tr_{F/\Q}(\alpha_i\cdot \sigma(\alpha_j)))_{i,j=1}^n$ is positive definite.
By Sylvester's criterion, $A$ is positive definite if and only if
its leading principal minors $\det((\Tr_{F/\Q}(\alpha_i\cdot \sigma(\alpha_j)))_{i,j=1}^k)$
are all positive.
Correctness of the algorithm now follows from Proposition \ref{involCMprop} and Lemma \ref{CMalgpos}.
Computing $\Aut(F)$ can be done in polynomial time, by
\S 2.9 of \cite{AlgNoThy}.
\end{proof}

\begin{rem}
\label{CMalgalgorrem}
There is a deterministic polynomial-time algorithm that
given a finite dimensional commutative $\Q$-algebra 
$E$ decides whether it is a CM-algebra and if so produces $\bar{\,\,}$.
Namely, use Algorithms 5.5 and 7.2 of \cite{Qalgs} to determine whether 
all elements of $E$ are separable and 
if so to compute all $\m\in\Spec(E)$ and apply
Algorithm \ref{CMtestalgor} above to check whether each $E/\m$ is
a CM-field and find its automorphism $\bar{\,\,}$.
\end{rem}

\section{CM-orders}
\label{CMorderssect}

If $A$ is a reduced order,
then the trace map  
$\Tr = \Tr_{A/\Z} : A \to \Z$
extends by linearity to trace maps 
$\Tr : A_\Q \to \Q$ and $\Tr : A_\R \to \R$,
and for all $a\in A$ we have $\Tr(a) = \sum_{\psi\in\Rhom(A,\C)} \psi(a)$. 
(Note that $\#\Rhom(A,\C) = \rk_\Z(A)$.) 

Recall that the discriminant $\Delta_{A/\Z}$ of an order $A$
is the determinant of the matrix 
$(\Tr_{\O/\Z}(\alpha_i\alpha_j))_{i,j}$ for any $\Z$-basis $\{\alpha_i\}$ of $A$.

In Section \ref{introsect}, a CM-order $A$ was specified 
by $n=\rk_\Z(A)$,
and a system $(b_{ijk})_{i,j,k=1}^n$ of integers such that
for some $\Z$-basis $\{\alpha_i\}_{i=1}^n$ of $A$ one has 
$\alpha_i \alpha_j = \sum_{k=1}^n b_{ijk} \alpha_k$ for all $1 \le i,j\le n$,
and a matrix giving $\bar{\,\,}$ on $A$.
We improve the way the data for $A$ are specified, as follows.
Note that $\Tr(\alpha_i) = \sum_{j=1}^n b_{ijj}$.
It is straightforward to use the specified data
to compute the Gram matrix $((\alpha_i,\alpha_j))_{1\le i,j\le n}$
for $A$ relative to the basis $\{ \alpha_i\}_{i=1}^n$,
where $(a,b) = \Tr_{A/\Z}(a\bar{b})$ for all $a,b\in A$,
and compute $\det((\alpha_i,\alpha_j)) = |\Delta_{A/\Z}|$, which is
the determinant of $A$ as a lattice (Definition \ref{detlatdef} below).
Run the LLL lattice basis reduction algorithm (\cite{LLL}) to replace
$\{ \alpha_i\}_{i=1}^n$ by an LLL-reduced basis
(see Definition \ref{LLLreduceddefn} for the definition), and recompute the constants
$b_{ijk}$ and the matrix giving $\bar{\,\,}$.
We always first run the above algorithm to give an LLL-reduced
basis, and convert back to the original basis at the end.
We suppress this in the algorithms below, and assume our input $A$ 
is given with an LLL-reduced basis, and that we have kept track
of how the LLL-basis is expressed in terms of the original basis
$\{ \alpha_i\}$, so that one can give the final answer in terms
of the original basis.

\begin{lem}
\label{KalglemBourb}
If $A$ is a reduced order, then
$
\bigcap_{\psi \in \Rhom(A,\C)}\ker(\psi) = 0.
$
\end{lem}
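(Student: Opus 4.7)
The plan is to reduce to the $\Q$-algebra $A_\Q = A\otimes_\Z\Q$ and apply the structure theorem for reduced finite-dimensional commutative $\Q$-algebras.

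First I would observe that the natural map $A \hookrightarrow A_\Q$ is injective because $A\cong\Z^n$ as an abelian group, in particular torsion-free. Since $\C$ is a $\Q$-algebra, every ring homomorphism $\psi : A \to \C$ extends uniquely to a $\Q$-algebra homomorphism $\psi_\Q : A_\Q \to \C$, and conversely every ring homomorphism $A_\Q\to\C$ restricts to one $A \to \C$. Hence it suffices to show that $\bigcap_{\psi\in\Rhom(A_\Q,\C)}\ker(\psi)=0$ in $A_\Q$.

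Next, since $A$ is reduced, so is $A_\Q$ (a localization of a reduced ring is reduced, as nilpotents in $A_\Q$ pull back to torsion nilpotents in a finite free $\Z$-module). Being also finite-dimensional over $\Q$, $A_\Q$ is Artinian, hence by the structure theorem for Artinian reduced commutative rings we have a product decomposition
\[
A_\Q \;\cong\; \prod_{i=1}^r K_i
\]
where each $K_i$ is a number field. Under this decomposition, the ring homomorphisms $A_\Q \to \C$ are exactly the compositions $A_\Q \twoheadrightarrow K_i \hookrightarrow \C$ for some $i\in\{1,\ldots,r\}$ and some field embedding $K_i\hookrightarrow\C$ (each $K_i$ has at least one such embedding since $\C$ is algebraically closed of characteristic $0$).

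Finally, if $a\in A_\Q$ corresponds to $(a_1,\ldots,a_r)$ and lies in $\ker(\psi)$ for every $\psi$, then for each $i$ and every embedding $\sigma: K_i\hookrightarrow \C$ we get $\sigma(a_i)=0$, which forces $a_i=0$ by injectivity of $\sigma$. Hence $a=0$, completing the proof. There is no genuine obstacle; the main point to verify carefully is simply the reducedness of $A_\Q$ and the bijection between ring homomorphisms from $A$ and from $A_\Q$ to $\C$.
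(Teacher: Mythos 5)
Your proof is correct and follows essentially the same route as the paper: both pass to a base change of $A$ over a field, use reducedness to get a product decomposition into fields, and observe that the ring homomorphisms to $\C$ then separate points. The only cosmetic difference is that the paper tensors directly with $\C$ to get $A_\C\cong\C^n$ in one step, whereas you decompose $A_\Q$ into number fields first and then embed each into $\C$.
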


\begin{proof}
Let $n=\rk_\Z(A)$. Since $A$ is reduced, we have
$A \subset A_\C \cong \C^n$,
so 
$\bigcap_{\psi \in \Rhom(A,\C)}\ker(\psi) \subset 
\bigcap_{\psi \in \Rhom_\C(\C^n,\C)}\ker(\psi) = 0.
$
\end{proof}

\begin{defn}
\label{innerproddef}
If $A$ is a CM-order, and $a,b\in A$, define
$(a,b) = \Tr_{A/\Z}(a\bar{b})$.
\end{defn}

\begin{lem}
\label{ordlatlem}
If $A$ is a CM-order, then $A$ is an integral lattice with respect to the inner product
$(  \,\, , \,\, )$.
\end{lem}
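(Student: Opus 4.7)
The plan is to verify in turn the four defining properties of an integral lattice for the pair $(A,(\,\,,\,\,))$: finite generation as a free $\Z$-module, $\Z$-bilinearity, symmetry, integrality, and positive definiteness. Only the last two use the CM structure in any essential way.

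First, $A$ is a free abelian group of finite rank by the definition of an order. The map $(a,b)\mapsto\Tr_{A/\Z}(a\bar b)$ is $\Z$-bilinear because $\Tr_{A/\Z}$ is $\Z$-linear and $\bar{\,\,}$ is a $\Z$-algebra automorphism; it takes values in $\Z$ because $a\bar b\in A$ and the trace of any element of $A$ is in $\Z$.

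Next I would prove symmetry. Since $\bar{\,\,}$ is an involution (as recorded after Definition \ref{CMorddef}), we have $\overline{a\bar b}=\bar a\,\bar{\bar b}=\bar a b=b\bar a$. It therefore suffices to show $\Tr(x)=\Tr(\bar x)$ for $x\in A$. Using the formula $\Tr(x)=\sum_{\psi\in\Rhom(A,\C)}\psi(x)$ (valid because $A$ is reduced), and the defining property $\psi(\bar x)=\overline{\psi(x)}$, we get
\[
\Tr(\bar x)=\sum_{\psi}\psi(\bar x)=\sum_\psi \overline{\psi(x)}=\overline{\Tr(x)},
\]
and since $\Tr(x)\in\Z\subset\R$ this equals $\Tr(x)$. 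Applying this to $x=a\bar b$ gives $(a,b)=(b,a)$.

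Finally, for positive definiteness, I would compute, for any $a\in A$,
\[
(a,a)=\Tr(a\bar a)=\sum_{\psi\in\Rhom(A,\C)}\psi(a)\psi(\bar a)=\sum_{\psi}\psi(a)\overline{\psi(a)}=\sum_\psi|\psi(a)|^2\ge 0.
\]
Equality forces $\psi(a)=0$ for every $\psi\in\Rhom(A,\C)$, and then Lemma \ref{KalglemBourb} gives $a=0$. Thus the form is positive definite, and $A$ together with $(\,\,,\,\,)$ is an integral lattice.

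There is no real obstacle here; the only subtle point is the interplay between the involution and the embeddings into $\C$, but that is precisely the content of the CM-order axiom, so each step is a direct consequence of the definitions together with Lemma \ref{KalglemBourb}.
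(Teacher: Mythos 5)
Your proof is correct and follows essentially the same route as the paper: the key step in both is positive definiteness via $(a,a)=\sum_{\psi\in\Rhom(A,\C)}|\psi(a)|^2$ together with Lemma \ref{KalglemBourb}. The only difference is that you spell out the symmetry verification (via the involution and $\Tr(\bar x)=\overline{\Tr(x)}$), which the paper dismisses as clear; your version is fine since the involution property rests only on Lemma \ref{KalglemBourb} and introduces no circularity.
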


\begin{proof}
The map 
$(a,b) \mapsto \Tr_{A/\Z}(a\bar{b})$
is clearly $\Z$-valued, $\Z$-bilinear, and symmetric.
If $a\in A$, then 
$\psi(a\bar{a}) = \psi(a){\overline{\psi(a)}} \in \R_{\ge 0}$
for all $\psi\in\Rhom(A,\C)$,
so 
$$(a,a) = \Tr_{A/\Z}(a\bar{a}) = \sum_{\psi\in\Rhom(A,\C)} \psi(a\bar{a}) \in \R_{\ge 0}.$$
Suppose $a\neq 0$.
Since $\bigcap_{\psi\in\Rhom(A,\C)} \ker \psi =0$ by Lemma \ref{KalglemBourb},
there exists $\psi\in\Rhom(A,\C)$ such that $\psi(a) \neq 0$.
Thus $\psi(\bar{a}) =\overline{\psi(a)} \neq 0$, so
$\psi(a\bar{a}) = \psi(a)\psi(\bar{a}) \neq 0$, so $(a,a) > 0$.
\end{proof}

\begin{lem}
\label{invollem}
Suppose $A$ is a CM-order. Then:
\begin{enumerate}
\item 
$a \mapsto \bar{a}$ is an involution on $A$ (i.e., $\bar{\bar{a}} = a$
for all $a\in A$);
\item
$A$ has exactly one involution satisfying Definition \ref{CMorddef}(ii);
\item 
the involution $\bar{\,\,}$ extends $\R$-linearly to $A_\R$,  
and is the unique
involution on $A_\R$ 
such that $\psi(\bar{a})=\overline{\psi(a)}$ for all $a\in A_\R$ and
all $\psi \in \Rhom_\R(A_\R,\C)$; 
\item 
$\Tr_{A_\R/\R}(a\bar{a}) > 0$ for all non-zero $a\in A_\R$.
\end{enumerate}
\end{lem}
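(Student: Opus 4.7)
The plan is to reduce everything to the $\Q$-algebra $A_\Q$ and invoke Proposition \ref{involCMprop}. First I would extend $\bar{\,\,}$ by $\Q$-linearity to an automorphism $\rho$ of $A_\Q$. Since $A$ is reduced, so is $A_\Q$ (because $A_\Q \hookrightarrow A_\C \cong \C^n$ by Lemma \ref{KalglemBourb}, after tensoring). Restriction gives a bijection $\Rhom(A_\Q,\C) \isom \Rhom(A,\C)$, so the conjugation identity $\psi(\rho(x)) = \overline{\psi(x)}$ of Definition \ref{CMorddef}(ii) persists for all $x \in A_\Q$ and all $\psi \in \Rhom(A_\Q,\C)$.

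Next I would verify that $\Tr_{A_\Q/\Q}(x\rho(x)) > 0$ for every nonzero $x \in A_\Q$. This follows from Lemma \ref{ordlatlem} by clearing denominators: writing $x = a/n$ with $a \in A \smallsetminus \{0\}$ and $n \in \Z_{>0}$, one has $\Tr_{A_\Q/\Q}(x\rho(x)) = n^{-2}\Tr_{A/\Z}(a\bar{a}) = n^{-2}(a,a) > 0$. With this positivity in hand, Proposition \ref{involCMprop} applied to $(A_\Q,\rho)$ yields $\rho \circ \rho = \id_{A_\Q}$ and $\Tr_{A_\R/\R}(x\rho(x)) > 0$ for all nonzero $x \in A_\R$. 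Restricting the former to $A$ gives (i), while the latter is exactly (iv).

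For (iii) I would let $\bar{\,\,}$ also denote the $\R$-linear extension of $\rho$ to $A_\R$. The proof of Proposition \ref{involCMprop} already exhibits, for the decomposition $A_\R \cong \R^r \times \C^s$, that $\bar{\,\,}$ preserves the factors and acts as the identity on each $\R$ factor and as complex conjugation on each $\C$ factor; from this description the identity $\psi(\bar{a}) = \overline{\psi(a)}$ is immediate for every $\psi \in \Rhom_\R(A_\R,\C)$. For uniqueness, I would use that the $r + 2s = \dim_\R A_\R$ elements of $\Rhom_\R(A_\R,\C)$ together embed $A_\R$ into $\C^{r+2s}$ and hence separate points, so any $\R$-linear involution on $A_\R$ satisfying the conjugation identity necessarily equals $\bar{\,\,}$.

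Finally, part (ii) follows by restriction: any involution $\sigma$ of $A$ satisfying Definition \ref{CMorddef}(ii) extends by $\R$-linearity to an involution of $A_\R$ satisfying the hypothesis of (iii), hence coincides with $\bar{\,\,}$ on $A_\R$ and therefore on $A$. The only conceptual work is in transferring positivity from $A$ to $A_\Q$ so as to be able to feed Proposition \ref{involCMprop}; the rest is bookkeeping. The mildly delicate point is the separating property of $\Rhom_\R(A_\R,\C)$, but this is already built into the componentwise picture of $A_\R$ extracted in the proof of that proposition.
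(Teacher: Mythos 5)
Your proof is correct, but for parts (i), (ii), and (iv) it takes a genuinely different route from the paper's. The paper disposes of (i) and (ii) in two lines: for every $\psi\in\Rhom(A,\C)$ one has $\psi(\bar{\bar a})=\overline{\psi(\bar a)}=\psi(a)$, and Lemma \ref{KalglemBourb} (the homomorphisms to $\C$ separate points of a reduced order) forces $\bar{\bar a}=a$; uniqueness is the same one-liner applied to two candidate involutions. It then proves (iv) by the direct computation $\Tr_{A_\R/\R}(a\bar a)=\sum_{i\le r}\psi_i(a)^2+2\sum_{j}\psi_j(a)\overline{\psi_j(a)}>0$ in $A_\R\cong\R^r\times\C^s$. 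You instead route everything through Proposition \ref{involCMprop}: you transfer the positivity $(a,a)>0$ of Lemma \ref{ordlatlem} from $A$ to $A_\Q$ by clearing denominators, and then that proposition delivers the involutive property and the positivity over $\R$ in one stroke. This is legitimate and non-circular, since Lemma \ref{ordlatlem} and Proposition \ref{involCMprop} are established beforehand without appeal to the present lemma. What your route buys is uniformity (one proposition yields (i) and (iv) simultaneously); what it costs is invoking the idempotent analysis behind Proposition \ref{involCMprop} where a one-line separating-points argument already settles (i) and (ii). Your argument for (iii) — the componentwise description of $\bar{\,\,}$ on $\R^r\times\C^s$ plus the fact that $\Rhom_\R(A_\R,\C)$ embeds $A_\R$ into $\C^{r+2s}$ — is in substance the same as the paper's.
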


\begin{proof}
For all 
$\psi\in\Rhom(A,\C)$ and  all $a\in A$
we have 
$\psi(a)=\overline{\psi(\bar{a})}=\psi(\bar{\bar{a}})$, 
so $a=\bar{\bar{a}}$ by Lemma \ref{KalglemBourb}.

Suppose $\rho_1$ and $\rho_2$ are two involutions satisfying Definition \ref{CMorddef}(ii).
Then for all $a\in A$ and all $\psi\in\Rhom(A,\C)$ we have
$\psi(\rho_1(a)) = \overline{\psi(a)} = \psi(\rho_2(a))$.
Thus 
$\rho_1=\rho_2$ by Lemma \ref{KalglemBourb}, giving (ii).

The map  $\bar{\,\,}$ extends $\R$-linearly to $A_\R$, and the
proofs of (i) and (ii) extend to $A_\R$ to give (iii).

We have $A_\R \cong \R^r \times \C^s$ for some $r,s\in\Z_{\ge 0}$,
and $\Rhom_\R(A_\R,\C)=\{\psi_j\}_{j=1}^{r+2s}$ 
with
$\psi_j: A_\R \to \R$ for $1 \le j \le r$ and
$\psi_{s+j}=\overline{\psi_{j}}$ for $r+1\le j\le r+s$.
For (iv), suppose $0 \neq a\in A_\R$.
Then 
$$
\Tr_{A_\R/\R}(a\bar{a}) = \sum_{\psi\in\Rhom_\R(A_\R,\C)} \psi(a\bar{a})
 = \sum_{i=1}^r \psi_i(a)^2 + 2\sum_{i=r+1}^{r+s} \psi_i(a)\overline{\psi_i(a)} > 0.
$$
\end{proof}

\begin{rem}
\label{CMordiffresult}
If $A$ is an order, then $A$ is a CM-order if and only if $A_\Q$ is
a CM-algebra and $A = \bar{A}$.
\end{rem}

\begin{defn}
\label{Ahatetcdef}
If $A$ is a CM-order, define 
$$
\hat{A} = \{ a\in A_\Q : \Tr_{A_\Q/\Q}(aA) \subset \Z \} \subset A_\Q,
$$
$$
A_\R^+ = \{ a\in A_\R : a=\bar{a} \} 
= \{ a\in A_\R : \forall \psi\in\Rhom(A,\C), \psi(a)\in\R \}, 
$$
\begin{align*}
(A_\R^+)_{>0} &= \{ a\in A_\R : \forall \psi\in\Rhom_\R(A_\R,\C), \psi(a)\in\R_{\ge 0} \text{ and } 
\exists \psi : \psi(a)> 0 \} \\
&= \{ a\in A_\R : \forall \psi\in\Rhom_\R(A_\R,\C), \psi(a)\in\R_{\ge 0} \} - \{ 0 \}, \\
(A_\R^+)_{\gg 0} &= \{ a\in {A_\R} : \forall \psi\in\Rhom_\R(A_\R,\C), \psi(a)\in\R_{> 0} \},
\end{align*}
and for $B \subset A_\R$ define 
$$
B^+ = B \cap A_{\R}^+, \quad 
B^+_{>0} = B \cap  (A_\R^+)_{>0}, \quad 
B^+_{\gg 0} = B \cap (A_\R^+)_{\gg 0}.
$$
\end{defn}

We will apply 
Definition \ref{Ahatetcdef}
with $B = A$ and with $B = \hat{A}$.

The set $A^+_{>0}$ is not necessarily closed under multiplication (since $A$ is not
necessarily a domain).

\begin{exs}
\label{CMordsexs}
\begin{enumerate}[leftmargin=*]
\item
If $F$ is a CM-field,
then the ring of integers of $F$
is a CM-order, with complex conjugation serving as $\bar{\,\,}$.
\item
If $B$ is a subring of a CM-order, then the subring generated
by $B$ and $\bar{B}$ is a CM-order.
\item
If $A_1$ and $A_2$ are CM-orders, then so are
$A_1 \times A_2$ and $A_1\otimes_\Z A_2$.
\item
Suppose $G$ is a finite abelian group of order $n$.
If $A = \Z[G]$ then $\hat{A} = \frac{1}{n}\Z[G]$.
\end{enumerate}
\end{exs}

\begin{ex}
A suborder of a CM-order is not necessarily a CM-order,
since the automorphism  $\bar{\,\,}$ might not preserve the suborder.
For example,
suppose $A$ is a CM-order, and $\m$ is a maximal ideal of $A$
such that $\m\neq\bar{\m}$ and $A/\m$ is not a prime field.
Then $A/\m$ contains a prime field $F$, and the inverse image of 
$F$ under the natural map $A \to A/\m$ is a proper subring $R$ of
$A$ such that $R\neq \bar{R}$, so $R$ is not a CM-order.
\end{ex}

\begin{ex}
Suppose that $q$ is a prime power and $\pi$ is a $q$-Weil number, i.e.,
$\pi$ is an algebraic integer in $\C$ such that 
$|\sigma(\pi)| =\sqrt{q}$ for all $\sigma\in\Aut(\C)$.
Then $\Z[\pi,\bar{\pi}]$ is a CM-order, but if $[\Q(\pi):\Q] > 2$
then its suborder $\Z[\pi]$ is not a CM-order.
To see the latter, consider the irreducible polynomial $\sum_{i=0}^n a_iX^i \in\Z[X]$
that $\pi$ satisfies with $a_n = 1$. Then 
$
\pi\sum_{i=0}^{n-1} a_{i+1}\pi^i = -a_0 = \pm q^{n/2} = \pm q^{n/2-1} \pi\bar{\pi}.
$
Thus, $\bar{\pi} = \pm q^{1-n/2}(\sum_{i=0}^{n-1} a_{i+1}\pi^i)$.
The coefficient of $\bar{\pi}$ at $\pi^{n-1}$ is $\pm q^{1-n/2} \not\in \Z$,
so $\bar{\pi} \not\in \Z[\pi]$.
The order $\Z[\pi]$ passes steps (i)--(iv) of Algorithm \ref{CMorddetalg} below, but not step (v).
\end{ex}

\begin{prop}
\label{1equiv}
Suppose $A$ is a CM-order
and $a\in {A}^+_{\gg 0}$. 
Then the following are equivalent:
\begin{enumerate}
\item
$a=1$,
\item
$\Tr(a) = \rank_\Z(A)$, 
\item
$\Tr(a) \le \rank_\Z(A)$. 
\end{enumerate}
\end{prop}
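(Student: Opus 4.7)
The plan is to set $n = \rk_\Z(A)$ and run through the implications (i) $\Rightarrow$ (ii) $\Rightarrow$ (iii) $\Rightarrow$ (i). The first two are immediate: if $a = 1$ then $\psi(a) = 1$ for each of the $n$ homomorphisms $\psi \in \Rhom(A,\C)$, so $\Tr(a) = \sum_\psi \psi(a) = n$; and (ii) $\Rightarrow$ (iii) is trivial. The substantive content is the implication (iii) $\Rightarrow$ (i), which I would handle with the arithmetic–geometric mean inequality applied to the values $\psi(a)$.

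More precisely, because $a \in A^+_{\gg 0} = A \cap (A_\R^+)_{\gg 0}$, one has $a \in A$ and $\psi(a) \in \R_{>0}$ for every $\psi \in \Rhom(A,\C)$, since these are the same as the $\R$-algebra homomorphisms $A_\R \to \C$. Applying AM-GM to the $n$ positive real numbers $\psi(a)$ gives
\[
\Tr(a) \;=\; \sum_{\psi \in \Rhom(A,\C)} \psi(a) \;\geq\; n \cdot \Bigl(\prod_{\psi} \psi(a)\Bigr)^{1/n} \;=\; n \cdot \N_{A/\Z}(a)^{1/n}.
\]
Now $a \in A$ forces $\N_{A/\Z}(a) \in \Z$, and the positivity of every $\psi(a)$ forces $\N_{A/\Z}(a) > 0$, hence $\N_{A/\Z}(a) \geq 1$. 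Therefore $\Tr(a) \geq n$, and the assumption (iii) that $\Tr(a) \leq n$ collapses this to equality throughout.

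Equality in AM-GM forces all $\psi(a)$ to be equal to one another, and $\N_{A/\Z}(a) = 1$ then forces their common value to be $1$. Thus $\psi(a - 1) = 0$ for every $\psi \in \Rhom(A,\C)$. Since $A$ is reduced, Lemma \ref{KalglemBourb} gives $\bigcap_\psi \ker(\psi) = 0$, so $a = 1$, which is (i).

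I do not foresee a real obstacle here; the only small point to watch is to justify that $\N_{A/\Z}(a)$ is a positive integer (it is the determinant of multiplication by $a$ on the free $\Z$-module $A$, which equals $\prod_\psi \psi(a)$ because $A$ is reduced of $\Z$-rank $n$), and to use that for a CM-order the homomorphisms $\Rhom(A,\C)$ and $\Rhom_\R(A_\R,\C)$ correspond under restriction, so that the condition $a \in (A_\R^+)_{\gg 0}$ indeed yields $\psi(a) > 0$ for each of the $n$ embeddings entering the AM-GM inequality.
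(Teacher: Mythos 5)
Your proposal is correct and follows essentially the same route as the paper: the only nontrivial implication is (iii) $\Rightarrow$ (i), handled by applying the arithmetic--geometric mean inequality to the $n$ positive values $\psi(a)$ and using that $\prod_\psi \psi(a)$ is a positive integer, so equality is forced everywhere and all $\psi(a)=1$. Your explicit appeal to Lemma \ref{KalglemBourb} to conclude $a=1$ from $\psi(a)=1$ for all $\psi$ is a point the paper leaves implicit, but the arguments are the same.
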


\begin{proof}
The implications (i) $\Rightarrow$ (ii) $\Rightarrow$ (iii) 
are clear.
Since $a\in {A}^+_{\gg 0}$,
we have $\sigma(a) \in\R_{> 0}$ for all $\sigma\in\Rhom(A_\Q,\C)$,
and $\prod_\sigma \sigma(a) \in\Z_{>0}$.
Assuming (iii),
then applying the arithmetic-geometric mean inequality we have
\begin{multline*}
\rank_\Z(A) 
\ge \Tr(a)
= \sum_{\sigma\in\Rhom(A_\Q,\C)} \sigma(a)
= \rank_\Z(A) \cdot\frac{\sum_{\sigma} \sigma(a)}{\#\Rhom(A_\Q,\C)} \\
\ge \rank_\Z(A) \cdot[\prod_{\sigma\in\Rhom(A_\Q,\C)} \sigma(a)]^{1/\#\Rhom(A_\Q,\C)}
\ge \rank_\Z(A).
\end{multline*}
Thus we have equality everywhere, and all $\sigma(a)=1$, so $a=1$, and
(iii) $\Rightarrow$ (i).
\end{proof}

The following algorithm is patterned after the algorithm described in 
Remark~\ref{CMalgalgorrem}.

\begin{algorithm}
\label{CMorddetalg}
Given an order $A$, the algorithm decides whether $A$ is a CM-order, 
and if so computes the automorphism $\bar{\,\,}$.

Steps:
\begin{enumerate}
\item
Compute 
the discriminant $\Delta_{A/\Z}$ of $A$.
If it is $0$,
terminate with ``no''.
\item
Use Algorithm 7.2 of \cite{Qalgs} to find all $\m\in\Spec(A_\Q)$
and to find a $\Q$-basis for each field $A_\Q/\m$. 
\item
For each $\m\in\Spec(A_\Q)$, 
apply Algorithm \ref{CMtestalgor} to determine whether the field
$A_\Q/\m$ is a CM-field. If one is not, terminate with ``no'',
and if all are, use Algorithm \ref{CMtestalgor} to compute $\bar{\,\,}$ 
on each $A_\Q/\m$ and thus on $A_\Q \isom \prod_\m A_\Q/\m$.
\item
Express the given $\Z$-basis for $A$ with respect to the $\Q$-basis
for $A_\Q$ obtained in Step (ii).
\item
Compute the matrix for $\bar{\,\,}$ with respect to the $\Z$-basis for $A$.
If all entries are integers, then output ``yes'' and this matrix, and otherwise
terminate with ``no''.
\end{enumerate}
\end{algorithm}

\begin{prop}
\label{CMorddetalgpf}
Algorithm \ref{CMorddetalg} is correct and runs in  polynomial time.
\end{prop}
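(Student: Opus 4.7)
My plan is to prove correctness by matching each step of Algorithm \ref{CMorddetalg} to one of the conditions implicit in Remark \ref{CMordiffresult}, namely that an order $A$ is a CM-order iff $A$ is reduced, $A_\Q$ is a CM-algebra, and the involution $\bar{\,\,}$ on $A_\Q$ preserves $A$. Step (i) tests reducedness: for an order $A$ the bilinear form $(a,b)\mapsto\Tr_{A/\Z}(ab)$ is degenerate iff $A$ contains nonzero nilpotents (a nilpotent $x$ satisfies $\Tr(xa)=0$ for every $a$ because $xa$ is nilpotent in $A_\Q$, and conversely the trace form on a product of number fields is non-degenerate), so $\Delta_{A/\Z}\neq 0$ iff $A$ is reduced. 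Hence terminating with ``no'' when $\Delta_{A/\Z}=0$ is justified, and if we proceed then $A_\Q$ is a reduced finite-dimensional commutative $\Q$-algebra, hence a product of number fields, making the subsequent steps meaningful.

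For Steps (ii) and (iii) I would invoke Remark \ref{CMalgrems}: a reduced finite-dimensional commutative $\Q$-algebra is a CM-algebra iff each residue field $A_\Q/\m$ is a CM-field. Algorithm 7.2 of \cite{Qalgs} produces the decomposition $A_\Q\cong\prod_\m A_\Q/\m$ together with explicit $\Q$-bases, and Algorithm \ref{CMtestalgor}, whose correctness was established in Proposition \ref{CMtestpf}, decides the CM-field property of each factor and outputs the involution $\bar{\,\,}$ there when it exists. By Lemma \ref{invollem}(ii)--(iii) the involution on $A_\Q$ compatible with Definition \ref{CMorddef}(ii) is unique, so the piecewise-defined involution on $\prod_\m A_\Q/\m$ is the only candidate. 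Steps (iv) and (v) then test whether $\bar{\,\,}$ preserves $A$: after rewriting the given $\Z$-basis of $A$ in terms of the $\Q$-basis of $A_\Q$ obtained in Step (ii), the matrix of $\bar{\,\,}$ on the $\Z$-basis has integer entries iff $\bar{A}\subseteq A$, which forces $\bar{A}=A$ because $\bar{\,\,}$ is an involution. Together with Steps (i) and (iii) this matches the criterion of Remark \ref{CMordiffresult}, so the algorithm outputs ``yes'' with the correct involution exactly when $A$ is a CM-order.

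For polynomial runtime, each subroutine is polynomial-time: Step (i) is a determinant of $(\Tr(\alpha_i\alpha_j))$, which is computable from the structure constants in polynomial time; Step (ii) calls Algorithm 7.2 of \cite{Qalgs}; Step (iii) calls Algorithm \ref{CMtestalgor} on each of the at most $\rk_\Z(A)$ residue fields $A_\Q/\m$, running in polynomial time by Proposition \ref{CMtestpf}; and Steps (iv) and (v) reduce to linear algebra over $\Q$ of polynomial size. The only point meriting attention is that the bit-sizes of the intermediate objects---the $\Q$-bases of the individual CM-field factors, the change-of-basis matrix between these and the original $\Z$-basis of $A$, and the matrix of $\bar{\,\,}$---remain polynomially bounded in the input size; this is already ensured by the polynomial-time guarantees of \cite{Qalgs} and of Algorithm \ref{CMtestalgor}, and I do not anticipate any further obstacle.
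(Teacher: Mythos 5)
Your proof is correct and follows essentially the same route as the paper's (which simply cites Remarks \ref{CMalgrems}, \ref{CMalgalgorrem}, and \ref{CMordiffresult} together with the equivalence of $\Delta_{A/\Z}\neq 0$ with separability, here the same as reducedness in characteristic zero); you have merely unpacked the content of those remarks step by step. The only cosmetic quibble is that Lemma \ref{invollem}(ii)--(iii) is stated for CM-orders rather than for the algebra $A_\Q$, but the uniqueness of the candidate involution on $A_\Q$ follows from the same argument (or from Lemma \ref{KalglemBourb}), so nothing is missing.
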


\begin{proof}
The algorithm is correct by Remarks \ref{CMalgrems}, \ref{CMalgalgorrem},
and \ref{CMordiffresult} 
(since $\Delta_{A/\Z} \neq 0$ if and only if every  element of $A$
is separable),
and runs in polynomial time since each step does. 
\end{proof}

\section{$A$-lattices}
\label{Alatticesect}

Throughout this section $A$ is a CM-order, except for Lemma \ref{Linvremarks}.
Suppose that $L$ is an $A$-module.
Then there is an $A$-module $\overline{L}$ with a group isomorphism
$\bar{\,\,} : L \to \overline{L}$  
that is semi-linear,
i.e., $\overline{rx} = \bar{r}\cdot\bar{x}$ for all $r\in A$
and $x\in L$.
The module $\overline{L}$ is easy to construct. If $L=A$, one can take
$\overline{L}=A$, and take $\bar{\,}$ on $L$ to be the
same as $\bar{\,}$ on $A$.

Recall that we define an $A$-lattice $L$ to be a lattice that is
given an $A$-module structure with the property that for all
$a\in A$ and $x,y\in L$ one has
$\langle ax,y \rangle = \langle x,\bar{a}y \rangle$.
Recall the definition of $\hat{A}$ in Definition \ref{Ahatetcdef}.

\begin{prop}
\label{latticeequivdefs1}
Suppose 
$L$ is an $A$-lattice.
Then:
\begin{enumerate}
\item
if $x,y\in L$, then there exists a unique $z_{x,y}\in \hat{A}$
such that 
$$
\Tr_{A_\Q/\Q}(az_{x,y}) = \langle ax,y \rangle
$$
for all $a\in A$; 
\item
 there is a unique $A$-linear homomorphism
$\varphi = \varphi_L : L\otimes_A \bar{L} \to \hat{A}$ such that
$$
\Tr_{A_\Q/\Q}(\varphi(x\otimes\bar{y})) = \langle x,y \rangle  
$$
for all $x,y\in L$;
for this map $\varphi$ we have
\begin{enumerate}
\item
$\varphi(x\otimes \bar{y}) = z_{x,y}$ for all $x,y\in L$,
\item
$\varphi(x\otimes \bar{y}) = \overline{\varphi(y\otimes \bar{x})}$ for all $x,y\in L$,   
\item
$\varphi(x\otimes \bar{x})\in \hat{A}^+_{>0}$ for all $0\neq x\in L$. 
\end{enumerate}
\end{enumerate}
\end{prop}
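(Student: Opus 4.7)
The plan is to build $\varphi$ directly from elements $z_{x,y}$, whose existence in (i) rests on the non-degeneracy of the trace pairing on the CM-algebra $A_\Q$.

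For (i), I would fix $x,y \in L$ and view $a \mapsto \langle ax, y\rangle$ as a $\Z$-linear map $A \to \Z$, extending $\Q$-linearly to a functional on $A_\Q$. By Remark \ref{CMalgrems}, $A_\Q$ is a product of CM-fields and hence \'etale over $\Q$, so the trace pairing $(a,b) \mapsto \Tr_{A_\Q/\Q}(ab)$ on $A_\Q$ is non-degenerate; this yields a unique $z_{x,y} \in A_\Q$ with $\Tr(az_{x,y}) = \langle ax, y\rangle$ for all $a \in A_\Q$. The integrality $\langle ax, y\rangle \in \Z$ for $a \in A$ then places $z_{x,y}$ in $\hat{A}$ by the definition of the latter.

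For (ii) and (a), I would define the map $L \times \bar{L} \to \hat{A}$, $(x, \bar{y}) \mapsto z_{x,y}$, and verify that it descends to $L \otimes_A \bar{L}$. The identities $z_{rx,y} = r z_{x,y} = z_{x,\bar{r}y}$ for $r \in A$---which give $A$-balancedness---both follow by evaluating $\Tr(a\,\cdot\,)$ on each side, using commutativity of $A$ together with the adjoint relation $\langle ax, by\rangle = \langle \bar{b}ax, y\rangle$. The resulting $\varphi$ is then $A$-linear by the same identities, and uniqueness of $\varphi$ follows by feeding the difference of two candidate maps into the uniqueness clause of (i). Property (a) is built in by construction. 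For (b), I would compute $\Tr(a\,\overline{z_{y,x}}) = \Tr(\bar{a}\,z_{y,x}) = \langle \bar{a}y, x\rangle = \langle y, ax\rangle = \langle ax, y\rangle$, using $\Tr(\bar{c}) = \Tr(c)$ on $A_\Q$ (valid because conjugation permutes $\Rhom(A,\C)$), the adjoint relation, and symmetry of $\langle\cdot,\cdot\rangle$; this matches the defining identity of $z_{x,y}$, so uniqueness forces $\overline{z_{y,x}} = z_{x,y}$.

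The hard part is (c). Self-conjugacy $\overline{z_{x,x}} = z_{x,x}$ from (b) places $z_{x,x}$ in $A_\R^+$, while non-vanishing for $x \ne 0$ follows from $\Tr(z_{x,x}) = \langle x, x\rangle > 0$. For the positivity of every value $\psi(z_{x,x})$, I would pass to the decomposition $A_\R \cong \R^r \times \C^s$ and use the analysis in the proof of Lemma \ref{invollem}(iii) to note that each primitive idempotent $e_i$ is fixed by $\bar{\,}$. The balanced identities of (ii) then give $e_i z_{x,x} = z_{e_i x, e_i x}$, whose trace over $\R$ equals $\langle e_i x, e_i x\rangle \ge 0$. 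Because $e_i z_{x,x}$ is supported on the $i$-th component and is real there (by self-conjugacy, even when that component is $\C$), this trace equals a strictly positive multiple of the value $\psi_i(z_{x,x})$ at the embedding attached to that component, so $\psi_i(z_{x,x}) \ge 0$ for every $\psi_i \in \Rhom_\R(A_\R, \C)$. Combined with $z_{x,x} \ne 0$, this places $z_{x,x}$ in $\hat{A}^+_{>0}$.
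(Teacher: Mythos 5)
Your proof is correct and follows essentially the same route as the paper: part (i) via non-degeneracy of the trace form (the paper phrases this as the isomorphism $\hat{A}\cong\Hom_\Z(A,\Z)$), part (ii) by checking $A$-balancedness of $(x,\bar{y})\mapsto z_{x,y}$ and invoking the uniqueness in (i), and (b) by the same trace computation. The only substantive difference is the last step of (c): the paper establishes $\Tr_{A_\R/\R}(a\bar{a}\,\varphi(x\otimes\bar{x}))\ge 0$ for all $a\in A_\R$ and then cites Lemma 7.3(vii) of \cite{LwS} to conclude $\psi(\varphi(x\otimes\bar{x}))\ge 0$ for every $\psi$, whereas you argue directly: the primitive idempotents $e_i$ of $A_\R\cong\R^r\times\C^s$ are fixed by $\bar{\,\,}$, the balanced identities give $e_i z_{x,x}=z_{e_ix,e_ix}$ (after extending everything $\R$-linearly, which you should state explicitly), and $\Tr_{A_\R/\R}(e_iz_{x,x})=\langle e_ix,e_ix\rangle\ge 0$ is $\psi_i(z_{x,x})$ or $2\psi_i(z_{x,x})$ according as the component is $\R$ or $\C$, the latter using self-conjugacy to know $\psi_i(z_{x,x})$ is real. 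This makes the argument self-contained at the cost of a few lines; it is in effect an unpacking of the cited lemma, so I would count it as the same proof with one external reference inlined.
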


\begin{proof}
Since
$
g : \hat{A} \isom \Hom_\Z(A,\Z)$, 
$b \mapsto (a \mapsto \Tr_{A_\Q/\Q}(ab))
$
is an isomorphism, for every 
$x,y\in L$ there exists a unique $z_{x,y}\in \hat{A}$
such that $g(z_{x,y})$ is the map $a \mapsto \langle ax,y \rangle$.
This proves (i).

It is straightforward to check that
the map $L \times \bar{L} \to \hat{A},
(x,\bar{y}) \mapsto z_{x,y}$ is $A$-bilinear.
Thus there exists a unique $A$-linear map
$\varphi : L \otimes_A \bar{L} \to \hat{A}, x\otimes\bar{y} \mapsto z_{x,y}$,
and by (i) we have
$\Tr_{A_\Q/\Q}(a\varphi(x\otimes\bar{y})) = \langle ax,y \rangle$
for all $x,y\in L$ and $a\in A$.

If a map $\varphi : L\otimes_A \bar{L} \to \hat{A}$ is $A$-linear and satisfies
$
\Tr_{A_\Q/\Q}(\varphi(x\otimes\bar{y})) = \langle x,y \rangle  
$
for all $x,y\in L$, then 
$\Tr_{A_\Q/\Q}(a\varphi(x\otimes\bar{y})) = \langle ax,y \rangle$
for all $x,y\in L$ and $a\in A$, so $\varphi(x\otimes\bar{y}) = z_{x,y}$ by (i),
giving the uniqueness in (ii).

Since for all $a\in A$ we have
$$
\Tr(az_{x,y}) = \langle ax,y \rangle = \langle x,\bar{a}y \rangle  = 
\langle \bar{a}y,x \rangle = \Tr(\bar{a}z_{y,x}) = \Tr(a\overline{z_{y,x}})
$$
it follows that $z_{x,y} = \overline{z_{y,x}}$ and thus
$\varphi(x\otimes \bar{y}) = \overline{\varphi(y\otimes \bar{x})}$
for all $x,y\in L$.

Substituting $x$ for $y$, it follows that 
$\varphi(x\otimes \bar{x}) \in \hat{A}^+$.
If $x\neq 0$ then 
$\langle x,x \rangle \neq 0$, so
$\Tr(\varphi(x\otimes \bar{x})) \neq 0$, so
$\varphi(x\otimes \bar{x}) \neq 0$.
Extending $\varphi$ $\R$-linearly, we have
$$
\Tr_{A_\R/\R}(a\bar{a}\varphi(x\otimes \bar{x})) = 
\langle a\bar{a}x,x \rangle = 
\langle \bar{a}x,\bar{a}x \rangle \ge 0
$$
for all $x\in L_\R$ and $a\in A_\R$.
The proof of Lemma 7.3(vii) of \cite{LwS} with $A_\R$ in the
role of $\R\langle G \rangle$ and $z=\varphi(x\otimes \bar{x})$ now
gives that
$\psi(\varphi(x\otimes \bar{x})) \ge 0$ for all
$\psi\in\Rhom_\R(A_\R,\C)$ and all $x\in L_\R$.
It follows now that 
$\varphi(x\otimes \bar{x})\in \hat{A}^+_{>0}$ for all $0\neq x\in L$,
and we have (ii).
\end{proof}

\begin{prop}
\label{latticeequivdefs2}
Suppose 
$L$ is a finitely generated $A$-module,
and $\varphi = \varphi_L : L\otimes_A \bar{L} \to \hat{A}$ 
is an $A$-linear homomorphism such that
\begin{enumerate}
\item
$\varphi(x\otimes \bar{y}) = \overline{\varphi(y\otimes \bar{x})}$ for all $x,y\in L$,  and
\item
$\varphi(x\otimes \bar{x})\in \hat{A}^+_{>0}$ for all $0\neq x\in L$. 
\end{enumerate}
Then $L$ is an $A$-lattice with respect to the inner product
$$
\langle x,y \rangle = \Tr_{A_\Q/\Q}(\varphi(x\otimes\bar{y})).
$$
\end{prop}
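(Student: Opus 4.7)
The plan is to verify one by one the properties required of an $A$-lattice: that $L$ is finitely generated and free as a $\Z$-module, that $\langle \cdot, \cdot \rangle$ is a $\Z$-valued, symmetric, $\Z$-bilinear, positive definite form, and that the compatibility relation $\langle ax, y \rangle = \langle x, \bar{a}y \rangle$ holds. Since $L$ is already assumed to be finitely generated as an $A$-module, hence as a $\Z$-module, the freeness reduces to showing $L$ is torsion-free: if $0 \neq x \in L$ satisfies $nx = 0$ for some $n \in \Z_{>0}$, then $\varphi(x \otimes \bar x) \in \hat A^+_{>0}$ is nonzero, while $n^2 \varphi(x \otimes \bar x) = \varphi(nx \otimes \overline{nx}) = 0$, a contradiction.

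For the bilinear form, integrality is immediate: $\varphi(x \otimes \bar y) \in \hat A$, and by the defining property of $\hat A$ (taking $a = 1$) we get $\Tr_{A_\Q/\Q}(\varphi(x \otimes \bar y)) \in \Z$. $\Z$-bilinearity follows from $A$-bilinearity of the tensor product pairing $(x, y) \mapsto x \otimes \bar y$ composed with the $\Z$-linear maps $\varphi$ and $\Tr$. For symmetry I would use hypothesis (i):
\[
\langle x, y \rangle = \Tr_{A_\Q/\Q}(\varphi(x \otimes \bar y)) = \Tr_{A_\Q/\Q}\bigl(\overline{\varphi(y \otimes \bar x)}\bigr) = \Tr_{A_\Q/\Q}(\varphi(y \otimes \bar x)) = \langle y, x \rangle,
\]
where the middle equality uses that $\Tr_{A_\Q/\Q}(\bar z) = \Tr_{A_\Q/\Q}(z)$; this holds because $\Tr_{A_\Q/\Q}(z) = \sum_\psi \psi(z)$ lies in $\Q \subset \R$, and $\sum_\psi \psi(\bar z) = \sum_\psi \overline{\psi(z)}$ equals its own complex conjugate (or equivalently, complex conjugation permutes the $\psi \in \Rhom(A, \C)$).

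For positive definiteness, given $0 \neq x \in L$, hypothesis (ii) gives $\varphi(x \otimes \bar x) \in \hat A^+_{>0}$, which means every $\psi \in \Rhom_\R(A_\R, \C)$ sends it to a nonnegative real, with at least one value strictly positive. Then $\langle x, x \rangle = \Tr_{A_\Q/\Q}(\varphi(x \otimes \bar x)) = \sum_\psi \psi(\varphi(x \otimes \bar x)) > 0$. Finally, the compatibility relation follows from $A$-linearity of $\varphi$: since $\bar{\bar{a}} = a$ by Lemma~\ref{invollem}(i),
\[
\langle x, \bar a y \rangle = \Tr_{A_\Q/\Q}\bigl(\varphi(x \otimes \overline{\bar a y})\bigr) = \Tr_{A_\Q/\Q}\bigl(\varphi(x \otimes a \bar y)\bigr) = \Tr_{A_\Q/\Q}\bigl(a\, \varphi(x \otimes \bar y)\bigr) = \Tr_{A_\Q/\Q}\bigl(\varphi(ax \otimes \bar y)\bigr) = \langle ax, y \rangle.
\]

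The only mildly subtle points are the torsion-freeness argument (which forces us to use the positivity hypothesis (ii) rather than just the existence of $\varphi$) and the verification that $\Tr$ commutes with the involution on $A_\Q$. Neither is really an obstacle, so this is essentially a packaging of the given hypotheses into the definition of an $A$-lattice; it is the converse of Proposition~\ref{latticeequivdefs1}.
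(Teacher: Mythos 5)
Your proof is correct and follows essentially the same route as the paper's: integrality from the definition of $\hat{A}$, symmetry from hypothesis (i) together with $\Tr_{A_\Q/\Q}(\bar z)=\Tr_{A_\Q/\Q}(z)$, positive definiteness from hypothesis (ii), and the compatibility $\langle ax,y\rangle=\langle x,\bar a y\rangle$ from $A$-linearity of $\varphi$. The only difference is that you explicitly verify that $L$ is torsion-free (hence free) as an abelian group, a point the paper leaves implicit; your argument for it is valid.
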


\begin{proof}
Define 
$
\langle  \,\, , \,\, \rangle : L \otimes_A \bar{L} \to \Z$ by
$\langle x,y \rangle = \Tr_{A_\Q/\Q}(\varphi(x\otimes \bar{y})).
$
Note that the image lies in $\Z$ by the definition of $\hat{A}$,
and $\Z$-bilinearity is also clear.
We have
$$
\langle x,y \rangle = \Tr(\varphi(x\otimes \bar{y})) =
\Tr(\overline{\varphi(y\otimes \bar{x})}) =
\Tr({\varphi(y\otimes \bar{x})}) =
\langle y,x \rangle.
$$
If $x\neq 0$ then
$$
\langle x,x \rangle = \Tr_{A_\Q/\Q}(\varphi(x\otimes \bar{x}))
 = \sum_{\psi\in\Rhom(A_\Q,\C)}\psi(\varphi(x\otimes \bar{x})) > 0,
$$
the  inequality holding since each 
$\psi(\varphi(x\otimes \bar{x}))$ is real and non-negative, 
and at least one is positive. 
By the $A$-linearity of $\varphi$ we have  
$$
\langle ax,y \rangle = a\Tr_{A_\Q/\Q}(\varphi(x\otimes \bar{y}))
= \langle x,\bar{a}y \rangle.
$$
\end{proof}

\begin{defn}
\label{Alatticeinvertdef}
An $A$-lattice $L$ is {\em invertible} if the values of 
the map $\varphi_L$ of Proposition \ref{latticeequivdefs1}
all lie in $A$ and
the map $\varphi_L : L\otimes_A \bar{L} \to {A}$ is an isomorphism
of $A$-modules.
\end{defn}

\begin{rems}
\begin{enumerate}[leftmargin=*]
\item
For the  standard $A$-lattice $L=A$ we have
$\varphi_A(x\otimes \bar{y}) =x\bar{y}$ and 
$\langle x,y \rangle = \Tr_{A/\Z}(x\bar{y})$. 
The standard $A$-lattice is invertible since the map $A\otimes_A \bar{A} \to A$,
$x\otimes\bar{y}\mapsto x\bar{y}$ is an isomorphism.
\item
Invertibility is preserved under $A$-lattice isomorphisms. 
\end{enumerate}
\end{rems}

\begin{defn}
An $A$-module $L$ is {\em invertible}
if there is an  $A$-module $M$ such that
$L \otimes_{A} M$ and $A$
are isomorphic as $A$-modules.
\end{defn}

\begin{rem}
If $L$ is an invertible $A$-lattice, then $L$ is an invertible $A$-module.
\end{rem}

\begin{lem}
\label{Linvremarks}
If $A$ is a reduced order and $L$ is an invertible $A$-module, then 
$L_\Q$ and $A_\Q$ are isomorphic as $A_\Q$-modules, and 
$\rank_\Z(L) = \rank_\Z(A)$.
\end{lem}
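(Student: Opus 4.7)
The plan is to base-change everything to $\Q$ and exploit the fact that $A_\Q$ is a product of fields, where invertible modules are very easy to describe.

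First I would observe that since $A$ is reduced and of finite $\Z$-rank, the $\Q$-algebra $A_\Q = A \otimes_\Z \Q$ is a finite-dimensional, reduced, commutative $\Q$-algebra, and so is a finite product of number fields: $A_\Q \cong \prod_{i=1}^r F_i$. Tensoring the isomorphism $L \otimes_A M \isom A$ (where $M$ is an inverse for $L$) with $\Q$ over $\Z$ yields
$$L_\Q \otimes_{A_\Q} M_\Q \;\cong\; A_\Q$$
as $A_\Q$-modules, so $L_\Q$ is an invertible $A_\Q$-module.

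Next I would decompose along the idempotents of $A_\Q$. Writing $L_\Q = \prod_{i=1}^r L_i$ and $M_\Q = \prod_{i=1}^r M_i$, where $L_i$ and $M_i$ are $F_i$-vector spaces, the tensor product distributes over the product decomposition to give
$$\prod_{i=1}^r (L_i \otimes_{F_i} M_i) \;\cong\; \prod_{i=1}^r F_i.$$
Comparing the $i$-th factors, $L_i \otimes_{F_i} M_i \cong F_i$, so by dimension count over the field $F_i$ we must have $\dim_{F_i}(L_i) = \dim_{F_i}(M_i) = 1$. In particular each $L_i$ is (non-canonically) isomorphic to $F_i$, and therefore
$$L_\Q \;\cong\; \prod_{i=1}^r F_i \;\cong\; A_\Q$$
as $A_\Q$-modules.

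Finally, since $\rank_\Z$ of any $\Z$-module equals the $\Q$-dimension of its tensor product with $\Q$, the isomorphism $L_\Q \cong A_\Q$ immediately gives $\rank_\Z(L) = \dim_\Q(L_\Q) = \dim_\Q(A_\Q) = \rank_\Z(A)$; note that this calculation also shows the rank is finite, even though the definition of invertible $A$-module did not a priori include finite generation. There is no real obstacle: the entire argument rests on the two elementary facts that a reduced order has a product-of-fields rationalization, and that invertible modules over a field are one-dimensional.
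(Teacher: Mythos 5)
Your proof is correct and follows essentially the same route as the paper's: decompose $A_\Q$ as a finite product of fields, use $L_\Q\otimes_{A_\Q}M_\Q\cong A_\Q$ to force each local dimension $d_\m(L)d_\m(M)=1$, and conclude $L_\Q\cong A_\Q$. Your side remark about finite generation not being built into the definition of invertible module is a nice observation, but nothing further is needed.
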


\begin{proof}
We use the argument that shows (c) $\Rightarrow$ (a)
of Theorem 11.1 in \cite{LwS}.
Since $A_\Q$ is a product of finitely many fields $A_\Q/\m$ with $\m\in\Maxspec(A)$,
and $L_\Q$ is an $A_\Q$-module,
we have $L_\Q = \prod_\m V_\m$ where $V_\m$ is a vector space over
$A_\Q/\m$. Let $d_\m(L) = \dim(V_\m)$. 
Since $L$ is invertible, there is an $A$-module $M$ such that
$L_\Q \otimes_{A_\Q} M_\Q \cong A_\Q$.
Thus, $d_\m(L)d_\m(M) = d_\m(A) = 1$, so $d_\m(L) = 1 = d_\m(M)$.
The desired result now follows.
\end{proof}

\begin{notation}
\label{xynotat}
If $x,y\in L$, when we write $x\cdot \bar{y}$ or $x\bar{y}$ we mean
$\varphi(x\otimes \bar{y})$.
\end{notation}

\begin{rem}
If $L$ is an $A$-lattice, $x\in L$, and $x\bar{x}=1$, then
$\langle x,x \rangle = \rk_\Z(A)$,
by 
Propositions \ref{1equiv} and \ref{latticeequivdefs1}.
\end{rem}

We call a commutative ring $R$  {\em connected} if it has exactly two
idempotents.
The following result allows us to reduce our main algorithm
(Theorem \ref{mainthm}) to the case where $A$ is connected.

\begin{lem}
\label{decomposeLA}
Suppose 
$\I$ is the set of primitive idempotents of $A$.
Then:
\begin{enumerate}
\item
$A = \prod_{e\in\I} eA$
and each 
$eA$ is a CM-order (viewing $eA$ as a ring with identity $e$), 
\item
if $L$ is an $A$-lattice, then $L$ is the orthogonal sum
$\bigperp_{e\in\I} eL$ and each $eL$ is an $eA$-lattice,
\item
if $L$ is an invertible $A$-lattice, then 
each $eL$ is an invertible $eA$-lattice.
\end{enumerate}
\end{lem}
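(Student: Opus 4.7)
The plan is to reduce everything to the single observation that the involution $\bar{\,}$ fixes each primitive idempotent pointwise. First I would show that for every primitive idempotent $e\in\I$, $\bar e=e$. Indeed, $\bar{\,}\in\Aut(A)$ permutes the primitive idempotents; moreover, for any ring homomorphism $\psi:A\to\C$ we have $\psi(e)\in\{0,1\}\subset\R$, so $\psi(\bar e)=\overline{\psi(e)}=\psi(e)$. By Lemma \ref{KalglemBourb} (applied to the reduced order $A$), this forces $\bar e=e$.

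Granting this, part (i) follows almost formally. The decomposition $A=\prod_{e\in\I}eA$ is the standard idempotent decomposition, and because $\bar e=e$ the involution preserves each factor $eA$, so $\bar{\,}$ restricts to a ring automorphism of the reduced order $eA$ (thinking of $eA$ as a ring with identity $e$). To check Definition \ref{CMorddef}(ii) for $eA$, I would note that any $\psi:eA\to\C$ extends (via the projection $A\to eA$) to a ring homomorphism $\tilde\psi:A\to\C$; since $\psi\circ(\bar{\,}|_{eA})$ and $(\bar{\,}_\C)\circ\psi$ agree after composition with $A\to eA$ by the assumption on $A$, the condition transfers.

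For (ii), the decomposition $L=\bigoplus_{e\in\I}eL$ is automatic from $A$-linearity. Orthogonality is the main point: if $e\neq e'$ are primitive idempotents, then for $x\in eL$ and $y\in e'L$ we compute
\[
\langle x,y\rangle=\langle ex,y\rangle=\langle x,\bar ey\rangle=\langle x,ey\rangle=\langle x,ee'y\rangle=0,
\]
using $\bar e=e$ and $ee'=0$. The bilinear form on $eL$ is then the restriction of that on $L$, and it is still positive definite, $\Z$-valued, and satisfies the adjoint property for the $eA$-action, so $eL$ is an $eA$-lattice.

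For (iii), I would use that the decomposition of $A$ induces tensor product decompositions $\bar L=\prod_e\overline{eL}$ and $L\otimes_A\bar L=\prod_e(eL\otimes_{eA}\overline{eL})$. Under these identifications, $\varphi_L$ decomposes as $\prod_e\varphi_{eL}$ (the cross terms $x\otimes\bar y$ with $x\in eL$, $y\in e'L$, $e\neq e'$ vanish because $x\otimes\bar y=x\otimes\bar e\bar y=x\otimes ee'\bar y=0$). Invertibility of $L$ means that $\varphi_L$ lands in $A$ and is an isomorphism onto $A$; since both of these properties pass to each factor, each $\varphi_{eL}$ lands in $eA$ and is an isomorphism onto $eA$, i.e., each $eL$ is an invertible $eA$-lattice. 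There is no real obstacle here; the only nontrivial input is the identity $\bar e=e$, and the rest is bookkeeping with the product decomposition.
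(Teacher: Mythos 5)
Your proposal is correct and follows essentially the same route as the paper: the key point in both is that $\bar e=e$ for each primitive idempotent (deduced from $\psi(e)\in\{0,1\}$ and Lemma \ref{KalglemBourb}), after which (i) and (ii) are routine and (iii) follows from the decomposition $L\otimes_A\bar L=\prod_e\bigl(eL\otimes_{eA}\overline{eL}\bigr)$ with vanishing cross terms. You have merely filled in details the paper leaves implicit.
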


\begin{proof}
Since $\I$ is the set of primitive idempotents of $A$ we have 
$1 = \sum_{e\in\I} e$, so $A = \prod_{e\in\I} eA$ and $L = \bigoplus_{e\in\I} eL$.
Suppose $e\in\I$. Then $\psi(e)\in\{ 0,1\}$ for all $\psi\in\Rhom(A,\C)$,
so $\psi(e) = \overline{\psi(e)} = \psi(\overline{e})$
for all $\psi$. Thus, $e=\bar{e}$, so $\overline{eA} = \bar{e}\bar{A} = eA$.
Parts (i) and (ii) now follow easily from 
Definition \ref{CMorddef} and 
the definition of an $A$-lattice. 
Part (iii) follows from the definition of invertibility since 
$1\otimes 1 = \sum_{e\in\I}(e\otimes \bar{e})$
and 
$(e\otimes \bar{e})(L \otimes_A \bar{L}) = eL\otimes_{eA} \overline{eL}$.
\end{proof}

\section{Reduced bases}
\label{redbasautsect}

The main result of this section is Proposition \ref{LLLlem}.
It shows that there exists $B\in\R$ depending only on 
the CM-order $A$,
and polynomially bounded in the length of the data specifying $A$, such that
for each invertible $A$-lattice $L$, 
the length of the data specifying $L$ is bounded by $B$.
It is an analogue of Proposition 3.4 of \cite{LwS} 
(see also Lemma 3.12 of \cite{LenSil}), which was for
integral unimodular lattices.
It allows us to show that the Witt-Picard group of $A$ is finite
(Theorem \ref{WPgpfin} below),
and helps to show, as in \cite{LwS}, that the algorithms 
associated with Theorem \ref{LMmultalg} run in polynomial time.

\begin{defn}
\label{LLLreduceddefn}
If $\{ b_1,\ldots,b_m\}$ is a basis for a lattice $L$,
and $\{ b_1^\ast,\ldots,b_m^\ast\}$
is its Gram-Schmidt orthogonalization, 
and $b_i = b_i^\ast + \sum_{j=1}^{i-1} \mu_{ij}b_j^\ast$
with $\mu_{ij}\in \R$,
then
$\{ b_1,\ldots,b_m\}$ is {\bf LLL-reduced} if 
\begin{enumerate}
\item
$|\mu_{ij}| \le \frac{1}{2}$ for all $j<i\le m$, and 
\item
$|b_i^\ast|^2 \le 2|b_{i+1}^\ast|^2$
for all $i<m$.
\end{enumerate}
\end{defn}

The LLL basis reduction algorithm \cite{LLL} takes as input a lattice,
and produces an LLL-reduced basis of the lattice, in polynomial time.

Recall the definition of the inner product $( \, \, \, , \,\, \,  )$ 
in Definition \ref{innerproddef}.

\begin{lem}
\label{axlem}
If $A$ is a CM-order, $L$ is an $A$-lattice, $a\in A$, and $x\in L$, then 
$\langle ax,ax\rangle \le (a,a)\langle x,x\rangle$.
\end{lem}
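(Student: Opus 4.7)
The plan is to translate everything into the ring $\hat A$ via the map $\varphi_L$ of Proposition \ref{latticeequivdefs1}, and then reduce to a trivial inequality about nonnegative real numbers.

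First I would use $A$-linearity of $\varphi_L$ and the semi-linearity of the bar map on $L$ to compute
\[
\langle ax,ax\rangle \;=\; \Tr_{A_\Q/\Q}\bigl(\varphi_L(ax\otimes\overline{ax})\bigr) \;=\; \Tr_{A_\Q/\Q}\bigl(a\bar a\cdot\varphi_L(x\otimes\bar x)\bigr).
\]
Set $z=\varphi_L(x\otimes\bar x)\in\hat A$. The case $x=0$ is trivial, so assume $x\neq0$; then by Proposition \ref{latticeequivdefs1}(ii)(c) we have $z\in\hat A^+_{>0}$, meaning $\psi(z)\in\R_{\ge0}$ for every $\psi\in\Rhom(A,\C)$. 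On the other hand, for every such $\psi$ one has $\psi(a\bar a)=\psi(a)\overline{\psi(a)}=|\psi(a)|^2\in\R_{\ge0}$.

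The problem now reduces to showing
\[
\Tr_{A_\Q/\Q}(a\bar a\cdot z) \;\le\; \Tr_{A_\Q/\Q}(a\bar a)\cdot\Tr_{A_\Q/\Q}(z),
\]
since the right-hand side equals $(a,a)\langle x,x\rangle$. Using $\Tr_{A_\Q/\Q}(b)=\sum_{\psi\in\Rhom(A,\C)}\psi(b)$ (which extends to $A_\Q$ by linearity and to $\hat A$ since $\hat A\subset A_\Q$), and writing $\alpha_\psi=\psi(a\bar a)\ge0$ and $\beta_\psi=\psi(z)\ge0$, the desired inequality becomes
\[
\sum_\psi \alpha_\psi\beta_\psi \;\le\; \Bigl(\sum_\psi \alpha_\psi\Bigr)\Bigl(\sum_\psi \beta_\psi\Bigr),
\]
which is immediate upon expanding the right-hand side and dropping the nonnegative cross-terms $\alpha_\psi\beta_{\psi'}$ with $\psi\neq\psi'$.

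I do not anticipate any serious obstacle; the only subtleties are to check that the values $\psi(z)$ and $\psi(a\bar a)$ are really nonnegative reals (guaranteed by (ii)(c) of Proposition \ref{latticeequivdefs1} and by the compatibility $\psi(\bar a)=\overline{\psi(a)}$ from Definition \ref{CMorddef}(ii) respectively), and that the trace on $\hat A$ may be expanded as a sum over $\Rhom(A,\C)$, which follows from the discussion at the start of Section \ref{CMorderssect}.
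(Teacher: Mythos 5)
Your proposal is correct and follows essentially the same route as the paper's proof: expand $\langle ax,ax\rangle$ via the $A$-linearity of $\varphi_L$, write the trace as a sum over $\Rhom(A_\Q,\C)$, and bound $\sum_\sigma \alpha_\sigma\beta_\sigma$ by $(\sum_\sigma\alpha_\sigma)(\sum_\sigma\beta_\sigma)$ using the nonnegativity of $\sigma(a\bar a)$ and $\sigma(\varphi_L(x\otimes\bar x))$. The only (harmless) cosmetic difference is that you treat $x=0$ separately, which the paper does not bother to do.
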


\begin{proof}
If $\sigma\in\Rhom(A_\Q,\C)$, then
$\sigma(a\bar{a}) = \sigma(a)\overline{\sigma({a})} \in\R_{\ge 0}$, 
and $\sigma(\varphi(x\otimes\bar{x}))  \in\R_{\ge 0}$ 
by Proposition \ref{latticeequivdefs1}(ii)(c).
Then by Proposition \ref{latticeequivdefs1}(ii) we have
\begin{multline*}
\langle ax,ax\rangle =
\Tr_{A_\Q/\Q}(\varphi(ax\otimes\overline{ax})) =
\Tr_{A_\Q/\Q}(a\bar{a}\varphi(x\otimes\overline{x})) \\ =
\sum_{\sigma\in\Rhom(A_\Q,\C)}\sigma(a\bar{a})\sigma(\varphi(x\otimes\bar{x})) \le
\left(\sum_{\sigma}\sigma(a\bar{a})\right)
\left(\sum_{\sigma}\sigma(\varphi(x\otimes\bar{x}))\right) \\ =
(a,a)\Tr_{A_\Q/\Q}(\varphi(x\otimes\overline{x})) =
(a,a)\langle x,x\rangle.
\end{multline*}
\end{proof}

\begin{defn}
\label{detlatdef}
We define the determinant $\det(L)$ of a lattice $L$ to be the determinant of its Gram matrix, or
equivalently, the order of the cokernel of the map $L \to \Hom(L,\Z)$,
$x \mapsto (y \mapsto \langle x,y \rangle)$.
\end{defn}

\begin{lem}
\label{invDelta}
If $L$ is an invertible $A$-lattice,  then
$\det(L) = \det(A) = |\Delta_{A/\Z}|$.
\end{lem}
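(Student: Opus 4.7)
The plan has two parts. First I compute $\det(A)$ directly. The Gram matrix of the standard $A$-lattice in the basis $\alpha_1,\ldots,\alpha_n$ is $G=(\Tr(\alpha_i\bar\alpha_j))_{i,j}$. Since $\bar{\,}\in\Aut(A)$ has $\bar{\bar{\,}}=\id$ by Lemma \ref{invollem}, the integer matrix $T$ expressing $(\bar\alpha_j)$ on $(\alpha_k)$ satisfies $T^2=I$, so $\det T=\pm 1$. From $G=(\Tr(\alpha_i\alpha_k))_{i,k}\,T^{\mathrm{T}}$ I get $\det G=\pm\Delta_{A/\Z}$, and positive definiteness (Lemma \ref{ordlatlem}) fixes the sign, giving $\det(A)=|\Delta_{A/\Z}|$.

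For an invertible $A$-lattice $L$, the plan is to identify $\det(L)$ with $|\Hom_A(\bar L,\hat A/A)|$. Equip $L^{*}:=\Hom_\Z(L,\Z)$ with the $A$-action $(af)(x)=f(\bar a x)$. The map $\phi\colon L\to L^{*}$, $x\mapsto\langle x,\cdot\rangle$, is then $A$-linear (by the defining compatibility of an $A$-lattice), injective by non-degeneracy of the inner product, and $|\coker\phi|=\det(L)$. Using the identification $\hat A\cong\Hom_\Z(A,\Z)$ (as $A$-modules via the trace pairing) together with Hom-tensor adjunction, I obtain a canonical $A$-linear isomorphism $L^{*}\cong\Hom_A(\bar L,\hat A)$; under this identification, using Proposition \ref{latticeequivdefs1}(i), $\phi$ becomes the composite
\[
L\;\xrightarrow{\sim}\;\Hom_A(\bar L,A)\;\hookrightarrow\;\Hom_A(\bar L,\hat A)\;\cong\;L^{*},
\]
where the first isomorphism comes from $\varphi_L$ (this is where invertibility of $L$ enters) and the middle arrow is induced by $A\subset\hat A$.

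Invertible modules are projective, so $\Hom_A(\bar L,-)$ is exact; applying it to $0\to A\to\hat A\to\hat A/A\to 0$ gives $\coker\phi\cong\Hom_A(\bar L,\hat A/A)$. For any finite $A$-module $N$ and any prime $p$, $\bar L\otimes_\Z\Z_p$ is a rank-one projective module over the semilocal ring $A\otimes_\Z\Z_p$ and thus free, so $\Hom_A(\bar L,N)\otimes_\Z\Z_p\cong N\otimes_\Z\Z_p$; multiplying $p$-local orders yields $|\Hom_A(\bar L,N)|=|N|$. Taking $N=\hat A/A$, which has order $|\Delta_{A/\Z}|$ by the classical dual-basis computation, gives $\det(L)=|\Delta_{A/\Z}|=\det(A)$.

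The main hurdle is checking that $\phi$ matches the displayed composite: the $A$-action on $L^{*}$ uses the conjugation $\bar{\,}$, so this requires tracing the adjunction $\Hom_A(M,\Hom_\Z(A,\Z))\cong\Hom_\Z(M,\Z)$ through the semi-linear identification $L\cong\bar L$ (as abelian groups) and invoking the defining property $\Tr_{A_\Q/\Q}(a\,\varphi_L(x\otimes\bar y))=\langle ax,y\rangle$ of $\varphi_L$ to recognize the two maps as literally the same. Once this bookkeeping is carried out, no further estimate is needed.
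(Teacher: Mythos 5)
Your proof is correct, and while it starts from the same factorization as the paper's argument --- identifying $x\mapsto\langle x,\cdot\rangle$ with the composite $L\xrightarrow{\sim}\Hom_A(\bar L,A)\to\Hom_\Z(\bar L,\Z)\cong\Hom_A(\bar L,\hat A)$ via $\varphi_L$ and the trace pairing --- it finishes in a genuinely different way. The paper reduces the cokernel computation to fractional ideals: it scales $\bar L$ into $A$, identifies $\Hom_A(\bar L,A)$ with $I^{-1}$ for the invertible ideal $I=\bar L$, and proves $(I^{-1}:A)=(A:I)$ by a hands-on composition-chain argument (the bijection $J\mapsto JI$ preserving simple quotients). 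You instead stay module-theoretic: projectivity of the invertible module $\bar L$ makes $\Hom_A(\bar L,-)$ exact on $0\to A\to\hat A\to\hat A/A\to 0$, so $\coker\phi\cong\Hom_A(\bar L,\hat A/A)$, and the counting identity $|\Hom_A(\bar L,N)|=|N|$ for finite $N$ follows from freeness of rank-one projectives over the semilocal rings $A\otimes_\Z\Z_p$. Your route buys a cleaner conceptual statement (the cokernel is literally $\Hom_A(\bar L,\hat A/A)$, and the index equality is a special case of a general fact about invertible modules) at the cost of invoking projectivity, Hom--tensor adjunction, and local freeness; the paper's route is more elementary and self-contained, using only indices of ideals and composition series. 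Your separate verification that $\det(A)=|\Delta_{A/\Z}|$ via $\det T=\pm1$ and positive definiteness is a correct justification of a fact the paper asserts without proof in Section 3. The ``bookkeeping'' you flag at the end does check out: evaluating the adjunction at $1\in A$ turns $\bar y\mapsto\varphi_L(x\otimes\bar y)$ into $y\mapsto\Tr_{A_\Q/\Q}(\varphi_L(x\otimes\bar y))=\langle x,y\rangle$, which is exactly $\phi(x)$.
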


\begin{proof}
Consider the maps:
$$
L \to \Hom_{A}(\overline{L},A)
\to \Hom(\overline{L},\Z)
\to \Hom({L},\Z)
$$
where the left-hand map is the $A$-module isomorphism
$x \mapsto (\bar{y} \mapsto \varphi(x \otimes \overline{y}))$
with inverse
$f \mapsto (\id_L\otimes f)\circ \varphi^{-1}(1)$,
the middle map is $f\mapsto \Tr_{A/\Z}\circ f$, 
and  the right-hand map is the group isomorphism
$g \mapsto (y\mapsto g(\bar{y}))$.
By Proposition \ref{latticeequivdefs1}, 
the composition is the map 
$x \mapsto (y \mapsto  \langle x,y\rangle)$ of Definition \ref{detlatdef}.
We will show that the cokernel of the middle map
has order $|\Delta_{A/\Z}|$.
By the definition of $\Delta_{A/\Z}$, this holds with $A$ in place of $\overline{L}$,
and we next reduce to that case.
Since $L$ is invertible, we may identify $\overline{L}_\Q$ with $A_\Q$
by Lemma \ref{Linvremarks}.
Multiplying $\overline{L}$ by a sufficiently large positive integer, we may assume that
$\overline{L} \subset A$.
Let 
$$
L' = \{ a\in A_\Q : a\overline{L} \subset A\}.
$$
Consider the commutative diagram 
$$
\xymatrix@C=40pt{
~L' = \Hom_A(\overline{L},A)~ \ar[r] & \Hom(\overline{L},\Z) ~ \\
A = \Hom_A(A,A)~ \ar[r]\ar[u] & ~\Hom(A,\Z)~ \ar[u]
}
$$
where the vertical maps are the restriction maps.
The orders of the cokernels of the left and right maps are,
respectively, $(L':A)$ and $(A:\overline{L})$.
It suffices to show that these two numbers are equal.

We have $A \to L \otimes_A \overline{L} \onto L \cdot \overline{L}$
where the first map is the inverse of the isomorphism $\varphi_L$, so
$L \cdot \overline{L}$ is a principal ideal of $A$.
Hence  $I=\overline{L}$ is an invertible $A$-ideal
of finite index, 
and 
$I^{-1} = \{ a\in A_\Q : aI \subset A\} =L'$.
It remains to show that $(I^{-1}:A) = (A:I)$.
The map $J \mapsto J\cdot I$ from the set of intermediate $A$-modules
of $I^{-1} \supset A$ to the set of intermediate $A$-modules
of $A \supset I$ is a bijection with inverse $K \mapsto K \cdot I^{-1}$.
So a composition chain of $I^{-1}/A$ gives
a composition chain of $A/I$. Thus it suffices to prove
that if $J/J'$ is simple then $J/J' \cong J \cdot I/J' \cdot I$.
If $J/J' \cong A/\m$ with $\m\in\Maxspec(A)$, then
$J \cdot I/J' \cdot I$ is also simple and
annihilated by $\m$, so is also isomorphic to $A/\m$.
This gives the desired result. 
\end{proof}

We specify an $A$-lattice $L$ by giving $A$ as before,
$m=\rk_\Z(L)$,
the Gram matrix $(\langle b_i,b_j \rangle)_{i,j=1}^m$ with respect
to a $\Z$-basis $\{b_1,\ldots,b_m\}$ for $L$,
and $d_{ijk}\in\Z$ 
for $i \in\{1,\ldots, n\}$ and
$j,k \in\{1,\ldots, m\}$ such that
$\alpha_i b_j = \sum_{k=1}^m d_{ijk} b_k$ for all $i$ and $j$,
with respect to the same $\Z$-basis $\{ \alpha_i\}_{i=1}^n$ that
was used for the system of integers $\{ b_{ijk}\}_{i,j,k=1}^n$ used to specify $A$.
We always work with LLL-reduced bases for $A$-lattices,
as we explained for $A$ at the beginning of Section \ref{CMorderssect}.

If $x \in L_\R$ let $|x| = \langle x,x \rangle^{1/2}$, and
if $a \in A_\R$ let $|a| = ( a,a )^{1/2}$.

\begin{prop}
\label{LLLlem}
If $A$ is a CM-order, 
$n$ is its rank, 
$L$ is an invertible $A$-lattice,  
$\{ b_1,\ldots,b_m\}$ is an LLL-reduced basis for 
$L$, and $\{ b_1^\ast,\ldots,b_m^\ast\}$
is its Gram-Schmidt orthogonalization, then $m=n$ 
and:
\begin{enumerate}
\item
$2^{1-i} \le |b_i^\ast|^2 \le 2^{n-i}|\Delta_{A/\Z}|$ for all $i$, 
\item
$|b_i|^2 \le 2^{n-1}|\Delta_{A/\Z}|$  for all $i$,
\item
$|\langle b_i,b_j\rangle| \le 2^{n-1}|\Delta_{A/\Z}|$ for all $i$ and $j$,
\item
$|d_{ijk}|, |b_{ijk}| \le (3\sqrt{2})^{n-1}|\Delta_{A/\Z}|$
 for all $i$,  $j$, and $k$.
\end{enumerate}
\end{prop}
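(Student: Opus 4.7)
The plan is to derive all four parts from two inputs: the LLL-reducedness inequalities of Definition \ref{LLLreduceddefn}, and the identity $\prod_i|b_i^\ast|^2=\det(L)=|\Delta_{A/\Z}|$, where the last equality comes from Lemma \ref{invDelta}. First, $m=n$ follows immediately from Lemma \ref{Linvremarks}, once one notes that an invertible $A$-lattice is automatically an invertible $A$-module.

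For the lower bound in (i), use integrality: $|b_1^\ast|^2=\langle b_1,b_1\rangle\in\Z_{>0}$, so $|b_1^\ast|^2\ge 1$, and then iterate the LLL inequality $|b_{i+1}^\ast|^2\ge\tfrac12|b_i^\ast|^2$. For the upper bound, iterate the same inequality in the other direction to get $|b_i^\ast|^2\le 2^{n-i}|b_n^\ast|^2$, and write $|b_n^\ast|^2=|\Delta_{A/\Z}|/\det(L')$ where $L'=\Z b_1+\cdots+\Z b_{n-1}$. Since $L'$ is a sublattice of an integer lattice, its Gram determinant is a positive integer and therefore at least $1$, which gives $|b_n^\ast|^2\le|\Delta_{A/\Z}|$. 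Parts (ii) and (iii) are then routine: for (ii), expand $|b_i|^2=|b_i^\ast|^2+\sum_{j<i}\mu_{ij}^2|b_j^\ast|^2$ with $|\mu_{ij}|\le\tfrac12$ and apply (i) termwise; for (iii), combine Cauchy--Schwarz with (ii).

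The main work is (iv). By Lemma \ref{axlem} and (ii) applied to both $L$ and to $A$ itself as an $A$-lattice, $|\alpha_i b_j|^2\le|\alpha_i|^2|b_j|^2\le(2^{n-1}|\Delta_{A/\Z}|)^2$, so $|\alpha_i b_j|\le 2^{n-1}|\Delta_{A/\Z}|$. To extract $d_{ijk}$ from $\alpha_i b_j$, invert the lower unitriangular Gram--Schmidt change-of-basis matrix $M$ with $M_{ii}=1$ and $M_{ij}=\mu_{ij}$ for $j<i$; a Neumann series argument using $|\mu_{ij}|\le\tfrac12$ yields $|(M^{-1})_{kl}|\le\tfrac12\bigl(\tfrac32\bigr)^{k-l-1}$ for $k>l$. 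The coordinates of $\alpha_i b_j$ in the Gram--Schmidt basis are bounded by $|\langle\alpha_i b_j,b_l^\ast\rangle|/|b_l^\ast|^2\le|\alpha_i b_j|/|b_l^\ast|\le 2^{n-1}|\Delta_{A/\Z}|\cdot 2^{(l-1)/2}$, using the lower bound in (i). Summing these against the entries of $M^{-1}$ produces a geometric series of common ratio $\sqrt{2}\cdot\tfrac{3}{2}=\tfrac{3\sqrt{2}}{2}$ whose total is bounded by $(3\sqrt2)^{n-1}|\Delta_{A/\Z}|$; applying the same reasoning to $A$ in place of $L$ gives the bound on $b_{ijk}$.

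The main obstacle I anticipate is part (iv): it requires tracking three competing exponential growth rates---the $2^{n-i}$ of the LLL chain, the $2^{(l-1)/2}$ coming from the lower bound on $|b_l^\ast|$, and the $(3/2)^{k-l-1}$ of the Neumann series---and checking that their combination collapses uniformly in $i,j,k$ into the single clean bound $(3\sqrt2)^{n-1}|\Delta_{A/\Z}|$. Everything else is either a direct consequence of the LLL definition or a short manipulation with $\prod_i|b_i^\ast|^2=|\Delta_{A/\Z}|$.
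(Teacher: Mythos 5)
Your proof is correct, and for $m=n$ and parts (i)--(iii) it coincides with the paper's argument (Lemma \ref{invDelta} for $m=n$ and $\det(L)=|\Delta_{A/\Z}|$, integrality of $L$ and of $\det(L_{n-1})$ for the two bounds in (i), and the standard expansions for (ii) and (iii)). The only divergence is in (iv): the paper writes $d_{ijk}=\langle c_k,\alpha_i b_j\rangle$ for the dual basis $\{c_k\}$ of $\{b_k\}$, applies Cauchy--Schwarz together with Lemma \ref{axlem}, and imports the estimate $|c_j|^2\le(9/2)^{n-1}$ from the proof of Proposition 3.4(iv) of \cite{LwS}, which immediately yields $2^{n-1}(9/2)^{(n-1)/2}=(3\sqrt2)^{n-1}$. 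You instead bound the Gram--Schmidt coordinates of $\alpha_i b_j$ and convert to $b$-coordinates by inverting the unitriangular matrix $M$ via the binomial/Neumann expansion $|(M^{-1})_{kl}|\le\frac12(3/2)^{k-l-1}$; I checked that the resulting geometric sum does collapse to $(3\sqrt2)^{n-1}|\Delta_{A/\Z}|$ uniformly in $k$ (the worst case $k=1$ gives exactly this constant). The two routes are equivalent in substance---bounding $|c_k|$ is just another packaging of the same change-of-basis estimate---but yours has the advantage of being self-contained where the paper defers to \cite{LwS}, at the cost of a slightly more delicate final summation.
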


\begin{proof}
The proof generalizes our proof of Proposition 3.4 of \cite{LwS}
(and corrects some typographical errors therein). 
Since $L$ is an  invertible $A$-lattice, we have $m=n$ and
$\det(L) = |\Delta_{A/\Z}|$, by Lemma \ref{invDelta}.
It follows from Definition \ref{LLLreduceddefn}(ii) that for all 
$1 \le i \le j \le n$ we have
$|b_i^\ast|^2 \le 2^{j-i}|b_j^\ast|^2$, so
for all $i$ we have 
$
2^{1-i}|b_1^\ast|^2 \le |b_i^\ast|^2 \le 2^{n-i}|b_n^\ast|^2.
$
Since $L$ is integral we have 
$
|b_1^\ast|^2 = |b_1|^2 = \langle b_1,b_1\rangle \ge 1,$ 
so $|b_i^\ast|^2 \ge 2^{1-i}$.
Letting $L_i = \sum_{j=1}^i \Z b_j$, we have 
$
|b_i^\ast|^2 = \det(L_i)/\det(L_{i-1}).
$
Since $L$ is integral 
we have
$|b_n^\ast|^2 = \det(L_n)/\det(L_{n-1}) \le |\Delta_{A/\Z}|,$
so $|b_i^\ast|^2 \le 2^{n-i}|\Delta_{A/\Z}|$, 
giving (i).

Following the proof of Proposition 3.4(ii,iii) of \cite{LwS}
now gives (ii) and (iii).

Define $\{ c_1,\ldots,c_n\}$ to be the $\Q$-basis of $L_\Q$ that is dual to
$\{ b_1,\ldots,b_n\}$, i.e., $\langle c_i,b_j\rangle = \delta_{ij}$
for all $i$ and $j$, where $\delta_{ij}$ is the Kronecker delta symbol.
Then $d_{ijk} = \langle c_j,\alpha_i b_j\rangle$,
so
$$
|d_{ijk}| \le |c_j||\alpha_i b_j| \le |c_j||\alpha_i||b_j| \le 2^{n-1}|\Delta_{A/\Z}||c_j| 
$$
by the Cauchy-Schwarz inequality, 
Lemma \ref{axlem}, and (ii)
applied to the $A$-lattices $L$ and $A$.
The proof of Proposition 3.4(iv) of \cite{LwS} shows that
$|c_j|^2  \le (9/2)^{n-1}$, 
and this gives the desired bound on $|d_{ijk}|$ in (iv).
Applying this to the standard $A$-lattice $A$ 
(recall that $\{ \alpha_i \}_{i=1}^n$ is LLL-reduced) gives
the desired bound on $|b_{ijk}|$.
\end{proof}

\section{Short vectors in lattice cosets}
\label{latticecosetssect}
We show how to find the unique ``short'' vector in a
suitable lattice coset, when such a vector exists.

\begin{defn}
\label{shortdef}
Suppose $A$ is a CM-order and $L$ is an $A$-lattice.
We say $x\in L$ is {\em short} if
$\varphi(x\otimes \bar{x}) =1$, 
where $\varphi$ is the map from Proposition \ref{latticeequivdefs1}.
\end{defn}

Shortness is preserved by $A$-lattice isomorphisms.
Recalling Notation \ref{xynotat}, the element $x$ is short if
and only if $x\bar{x} =1$.
Hence $\langle x,x\rangle = \rk_\Z(A)$ when $x$ is short.

The following algorithm is 
an analogue of Algorithm 4.2 of \cite{LwS}.
We will use it in Algorithms \ref{ingredient4} and \ref{almostmainalgor} below.

\begin{algorithm} 
\label{findingekmalg2}
Given a CM-order $A$, an $A$-lattice $L$ of $\Z$-rank $n$, an $A$-ideal $\mathfrak{a}$
of finite index in $A$ such that
\begin{equation}
\label{betabetage}
\langle \beta,\beta\rangle \ge (2^{n/2}+1)^2\rk_\Z(A)
\text{ for all $\beta\in \a L \smallsetminus \{ 0 \}$},
\end{equation} 
and $C\in L/\mathfrak{a}L$, the algorithm computes
all $y\in C$ with $\langle y,y\rangle=\rank_\Z(A)$.

Steps:
\begin{enumerate}
\item
Compute an LLL-reduced basis for $\mathfrak{a}L$ and use it
as in \S 10 of \cite{HWLMSRI} to 
compute $y\in C$ such that 
$
\langle y,y \rangle \le (2^n -1)\langle x,x \rangle
$
for all $x\in C$,
i.e., find an approximate solution to the  
shortest vector problem.
\item
Compute $\langle y,y \rangle$. 
\item
If $\langle y,y \rangle =\rank_\Z(A)$, output $y$.
\item 
If $\langle y,y \rangle \neq \rank_\Z(A)$, output
``there is no $y\in C$ with $\langle y,y\rangle=\rank_\Z(A)$''.
\end{enumerate}
\end{algorithm}

The following result is used to prove Proposition \ref{ingredient4pf}.

\begin{prop}
\label{findingekmprop2}
Algorithm \ref{findingekmalg2} is correct and runs in polynomial time.
Further, the number of $y$ output by the algorithm is $0$ or $1$,
and if such a $y$ exists then it is the unique shortest element of $C$.
\end{prop}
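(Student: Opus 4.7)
The plan is to exploit the interplay between the Babai-style approximation factor in step (i) and the separation lower bound built into hypothesis (\ref{betabetage}). Write $n = \rk_\Z(A)$. Two vectors in the same coset $C$ differ by an element of $\a L$, so by (\ref{betabetage}) any two distinct vectors $y_1, y_2 \in C$ satisfy $|y_1 - y_2|^2 \ge (2^{n/2}+1)^2 n$. First I would use this to prove uniqueness: if $y_1, y_2 \in C$ both have $\langle y_i, y_i\rangle = n$, the triangle inequality gives $|y_1 - y_2|^2 \le (|y_1|+|y_2|)^2 = 4n$, which is strictly less than $(2^{n/2}+1)^2 n$ (since $2^{n/2}+1 > 2$ for all $n \ge 1$), forcing $y_1 = y_2$. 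The same separation estimate, via the reverse triangle inequality applied to any $z \in C$ with $z \ne y$, gives $|z| \ge (2^{n/2}+1)\sqrt{n} - \sqrt{n} = 2^{n/2}\sqrt{n}$, so $|z|^2 \ge 2^n n > n$, establishing that a $y \in C$ with $\langle y,y\rangle = n$, if it exists, is uniquely the shortest element of $C$.

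Next I would argue that the approximate closest-vector computation in step (i) finds this $y$ whenever it exists. Suppose $z \in C$ satisfies $\langle z,z\rangle = n$. The output $y$ of step (i) satisfies $|y|^2 \le (2^n-1)|z|^2 = (2^n-1)n$. Since $y - z \in \a L$, the triangle inequality yields
\[
|y-z|^2 \;\le\; \bigl(|y|+|z|\bigr)^2 \;\le\; n\bigl(\sqrt{2^n-1}+1\bigr)^2 \;<\; n\bigl(2^{n/2}+1\bigr)^2,
\]
using only the trivial inequality $2^n - 1 < 2^n$. By (\ref{betabetage}) this forces $y = z$, so step (ii) computes $\langle y, y\rangle = n$ and step (iii) outputs $y$. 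Conversely, if no element of $C$ has squared length $n$, then in particular the computed $y$ does not, so step (iv) correctly reports nonexistence. This also makes explicit why the constant $(2^{n/2}+1)^2$ in (\ref{betabetage}) is the right one: it is the smallest value that, combined with the $\sqrt{2^n-1}$ approximation factor of LLL-based closest-vector approximation, allows the triangle-inequality comparison to collapse to the strict inequality $2^n - 1 < 2^n$.

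Finally, polynomial runtime follows step by step: an LLL-reduced basis of $\a L$ is computed by \cite{LLL} in polynomial time, and the Babai/round-off procedure in \S 10 of \cite{HWLMSRI} then produces $y \in C$ with the stated approximation guarantee in polynomial time; steps (ii)--(iv) are a bounded number of inner product evaluations and comparisons. The only subtle point is that the intermediate data stay of polynomial bit-length, which I would deduce from Proposition \ref{LLLlem} applied to $L$ (the coefficients of the specification of $L$, and hence of $\a L$, are polynomially bounded) together with standard bookkeeping for LLL. The main conceptual obstacle is really just the arithmetic interplay of the two constants described above; once that inequality is in hand, uniqueness, correctness, and the behaviour in the nonexistence case all fall out of the same triangle-inequality argument.
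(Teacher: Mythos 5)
Your proposal is correct and follows essentially the same route as the paper: the paper's proof likewise combines the $\sqrt{2^n-1}$ approximation guarantee of Step (i) with the triangle inequality and the separation hypothesis \eqref{betabetage} to show that \emph{any} $z\in C$ with $\langle z,z\rangle\le\rank_\Z(A)$ must coincide with the computed $y$, which yields existence-detection, uniqueness, and the ``unique shortest'' claim in one stroke rather than in your three separate steps. One cosmetic slip: the $n$ of Algorithm \ref{findingekmalg2} is $\rk_\Z(L)$ (the dimension governing the approximation factor and the constant in \eqref{betabetage}), not $\rk_\Z(A)$ (the target norm); your inequalities survive this distinction unchanged, and the two ranks coincide in all of the paper's applications.
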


\begin{proof}
Let $y\in C$ be as computed in Step (i). Then
$\langle y,y \rangle \le (2^n -1)\langle x,x \rangle$
for all $x\in C$. Suppose
$z\in C$ 
is such that
$\langle z,z \rangle \leq \rank_\Z(A)$,
and let $\beta = z-y\in \mathfrak{a}L$. Then
$$
\langle \beta,\beta\rangle 
\le \left(\langle z,z \rangle^{1/2} + \sqrt{2^n-1}\langle z,z \rangle^{1/2}\right)^2 
< (2^{n/2}+1)^2\rank_\Z(A),
$$
so $\beta = 0$ by \eqref{betabetage} and $z=y$.
It follows that the algorithm finds all 
$y\in C$ with $\langle y,y \rangle=\rank_\Z(A)$,
there is at most one such, and 
if one exists then it is the unique shortest element of $C$.
\end{proof}

\begin{rem}
Note that $2^{2(n+1)} \ge (2^{n/2}+1)^2n$.
Thus if $L$ is an $A$-lattice, $n = \rank_\Z(A) = \rank_\Z(L)$, and 
$\a = 2^{n+1}A$, then 
\eqref{betabetage} holds.
We will make special use of the ideal $2^{n+1}A$ in
Algorithms \ref{almostmainalgor} and \ref{ingredient4}.
\end{rem}

\section{Short vectors and regular elements}
\label{regsect}

\begin{defn}
\label{regulardef}
Suppose $A$ is a commutative ring and $L$ is an  
$A$-module.
An element $x\in L$ is {\em regular} (or regular in $L$) if the map $A \to L$
defined by $a \mapsto a x$ is injective.
\end{defn}

Recall (Notation \ref{xynotat}) that 
$x\bar{y}$ is shorthand for $\varphi(x\otimes \bar{y})$.

\begin{prop}
\label{regularequiv}
Suppose $A$ is a CM-order, $L$ is an $A$-lattice, and $x\in L$.
Then the following are equivalent:
\begin{enumerate}
\item
$x$ is regular,
\item
$x\bar{x} \in \hat{A}^+_{\gg 0}$,
\item
$x\bar{x}$ is regular (in $A_\Q$). 
\end{enumerate}
\end{prop}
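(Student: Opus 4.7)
The plan is to prove (i)$\,\Leftrightarrow\,$(iii) and (ii)$\,\Leftrightarrow\,$(iii), using in both cases the structural fact that the CM-algebra $A_\Q$ is a finite product of fields, by Remark~\ref{CMalgrems} together with Remark~\ref{CMordiffresult}.

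For (ii)$\,\Leftrightarrow\,$(iii): from the product-of-fields description, an element $y\in A_\Q$ is regular if and only if its image in every simple factor is nonzero, equivalently $\psi(y)\ne 0$ for every $\psi\in\Rhom(A,\C)$. Applied to $y=x\bar x$, Proposition~\ref{latticeequivdefs1}(ii)(c) already guarantees $\psi(x\bar x)\ge 0$ at each such $\psi$, so the conditions ``$\psi(x\bar x)\ne 0$ for all $\psi$'' and ``$\psi(x\bar x)>0$ for all $\psi$'' describe the same set of vectors, and the latter is precisely (ii) (using that $\Rhom_\R(A_\R,\C)$ and $\Rhom(A,\C)$ are in canonical bijection).

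For (i)$\,\Leftrightarrow\,$(iii): the plan is to show $\Ann_{A_\Q}(x)=\Ann_{A_\Q}(x\bar x)$ as ideals of $A_\Q$. The containment $\subseteq$ is immediate from the $A$-bilinearity of $\varphi$, since $ax=0$ gives $a(x\bar x)=\varphi(ax\otimes\bar x)=0$. For $\supseteq$, suppose $a(x\bar x)=0$; then $\bar a\,a\,(x\bar x)=\varphi(ax\otimes\overline{ax})=0$, using the semi-linearity $\overline{ax}=\bar a\bar x$ and $A$-bilinearity. Proposition~\ref{latticeequivdefs1}(ii)(c), applied in $L_\Q$ via a one-line scaling argument (since $L$ spans $L_\Q$ over $\Q$), then forces $ax=0$. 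Because $L$ is torsion-free and $A\hookrightarrow A_\Q$, regularity over $A$ in $L$ coincides with regularity over $A_\Q$ in $L_\Q$, both for $x$ and for $x\bar x$; combining this with the equality of annihilators yields (i)$\,\Leftrightarrow\,$(iii).

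I do not expect a genuine obstacle in executing this plan. The only slightly delicate point is the application of Proposition~\ref{latticeequivdefs1}(ii)(c) to vectors of $L_\Q$ rather than $L$, which is handled by clearing denominators and using the $\Q$-bilinearity of the extended $\varphi$.
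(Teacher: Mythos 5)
Your proof is correct and uses essentially the same ingredients as the paper's: the decomposition of $A_\Q$ into a product of fields for (ii)$\,\Leftrightarrow\,$(iii), and the positivity statement of Proposition~\ref{latticeequivdefs1}(ii)(c) to tie regularity of $x$ to regularity of $x\bar{x}$. The only difference is organizational — you phrase the second equivalence as an equality of annihilator ideals and prove (i)$\,\Leftrightarrow\,$(iii) directly, whereas the paper proves (i)$\,\Leftrightarrow\,$(ii) using the positive-definiteness of $\langle\,\cdot\,,\,\cdot\,\rangle$ via the identity $\langle ax,ax\rangle=\Tr(a\bar{a}(x\bar{x}))$ — but these amount to the same argument.
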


\begin{proof}
Let $(y_\r)_{\r\in\Minspec(A)}$ denote the image 
of $y\in A_\Q$ under the natural isomorphism
$A_\Q \isom \prod_{\r\in\Minspec(A)} A_\r$,
where each $A_\r$ is a field (cf.\ Remark \ref{CMalgrems}).
Then $y\in A_\Q$ is regular in $A_\Q$ if and only if $y_\r \neq 0$ for all $\r$.
This implies that (ii) and (iii) are equivalent,
by Proposition \ref{latticeequivdefs1}(ii)(c).

Suppose $x$ is regular. If $0\neq a\in A$, then $ax \neq 0$, so 
\begin{equation}
\label{trgr0eqn}
0 \neq \langle ax,ax \rangle  = \Tr(a\bar{a}(x\bar{x})).
\end{equation}
If $\r\in\Minspec(A)$ and $(x\bar{x})_\r = 0$ in $A_\r$, then
there exists $b\in A_\Q\smallsetminus \{ 0\}$ such that
$b(x\bar{x}) =0$,
so there exists $a\in A\smallsetminus \{ 0\}$ such that
$a(x\bar{x}) =0$.
Thus, $a\bar{a}(x\bar{x}) =0$,
so $\Tr(a\bar{a}(x\bar{x})) =0$,
contradicting \eqref{trgr0eqn}.
It follows that (i) implies (ii).

Next we show that (ii) implies (i). 
Suppose $a\in A$ and $ax=0$. Then
$a(x\bar{x}) = (ax)\bar{x} = 0$.
By (ii) we have 
$x\bar{x} \in \hat{A}^+_{\gg 0} \subset A_\Q^\ast$.
Thus $a=0$, giving (i).
\end{proof}

Recall the definition of {\em short} in Definition \ref{shortdef}.

\begin{prop}
\label{shortequiv}
Suppose $A$ is a CM-order, $L$ is an $A$-lattice, 
$\varphi(L\otimes \bar{L}) \subset A$,
and $x\in L$.
Then the following are equivalent:
\begin{enumerate}
\item
$x$ is short,
\item
$x$ is regular and $\langle x,x \rangle =\rank_\Z(A)$.
\end{enumerate}
\end{prop}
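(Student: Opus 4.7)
The plan is to deduce both implications directly from Propositions \ref{regularequiv} and \ref{1equiv}, with the hypothesis $\varphi(L\otimes\bar{L})\subset A$ serving only to ensure that $x\bar{x}$ lies in $A$ (not merely in $\hat{A}$), so that Proposition \ref{1equiv} is applicable.

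For the implication (i) $\Rightarrow$ (ii): if $x\bar{x}=1$, then $\psi(x\bar{x})=1>0$ for every $\psi\in\Rhom(A,\C)$, so $x\bar{x}\in\hat{A}^+_{\gg 0}$, and Proposition \ref{regularequiv} gives that $x$ is regular. The equality $\langle x,x\rangle = \Tr(x\bar{x}) = \Tr(1) = \rank_\Z(A)$ is then immediate from the definition of the inner product via $\varphi$ (Proposition \ref{latticeequivdefs1}(ii)).

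For the converse (ii) $\Rightarrow$ (i): assume $x$ is regular with $\langle x,x\rangle = \rank_\Z(A)$. By Proposition \ref{regularequiv}, regularity gives $x\bar{x}\in\hat{A}^+_{\gg 0}$. The extra hypothesis $\varphi(L\otimes\bar{L})\subset A$ then upgrades this to $x\bar{x}\in A^+_{\gg 0}$. Now $\Tr(x\bar{x}) = \langle x,x\rangle = \rank_\Z(A)$, so Proposition \ref{1equiv} applied to $a = x\bar{x}$ (using the implication (iii) $\Rightarrow$ (i) there) forces $x\bar{x}=1$, i.e., $x$ is short.

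There is no real obstacle here: each direction is a one-line application of an already-proved proposition, and the only subtle point is making sure the hypothesis $\varphi(L\otimes\bar{L})\subset A$ is invoked in the right place (the $\Rightarrow$ direction of Proposition \ref{1equiv} requires its input to lie in $A$, not just in $\hat{A}$, since $\rank_\Z(A) = \#\Rhom(A_\Q,\C)$ controls the AM-GM argument via integrality of $\prod_\sigma \sigma(a)$).
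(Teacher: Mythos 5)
Your proof is correct and follows essentially the same route as the paper: both directions reduce to Propositions \ref{regularequiv} and \ref{1equiv}, with the hypothesis $\varphi(L\otimes\bar{L})\subset A$ used exactly where you place it, to pass from $\hat{A}^+_{\gg 0}$ to $A^+_{\gg 0}$ so that Proposition \ref{1equiv} applies. Your remark on why integrality of $x\bar{x}$ matters is a slightly more explicit accounting than the paper gives, but the argument is identical in substance.
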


\begin{proof}
That (i) implies (ii) follows from
Proposition \ref{regularequiv} and $\Tr(1) =\rank_\Z(A)$.

Conversely, assume (ii) and
let $a = x\bar{x}$.  
Then $a\in {A}^+_{\gg 0}$ by Proposition \ref{regularequiv}, and 
$\Tr(a) = \langle x,x \rangle =\rank_\Z(A)$, so 
by Proposition \ref{1equiv} we have $a=1$.
\end{proof}

The next result may be viewed as a variation on Kronecker's theorem
that every algebraic integer all of whose conjugates lie on the
unit circle must be a root of unity.
We will use it to prove Theorem \ref{shortthm}(iv).

\begin{prop}
\label{muAequiv}
Suppose $A$ is a CM-order
and $a\in A$.
Then the following are equivalent:
\begin{enumerate}
\item
$a\in\mu(A)$,
\item
$a$ is regular and $\Tr(a\bar{a}) = ( a,a ) =\rank_\Z(A)$, 
\item
$a$ is regular and $\Tr(a\bar{a}) = ( a,a ) \le \rank_\Z(A)$, 
\item
$a\bar{a}=1$.
\end{enumerate}
\end{prop}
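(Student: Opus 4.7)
My plan is to establish the four conditions as equivalent via the cycle $(i) \Rightarrow (iv) \Rightarrow (ii) \Rightarrow (iii) \Rightarrow (iv) \Rightarrow (i)$, organized around the observation that condition~(iv) says exactly that $a$ is a \emph{short} element of the standard $A$-lattice, since $\varphi_A(a \otimes \bar{a}) = a\bar{a}$. In particular, the standard $A$-lattice satisfies the hypothesis $\varphi(L\otimes\bar{L}) \subset A$ of Proposition~\ref{shortequiv}, and for it one has $\langle a,a\rangle = \Tr(a\bar{a}) = (a,a)$ and $\rank_\Z(L) = \rank_\Z(A)$.

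With that observation in hand, $(iv) \Rightarrow (ii)$ is immediate by applying Proposition~\ref{shortequiv} to $L = A$, which converts shortness into ``$a$ is regular and $(a,a) = \rank_\Z(A)$''; and $(ii) \Rightarrow (iii)$ is trivial. For $(iii) \Rightarrow (iv)$, I note that $a\bar{a}\in A \subset \hat{A}$, and that regularity of $a$ together with Proposition~\ref{regularequiv} gives $a\bar{a} \in \hat{A}^+_{\gg 0}$, so in fact $a\bar{a} \in A^+_{\gg 0}$. The hypothesis $\Tr(a\bar{a}) \le \rank_\Z(A)$ then lets me apply Proposition~\ref{1equiv} to $a\bar{a}$ to conclude $a\bar{a} = 1$.

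It remains to prove $(i) \Leftrightarrow (iv)$. Direction $(i) \Rightarrow (iv)$ is straightforward: if $a^N = 1$, then for every $\psi \in \Rhom(A,\C)$ the image $\psi(a)$ is a complex root of unity, so $\psi(a\bar{a}) = |\psi(a)|^2 = 1 = \psi(1)$; Lemma~\ref{KalglemBourb} then upgrades this to $a\bar{a} = 1$.

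The direction $(iv) \Rightarrow (i)$ is the essential content of the proposition and the main obstacle. The plan there is to invoke Kronecker's classical theorem. Given $a\bar{a} = 1$, each algebraic integer $\psi(a)$ satisfies $|\psi(a)| = 1$, and its Galois conjugates are obtained by composing $\psi$ with elements of $\mathrm{Gal}(\bar{\Q}/\Q)$, hence are again of the form $\psi'(a)$ for some $\psi' \in \Rhom(A,\C)$ and thus also of modulus one. Kronecker's theorem then forces each $\psi(a)$ to be a root of unity, of some order $N_\psi$. Finiteness of $\Rhom(A,\C)$ lets me set $N = \mathrm{lcm}_\psi \, N_\psi$, so that $\psi(a^N - 1) = 0$ for every $\psi$, and a final application of Lemma~\ref{KalglemBourb} promotes this to $a^N = 1$ in $A$, giving $a \in \mu(A)$.
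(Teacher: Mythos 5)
Your proposal is correct and follows essentially the same route as the paper: Proposition~\ref{shortequiv} applied to the standard $A$-lattice for the equivalence with (ii), Propositions~\ref{regularequiv} and~\ref{1equiv} for (iii)~$\Rightarrow$~(iv), and Kronecker's theorem for (iv)~$\Rightarrow$~(i). The only (harmless) cosmetic differences are that you make the intermediate step (i)~$\Rightarrow$~(iv) explicit where the paper leaves it implicit, and that you run the Kronecker argument through the embeddings $\psi\in\Rhom(A,\C)$ and Lemma~\ref{KalglemBourb} rather than through the decomposition $A_\Q\cong\prod_\r A_\r$ into number fields.
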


\begin{proof}
That (i) $\Rightarrow$ (ii) follows by applying Proposition \ref{shortequiv} 
to the standard $A$-lattice $L=A$.
The implication 
(ii) $\Rightarrow$ (iii) 
is clear.
For (iii) $\Rightarrow$ (iv), suppose we have (iii).
Then $\bar{a}$ is regular, so $a\bar{a}$ is regular.
By Proposition \ref{regularequiv},
$a\bar{a} \in A^+_{\gg 0}$.
Since $\Tr(a\bar{a}) \le \rank_\Z(A)$, by Proposition \ref{1equiv}
we have $a\bar{a}=1$ as desired.

To show  (iv) $\Rightarrow$ (i), 
suppose $a\bar{a}=1$.
We have $A_\Q \cong \prod_{\r\in\Minspec(A)}A_\r$
with each localization $A_\r$ being a number field, and
the components $a_\r$ of $a$
are algebraic integers all of whose conjugates lie on
the unit circle, so each $a_\r$ is a root of unity. Thus, $a\in\mu(A)$. 
\end{proof}

\begin{ex}
For an example of a CM-order with a vector shorter than a
``short'' one, 
suppose that $A_1$ and $A_2$ are non-zero CM-orders and let $A = A_1\times A_2$,
a disconnected order.
Then the unit element $1\in A$ satisfies 
$\langle 1,1\rangle = \Tr(1\cdot\bar{1}) = \Tr(1) = \rank_\Z(A)$,
so by Proposition \ref{shortequiv} with $L=A$ the vector $1$ is ``short''.
For $(1,0)\in A_1\times A_2 = A$ we have
$$
\langle (1,0),(1,0)\rangle = \Tr((1,0)\cdot\overline{(1,0)}) =  \rank_\Z(A_1)
< \rank_\Z(A)
$$
and similarly
$$
\langle (0,1),(0,1)\rangle  =  \rank_\Z(A_2) < \rank_\Z(A),
$$
giving shorter vectors than our ``short'' vector $1=(1,1) \in A$.
\end{ex}

\begin{ex}
For an example of a {\em connected} order with a non-zero vector shorter than a
``short'' one, let
$$
A = \{ (x_1,x_2,x_3,x_4,x_5)\in\Z^5 : \text{ all $x_i$ have the same parity}\}
$$
with coordinate-wise multiplication.
Then $A$ is a subring of $\Z^5$ of index $16$, and $A$ is a connected order.
The element $x=(2,0,0,0,0)\in A$ has 
$\langle x,x\rangle = 4$,
while $\rank_\Z(A) = \langle 1,1\rangle = 5 > 4$.
\end{ex}

\begin{ex}
Let
$$
A = \{ (x_1,x_2,x_3,x_4)\in\Z^4 : \text{ all $x_i$ have the same parity}\}
$$
with coordinate-wise multiplication.
The element $x=(2,0,0,0)\in A$ has 
$\langle x,x\rangle = 4 =\langle 1,1\rangle$.
While $1$ is regular, $x$ is not (since in $A$,
an element is regular if and only if no coordinate is $0$).
\end{ex}

\section{Vigilant sets and lower bounds}
\label{lowerbdssect}

Suppose $A$ is a CM-order.
The main result of this section is Proposition \ref{xxlowerbd},
which for any $A$-ideal $\a$ that can be written as a product of
finitely many maximal ideals,
finds a lower bound for 
$\min\{\langle \beta,\beta \rangle : \beta\in \mathfrak{a}L \smallsetminus \{ 0\}\}$
in terms of $\mathfrak{a}$, valid for all $A$-lattices $L$
for which the image of $\varphi$ is contained in $A$.
We will use it to prove Proposition \ref{Usetalgorpf}.
We start with some lemmas.

See Corollary 2.5 of \cite{AtiyahMcD} for the following version
of Nakayama's Lemma.

\begin{prop}[Nakayama's Lemma]
\label{Nakayama}
Suppose $A$ is a commutative ring, $L$ is a finitely generated
$A$-module, and $\a$ is an ideal of $A$ such that $\a L = L$.
Then there exists $x\in 1 + \a \subset A$ such that $xL = 0$.
\end{prop}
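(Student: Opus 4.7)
The plan is to use the classical determinant trick (a Cayley--Hamilton style argument). Since $L$ is finitely generated over $A$, I would start by choosing a finite generating set $e_1,\ldots,e_n$ of $L$. The hypothesis $\a L = L$ then lets me express each generator as an $\a$-linear combination of the others: there exist $a_{ij}\in\a$ with
\[
e_i = \sum_{j=1}^n a_{ij}e_j \qquad (1\le i\le n).
\]

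Next I would rewrite this as $\sum_j(\delta_{ij}-a_{ij})e_j = 0$ for every $i$, i.e., $(I_n - M)\vec e = 0$ where $M=(a_{ij})$ is a matrix with entries in $\a$ and $\vec e$ is the column vector of generators. Multiplying on the left by the adjugate matrix of $I_n-M$ (which makes sense over any commutative ring) and using the identity $\mathrm{adj}(N)\cdot N = \det(N)\cdot I_n$, I obtain $\det(I_n-M)\cdot e_i = 0$ for every $i$. Since the $e_i$ generate $L$, this forces $\det(I_n-M)\cdot L = 0$.

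Finally I would set $x = \det(I_n - M)$ and check that $x\in 1+\a$. Expanding the determinant as a sum over permutations, the only contribution not lying in $\a$ comes from the identity permutation and the ``$\delta_{ij}$'' part of each diagonal entry, giving a term equal to $1$; every other summand involves at least one factor $-a_{ij}\in\a$ and therefore lies in $\a$ (which is an ideal, hence closed under multiplication by arbitrary ring elements and under addition). So $x-1\in\a$ and $xL=0$, as required.

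There is no real obstacle here: the only thing to be careful about is that $A$ is not assumed to be a domain or even to have any particular finiteness property, so one must rely solely on the formal identity $\mathrm{adj}(N)N = \det(N)I_n$, which holds over any commutative ring, rather than on any invertibility argument. The generating set $\{e_i\}$ need not be a basis and $M$ is in general not unique, but none of that matters for the conclusion.
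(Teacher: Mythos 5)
Your determinant-trick argument is correct and complete; it is precisely the standard proof of this statement (the paper gives no proof of its own but cites Corollary 2.5 of Atiyah--Macdonald, which rests on exactly this Cayley--Hamilton/adjugate computation). In particular your verification that $\det(I_n-M)\in 1+\a$ via the permutation expansion, and your remark that only the formal identity $\mathrm{adj}(N)N=\det(N)I_n$ is needed, are exactly the right points of care.
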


\begin{lem}
\label{Fact1lem}
Suppose $A$ is an order, $I \subset A_\Q$ is a fractional 
$A$-ideal, and $\a \subsetneq A$ is an ideal.
Then $\a I \subsetneq I$.
\end{lem}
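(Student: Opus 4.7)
The plan is to derive a contradiction from the assumption $\a I = I$ by applying Nakayama's Lemma (Proposition \ref{Nakayama}) to the finitely generated $A$-module $I$ and then exploiting the fact that $I$ spans $A_\Q$ over $\Q$.

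First I would observe that, by the definition of a fractional $A$-ideal recalled in Section \ref{introsect}, the module $I$ is finitely generated over $A$, so Nakayama's Lemma applies. Suppose for contradiction that $\a I = I$. Then Proposition \ref{Nakayama} yields an element $x \in 1 + \a \subset A$ with $xI = 0$.

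Next, I would use the spanning property of $I$ to conclude that $x = 0$ in $A_\Q$, hence in $A$. Concretely, since $I$ spans $A_\Q$ as a $\Q$-vector space, one can write $1 = \sum_j c_j y_j$ with $c_j \in \Q$ and $y_j \in I$, so that
\[
x = x \cdot 1 = \sum_j c_j (x y_j) = 0
\]
in $A_\Q$, and therefore in $A$. Writing $x = 1 + a$ with $a \in \a$, this forces $-1 = a \in \a$, whence $1 \in \a$ and $\a = A$, contradicting the hypothesis $\a \subsetneq A$. This gives $\a I \neq I$, and since obviously $\a I \subset I$, we conclude $\a I \subsetneq I$.

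There is no real obstacle here; the statement is essentially a textbook application of Nakayama. The only small point requiring attention is to justify that the annihilator element $x$ produced by Nakayama must vanish, and for this the spanning-over-$\Q$ condition in the definition of fractional ideal is exactly what is needed.
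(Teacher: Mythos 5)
Your proposal is correct and follows essentially the same route as the paper: assume $\a I = I$, invoke Nakayama's Lemma to get $x \in 1+\a$ with $xI=0$, and then use $I_\Q = A_\Q$ to force $x=0$ and hence $1\in\a$, a contradiction. Your explicit expansion $1 = \sum_j c_j y_j$ just spells out the paper's step $x = x\cdot 1 \in xI_\Q = \{0\}$.
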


\begin{proof}
If not, then $\a I = I$, so  Nakayama's Lemma 
(Proposition \ref{Nakayama}) gives 
$x\in 1 + \a \subset A$ such that $xI=0$.
Then $xI_\Q = 0$. 
But $I_\Q = A_\Q$. So $x = x\cdot 1 \in x\cdot A_\Q = xI_\Q = \{ 0\}$.
Thus, $1\in\a$, so $\a =A$, a contradiction.
\end{proof}

Recall that if $\p\in\Spec(A)$, then $\N(\p) = \#(A/\p)$.

\begin{lem}
\label{bcnormprop}
Suppose $A$ is an order, 
$\p_1,\ldots,\p_m \in\Maxspec(A)$,  
and
$\cc$ is a fractional $A$-ideal.  
Then
$$
\#(\cc/\p_1\cdots\p_m\cc) \ge \prod_{i=1}^m\Norm(\p_i).
$$
\end{lem}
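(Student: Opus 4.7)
The plan is to establish the bound by induction on $m$ (equivalently, by constructing a filtration of length $m$ with controlled successive quotients). The key observation is that quotienting by a maximal ideal turns the successive quotients into vector spaces over the residue field, each of which is forced to be non-zero by Lemma \ref{Fact1lem}.

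Concretely, I would set $\cc_0 = \cc$ and, for $1 \le i \le m$, define $\cc_i = \p_1 \p_2 \cdots \p_i \cc$, giving the filtration
$$\cc = \cc_0 \supseteq \cc_1 \supseteq \cdots \supseteq \cc_m = \p_1 \cdots \p_m \cc.$$
Each $\cc_{i-1}$ is itself a fractional $A$-ideal (a product of fractional $A$-ideals is a fractional $A$-ideal), so Lemma \ref{Fact1lem} applied with $I = \cc_{i-1}$ and $\a = \p_i \subsetneq A$ shows that $\cc_i = \p_i \cc_{i-1} \subsetneq \cc_{i-1}$ is a strict inclusion.

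Now the quotient $\cc_{i-1}/\cc_i = \cc_{i-1}/\p_i \cc_{i-1}$ is annihilated by $\p_i$, hence is a module over the residue field $A/\p_i$, i.e.\ an $A/\p_i$-vector space. Since the inclusion is strict this vector space is non-zero, so it has dimension at least $1$ and therefore
$$\#(\cc_{i-1}/\cc_i) \ge \#(A/\p_i) = \N(\p_i).$$
Multiplying over $i = 1, \ldots, m$ and using the multiplicativity of the index in a finite filtration gives
$$\#(\cc/\p_1 \cdots \p_m \cc) \;=\; \prod_{i=1}^m \#(\cc_{i-1}/\cc_i) \;\ge\; \prod_{i=1}^m \N(\p_i),$$
as required.

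There is no real obstacle here: the only point that needs a moment's care is the verification that each $\cc_{i-1}$ is a fractional $A$-ideal so that Lemma \ref{Fact1lem} applies, and that $\#(A/\p_i)$ is finite (which holds because $\p_i$ is a maximal ideal of an order, so $A/\p_i$ is a finite field).
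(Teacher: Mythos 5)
Your proof is correct and is essentially the paper's argument: the paper proceeds by induction on $m$, peeling off one maximal ideal at a time, which unrolls to exactly your filtration $\cc_{i-1} \supsetneq \p_i\cc_{i-1}$, with the same two ingredients (Lemma \ref{Fact1lem} for strictness and the observation that $\cc_{i-1}/\p_i\cc_{i-1}$ is a non-zero $A/\p_i$-vector space). No issues.
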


\begin{proof}
We proceed by induction on $m$. The case $m=0$ is clear.
For $m>0$, letting $J$ denote the fractional $A$-ideal
$\p_1\cdots\p_{m-1}\cc$ we have
$$
\#(\cc/\p_1\cdots\p_m\cc) = \#(\cc/J)\#(J/\p_mJ).
$$
By Lemma \ref{Fact1lem} we have $J \neq \p_mJ$.
Thus,
$\dim_{A/\p_m}(J/\p_mJ) \ge 1$,
so $\#(J/\p_mJ) \ge \Norm(\p_m)$.
\end{proof}

\begin{defn}
\label{vigilantdef}
Suppose $A$ is a reduced order. 
We will say that a set $S$ of maximal ideals of $A$ is a {\em vigilant} set for $A$ if
for all $\r\in\Minspec(A)$ there exists $\L\in S$ such that $\r \subset \L$.
\end{defn}
Being a vigilant set for $A$ is equivalent to the natural map $A \to \prod_{\L\in S}A_\L$
being injective. 
If $S \subset \Maxspec(A)$ 
and $\r\in \Minspec(A)$,
let 
$$
S(\r) = \{ \L\in S: \r \subset \L\}.
$$ 
Then $S = \bigcup_{\r\in \Minspec(A)} S(\r)$, and $S$ is 
a vigilant set for $A$ if and only if each
$S(\r)$ is non-empty.
If $S$ is vigilant, we think of
$S$ as ``seeing'' all the irreducible
components of $\Spec(A)$.

\begin{prop}
\label{xxlowerbd}
Suppose that $A$ is a CM-order,  $n$ is its rank, 
$L$ is an $A$-lattice such that
the map $\varphi$ of Proposition \ref{latticeequivdefs1}
takes values in $A$, and
$S$ is a finite subset of $\Maxspec(A)$. 
Suppose $t : S\to \Z_{\ge 0}$ is a function, 
and $\mathfrak{a} = \prod_{\p\in S}\p^{t(\p)}$.
For $\r\in \Minspec(A)$,  
let $\dd_\r  = \rank_\Z(A/\r)$.
Then: 
\begin{enumerate}
\item
for all non-zero $\beta \in \mathfrak{a}L$ we have
$$
\langle \beta,\beta \rangle \ge 
\min_{\r\in \Minspec(A)} {\dd_\r} 
\prod_{\p\in S(\r)}\Norm(\p)^{\frac{2t(\p)}{\dd_\r}};
$$
\item
if $S$ is vigilant and
$t(\p)\ge {n(n+1)}$ for all $\p\in S$, then
$
\langle \beta,\beta \rangle \ge 
(2^{n/2}+1)^2n
$
for all $\beta \in \mathfrak{a}L \smallsetminus \{ 0\}$.
\end{enumerate}
\end{prop}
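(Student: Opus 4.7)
Let $c := \varphi(\beta \otimes \bar{\beta}) \in A$. The plan for (i) is to show $c \in \mathfrak{a}\bar{\mathfrak{a}}$, select a component $A_\r$ of $A_\Q \cong \prod_{\r \in \Minspec(A)} A_\r$ on which $c$ is non-zero, bound the norm of the image from below via Lemma \ref{bcnormprop}, and combine with the arithmetic-geometric mean inequality. Expanding $\beta = \sum_i a_i \ell_i$ with $a_i \in \mathfrak{a}$ and $\ell_i \in L$, the $A$-linearity of $\varphi$ and the semi-linearity of $\bar{\,\,}$ give $c = \sum_{i,j} a_i\,\bar{a_j}\,\varphi(\ell_i \otimes \bar{\ell_j})$, which lies in $\mathfrak{a}\bar{\mathfrak{a}}$ by the hypothesis $\varphi(L\otimes_A \bar L) \subset A$. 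Each $A_\r$ is a CM-field by Remark \ref{CMalgrems}; writing $c_\r$ for the image of $c$ in $A_\r$, we have $\Tr_{A_\Q/\Q}(c) = \sum_\r \Tr_{A_\r/\Q}(c_\r)$ and $\psi(c_\r) \ge 0$ for every embedding $\psi : A_\r \hookrightarrow \C$ by Proposition \ref{latticeequivdefs1}(ii)(c). Positive definiteness of $L$ forces $c \neq 0$, so I fix $\r_0$ with $c_{\r_0} \neq 0$.

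Next I use that $\bar{\,\,}$ fixes each minimal prime of $A$: each primitive idempotent $e$ of $A_\Q$ satisfies $\psi(e) \in \{0,1\}$ for every $\psi \in \Rhom(A_\Q,\C)$, hence $\bar{e} = e$ (the same argument as in the proof of Lemma \ref{decomposeLA}), and the minimal primes of $A_\Q$ are cut out by these idempotents, giving $\bar{\r_0}=\r_0$. Set $A^{(\r_0)} := A/\r_0$, an order of $\Z$-rank $\dd_{\r_0}$ in the CM-field $A_{\r_0}$. For a maximal ideal $\p$ of $A$, the image of $\p$ in $A^{(\r_0)}$ is all of $A^{(\r_0)}$ when $\r_0 \not\subset \p$ (since then $\p+\r_0=A$ by maximality of $\p$), and is otherwise a maximal ideal of the same residue norm; the analogous statement holds for $\bar{\p}$, with indexing by the same set $S(\r_0)$ since $\bar{\p} \supset \r_0 \iff \p \supset \r_0$. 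Hence the image of $\mathfrak{a}\bar{\mathfrak{a}}$ in $A^{(\r_0)}$ is a product of maximal ideals of $A^{(\r_0)}$, and Lemma \ref{bcnormprop} with $\cc = A^{(\r_0)}$, together with $\N(\bar{\p}) = \N(\p)$, gives
\[
\#\bigl(A^{(\r_0)}/(\mathfrak{a}\bar{\mathfrak{a}})^{(\r_0)}\bigr) \ge \prod_{\p \in S(\r_0)} \N(\p)^{2t(\p)}.
\]
Since $c_{\r_0} A^{(\r_0)} \subset (\mathfrak{a}\bar{\mathfrak{a}})^{(\r_0)}$ and $|\N_{A_{\r_0}/\Q}(c_{\r_0})| = \#(A^{(\r_0)}/c_{\r_0} A^{(\r_0)})$, the same lower bound holds for $\N_{A_{\r_0}/\Q}(c_{\r_0})$. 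Applying the arithmetic-geometric mean inequality to the $\dd_{\r_0}$ non-negative reals $\psi(c_{\r_0})$ gives $\Tr_{A_{\r_0}/\Q}(c_{\r_0}) \ge \dd_{\r_0}\,\N_{A_{\r_0}/\Q}(c_{\r_0})^{1/\dd_{\r_0}}$, and non-negativity of the remaining $\Tr_{A_\r/\Q}(c_\r)$ yields (i).

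Part (ii) follows by a crude estimate: vigilance of $S$ gives $S(\r) \neq \emptyset$ for every $\r$, and choosing any $\p_0 \in S(\r)$ with $\N(\p_0) \ge 2$, $t(\p_0) \ge n(n+1)$, and $\dd_\r \le n$, a single factor already satisfies $\N(\p_0)^{2t(\p_0)/\dd_\r} \ge 2^{2(n+1)}$; since the remaining factors are $\ge 1$ and $\dd_\r \ge 1$, the bound from (i) is $\ge 2^{2(n+1)} \ge (2^{n/2}+1)^2 n$, the last inequality being the one recorded after Algorithm \ref{findingekmalg2}. The main technical hurdle will be the clean reduction of $\bar{\mathfrak{a}}$ modulo $\r_0$, which hinges on the fact that the involution fixes each minimal prime.
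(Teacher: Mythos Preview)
Your proof is correct and follows essentially the same route as the paper's: set $c=\beta\bar\beta\in\mathfrak{a}\bar{\mathfrak{a}}$, pick a minimal prime $\r_0$ with $c_{\r_0}\neq 0$, bound the norm of $c_{\r_0}$ in the order $A/\r_0$ via Lemma~\ref{bcnormprop}, and apply AM--GM. The only difference is cosmetic---you work in the quotient $A/\r_0$ while the paper works with the image $A(\r_0)\subset A_{\r_0}$ (which is the same ring), and you make explicit the fact $\bar{\r_0}=\r_0$ that the paper uses tacitly when writing $\bar{\mathfrak{a}}(\r)=\prod_{\p\in S(\r)}\overline{\p(\r)}^{t(\p)}$.
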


\begin{proof}
Suppose $\r \in \Minspec(A)$.
Then $\r = \ker(A\to A_\r, \alpha\mapsto \alpha_\r),$
and $A_\r$ is a zero-dimensional 
local ring with no nilpotent elements,
so it is a field, namely 
the field of fractions of $A/\r$.
(Note that $A/\r$ is a domain but not a field.)
For $C\subset A$, let $C(\r)$ denote the image of $C$ in $A_\r$.
We have
$\mathfrak{a}(\r) = 
\prod_{\p\in S(\r)}\p(\r)^{t(\p)}$ 
and 
$\bar{\mathfrak{a}}(\r) = 
\prod_{\p\in S(\r)}\overline{\p(\r)}^{t(\p)}$. 
If $\p\in S(\r)$, then 
$$
A(\r)/\p(\r) \cong (A/\r)/(\p/\r) \cong A/\p,
$$
 so
\begin{equation}
\label{indexeq}
\#(A(\r)/\p(\r)) = \Norm(\p).
\end{equation}

For (i), put $w = \beta\bar{\beta}  
\in \mathfrak{a}\bar{\mathfrak{a}}{A}$. 
Then $0 \neq w\in {A}^+_{>0}$.
Choose $\ff \in \Minspec(A)$ such that
$w\notin \ff$ (which we can do since $\bigcap_{\r\in \Minspec(A)} \r = (0)$).
Then $A/\ff \cong A(\ff) \subset A_\ff$, and 
$0 \neq w(\ff)  \in \mathfrak{a}\bar{\mathfrak{a}}A(\ff)$.
Then
\begin{align*}
\langle \beta,\beta \rangle 
&= 
\Tr_{A_\Q/\Q}(w) \ge 
\Tr_{A_\ff/\Q}(w(\ff)) = 
\sum_{\sigma\in\Rhom(A_\ff,\C)} \sigma(w(\ff)) 
\\
& = \dd_\ff\cdot\frac{1}{\dd_\ff} \sum_{\sigma} \sigma(w(\ff)) \\
&\ge \dd_\ff\left(\prod_{\sigma} \sigma(w(\ff))\right)^{1/\dd_\ff} \quad\text{by the arithmetic-geometric mean inequality}
 \\  
&=  \dd_\ff |\N_{A(\ff)/\Z}(w(\ff))|^{1/\dd_\ff}
\quad = \quad \dd_\ff [\#(A(\ff)/w(\ff) A(\ff))]^{1/\dd_\ff}
 \\  
&
\ge
{\dd_\ff} [\#(A(\ff)/\mathfrak{a}\bar{\mathfrak{a}}A(\ff))]^{1/\dd_\ff} 
 \\  
 &\ge
{\dd_\ff} 
\prod_{\p\in S(\ff)}\Norm(\p)^{2t(\p)/\dd_\ff} \quad \text{by \eqref{indexeq}, 
Lemma \ref{bcnormprop},
and $\Norm(\overline{\p}) = \Norm(\p)$}
 \\  
 &\ge
\min_{\r\in \Minspec(A)}{\dd_\r} 
\prod_{\p\in S(\r)}\Norm(\p)^{2t(\p)/\dd_\r}, 
\end{align*} 
giving (i).

For (ii), since $S$ is vigilant each $S(\r)$ is non-empty.
Since $1\le d_\r \le n$,
by (i) we have
$$
\langle \beta,\beta \rangle \ge 
\min_{\r\in \Minspec(A)} {\dd_\r} 
\prod_{\p\in S(\r)}\Norm(\p)^{\frac{2n(n+1)}{\dd_\r}}
\ge 2^{2n+2}
\ge 
(2^{n/2}+1)^2n.
$$
\end{proof}

\begin{ex}
Let $A = \Z \times_{\F_3} \Z$. 
Then $\Spec(A)$ is connected, and
is the union of 2 copies of $\Spec(\Z)$ that are identified at the prime $3$.
The minimal prime ideals of $A$ are
$\r_1 = \{ 0\} \times 3\Z$ and $\r_2 = 3\Z \times \{ 0\}$.
Let $\p = (2\Z \times \Z) \cap A$ and
$S=\{ \p \}$.
Then  $S(\r_1) = S$, but
$S(\r_2)$ is empty  
so $S$ is not vigilant.
Let $L=A$ be the standard $A$-lattice.
For every $t\in\Z_{> 0}$, 
one has $\p^t = (2^t\Z \times \Z) \cap A$.
Hence, independently of $t$, one has
$\beta=(0,3)\in \p^t = \p^t L$, and
$\langle \beta,\beta\rangle = \trace((0,3)) = 9$.
Thus, the hypothesis that $S$ is vigilant cannot be removed 
in Proposition \ref{xxlowerbd}(ii).
\end{ex}

\section{Ideal lattices}
\label{ideallatsect}

The proof of Theorem 8.2 of \cite{LwS} carries over essentially
verbatim, with $\Z\langle G\rangle$ replaced by $A$ and
 $\Q\langle G\rangle$ replaced by $A_\Q$, to show:

\begin{thm}
\label{Iwmisclem}
Suppose $A$ is a CM-order, $I \subset A_\Q$ is a fractional $A$-ideal, 
and $w \in (A^+_\Q)_{\gg 0}$. Suppose 
that 
$
I\bar{I} \subset \hat{A} w.
$ 
Then:
\begin{enumerate}
\item
$\overline{w}=w$;
\item 
$w\in A_\Q^\ast$;
\item
$I$ is an $A$-lattice, with 
$
\varphi(x\otimes \bar{y}) = \frac{x\overline{y}}{w}
$
and 
$\langle x,y \rangle = \Tr_{A_\Q/\Q}\left(\frac{x\overline{y}}{w}\right)$.
\end{enumerate}
\end{thm}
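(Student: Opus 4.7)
The plan is to follow the same three-step strategy used in Theorem 8.2 of \cite{LwS}, exploiting the injectivity statement in Lemma \ref{KalglemBourb} together with the explicit decomposition of $A_\Q$ as a product of CM-fields coming from Remark \ref{CMalgrems}. None of the steps looks genuinely hard; the main care is to keep track of where things live ($A$, $\hat A$, $A_\Q$) and to verify the precise hypotheses of Proposition \ref{latticeequivdefs2}.

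For (i), I would observe that the condition $w\in (A^+_\Q)_{\gg 0}$ says $\psi(w)\in\R_{>0}$ for every $\psi\in\Rhom(A_\Q,\C)$. Then $\psi(\bar w)=\overline{\psi(w)}=\psi(w)$ because $\psi(w)$ is real. Since the $\psi$'s separate points of $A_\Q$ (applying Lemma \ref{KalglemBourb} to the reduced order $A$ and extending to $A_\Q$), this forces $\bar w=w$. For (ii), write $A_\Q\isom\prod_{\m\in\Maxspec(A_\Q)}A_\Q/\m$, a finite product of CM-fields. In each factor $A_\Q/\m$, the image of $w$ is nonzero because we can pick a $\psi$ that factors through $A_\Q/\m$ and obtain $\psi(w)>0$; being nonzero in every field factor, $w$ is a unit in $A_\Q$.

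For (iii), I would define the candidate map on the tensor product by
\[
\varphi: I\otimes_A \bar I \too A_\Q,\qquad \varphi(x\otimes\bar y)=\frac{x\bar y}{w},
\]
which is well-defined and $A$-linear using (ii) and the usual bilinearity check. The hypothesis $I\bar I\subset\hat A w$ and invertibility of $w$ imply $\varphi(x\otimes\bar y)\in\hat A$, so $\varphi$ lands in $\hat A$. Conjugate symmetry $\varphi(x\otimes\bar y)=\overline{\varphi(y\otimes\bar x)}$ is immediate from (i) since $\overline{x\bar y/w}=\bar x y/\bar w=y\bar x/w$. For the positivity condition $\varphi(x\otimes\bar x)\in \hat A^+_{>0}$ when $x\neq 0$, I would evaluate $\psi$ at $x\bar x/w$ to get $|\psi(x)|^2/\psi(w)\in\R_{\ge 0}$ for every $\psi\in\Rhom(A_\Q,\C)$, with strict positivity for at least one $\psi$ by Lemma \ref{KalglemBourb}, so nonvanishing at some component is guaranteed.

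Finally, I would invoke Proposition \ref{latticeequivdefs2} with $L=I$ (finitely generated as an $A$-module by definition of a fractional ideal) and the $\varphi$ just constructed to conclude that $I$ is an $A$-lattice with inner product $\langle x,y\rangle=\Tr_{A_\Q/\Q}(x\bar y/w)$, and that this $\varphi$ is the map produced by Proposition \ref{latticeequivdefs1} by the uniqueness clause there. The only place where one has to be slightly attentive is the positivity verification, since one must distinguish $\hat A^+_{>0}$ from $\hat A^+_{\gg 0}$ and recall that $x$ may have $\psi(x)=0$ for some $\psi$'s; but the separation property of the $\psi$'s handles this without issue.
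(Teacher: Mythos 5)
Your proposal is correct and follows essentially the same route as the paper, which simply ports the proof of Theorem 8.2 of \cite{LwS}: use the separation of points by $\Rhom(A_\Q,\C)$ for (i), the decomposition of $A_\Q$ into fields for (ii), and then verify the hypotheses of Proposition \ref{latticeequivdefs2} for the map $x\otimes\bar y\mapsto x\bar y/w$, with the uniqueness clause of Proposition \ref{latticeequivdefs1} identifying it as $\varphi_I$. All the details you supply, including the distinction between $\hat A^+_{>0}$ and $\hat A^+_{\gg 0}$ in the positivity check, are handled correctly.
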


\begin{notation}
\label{IwGlatdefn}
With $I$ and $w$ as in Theorem \ref{Iwmisclem},
define $L_{(I,w)}$ to be
the $A$-lattice $I$ with 
$\langle x,y \rangle = \Tr_{A_\Q/\Q}(x\bar{y}/w)$.
\end{notation}

The proof of Theorem 8.5 of \cite{LwS} carries over
(with $\Tr$ playing the role of the scaled trace function 
$t$ of \cite{LwS}) to give the following result, which
allows us to deduce Theorem \ref{mainIwthm} from Theorem \ref{mainthm}.

\begin{thm} 
\label{I1I2isom}
Suppose  $A$ is a CM-order, 
$I_1$ and $I_2$ are fractional $A$-ideals,
and $w_1, w_2 \in (A^+_\Q)_{\gg 0}$ 
satisfy
$I_1\overline{I_1} \subset \hat{A} w_1$ and 
$I_2\overline{I_2} \subset \hat{A} w_2$. 
Let $L_j = L_{(I_j,w_j)}$ for $j=1,2$.
Then sending $v$ to multiplication by $v$ gives a bijection from 
$$\{ v\in A_\Q :  I_1 = vI_2, w_1 = v\overline{v}w_2 \}
\quad \text{to} \quad  
\{ \text{$A$-isomorphisms $L_2 \isom L_1$} \}$$
and gives a bijection from
$$\{ v\in A_\Q : I_1 = vA, w_1=v\overline{v} \}
\quad \text{to} \quad  
\{ \text{$A$-isomorphisms 
$A \isom L_1$} \}.$$
In particular, 
$L_{1}$ is $A$-isomorphic to $A$ if and only if
there exists $v\in A_\Q$ such that $I_1 = (v)$ 
and $w_1=v\overline{v}$.
\end{thm}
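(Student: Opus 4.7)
The plan is to exhibit the bijection by sending $v$ to the map $m_v\colon L_2\to L_1$ given by multiplication by $v$, and then to prove both directions by separating well-definedness from surjectivity. The special case asserting a bijection with isomorphisms $A \isom L_1$ is obtained by taking $I_2 = A$ and $w_2 = 1$ (these satisfy the hypotheses trivially), so it suffices to treat the main statement.

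First I would check that for $v\in A_\Q$ satisfying $I_1 = vI_2$ and $w_1 = v\bar{v}w_2$, the map $m_v$ is an $A$-isomorphism of $A$-lattices. Since $I_1 = vI_2$ and both are fractional $A$-ideals spanning $A_\Q$, we have $v\in A_\Q^\ast$, so $m_v$ is an $A$-module isomorphism $I_2\isom I_1$ with inverse $m_{v^{-1}}$. To see that $m_v$ preserves inner products, I would compute directly using Theorem \ref{Iwmisclem}(iii):
\[
\langle vx, vy\rangle_{L_1} = \Tr_{A_\Q/\Q}\!\left(\tfrac{vx\,\overline{vy}}{w_1}\right)
= \Tr_{A_\Q/\Q}\!\left(\tfrac{v\bar v\,x\bar y}{v\bar v\,w_2}\right) = \langle x,y\rangle_{L_2}.
\]
Injectivity of the correspondence $v\mapsto m_v$ is immediate: if $m_v = m_{v'}$ on $I_2$, then $(v-v')I_2 = 0$, and since $I_2$ spans $A_\Q$ and $A_\Q$ has no nonzero annihilators of $I_2$ (as $I_2\otimes_\Z\Q = A_\Q$), we get $v=v'$.

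The more substantive step is surjectivity. Given an $A$-isomorphism $f\colon L_2\to L_1$ of $A$-lattices, I would extend $f$ $\Q$-linearly to an $A_\Q$-linear automorphism $f_\Q\colon A_\Q\to A_\Q$, using that $I_j\otimes_\Z\Q = A_\Q$ as an $A_\Q$-module (since $I_j$ is a fractional $A$-ideal). Any $A_\Q$-linear self-map of $A_\Q$ is multiplication by some element: set $v = f_\Q(1)\in A_\Q$, so that $f_\Q = m_v$. Restricting back, $m_v(I_2) = f(I_2) = I_1$, hence $I_1 = vI_2$, and $v\in A_\Q^\ast$ since $m_v$ is invertible. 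It remains to recover $w_1 = v\bar v w_2$ from the fact that $f$ preserves inner products. For all $x,y\in I_2$ we have
\[
\Tr_{A_\Q/\Q}\!\left(\tfrac{v\bar v\,x\bar y}{w_1}\right) = \Tr_{A_\Q/\Q}\!\left(\tfrac{x\bar y}{w_2}\right),
\]
so setting $c = v\bar v/w_1 - 1/w_2 \in A_\Q$, we have $\Tr_{A_\Q/\Q}(c\cdot x\bar y) = 0$ for all $x,y\in I_2$. Because $I_2\bar I_2$ spans $A_\Q$ as a $\Q$-vector space and the trace pairing on $A_\Q$ is non-degenerate (as $A_\Q$ is a finite \'etale $\Q$-algebra, or by Lemma \ref{KalglemBourb}), we conclude $c=0$, i.e., $w_1 = v\bar v w_2$. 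Thus $v$ lies in the source set and maps to $f$, finishing surjectivity.

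The main obstacle I anticipate is purely bookkeeping: ensuring that $f$ really does extend to an $A_\Q$-linear map (which rests on $I_j$ being fractional ideals, not just on $L_j$ being invertible $A$-lattices), and that the non-degeneracy of the trace form is correctly invoked to pass from a trace identity on products $x\bar y$ to an identity in $A_\Q$. Once these two points are settled, every remaining verification is routine.
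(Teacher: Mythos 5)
Your proof is correct and is essentially the argument the paper has in mind (the paper simply defers to the proof of Theorem 8.5 of \cite{LwS}, which proceeds the same way: check multiplication by $v$ preserves the form, extend an $A$-isomorphism $\Q$-linearly to an $A_\Q$-linear automorphism of $A_\Q$ to recover $v$, and use non-degeneracy of the trace form on the reduced algebra $A_\Q$ to deduce $w_1=v\bar v w_2$). All the supporting facts you invoke — that $I_2\overline{I_2}$ spans $A_\Q$ over $\Q$, that $v\in A_\Q^\ast$, and the reduction of the second bijection to the case $(I_2,w_2)=(A,1)$ — check out.
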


\begin{rem}
If $I$, $w$, and $L_{(I,w)}$ are as in Theorem \ref{Iwmisclem}
and Notation \ref{IwGlatdefn}, then
$
\overline{L_{(I,w)}} = L_{(\overline{I},{w})}.
$ 
\end{rem}

\section{Invertible $A$-lattices}
\label{invAlatsect}

Recall the definition of invertible $A$-lattice from Definition \ref{Alatticeinvertdef}.
Theorem 11.1 of \cite{LwS} gave equivalent statements for
invertibility of a $G$-lattice.
The following example shows that the result does not 
fully extend to the case of  $A$-lattices,
while Theorem \ref{LwSthm} gives a part that does carry over.

\begin{ex}
We give an example of an $A$-lattice $L$
that is invertible as an $A$-module and satisfies $\det(L) = |\Delta_{A/\Z}|$,
but is not invertible as an $A$-lattice.
The CM-order $A = \Z\left[\frac{1+\sqrt{17}}{2},\sqrt{-1}\right]$
has 
$$
A^+ = \Z\left[\frac{1+\sqrt{17}}{2}\right], \quad
\hat{A} = \frac{1}{2\sqrt{17}}A, \quad
\Delta_{A/\Z} = 2^4\cdot 17^2.
$$
We can view $A$ as a rank four $A$-lattice with
$\langle x,y\rangle =\Tr_{A_\Q/\Q}(x\bar{y}z)$,
where 
$$
z= \frac{5+\sqrt{17}}{5-\sqrt{17}} \, \in  \, 
 \, \frac{1}{2}A^+_{\gg 0} \,  \subset  \, 
 \frac{1}{2}A  \, \subset  \, \hat{A}. 
$$
This $A$-lattice has determinant $2^4\cdot 17^2$ 
and is invertible as an $A$-module.
However, it is not
invertible as an $A$-lattice, since $\varphi(1\otimes\bar{1}) = z \notin A$.
\end{ex}

The following lemma, which is used to prove Theorem \ref{LwSthm},
is an analogue of Lemma 11.4 of \cite{LwS}.

\begin{lem}
\label{fractideallem}
If  $A$ is a CM-order and $I$ is an invertible
fractional $A$-ideal, then:
\begin{enumerate}
\item
if $m\in\Z_{>0}$, 
then $I/mI$ is isomorphic to $A/mA$ as $A$-modules;
\item
if $\a \subset A$ is an ideal of finite index, 
then $I/\a I$ is isomorphic to $A/\a$ 
both
as $A$-modules and as $A/\a$-modules;
\item
if $I'$ is a fractional $A$-ideal, then the
natural surjective map 
$$
I\otimes_{A} I' \to II'
$$ 
is an isomorphism.
\end{enumerate}
\end{lem}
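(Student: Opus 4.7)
The plan is to use that an invertible fractional $A$-ideal is locally principal, after which (i)--(iii) all reduce to routine statements about localizations. Fix a fractional $A$-ideal $J$ with $IJ=A$; for each $\p\in\Spec(A)$, localization gives $I_\p J_\p = A_\p$ inside $A_\Q$. Writing $1 = \sum_i x_i y_i$ with $x_i \in I$, $y_i \in J$, some summand becomes a unit of the local ring $A_\p$, and so the corresponding $x_i\in I_\p$ is invertible in $A_\Q$ with $I_\p = x_i A_\p$. Thus $I$ is locally principal, generated at $\p$ by some $\xi_\p \in A_\Q^\ast$ with $J_\p = \xi_\p^{-1} A_\p$.

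For (iii), the natural map $\mu : I \otimes_A I' \to II'$ is surjective by the definition of the product $II'$. Injectivity can be checked after localizing at each $\p\in\Spec(A)$: the map $\mu_\p : I_\p \otimes_{A_\p} I'_\p \to I_\p I'_\p$ becomes, under the identifications $I_\p \otimes_{A_\p} I'_\p \isom I'_\p$ (via $\xi_\p \otimes y \mapsto y$) and $I_\p I'_\p = \xi_\p I'_\p$, nothing but multiplication by $\xi_\p$ on $I'_\p$, which is a bijection since $\xi_\p \in A_\Q^\ast$ acts injectively on $I'_\p \subset A_\Q$.

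For (ii), tensor the isomorphism $I\otimes_A J \isom A$ supplied by (iii) with $A/\a$: using the standard identifications this yields an isomorphism $(I/\a I)\otimes_{A/\a}(J/\a J) \isom A/\a$, exhibiting $I/\a I$ as an invertible $A/\a$-module. Since $\a$ has finite index in $A$, the quotient $A/\a$ is a finite, hence Artinian, hence semi-local ring. Any invertible module over a semi-local ring is free of rank one: the canonical decomposition $A/\a \cong \prod_{\p\supset \a} A_\p/\a_\p$ reduces the question to local rings, where Nakayama's lemma (Proposition \ref{Nakayama}) provides a single generator. Hence $I/\a I \cong A/\a$ as $A/\a$-modules, and in particular as $A$-modules. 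Part (i) is the case $\a = mA$.

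The one real input is the local-principality step that opens the argument; once one knows $I$ is locally generated by a unit of $A_\Q$, parts (i)--(iii) follow by standard commutative-algebra manipulations, with (iii) amounting to a unit gauge change at each prime and (ii) a semi-local triviality. No feature of CM-orders beyond $A$ being a reduced order is used.
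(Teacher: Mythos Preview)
Your argument is correct, but it follows a different logical flow from the paper's. The paper does not work locally: it proves (i) directly (citing Lemma~11.4(i) of \cite{LwS}), then deduces (ii) from (i) by the trick of taking $m=\#(A/\a)$ so that $mA\subset\a$, and tensoring the isomorphism $I/mI\cong A/mA$ with $A/\a$ over $A/mA$. Part (iii) is again quoted from \cite{LwS}. By contrast, you prove local principality first, get (iii) by checking injectivity prime-by-prime, derive (ii) from (iii) via the structure of invertible modules over an Artinian ring, and recover (i) as the special case $\a=mA$. Your route is self-contained and conceptually clean; the paper's route for (ii) is a shade slicker in that it avoids invoking the Artinian decomposition, but at the cost of outsourcing (i) and (iii) to another reference.

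One small wording issue: the assertion ``$\xi_\p\in A_\Q^\ast$'' is not quite the right formulation, since $A_\p$ need not embed in $A_\Q$ when $A$ is not a domain. What you actually use, and what does follow from $\xi_\p\eta_\p\in A_\p^\ast$, is that multiplication by $\xi_\p$ is injective on any $A_\p$-module sitting inside $(A_\Q)_\p$; this suffices for the injectivity of $\mu_\p$. The argument stands once phrased this way.
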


\begin{proof}
The proof of (i) is the same as the proof of Lemma 11.4(i) of \cite{LwS}.
Now (ii) follows by letting $m=\#(A/\a)$, so that
$mA \subset \a$, and applying (i)
to show $I/mI \cong A/mA$ as $A$-modules. 
Tensoring with $A/\a$
we have 
$$
I/\a I \cong (I/mI) \otimes_{A/mA}(A/\a)  
\cong (A/mA) \otimes_{A/mA}(A/\a) \cong A/\a
$$
as $A$-modules and as $A/\a$-modules, giving (ii).
The proof of (iii) is the same as the  proof of Lemma 11.4(iii) of \cite{LwS}. 
\end{proof}

\begin{thm}
\label{LwSthm}
Suppose $A$ is a CM-order and $L$ is an $A$-lattice.
Then $L$ is invertible as an $A$-lattice if and only if
there exist a 
fractional $A$-ideal $I \subset A_\Q$  and an element 
$w \in (A^+_\Q)_{\gg 0}$ such that
\begin{enumerate}
\item
$I\bar{I} = Aw$ and
\item
$L$ and $L_{(I,w)}$ are
isomorphic as $A$-lattices.
\end{enumerate}
\end{thm}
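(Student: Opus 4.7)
The plan is to handle the two directions separately, using Lemma \ref{fractideallem} and Proposition \ref{latticeequivdefs1} as the main tools.

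For the ``if'' direction, suppose we are given $I$ and $w$ with $I\bar I = Aw$. Theorem \ref{Iwmisclem} realizes $L_{(I,w)}$ as the $A$-module $I$ with $\varphi_{L_{(I,w)}}(x\otimes \bar y) = x\bar y/w$. I would factor this map as the composition $I\otimes_A \bar I \to I\bar I \to A$, where the first arrow is multiplication of ideals and the second is division by $w$. The first map is an isomorphism by Lemma \ref{fractideallem}(iii) (noting that $I$ is invertible as a fractional $A$-ideal, since $I\bar I = Aw$), and the second is an isomorphism since $I\bar I = wA$. Hence $\varphi_{L_{(I,w)}}$ lands in $A$ and is an $A$-module isomorphism, so $L_{(I,w)}$ is invertible.

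For the ``only if'' direction, suppose $L$ is an invertible $A$-lattice. Then $L$ is an invertible $A$-module, so by Lemma \ref{Linvremarks} we have $L_\Q \isom A_\Q$ as $A_\Q$-modules. I would fix such an isomorphism; its restriction to $L$ identifies $L$ with a fractional $A$-ideal $I \subset A_\Q$. Under this identification, $\varphi_L$ becomes an $A$-module isomorphism $\varphi \colon I\otimes_A \bar I \isom A$. Composing with the inverse of the multiplication isomorphism $I\otimes_A \bar I \isom I\bar I$ from Lemma \ref{fractideallem}(iii), I obtain an $A$-module isomorphism $I\bar I \isom A$. After $\Q$-linear extension this is an $A_\Q$-module endomorphism of $A_\Q$, hence multiplication by some $c \in A_\Q^\ast$. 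Setting $w = 1/c$ then yields $\varphi(x\otimes \bar y) = x\bar y/w$ and $I\bar I = wA$.

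The remaining step---and the one I expect to require the most care---is verifying that $w \in (A^+_\Q)_{\gg 0}$ and that the identification $L \cong I$ is an $A$-lattice isomorphism $L \cong L_{(I,w)}$. Property (ii)(b) of Proposition \ref{latticeequivdefs1} yields $x\bar y/w = \overline{y\bar x/w} = x\bar y/\bar w$ for all $x,y \in I$, forcing $w = \bar w$; hence $w$ lies in $A^+_\Q$. For each $\sigma \in \Rhom(A_\Q,\C)$, I would pick $x \in I$ with $\sigma(x) \neq 0$, which is possible because $I$ spans $A_\Q$ over $\Q$; property (ii)(c) of Proposition \ref{latticeequivdefs1} then gives $\sigma(x\bar x/w) = |\sigma(x)|^2/\sigma(w) \in \R_{\ge 0}$, and combined with $\sigma(w) \in \R^\ast$ (which uses $w = \bar w$ together with $w \in A_\Q^\ast$) this forces $\sigma(w) > 0$, establishing $w \in (A^+_\Q)_{\gg 0}$. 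Finally, the identification $L \cong I$ transports $\varphi_L$ to the map $x \otimes \bar y \mapsto x\bar y/w = \varphi_{L_{(I,w)}}(x \otimes \bar y)$, so taking $\Tr$ shows it intertwines the inner products and is therefore an $A$-lattice isomorphism $L \cong L_{(I,w)}$.
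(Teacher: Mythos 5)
Your proof is correct and follows essentially the same route as the paper's: the forward direction factors $\varphi_{L_{(I,w)}}$ through $I\bar I$ via Lemma \ref{fractideallem}(iii) and division by $w$, and the converse identifies $L$ with a fractional ideal via Lemma \ref{Linvremarks}, extracts $w$ from a unit of $A_\Q$, and then uses Proposition \ref{latticeequivdefs1}(ii)(b),(c) to get $w=\bar w$ and $\psi(w)>0$ (the paper does the last two steps with a single $x\in I\cap\Q_{>0}$, which is marginally cleaner than choosing $x$ per embedding, but the content is identical).

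One small point to tighten in the converse direction: when you invoke Lemma \ref{fractideallem}(iii) to say the multiplication map $I\otimes_A\bar I\to I\bar I$ is an isomorphism, that lemma assumes $I$ is an \emph{invertible fractional $A$-ideal} in the sense of the introduction ($IJ=A$ for some fractional ideal $J$), whereas at that stage you only know $I\cong L$ is an invertible $A$-\emph{module}. This is easily repaired: since $\varphi\colon I\otimes_A\bar I\to A$ is an isomorphism and multiplication $I\otimes_A\bar I\to I\bar I$ is surjective, $I\bar I$ is a cyclic $A$-module $Av$ with $v\in A_\Q^\ast$ (as $I\bar I$ spans $A_\Q$), so $I\cdot(v^{-1}\bar I)=A$ — the same argument used in the proof of Lemma \ref{invDelta}. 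The paper sidesteps this entirely by not using Lemma \ref{fractideallem}(iii) in this direction: it reads off $\varphi(x\otimes\bar y)=x\bar y/w$ from a unit of $A_\Q$ and then concludes $I\bar I/w=\varphi(I\otimes_A\bar I)=A$ directly from surjectivity of $\varphi$.
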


\begin{proof}
Suppose there exist a 
fractional $A$-ideal $I \subset A_\Q$  and an element
$w \in (A^+_\Q)_{\gg 0}$ 
satisfying (i) and (ii).
By Lemma \ref{fractideallem}(iii) we have
$$
I \otimes_A \bar{I} \isom I\bar{I} \isom A, \quad x\otimes \bar{y} \mapsto x\bar{y} \mapsto 
\frac{x\bar{y}}{w}.
$$
Thus the composition 
$\varphi: L \otimes_A \bar{L} = I \otimes_A \bar{I}  \isom A$ 
is an isomorphism, and 
$$
\Tr_{A_\Q/\Q}\left(\varphi(x\otimes \bar{y})\right) =
\Tr_{A_\Q/\Q}\left(\frac{x\overline{y}}{w}\right) = 
\langle x,y \rangle_{L_{(I,w)}} = \langle x,y \rangle_{L},
$$
so $\varphi = \varphi_L$. 

Conversely, suppose that $L$ is an invertible $A$-lattice. 
Extending $\Q$-linearly 
the map $\varphi$ from Proposition \ref{latticeequivdefs1}
we have an isomorphism 
$
\varphi : L_\Q \otimes_{A_\Q} \bar{L}_\Q \isom A_\Q
$
as $A_\Q$-modules.
Lemma \ref{Linvremarks} gives that
$L_\Q$ and $A_\Q$ are isomorphic as $A_\Q$-modules,
so we may assume $L_\Q=A_\Q$.
Then $L$ is a finitely generated $A$-submodule of $A_\Q$ spanning
$A_\Q$ over $\Q$, so $L = I$ for some fractional ideal $I$.
We may then take $\bar{L} = \bar{I}$.
The inclusion $I \subset A_\Q$ induces an isomorphism $I_\Q \isom A_\Q$,
which induces an  $A_\Q$-module isomorphism 
$f : I_\Q \otimes_{A_\Q} \bar{I}_\Q \isom A_\Q \otimes_{A_\Q} A_\Q$.
Letting $i$ be the isomorphism 
$
i : A_\Q \otimes_{A_\Q} A_\Q \isom A_\Q$, $x\otimes y \mapsto xy,
$
then the composition $i\circ f\circ \varphi^{-1} : A_\Q \isom A_\Q$ is an
$A_\Q$-module isomorphism
and thus is multiplication by a unit $w\in A_\Q^\ast$. 
So the isomorphism  
$
\varphi : I \otimes_{A} \bar{I} = L \otimes_{A} \overline{L} \isom A$
takes $x\otimes y \in I \otimes_{A} \bar{I}$ to $x\bar{y}/w \in A$,
so $I\bar{I}/w = A$. 

Suppose $x\in I\cap\Q_{>0}$.
Then  
$$
x^2/w = \varphi(x\otimes\bar{x}) = \overline{\varphi(x\otimes\bar{x})}
=\overline{x^2/w} = x^2/\bar{w},
$$ 
so $w=\bar{w}$.
Further, $x^2/w \in A^+_{>0}$, so
for all $\psi\in\Rhom(A_\Q,\C)$ we have $\psi(x^2/w) \in\R_{\ge 0}$, so 
$\psi(w) \ge 0$.
Since $w\in A_\Q^\ast$, for all $\psi\in\Rhom(A_\Q,\C)$ we have $\psi(w) \neq 0$.
Thus $w \in (A^+_\Q)_{\gg 0}$, and
$L$ and $L_{(I,w)}$ are $A$-isomorphic.
\end{proof}

The following result will be used to prove 
Propositions \ref{elemprop2pf} and \ref{ingredient4pf}.

\begin{cor}
\label{egothicacor}
If $A$ is a CM-order, $L$ is an invertible $A$-lattice,
and $\a \subset A$ is an ideal of finite index, then
there exists $e_\a \in L$ such that 
$(A/\a)e_\a = L/\a L.$ 
\end{cor}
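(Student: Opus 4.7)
The plan is to reduce the claim to a statement about fractional $A$-ideals, which is already handled by Lemma \ref{fractideallem}. First I would apply Theorem \ref{LwSthm} to the invertible $A$-lattice $L$: this gives a fractional $A$-ideal $I \subset A_\Q$ and an element $w \in (A_\Q^+)_{\gg 0}$ with $I\bar{I} = Aw$ such that $L$ and $L_{(I,w)}$ are isomorphic as $A$-lattices, and in particular isomorphic as $A$-modules. By Notation \ref{IwGlatdefn}, the underlying $A$-module of $L_{(I,w)}$ is $I$, so $L \cong I$ as $A$-modules.

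Next I would observe that the relation $I\bar{I} = Aw$ exhibits $I$ as an invertible fractional $A$-ideal, with inverse $w^{-1}\bar{I}$ (note that $w \in A_\Q^\ast$ by Theorem \ref{Iwmisclem}(ii)). Since $\a$ has finite index in $A$, Lemma \ref{fractideallem}(ii) applies and gives an isomorphism $I/\a I \isom A/\a$ both as $A$-modules and as $A/\a$-modules. Combining with the isomorphism $L \cong I$, reducing modulo $\a L$ yields an isomorphism $L/\a L \isom A/\a$ of $A/\a$-modules.

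Finally, I would take $e_\a \in L$ to be any lift of a generator of the free rank-one $A/\a$-module $L/\a L$ (for instance, the preimage of $1 \in A/\a$ under the composite isomorphism). By construction, the $A/\a$-submodule $(A/\a)e_\a$ of $L/\a L$ is all of $L/\a L$.

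No step here appears to be a genuine obstacle: the work has been done in Theorem \ref{LwSthm} (turning an abstract invertible $A$-lattice into an invertible fractional ideal) and in Lemma \ref{fractideallem}(ii) (showing that invertible fractional ideals are locally free modulo finite-index ideals). The corollary is essentially just the assembly of these two inputs.
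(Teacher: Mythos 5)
Your proposal is correct and follows exactly the route of the paper's own (one-line) proof: apply Theorem \ref{LwSthm} to realize $L$ as $L_{(I,w)}$ with $I$ an invertible fractional $A$-ideal, then invoke Lemma \ref{fractideallem}(ii) to get $L/\a L\cong A/\a$ and lift a generator. You have merely spelled out the details that the paper leaves implicit.
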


\begin{proof}
This follows directly from Theorem \ref{LwSthm} and Lemma \ref{fractideallem}(ii).
\end{proof}

In Algorithm 1.1 of
\cite{modules} we obtained a deterministic polynomial-time algorithm
that on input a finite commutative ring $R$ 
and a finite $R$-module $M$,
decides whether there exists $y\in M$ such that $M = Ry$, and if there is, 
finds such a $y$.
Applying this with $R=A/\a$ and
$M=L/\a L$, gives the  algorithm in the following result,
which is an analogue of Proposition 10.1 of \cite{LwS}.

\begin{prop}
\label{elemprop}
There is a deterministic  polynomial-time algorithm that,
given a CM-order $A$, an $A$-lattice $L$, and an ideal $\a \subset A$
of finite index,  
decides whether there exists $e_\a\in L$ such that 
$
(A/\a)e_\a = L/\a L,
$ 
and if there is, finds one.
\end{prop}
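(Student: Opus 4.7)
The plan is to reduce the problem to an application of Algorithm 1.1 of \cite{modules}, which handles exactly the analogous question for a finite commutative ring and a finite module over it. Concretely, I would set $R = A/\a$ and $M = L/\a L$, and then run that algorithm on $(R, M)$. The element $e_\a \in L$ we seek is precisely a lift of a generator $y$ of $M$ as an $R$-module, and the existence question for $e_\a$ is exactly the existence question for $y$.

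First I would verify that the reduction is valid and polynomial-time: since $\a$ has finite index in $A$, the quotient $R = A/\a$ is a finite commutative ring, and $L/\a L$ is a finite $R$-module (its cardinality is bounded by $(\#R)^{\rk_\Z L}$). Given the specification of $A$, $L$, and $\a$ in terms of $\Z$-bases, a presentation of $R$ and $M$ as data structures of the type required by the algorithm of \cite{modules} is obtained by standard Hermite/Smith normal form computations on integer matrices, which run in polynomial time. The $R$-module structure on $M$ is inherited from the $A$-module structure on $L$ and can be written down explicitly from the structure constants $d_{ijk}$.

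Next I would run Algorithm 1.1 of \cite{modules}. If it returns ``no'', then no $e_\a$ exists, because the existence of $e_\a \in L$ with $(A/\a)e_\a = L/\a L$ is literally equivalent to the existence of a generator of $M$ as an $R$-module (the image of $e_\a$ in $M$ is such a generator, and conversely any lift of such a generator serves as $e_\a$). If it returns a generator $y \in M$, I would lift $y$ arbitrarily to an element $e_\a \in L$; by construction $(A/\a)e_\a$ reduces to $Ry = M$ in $L/\a L$, so $(A/\a)e_\a = L/\a L$.

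There is essentially no serious obstacle here: the only thing to check carefully is that the sizes of the finite ring $R$ and module $M$ are polynomially bounded in the input size, so that invoking Algorithm 1.1 of \cite{modules} yields overall polynomial running time. This follows from the bounds in Proposition \ref{LLLlem} applied to $L$ (after LLL-reduction, which we assume has been done) together with the hypothesis that $\a$ is specified by a $\Z$-basis, so $\#(A/\a)$ is computable and polynomially bounded by the input. With these bookkeeping points in place, correctness and polynomial running time are both immediate from the corresponding properties of Algorithm 1.1 of \cite{modules}.
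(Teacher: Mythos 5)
Your proposal is correct and is essentially identical to the paper's argument, which likewise just applies Algorithm 1.1 of \cite{modules} with $R=A/\a$ and $M=L/\a L$ and notes that a lift of a generator of $M$ serves as $e_\a$. The only caution is that it is the bit-length of the data describing $R$ and $M$ (not their cardinalities, which can be exponentially large) that is polynomially bounded, and that is what the algorithm of \cite{modules} requires.
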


If $L$ is an {\em invertible} $A$-lattice then $e_\a$ exists
by Corollary \ref{egothicacor}.

Recall the definition of vigilant in Definition \ref{vigilantdef}.

\begin{defn}
\label{gooddefn}
Suppose $A$ is a reduced order and
$\a$ is an ideal of $A$. Let
$$
V(\a) = \{ \p\in\Maxspec(A) : \p \supset \a\}. 
$$
We say $\a$ is {\em good} if $\#(A/\a) < \infty$
and $V(\a)$ is vigilant.
\end{defn}

In other words, $\a$ is good if $\#(A/\a) < \infty$
and for all $\r\in\Minspec(A)$ we have $\r + \a \neq A$.

\begin{lem}
\label{mAgood}
If $A$ is a reduced order and $m\in\Z_{>1}$,
then $V(mA)$ is vigilant and $mA$ is good.
\end{lem}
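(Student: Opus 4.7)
The plan is to check the two conditions of goodness separately. Finiteness of $A/mA$ is automatic: the additive group of $A$ is $\Z^n$ where $n = \rk_\Z(A)$, so $\#(A/mA) = m^n < \infty$, and the hypothesis $m > 1$ ensures that $mA$ is in fact a proper ideal.

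The substantive point is vigilance of $V(mA)$. Fix $\r \in \Minspec(A)$; I need to produce a maximal ideal of $A$ containing both $\r$ and $mA$. I would argue as follows. Because $A$ is a reduced order, the ring $A/\r$ is a domain that is a finitely generated $\Z$-module, and its field of fractions $A_\r$ (the localization appearing in the proof of Proposition \ref{xxlowerbd}) is a number field. Thus $A/\r$ is an order in a number field, so in particular $A/\r$ is $\Z$-torsion-free of positive $\Z$-rank $\dd_\r = \rk_\Z(A/\r) \ge 1$. Consequently $m(A/\r)$ has index $m^{\dd_\r} \ge m > 1$ in $A/\r$, so $m(A/\r)$ is a proper ideal of $A/\r$.

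To finish, pick any maximal ideal $\bar{\L}$ of $A/\r$ containing $m(A/\r)$, which exists by Zorn's lemma (or by finiteness of $(A/\r)/m(A/\r)$). The preimage $\L$ of $\bar{\L}$ under the projection $A \onto A/\r$ is then a maximal ideal of $A$ containing $\r$, and $m\in\L$ forces $mA\subset\L$. Hence $\L \in V(mA)$ and $\r \subset \L$, witnessing vigilance at $\r$. Since $\r$ was arbitrary in $\Minspec(A)$, the set $V(mA)$ is vigilant, and combined with finiteness of $A/mA$ this shows that $mA$ is good.

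The only step requiring any care is identifying $A/\r$ as an order in a number field, which guarantees $\dd_\r \ge 1$ and hence the properness of $m(A/\r)$; this is a standard property of reduced orders, and everything else is essentially bookkeeping.
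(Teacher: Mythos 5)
Your proof is correct and follows essentially the same route as the paper: the paper also reduces to the quotient $A/\r$ for each minimal prime $\r$, observes that $A/\r$ is an order of positive rank so that $m(A/\r)\neq A/\r$ when $m>1$, and concludes $\r+mA\neq A$, whence a maximal ideal contains both $\r$ and $mA$. Your version merely spells out the intermediate steps (the index computation and the lift of a maximal ideal of $A/\r$) that the paper leaves implicit.
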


\begin{proof}
Suppose $\r\in\Minspec(A)$.
Then $A/\r$ is an order.
Since $m > 1$ we have $m(A/\r)\neq A/\r$,
so $\r + mA \neq A$. 
The desired result now follows.
\end{proof}

The following result is an analogue of Lemma 10.2 of \cite{LwS}.

\begin{lem} 
\label{reglem2}
Suppose  $A$ is a CM-order,
$L$ is an $A$-lattice, 
and $e\in L$.
\begin{enumerate}
\item
Suppose $m\in\Z_{>1}$. Then
$
(A/mA)e = L/m L
$ 
 if and only if $L/(Ae)$ 
is finite of order coprime to $m$.
\item
Suppose $\rk_\Z(L)=\rk_\Z(A)$ 
and
$L/(A e)$ is finite. Then the map
$
A \to A e$,  $a\mapsto ae
$
is an isomorphism of $A$-modules, i.e., $e$ is regular.
\item
Suppose  
$\a$ is a good ideal of $A$ 
and 
$(A/\a)e = L/\a L$.
Then $L/(A e)$ is finite  
and $L_\Q = A_\Q\cdot e$.
\end{enumerate}
\end{lem}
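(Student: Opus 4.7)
The plan is to handle (i) and (ii) by straightforward reductions to elementary facts about finitely generated abelian groups, and to handle (iii) by combining local Nakayama with the vigilance hypothesis built into ``good''.

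For (i), I would rewrite the hypothesis $(A/mA)e = L/mL$ as $Ae + mL = L$, equivalently $m\cdot(L/Ae) = L/Ae$. Since $L$ is finitely generated as an abelian group, so is $N := L/Ae$. A finitely generated abelian group $N$ with $mN = N$ for $m>1$ must have trivial free part (otherwise $mN \subsetneq N$), and then multiplication by $m$ is a bijection on the finite torsion part, forcing $N$ to be finite of order coprime to $m$. The converse is immediate: if $N$ is finite with $\gcd(\#N,m)=1$, multiplication by $m$ is a bijection on $N$, so $mN=N$.

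For (ii), the hypotheses $\rk_\Z(L)=\rk_\Z(A)$ and $L/Ae$ finite give $\rk_\Z(Ae)=\rk_\Z(A)$. The surjective $A$-linear map $A\to Ae$, $a\mapsto ae$, is then a surjection of free abelian groups of equal rank, so its kernel is a finite subgroup of the torsion-free group $A$, hence trivial.

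For (iii), first I would translate $(A/\a)e = L/\a L$ into $Ae + \a L = L$, and hence $\a\cdot(L/Ae) = L/Ae$. Fix $\p \in V(\a)$. Localizing at $\p$, the quotient $M := L_\p/A_\p e_\p$ is a finitely generated $A_\p$-module satisfying $\a A_\p\cdot M = M$. Applying Proposition~\ref{Nakayama} to $M$ over $A_\p$ with ideal $\a A_\p$ produces $x \in 1+\a A_\p$ with $xM=0$; since $\a A_\p \subseteq \p A_\p$, the element $x$ lies outside the maximal ideal of the local ring $A_\p$, so $x$ is a unit and $M=0$, i.e.\ $L_\p = A_\p e_\p$. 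Next, for each $\r \in \Minspec(A)$ the vigilance of $V(\a)$ produces some $\p \in V(\a)$ with $\r \subset \p$, and further localizing from $A_\p$ to $A_\r$ yields $L_\r = A_\r e_\r$. Using the decomposition $A_\Q \cong \prod_{\r\in\Minspec(A)} A_\r$ (valid since $A$ is reduced) and the induced decomposition $L_\Q \cong \prod_\r L_\r$, I conclude $L_\Q = A_\Q\cdot e$. This forces $\rk_\Z(Ae) = \rk_\Z(L)$, so $L/Ae$ is finite.

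The main obstacle in (iii) is propagating local cyclicity at primes containing $\a$ to a global statement over every irreducible component of $\Spec(A)$; this is exactly what the vigilance of $V(\a)$ delivers, and without it the conclusion could fail.
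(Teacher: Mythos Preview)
Your argument is correct. Parts (i) and (ii) match the standard finitely-generated-abelian-group reasoning that the paper invokes by reference to \cite{LwS}.

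For (iii) you take a genuinely different route from the paper. The paper applies Proposition~\ref{Nakayama} \emph{once, globally}: from $\a\,(L/Ae)=L/Ae$ it extracts a single $x\in 1+\a$ annihilating $L/Ae$, and then uses goodness directly to conclude $x\notin\r$ for every $\r\in\Minspec(A)$ (since $1\notin\a+\r$), whence $x\in A_\Q^\ast$ and $(L/Ae)_\Q=0$. You instead localize at each $\p\in V(\a)$, apply Nakayama there to get $L_\p=A_\p e$, and then invoke vigilance to reach every minimal prime and reassemble via $A_\Q\cong\prod_\r A_\r$. Both arguments hinge on exactly the same content of ``good'' (every irreducible component meets $V(\a)$); the paper's version is a line shorter and avoids localization entirely, while yours makes the local-to-global mechanism more transparent and would adapt readily if one only knew cyclicity of $L/\a L$ locally.
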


\begin{proof}
The proof of Lemma 10.2 of \cite{LwS} 
with $\Z\langle G\rangle$ replaced by $A$ shows (i) and (ii).

For (iii),
we have $Ae + \a L = L$, so $\a(L/Ae)=L/Ae$.
By Proposition \ref{Nakayama} (Nakayama's Lemma) there exists 
$x\in 1 + \a \subset A$ such that $x(L/Ae) = 0$.
Since $\a$ is good,
for all $\r\in\Minspec(A)$ we have $\a + \r \neq A$;
thus $1 \notin\a + \r$.
Since $x\in 1 + \a$, it follows that $x\notin \r$ for all $\r\in\Minspec(A)$,
so $x\in A_\Q^\ast$.
Since $x(L/Ae)_\Q = 0$ we have $(L/Ae)_\Q = 0$, so $L/Ae$ is finite
and $L_\Q = A_\Q\cdot e$.
\end{proof}

The following lemma will be used to prove Proposition \ref{elemprop2pf}.
It serves as an analogue of Lemma 11.5 of \cite{LwS}.

\begin{lem}
\label{invlemnew}
Suppose $A$ is a CM-order, $L$ is an 
$A$-lattice, and $\rank_\Z(L)= \rank_\Z(A)$.
Suppose $e_2\in L$ satisfies  
$
(A/2A)e_2 = L/2L,
$ 
and let $z = e_2\overline{e_2} \in A_\Q$
and
$I = \{ a\in A_\Q : ae_2\in L\}$.
Then: 
\begin{enumerate}
\item
$L/(A e_2)$ is finite, $e_2$ is regular, 
$L_\Q = A_\Q e_2$, 
and $L = Ie_2$;
\item
$z \in A_\Q^\ast \cap (A^+_\Q)_{\gg 0}$;
\item
if $L$ is invertible as an $A$-lattice 
and $w = z^{-1}$,
then 
$I\bar{I} = Aw$,  the map $I \to L$, $a \mapsto ae_2$ induces an
$A$-isomorphism from  $L_{(I,w)}$ to $L$,
and $$\varphi_L(x\otimes\bar{y}) = \sigma^{-1}(x)\overline{\sigma^{-1}(y)}z$$
for all $x,y\in L$, where $\sigma : I \isom L$, $a\mapsto ae_2$.
\end{enumerate}
\end{lem}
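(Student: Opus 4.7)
The plan is to verify the three parts in order, with (i) and (ii) following cleanly from the general machinery of Sections \ref{CMorderssect} and \ref{invAlatsect}, and (iii) being the structural heart of the statement.

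For (i), the hypothesis $(A/2A)e_2 = L/2L$ combined with the fact that $2A$ is a good ideal (Lemma \ref{mAgood}) puts us in position to apply Lemma \ref{reglem2}(iii): this yields that $L/(Ae_2)$ is finite and $L_\Q = A_\Q e_2$. Since $\rk_\Z(L) = \rk_\Z(A)$, Lemma \ref{reglem2}(ii) then gives that $e_2$ is regular, i.e.\ $A \to Ae_2$, $a\mapsto ae_2$ is injective. To conclude $L = Ie_2$, I would argue that every $x\in L$ lies in $L_\Q = A_\Q e_2$, so it has the form $ae_2$ with $a\in A_\Q$ uniquely determined by regularity, and such an $a$ lies in $I$ by definition; the reverse inclusion $Ie_2 \subset L$ is tautological. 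A side remark is that $I$ is itself a fractional $A$-ideal, being in $A_\Q$-linear bijection with the finitely generated $A$-module $L$ that spans $L_\Q$ of the same $\Q$-dimension as $A_\Q$.

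For (ii), observe that $z = e_2 \overline{e_2}$ is by Notation \ref{xynotat} nothing but $\varphi_L(e_2\otimes\overline{e_2})$, which lies in $\hat A \subset A_\Q$ by Proposition \ref{latticeequivdefs1}(i). Since $e_2$ is regular by (i), Proposition \ref{regularequiv} gives $z \in \hat A^+_{\gg 0}$; unpacking the definition of $\hat A^+_{\gg 0}$ from Definition \ref{Ahatetcdef} says precisely that $\psi(z) \in \R_{>0}$ for every $\psi\in\Rhom(A_\Q,\C)$. This forces $z\in A_\Q^\ast$ (nonzero under every ring homomorphism) and $z \in (A^+_\Q)_{\gg 0}$.

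Part (iii) is the main content and the step I expect to require the most care. Assuming $L$ is invertible, I would extend $\varphi_L$ $\Q$-linearly to an $A_\Q$-module isomorphism $\varphi_L^\Q : L_\Q \otimes_{A_\Q} \bar L_\Q \isom A_\Q$ and use $L_\Q = A_\Q e_2$ to compute
\[
\varphi_L^\Q(ae_2 \otimes \overline{be_2}) \;=\; a\bar b\,\varphi_L^\Q(e_2 \otimes \overline{e_2}) \;=\; a\bar b\, z
\qquad (a,b\in A_\Q),
\]
which is the claimed formula $\varphi_L(x\otimes\bar y) = \sigma^{-1}(x)\overline{\sigma^{-1}(y)}\,z$. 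Since $L = Ie_2$ and $\bar L = \bar I \bar{e_2}$, the image $\varphi_L(L\otimes_A \bar L) = A$ (because $\varphi_L$ is invertibility-onto) equals the $A$-submodule $I\bar I\, z$ of $A_\Q$, yielding $I\bar I = A z^{-1} = Aw$. The map $\sigma : I \to L$, $a\mapsto ae_2$, is $A$-linear and bijective by (i); it remains to check it is an isometry from $L_{(I,w)}$ to $L$. For $a,b\in I$, the inner product on $L_{(I,w)}$ is
\[
\langle a,b\rangle_{L_{(I,w)}} \;=\; \Tr_{A_\Q/\Q}\bigl(a\bar b/w\bigr) \;=\; \Tr_{A_\Q/\Q}(a\bar b\, z),
\]
while on $L$ it is $\Tr_{A_\Q/\Q}(\varphi_L(ae_2\otimes \overline{be_2})) = \Tr_{A_\Q/\Q}(a\bar b\,z)$ by the formula above, so $\sigma$ preserves inner products. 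The one delicate point I would be most careful about is that the identity $\varphi_L(ae_2\otimes\overline{be_2}) = a\bar b\,z$ for $a,b\in I \subset A_\Q$ is not quite an $A$-linearity statement, since $a\bar b$ need not lie in $A$; one must either pass through the $A_\Q$-linear extension $\varphi_L^\Q$, or clear denominators by multiplying by a suitable integer and then dividing. Either route works, and both give (iii).
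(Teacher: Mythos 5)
Your proposal is correct and follows essentially the same route as the paper: Lemma \ref{reglem2} for (i), the positivity of $\varphi_L(e_2\otimes\overline{e_2})$ for (ii), and the computation $\varphi_L(L\otimes_A\bar L)=I\bar I z$ together with the trace identity for (iii). The only (harmless) variation is in (ii), where you invoke Proposition \ref{regularequiv} to get $z\in\hat A^+_{\gg 0}$ directly, whereas the paper first gets $z\in(A_\Q^+)_{>0}$ and then shows multiplication by $z$ is injective on $A_\Q$; your explicit care about extending $\varphi_L$ to $A_\Q$-linearity in (iii) is a point the paper leaves implicit.
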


\begin{proof}
In the notation of  Proposition \ref{latticeequivdefs1} we have
$z = e_2\overline{e_2} = z_{e_2,e_2} = \varphi(e_2 \otimes \overline{e_2})$,
and
$\langle ae_2,ae_2 \rangle = \Tr_{A_\Q/\Q}(a\bar{a}z)$ 
for all $a \in A$.
By Proposition \ref{latticeequivdefs1}(ii)(c) we have
$z \in (A^+_\Q)_{> 0}$.

By Lemma \ref{reglem2} we have that $L/(A e)$ is finite, 
$e_2$ is regular,
the map $A_\Q \to L_\Q$, $a \mapsto ae_2$ is an isomorphism,
and $L_\Q = A_\Q e_2$. 
By the definition of $I$,  
we now have $L = Ie_2$.
This gives (i).

If $a \in A$ and $az=0$, then 
$\langle ae_2,ae_2 \rangle = \Tr_{A_\Q/\Q}(a\bar{a}z) = 0$,
so $ae_2=0$, so $a=0$. Thus multiplication by $z$ is injective, and
therefore surjective, on $A_\Q$.
Thus $z \in A_\Q^\ast$.
Since $z \in (A^+_\Q)_{> 0}$ we now have
$z \in (A^+_\Q)_{\gg 0}$, giving (ii).

Suppose $L$ is invertible. 
Then 
$A = \varphi_L(L\otimes_A \overline{L}) = 
\varphi_L(Ie_2\otimes_A \overline{Ie_2}) = 
I\overline{I}z,$
and $\langle a,b \rangle_{L_{(I,w)}} = 
\Tr_{A_\Q/\Q}(a\bar{b}/w) =  \Tr_{A_\Q/\Q}(a\bar{b}z) = 
\langle ae_2,be_2 \rangle_L$ for all $a,b\in A$.

If $x=ae_2$ and $y=be_2$ with $a,b\in I$, then
$\varphi_L(x\otimes\bar{y}) = x\bar{y} = (ae_2)(\overline{be_2}) = a\bar{b}z$
as desired, giving (iii).
\end{proof}

Algorithm \ref{elemprop2} and Proposition \ref{elemprop2pf} below
extend Algorithm 10.3 and Proposition 10.4 of \cite{LwS}.

\begin{algorithm}
\label{elemprop2}
Given a CM-order $A$ and an $A$-lattice $L$, 
the algorithm decides whether $L$ is invertible,
and if so, outputs the map 
$\varphi : L \otimes_{A} \bar{L} \isom A$ from
Proposition \ref{latticeequivdefs1}.

Steps:
\begin{enumerate}
\item
Check whether $\rank_\Z(L)= \rank_\Z(A)$. 
If it does not, 
output ``no'' and stop.
\item
Run the algorithm associated with
Proposition \ref{elemprop} to decide if there exists
$e_2\in L$ such that 
$
(A/2A)e_2 = L/2L,
$ 
and if so, to compute such an $e_2$. If not, output ``no'' and stop.
\item
Use linear algebra over $\Z$ to 
compute a $\Z$-basis for $I = \{ a\in A_\Q : ae_2\in L\}$. 
\item
Solve for $z\in A_\Q$ in the system of linear equations
$$
\Tr_{A_\Q/\Q}(\alpha_iz) = \langle \alpha_ie_2,e_2\rangle
$$
where $\{ \alpha_i\}_{i=1}^n$ is the $\Z$-basis used for $A$.
\item
Output ``no'' and stop if $I\bar{I}z\neq A$, and otherwise output ``yes'' and 
the map $$\varphi : L \otimes_{A} \bar{L} \to A, \quad
x\otimes\bar{y} \mapsto \sigma^{-1}(x)\overline{\sigma^{-1}(y)}z$$
where $\sigma : I \isom L$, $a\mapsto ae_2$.
\end{enumerate}
\end{algorithm}

\begin{prop}
\label{elemprop2pf}
Algorithm \ref{elemprop2} is correct and
runs in  polynomial time.
\end{prop}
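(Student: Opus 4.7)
The plan is to handle correctness and polynomial-time runtime separately, leaning heavily on Lemma \ref{invlemnew} and Corollary \ref{egothicacor}.

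For correctness, I would proceed through the early-exit branches first. If step (i) outputs ``no'', then $\rank_\Z(L) \neq \rank_\Z(A)$ and $L$ cannot even be an invertible $A$-module, by Lemma \ref{Linvremarks}; in particular it is not an invertible $A$-lattice. If step (ii) outputs ``no'', then no $e_2$ with $(A/2A)e_2 = L/2L$ exists; but if $L$ were an invertible $A$-lattice, Corollary \ref{egothicacor} applied with $\a = 2A$ would produce such an $e_2$, a contradiction. So these rejections are sound.

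Now suppose the algorithm reaches step (iii). Since $e_2$ exists, Lemma \ref{invlemnew}(i) tells us that $L = Ie_2$ with $I = \{a \in A_\Q : ae_2 \in L\}$, that $e_2$ is regular, and that $z = e_2\overline{e_2} \in A_\Q^\ast \cap (A_\Q^+)_{\gg 0}$. The linear system in step (iv) is uniquely solvable for $z$: the form $(a,b) \mapsto \Tr_{A_\Q/\Q}(ab)$ is a non-degenerate $\Q$-bilinear pairing (this follows from Lemma \ref{KalglemBourb}, which gives that $A_\Q$ is reduced, hence \'etale over $\Q$), so $z$ is determined by the values $\Tr_{A_\Q/\Q}(\alpha_i z) = \langle \alpha_i e_2, e_2 \rangle$, and $z = e_2\overline{e_2}$ is a solution. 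By Lemma \ref{invlemnew}(iii), if $L$ is invertible then $I\bar{I}z = A$ and the formula $\varphi_L(x \otimes \bar{y}) = \sigma^{-1}(x)\overline{\sigma^{-1}(y)}z$ computes the structure map $\varphi_L$. Conversely, if $I\bar{I}z = A$, then writing $w = z^{-1} \in (A_\Q^+)_{\gg 0}$ we have $I\bar{I} = Aw$, so $L_{(I,w)}$ is an invertible $A$-lattice by Theorem \ref{LwSthm}; and the map $I \to L$, $a \mapsto ae_2$ is an $A$-isomorphism $L_{(I,w)} \isom L$ that preserves the inner product (by the identity $\langle ae_2, be_2\rangle_L = \Tr(a\bar{b}z) = \langle a,b\rangle_{L_{(I,w)}}$), so invertibility transfers to $L$.

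For polynomial-time runtime, one checks each step in turn. Step (i) is immediate. Step (ii) is polynomial by Proposition \ref{elemprop}. Step (iii) amounts to computing the kernel of the $\Z$-linear map $A_\Q \to L_\Q/L$, $a \mapsto ae_2 \bmod L$, which is standard integer linear algebra; since the input $A$ and $L$ are assumed to be given in LLL-reduced form with polynomial bit-size and $e_2$ itself comes out of a polynomial-time algorithm, $I$ has a polynomial-size $\Z$-basis. Step (iv) is an $n \times n$ rational linear system with polynomially sized coefficients and a unique solution. Step (v) is a standard equality test between two fractional $A$-ideals, reducible to testing whether each of finitely many generators of $I\bar{I}z$ lies in $A$ and vice versa, i.e., integer linear algebra. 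The main obstacle, and the point that deserves the most care, is verifying that all intermediate quantities stay polynomially bounded in bit-size: this is ensured because we always work with LLL-reduced bases (as arranged at the start of Section \ref{CMorderssect}), and the polynomial-time guarantee of Proposition \ref{elemprop} controls the size of $e_2$, from which $I$, $z$, and the final output are obtained by bounded-size linear-algebraic manipulations.
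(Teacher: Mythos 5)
Your proof is correct and follows essentially the same route as the paper's: the same appeals to Lemma \ref{Linvremarks} and Corollary \ref{egothicacor} for the early exits, Proposition \ref{latticeequivdefs1}/the nondegenerate trace pairing to identify $z=e_2\overline{e_2}$, Lemma \ref{invlemnew} for the forward direction of Step (v), and Theorem \ref{LwSthm} for the converse. You merely spell out two points the paper leaves implicit (that $a\mapsto ae_2$ is an isometry $L_{(I,w)}\isom L$ independently of invertibility, and the step-by-step runtime bounds), which is fine.
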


\begin{proof}
If $L$ is invertible, then 
$\rank_\Z(L)= \rank_\Z(A)$ by Lemma \ref{Linvremarks}, and
there exists $e_2$ as in Step (ii) by Corollary \ref{egothicacor}. 

The set $I$ in Step (iii) is clearly a fractional $A$-ideal.
Step (iv) computes  $z\in A_\Q$ such that
$\Tr_{A_\Q/\Q}(x\bar{y}z) = \langle xe_2,ye_2\rangle$
for all $x,y\in I$.
It follows from Proposition \ref{latticeequivdefs1}(i) that  
there is a unique such 
$z$ in ${A_\Q}$,
and 
$z=z_{e_2,e_2} = \varphi(e_2 \otimes \overline{e_2}) = e_2\overline{e_2}$.
By Step (i) and Lemma \ref{invlemnew}, the element $e_2$ is regular,
the map $A_\Q \to L_\Q$, $a \mapsto ae_2$ is an isomorphism
that takes $I$ to $L$, 
and
$z \in A_\Q^\ast \cap (A^+_\Q)_{\gg 0}$.

By Lemma \ref{invlemnew}, if $L$ is invertible, then 
$I\bar{I}z = A$ and Step (v) produces the desired map $\varphi$.
Conversely, if Step (v) determines that $I\bar{I}z= A$,
then the $A$-lattice $L$ is invertible by Theorem \ref{LwSthm}.
\end{proof}

\begin{rem}
To obtain an algorithm that, given a CM-order $A$ and an {\em invertible} 
$A$-lattice $L$, 
outputs $\varphi$, one can simply run Steps (ii)--(v) of Algorithm \ref{elemprop2}
to compute  
the map $\varphi$, without performing the checks
for invertibility.
In the algorithms in this paper, for invertible $A$-lattices
we generally assume  (and suppress mention) that this
has been done, if one needs to perform computations using $\varphi$.
\end{rem}

\section{Short vectors in invertible lattices}
\label{shorttensorsect}

The following theorem generalizes Theorem 12.4 of \cite{LwS}.

\begin{thm}
\label{shortthm} 
Suppose $A$ is a CM-order and $L$ is an $A$-lattice. Then:
\begin{enumerate}

\item
if $L$ is invertible, then the map 
$$
F : \{ \text{$A$-isomorphisms $A \isom L$}\} \to 
\{ \text{short vectors of $L$}\}, \quad \fff \mapsto \fff(1)
$$
is bijective;
\item
if $L$ is invertible and $e \in L$ is short, then 
$e$ generates $L$ as an $A$-module;
\item
 $L$ is $A$-isomorphic to the standard $A$-lattice
if and only if $L$ is invertible and has a short vector;
\item
if $L$ is invertible and $e \in L$ is short, 
then the map 
$$
\mu(A) \to \{\text{short vectors of $L$\}}, \qquad
\zeta \mapsto \zeta e
$$ 
is bijective.
\end{enumerate}
\end{thm}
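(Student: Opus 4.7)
The plan is to prove (i) first, since (ii) and (iii) then follow essentially for free, and (iv) reduces to identifying the $A$-lattice automorphism group of the standard $A$-lattice.

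For the forward direction of (i), given an $A$-lattice isomorphism $f : A \isom L$, set $e = f(1)$, so $f(a) = ae$ by $A$-linearity. Invertibility of $L$ puts $z := \varphi_L(e \otimes \bar e) \in A$. Using that $f$ is an isometry, I compute for every $a \in A$
\[
\Tr_{A_\Q/\Q}(a \cdot z) = \langle ae, e \rangle_L = \langle f(a), f(1)\rangle_L = \langle a, 1\rangle_A = \Tr_{A_\Q/\Q}(a),
\]
and the uniqueness clause of Proposition \ref{latticeequivdefs1}(i) forces $z = 1$, so $e$ is short. For the reverse direction, given a short $e$, define $f : A \to L$ by $f(a) = ae$. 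This is $A$-linear, and $\langle ae, be\rangle_L = \Tr(a\bar b \cdot e\bar e) = \Tr(a\bar b) = \langle a,b\rangle_A$, so $f$ preserves inner products. Since $e\bar e = 1 \in \hat{A}^+_{\gg 0}$, Proposition \ref{regularequiv} makes $e$ regular, hence $f$ is injective. For surjectivity, note that $A$ has rank $n = \rk_\Z A$ and Lemma \ref{Linvremarks} gives $\rk_\Z L = n$, so $f(A) = Ae$ is a full-rank sublattice of $L$; as $f$ is an isometry, $\det(Ae) = \det(A) = |\Delta_{A/\Z}| = \det(L)$ by Lemma \ref{invDelta}, forcing $[L : Ae] = 1$. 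The assignments $f \mapsto f(1)$ and $e \mapsto (a \mapsto ae)$ are then mutually inverse, giving (i).

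Item (ii) is immediate, since the inverse to $F$ shows $L = Ae$. For (iii), if $L$ is $A$-isomorphic to the standard $A$-lattice then $L$ is invertible (invertibility is preserved under $A$-lattice isomorphisms) and has a short vector by (i) applied to the image of $1$; conversely, the reverse direction of (i) exhibits the $A$-isomorphism $A \isom L$ directly from any short vector.

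For (iv), fix a short vector $e \in L$ and the associated $A$-lattice isomorphism $g : A \isom L$, $g(a) = ae$. Composition with $g$ provides a bijection
\[
\Aut_{A\text{-lat}}(A) \isom \{ A\text{-isomorphisms } A \isom L\},\qquad \psi \mapsto g \circ \psi,
\]
so by (i) it suffices to identify $\Aut_{A\text{-lat}}(A)$ with $\mu(A)$. Any such $\psi$ is multiplication by $\zeta := \psi(1) \in A$, and the isometry condition reads $\Tr(a\bar b \cdot \zeta\bar\zeta) = \Tr(a\bar b)$ for all $a, b \in A$, which by uniqueness forces $\zeta\bar\zeta = 1$; Proposition \ref{muAequiv} then gives $\zeta \in \mu(A)$. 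Conversely, any $\zeta \in \mu(A)$ defines such an automorphism. Under the chain of bijections, $\zeta$ corresponds to the short vector $(g \circ \text{mult}_\zeta)(1) = \zeta e$, proving (iv).

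The only non-formal step is the surjectivity in (i), which is the place where invertibility is really used, via the determinant equality of Lemma \ref{invDelta}; everything else reduces to bookkeeping with $\varphi_L$ and the characterization of $\mu(A)$ from Proposition \ref{muAequiv}.
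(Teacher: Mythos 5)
Your proof is correct, and it diverges from the paper's in the one place where something real has to happen: the surjectivity of $F$ in (i). The paper proves that the map $\fff\colon a\mapsto ae$ is onto by a purely module-theoretic argument — it writes $\fff\otimes\bar{\fff}=(\varphi_L)^{-1}\circ\varphi_A$, deduces from the two exact sequences for $\coker(\fff)$ that $L\otimes_A\overline{\coker(\fff)}=0$, and then tensors with $\bar L$ to kill $\coker(\fff)$; this uses nothing beyond the invertibility of $\varphi_L$. You instead argue numerically: $Ae$ is a finite-index sublattice of $L$ (regularity of $e$ plus Lemma \ref{Linvremarks}), and since $a\mapsto ae$ is an isometry, $\det(Ae)=\det(A)=|\Delta_{A/\Z}|=\det(L)$ by Lemma \ref{invDelta}, so the index formula $\det(Ae)=[L:Ae]^2\det(L)$ forces $Ae=L$. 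Both are valid; yours is more concrete but outsources the work to Lemma \ref{invDelta} (whose proof is itself a composition-series argument) and to the standard index–determinant identity, which the paper does not state explicitly but is routine. Your treatment of the forward direction of (i) — verifying $\varphi_L(e\otimes\bar e)=1$ by the trace identity and the uniqueness clause of Proposition \ref{latticeequivdefs1}(i) — just makes explicit the paper's remark that shortness is preserved by $A$-lattice isomorphisms, and your (iv), identifying the $A$-lattice automorphisms of the standard lattice with $\mu(A)$ via Proposition \ref{muAequiv} and transporting along $g$, is the same reduction the paper performs by invoking (iii) to assume $L=A$.
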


\begin{proof}
Suppose $L$ is invertible. First suppose $\fff : A \isom L$ is an $A$-isomorphism.
Since $1\in A$ is short, it follows that 
$\fff(1)\in L$ is short.
Thus, $F$ is well-defined.
Since $\fff(a) = \fff(a\cdot 1) = a\fff(1)$, 
the map $\fff$ is determined by $\fff(1)$.
Thus, $F$ is injective. 

For surjectivity of $F$, let $x\in L$ be short and define
$\fff : A \to L$ by $\fff(a) = ax$.
Then $\fff$ is $A$-linear, $\fff(1)=x$, and $\fff$ is injective
(since $x$ is regular by Proposition \ref{shortequiv}).
The map $\fff$ preserves the lattice structure since for all
$a,b\in A$ we have
$$
\varphi_L (\fff(a)\otimes\overline{\fff(b)}) = 
\varphi_L (ax\otimes\bar{b}\bar{x}) = 
a\bar{b}\varphi_L (x\otimes\bar{x}) = 
a\cdot\bar{b} = 
\varphi_A (a\otimes \bar{b}).
$$
To see that $\fff$ is surjective, consider the exact sequences
$$
A\otimes_A \bar{A} \xrightarrow{{\fff}\otimes\id} L\otimes_A \bar{A} \to (\coker(\fff))\otimes_A \bar{A} \to 0
$$
and
$$
L\otimes_A \bar{A} \xrightarrow{\id\otimes\bar{\fff}} L\otimes_A \bar{L} \to L\otimes_A \overline{(\coker(\fff))} \to 0.
$$
Since $L$ is invertible,  
$$
(\id\otimes\bar{\fff})\circ ({\fff}\otimes\id) = 
{\fff}\otimes \bar{\fff}  = (\varphi_L)^{-1}\circ \varphi_A : 
A\otimes_A \bar{A} \to L\otimes_A \bar{L}
$$
is an isomorphism, so
$\id\otimes\bar{\fff}$ is onto.
Thus 
$L\otimes_A \overline{(\coker(\fff))} = 0$,
so 
$$
A\otimes_A {\coker(\fff)} = L\otimes_A \bar{L}\otimes_A {\coker(\fff)} = 0,
$$
so ${\coker(\fff)} = 0$.
This proves (i).

If $L$ is invertible and $e \in L$ is short, then 
$L=Ae$ by (i), and this gives (ii).

For (iii), it suffices to assume that $L$ is invertible, and in that case
(iii) follows from (i).

For (iv), by (iii) we can (and do) reduce to the case where $L$ is the standard $A$-lattice.
By Proposition \ref{muAequiv}, the short vectors are exactly the roots of unity
in $A$. Now (iv) follows easily.
\end{proof}

By Theorem \ref{shortthm}(iii) and (iv), if $L$ is an invertible $A$-lattice and 
$X$ is the set of short vectors in $L$, then
$X = \mu(A)e$
if $L$ is $A$-isomorphic to the standard $A$-lattice and $e\in X$, 
and $X$ is empty otherwise.
Thus, $X$ might be too large to even write down in polynomial
time.

\section{The Witt-Picard group}
\label{WPsect}

As in the introduction, we define
$\WPic(A)$ to be the quotient of
$$
\{(I,w) : I \text{ is an invertible fractional
$A$-ideal, $w \in (A^+_\Q)_{\gg 0}$,  and $I\cdot\bar{I}=Aw$} 
 \}
$$
by $\{(Av,v\bar{v}) : v\in A_\Q^\ast\}$. 
Just as for the class groups in algebraic number theory, 
$\WPic(A)$ is a finite abelian group (Theorem \ref{WPgpfin} below).

The following result is an analogue of
Theorem 13.3, Proposition 13.4, and
Corollary 14.3 of \cite{LwS}, and
can be proved in a similar manner, but now also making use of
Propositions \ref{latticeequivdefs1} and \ref{latticeequivdefs2}.

\begin{prop}
\label{invertequivcor}
Suppose $A$ is a CM-order and 
$L$, $M$, and $N$ are invertible $A$-lattices. 
Then:
\begin{enumerate}
\item 
 $L \otimes_{A} M$ is an invertible $A$-lattice
with the map 
$$
\varphi_{L \otimes_{A} M}: (L \otimes_{A} M) \otimes_A \overline{(L \otimes_{A} M)} \to A
$$
of Proposition \ref{latticeequivdefs1} given by 
$$
\varphi_{L \otimes_{A} M}((x_1 \otimes y_1) \otimes \overline{(x_2 \otimes y_2)}) = 
\varphi_L(x_1 \otimes \overline{x_2}) \cdot \varphi_M(y_1 \otimes \overline{y_2});
$$
\item 
$\overline{L}$ is an invertible $A$-lattice
with the map $\varphi_{\overline{L}} : \overline{L}\otimes_A L \to A$ 
defined by
$$
\varphi_{\overline{L}}(\bar{x}\otimes y) = \varphi_L(y\otimes \bar{x}) = 
\overline{\varphi_L(x\otimes \bar{y})};
$$
\item 
we have the following canonical $A$-isomorphisms:
$$
L \otimes_{A} M \cong M \otimes_{A} L, \quad
(L \otimes_{A} M) \otimes_{A} N \cong 
L \otimes_{A} (M \otimes_{A} N), \quad
L \otimes_{A} A\cong L, \quad
L \otimes_{A} \overline{L} \cong A;
$$
\item
$L$ and $M$ are $A$-isomorphic if and only if 
$L \otimes_{A} \overline{M}$
and $A$ are $A$-isomorphic.
\end{enumerate}
\end{prop}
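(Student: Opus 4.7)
The plan is to verify (i) and (ii) by passing, via Theorem \ref{LwSthm}, to the concrete ideal model $L_{(I,w)}$, and then to deduce (iii) and (iv) formally from (i) and (ii). Write $L \cong L_{(I_L,w_L)}$ and $M \cong L_{(I_M,w_M)}$ with $I_L\overline{I_L}=Aw_L$ and $I_M\overline{I_M}=Aw_M$. Since the totally positive cone $(A_\Q^+)_{\gg 0}$ is closed under multiplication, $w_Lw_M \in (A_\Q^+)_{\gg 0}$, and multiplying the two equations gives $(I_LI_M)\overline{(I_LI_M)}=A(w_Lw_M)$. Hence $L_{(I_LI_M,w_Lw_M)}$ is itself an invertible $A$-lattice by Theorem \ref{LwSthm}. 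Lemma \ref{fractideallem}(iii) provides a natural $A$-module isomorphism $I_L\otimes_A I_M \isom I_LI_M$; using the identifications $L\otimes_A M \cong I_L\otimes_A I_M \isom I_LI_M$, this transports an invertible $A$-lattice structure onto $L\otimes_A M$, and for simple tensors the formula of (i) computes directly as
\[
\frac{(x_1y_1)\overline{(x_2y_2)}}{w_Lw_M} \;=\; \frac{x_1\overline{x_2}}{w_L}\cdot\frac{y_1\overline{y_2}}{w_M} \;=\; \varphi_L(x_1\otimes\overline{x_2})\,\varphi_M(y_1\otimes\overline{y_2}),
\]
which is the claimed expression.

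For (ii), the remark at the end of Section \ref{ideallatsect} gives $\overline{L} \cong L_{(\overline{I_L},w_L)}$, and $\overline{I_L}I_L = Aw_L$ shows this is invertible. A direct computation on $\overline{x}\otimes y$ shows that both expressions in the claimed formula equal $\overline{x}y/w_L$, using $w_L=\overline{w_L}$. For (iii), the four canonical $A$-module isomorphisms exhibiting commutativity, associativity, and the unit $A$ are standard for tensor products; compatibility with the $\varphi$ maps is immediate on simple tensors using the formulas in (i) and (ii), and the final $A$-isomorphism $L\otimes_A\overline{L}\cong A$ is the isomorphism $\varphi_L$ itself, which exists by invertibility. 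For (iv), an $A$-lattice isomorphism $\fff\colon L\isom M$ induces $\fff\otimes\overline{\fff}\colon L\otimes_A\overline{M}\isom M\otimes_A\overline{M}\cong A$ via (iii); conversely, if $L\otimes_A\overline{M}\cong A$, then tensoring with $M$ and using the isomorphisms from (iii) gives $M \cong A\otimes_A M \cong (L\otimes_A\overline{M})\otimes_A M \cong L\otimes_A(\overline{M}\otimes_A M)\cong L\otimes_A A\cong L$.

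The main step requiring care is showing that the formulas defining $\varphi_{L\otimes_A M}$ and $\varphi_{\overline{L}}$ really satisfy the hypotheses of Proposition \ref{latticeequivdefs2}, in particular the positivity condition $\varphi(z\otimes\overline{z})\in\hat{A}^+_{>0}$ for nonzero $z$. Verifying this directly from the formulas would require handling a general sum of simple tensors, where cancellation obscures positivity. Transporting the structure through Theorem \ref{LwSthm} to $L_{(I_LI_M,w_Lw_M)}$ and $L_{(\overline{I_L},w_L)}$ sidesteps this difficulty, since Proposition \ref{latticeequivdefs1}(ii)(c) then supplies the positivity automatically; one only needs to verify the $\varphi$ formulas on simple tensors, where the computations are transparent.
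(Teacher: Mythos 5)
Your argument is correct and complete. The paper itself does not write out a proof: it says the result ``can be proved in a similar manner'' to Theorem 13.3, Proposition 13.4, and Corollary 14.3 of \cite{LwS}, ``but now also making use of Propositions \ref{latticeequivdefs1} and \ref{latticeequivdefs2}.'' The explicit appeal to Proposition \ref{latticeequivdefs2} indicates that the intended route is to define $\varphi_{L\otimes_A M}$ by the product formula and verify directly that it satisfies the symmetry and positivity hypotheses of that proposition, so that $L\otimes_A M$ acquires its lattice structure intrinsically. You instead transport the structure through the ideal model: $L\cong L_{(I_L,w_L)}$ and $M\cong L_{(I_M,w_M)}$ by Theorem \ref{LwSthm}, $I_L\otimes_A I_M\isom I_LI_M$ by Lemma \ref{fractideallem}(iii), and $(I_LI_M)\overline{(I_LI_M)}=A\,w_Lw_M$ with $w_Lw_M\in(A^+_\Q)_{\gg 0}$, so invertibility and positivity come for free from Theorem \ref{LwSthm} and Proposition \ref{latticeequivdefs1}(ii)(c). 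This is the genuine advantage you identify: checking $\varphi(z\otimes\bar z)\in \hat{A}^+_{>0}$ for a general sum of simple tensors $z$ is the only nontrivial point in the direct approach, and the ideal model sidesteps it. The one thing worth saying explicitly is that the transported structure is independent of the chosen representations $(I_L,w_L)$, $(I_M,w_M)$; this holds because the transported $\varphi$ agrees on simple tensors with the intrinsic formula $\varphi_L(x_1\otimes\overline{x_2})\varphi_M(y_1\otimes\overline{y_2})$, and simple tensors generate $(L\otimes_A M)\otimes_A\overline{(L\otimes_A M)}$, so by the uniqueness in Proposition \ref{latticeequivdefs1}(ii) the inner product, and hence the lattice structure, is determined. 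Your deductions of (ii)--(iv) are the standard formal arguments and match what the paper intends.
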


Note that $\overline{L\otimes_A M}=\overline{L}\otimes_A\overline{M}$
and (canonically)
$$
L\otimes_A M\otimes_A\overline{L}\otimes_A\overline{M} \cong (L\otimes_A\overline{L})\otimes_A(M\otimes_A\overline{M})\cong A.
$$

The following result is an analogue of
Proposition 14.4 and
Theorem 14.5 of \cite{LwS}.

\begin{thm}
\label{WPgpfin}
The set of
invertible $A$-lattices up to $A$-isomorphism
is a finite abelian group and is isomorphic to $\WPic(A)$.
Here, the group operation on (isomorphism
classes of) invertible $A$-lattices is given by
tensoring over $A$,
the unit element is $(A,\varphi_0)$ with
$\varphi_0(x\otimes \bar{y}) = x\bar{y}$, and 
the inverse of $(L,\varphi_L)$ is $(\overline{L},\varphi_{\overline{L}})$.
\end{thm}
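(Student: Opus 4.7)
The plan is to build an explicit group isomorphism between $\WPic(A)$ and the set $\mathcal{L}$ of $A$-isomorphism classes of invertible $A$-lattices, with the tensor product as multiplication, and then to deduce finiteness from the bounds in Proposition \ref{LLLlem}.

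First I would define a map $\Phi : \WPic(A) \to \mathcal{L}$ by sending the class of a pair $(I,w)$ to the $A$-isomorphism class of $L_{(I,w)}$. Theorem \ref{LwSthm} shows that every invertible $A$-lattice is $A$-isomorphic to some $L_{(I,w)}$, so $\Phi$ is surjective. Theorem \ref{I1I2isom} asserts that $L_{(I_1,w_1)}$ and $L_{(I_2,w_2)}$ are $A$-isomorphic exactly when there exists $v \in A_\Q^\ast$ with $I_1 = vI_2$ and $w_1 = v\bar v w_2$, which is precisely the defining equivalence relation of $\WPic(A)$. Hence $\Phi$ is well-defined and injective, and thus a bijection.

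Next I would check that $\Phi$ intertwines the multiplication $(I_1,w_1)\cdot(I_2,w_2) = (I_1 I_2,\,w_1 w_2)$ on $\WPic(A)$ with the tensor product of $A$-lattices. The canonical $A$-module isomorphism $I_1 \otimes_A I_2 \isom I_1 I_2$, $x\otimes y \mapsto xy$, of Lemma \ref{fractideallem}(iii) combines with the formula of Proposition \ref{invertequivcor}(i) to give
$$
\varphi_{L_{(I_1,w_1)}\otimes L_{(I_2,w_2)}}\bigl((x_1\otimes y_1)\otimes\overline{(x_2\otimes y_2)}\bigr) \;=\; \frac{x_1\overline{x_2}}{w_1}\cdot \frac{y_1\overline{y_2}}{w_2} \;=\; \frac{(x_1 y_1)\,\overline{(x_2 y_2)}}{w_1 w_2},
$$
exhibiting an $A$-lattice isomorphism with $L_{(I_1 I_2,\,w_1 w_2)}$. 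The unit class $(A,1)$ goes to the standard $A$-lattice $(A,\varphi_0)$. Since $w = \bar w$ by Theorem \ref{Iwmisclem}(i) and $I\bar I = Aw$, the product $(I,w)\cdot(\bar I,w) = (Aw,\,w^2) = (Aw,\,w\bar w)$ is the identity class via $v = w \in A_\Q^\ast$ (Theorem \ref{Iwmisclem}(ii)), so the inverse of $[L_{(I,w)}]$ is $[L_{(\bar I,w)}] = [\overline{L_{(I,w)}}]$, matching Proposition \ref{invertequivcor}(ii). Combined with Proposition \ref{invertequivcor}(iii), this transports the group structure from $\WPic(A)$ to $\mathcal{L}$ in the form claimed in the theorem.

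Finally, for finiteness, I would invoke Proposition \ref{LLLlem}: each invertible $A$-lattice admits an LLL-reduced $\Z$-basis $b_1,\dots,b_n$ on which the Gram matrix entries $\langle b_i,b_j\rangle$ and the $A$-action coefficients $d_{ijk}$ are bounded in absolute value by $(3\sqrt{2})^{n-1}|\Delta_{A/\Z}|$, a bound depending only on $A$. Since the data specifying such a lattice then ranges over a finite set of integer arrays, $\mathcal{L}$ is finite, and hence so is $\WPic(A)$. The main obstacle I anticipate is the homomorphism verification: $L_{(I_1 I_2,\,w_1 w_2)}$ and $L_{(I_1,w_1)}\otimes_A L_{(I_2,w_2)}$ are a priori supported on different $A$-modules, so one must synthesize Lemma \ref{fractideallem}(iii) with the tensor-product $\varphi$-formula to produce a single explicit $A$-lattice isomorphism; once this is in hand, every other step is bookkeeping built from the previously established results.
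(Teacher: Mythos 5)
Your proposal is correct and takes essentially the same route as the paper, whose proof is a direct generalization of Proposition 14.4 and Theorem 14.5 of \cite{LwS}: the ideal--lattice dictionary of Theorems \ref{Iwmisclem}, \ref{I1I2isom}, and \ref{LwSthm} transports the group structure of $\WPic(A)$ to isomorphism classes of invertible $A$-lattices under tensor product, with Lemma \ref{fractideallem}(iii) and Proposition \ref{invertequivcor} supplying the homomorphism, unit, and inverse verifications. Finiteness via the LLL-reduced-basis bounds of Proposition \ref{LLLlem} is exactly the mechanism the paper cites (in the role of Proposition 3.4 of \cite{LwS}).
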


\begin{proof}
The proof is a direct generalization of the proofs of 
Proposition 14.4 and
Theorem 14.5 of \cite{LwS}, with Proposition \ref{LLLlem}
serving in the role of Proposition 3.4 of \cite{LwS}.
\end{proof}

Recall the group $\Cl^-(A)$ from the introduction.

\begin{thm}
\label{wpickercoker}
Let $h: \WPic(A) \to \Cl^-(A)$ be the group homomorphism 
sending the class of $(I,w)$
to the class of $I$. Then $2$ annihilates the kernel and cokernel of $h$.
\end{thm}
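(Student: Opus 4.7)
The plan is to prove both assertions by direct manipulation of representatives, using the two basic facts that (i) if $J$ is a fractional $A$-ideal with $\bar{J}=J$ and $J = Ax$, then $\bar{x}/x$ is a unit of $A$; and (ii) if $u \in A_\Q^*$ satisfies $\bar{u}=u$, then $u=\bar{u}$ lets one pass freely between $u^2$ and $u\bar{u}$.

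For the cokernel, I would take a class $[I] \in \Cl^-(A)$ and any generator $w' \in A_\Q^*$ of $I\bar{I}$. Since $\overline{I\bar{I}}=I\bar{I}$, the element $\bar{w'}/w'$ lies in $A^*$, and therefore $Aw'\bar{w'} = A(w')^2 = (I\bar{I})^2 = I^2\overline{I^2}$. On the other hand, $w = w'\bar{w'}$ is visibly self-conjugate and, for every $\psi \in \Rhom(A,\C)$, satisfies $\psi(w) = |\psi(w')|^2 > 0$ (because $w' \in A_\Q^*$), so $w \in (A^+_\Q)_{\gg 0}$. Thus $(I^2, w'\bar{w'})$ is an element of the defining set for $\WPic(A)$, and $h$ sends its class to $[I^2] = 2[I]$. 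This shows that $2$ annihilates $\coker(h)$.

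For the kernel, I would start with $(I,w)$ such that $[I]=0$ in $\Cl(A)$, so $I=Av$ for some $v \in A_\Q^*$. The pair $(Av, v\bar{v})$ is trivial in $\WPic(A)$ by definition of the subgroup of principal pairs, and $Aw = I\bar{I} = Av\bar{v}$ forces $u := w/(v\bar{v}) \in A^*$. Moreover $u$ is self-conjugate and $\psi(u) = \psi(w)/|\psi(v)|^2 > 0$ for all $\psi$, so $u \in A^* \cap (A^+_\Q)_{\gg 0}$. In $\WPic(A)$, the class of $(I,w)$ equals the class of $(A,u)$, so the kernel is represented by pairs of this form. Squaring, $(A,u)^2 = (A, u^2)$, and taking $v' = u \in A^*$ we have $A = v'A$ and $v'\bar{v'} = u\bar{u} = u^2$ (using $\bar{u}=u$); hence $(A, u^2)$ lies in the subgroup of principal pairs, which is to say $2\cdot[(I,w)] = 0$ in $\WPic(A)$.

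There is essentially no obstacle here beyond bookkeeping, because the two arithmetic facts I need — that $\bar{w'}/w'$ is automatically a unit when $I\bar{I}$ is principal, and that a totally positive self-conjugate element $u$ satisfies $u^2 = u\bar{u}$ — are both immediate from Definition \ref{Ahatetcdef} and the defining properties of CM-orders. The one point worth stating explicitly in the write-up is that $w'\bar{w'}$ and $u = w/(v\bar{v})$ genuinely lie in $(A^+_\Q)_{\gg 0}$, since that is precisely what certifies membership in the ambient group defining $\WPic(A)$.
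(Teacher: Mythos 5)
Your proof is correct and follows essentially the same route as the paper's: for the cokernel you conjugate a generator $w'$ of $I\bar I$ to get the totally positive generator $w'\bar{w'}$ of $I^2\overline{I^2}$, and for the kernel you use $\bar w=w$ to write $(I,w)^2$ as a principal pair $(Au,u\bar u)$ (the paper does this directly with $u=wv/\bar v$ rather than first reducing to the form $(A,u)$, but the computation is the same). No gaps.
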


\begin{proof}
If $[I]\in \Cl^-(A)$, then there exists $v\in A^\ast_\Q$
such that $I\bar{I}=Av$. Then $I\bar{I}=A\bar{v}$,
so $I^2\bar{I}^2=Av\bar{v}$.
Since $v\bar{v} \in (A^+_\Q)_{\gg 0}$ we have $[(I^2,v\bar{v})] \in \WPic(A)$,
and $h([(I^2,v\bar{v})]) = [I]^2$.

If $[(I,w)]$ is in the kernel of $h$, then there exists $v\in A^\ast_\Q$
such that $I=Av$. 
Since $I\bar{I}=Aw$, it follows that $\bar{I}=Aw/v$.
Since $\bar{w}=w$ we have ${I}=Aw/\bar{v}$.
Thus, ${I}^2=Au$ where $u=wv/\bar{v}\in A^\ast_\Q$.
We now have $(I^2,w^2)= (Au,u\bar{u})$.
\end{proof}

The following proposition summarizes the algorithmic results for $\WPic(A)$
that are proved in the present paper.

\begin{prop}
There are deterministic polynomial-time algorithms for
finding the unit element,
inverting,
multiplying,
exponentiation,
and
equality testing in $\WPic(A)$.
\end{prop}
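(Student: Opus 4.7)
The plan is to reduce everything to the identification, supplied by Theorem \ref{WPgpfin}, of $\WPic(A)$ with the group of invertible $A$-lattices modulo $A$-isomorphism: the multiplication is tensor product over $A$, the unit is the standard $A$-lattice $A$ (with $\varphi_0(x\otimes\bar y)=x\bar y$), and the inverse of $L$ is $\bar L$, by Proposition \ref{invertequivcor}. Once elements of $\WPic(A)$ are represented this way, each of the five algorithms is a short matter.

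For the unit element, I would simply output the standard $A$-lattice $A$, whose Gram matrix $(\Tr(\alpha_i\bar\alpha_j))_{i,j}$ is computable in polynomial time from the input. For inversion of $L$, the lattice $\bar L$ has the same underlying abelian group and the same inner product as $L$, so the only non-trivial work is transporting the $A$-module structure through the involution $\bar{\,\,}$ on $A$; the map $\varphi_{\bar L}$ is given explicitly by Proposition \ref{invertequivcor}(ii). For multiplication of invertible $A$-lattices $L_1,L_2$, I would compute $L_1\otimes_A L_2$ together with its inner product by means of the tensor formula of Proposition \ref{invertequivcor}(i), working with the algorithms of Section \ref{multexpinvlatsect}. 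Exponentiation $L\mapsto L^{\otimes n}$ is then handled by the standard repeated-squaring recipe, requiring $O(\log n)$ multiplications and at most one inversion, and equality testing $[L]=[M]$ is supplied directly by Theorem \ref{mainthmcor} (or, equivalently, by Proposition \ref{invertequivcor}(iv), by testing whether $L\otimes_A\bar M$ is $A$-isomorphic to the standard $A$-lattice, which is Theorem \ref{mainthm}).

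The main obstacle is not correctness but polynomial running time. A priori, repeated tensoring can blow up the integer entries of the Gram matrix and of the structure constants describing the $A$-action on the lattice. This is precisely where Proposition \ref{LLLlem} does the work: every invertible $A$-lattice admits an LLL-reduced basis whose specifying data has bit-length polynomial in the data specifying $A$. Since passing to an $A$-isomorphic copy does not alter the class in $\WPic(A)$, I would, after each multiplication or squaring step, replace the resulting invertible $A$-lattice by an LLL-reduced representative via the LLL algorithm, so that all intermediate data stay of polynomial size. Combined with Theorem \ref{mainthmcor} for the equality test, this yields deterministic polynomial-time algorithms for all five operations; the genuine difficulty resides entirely in the equality test, which is the content of Theorem \ref{mainthmcor} and thus of the paper itself.
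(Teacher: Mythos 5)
Your proposal is correct and follows essentially the same route as the paper: the unit element and inversion come from the identification of $\WPic(A)$ with invertible $A$-lattices up to $A$-isomorphism (Theorem \ref{WPgpfin} and Proposition \ref{invertequivcor}), multiplication and exponentiation are handled by the tensor-product algorithms of Section \ref{multexpinvlatsect} with LLL-reduced representatives (Proposition \ref{LLLlem}) controlling coefficient growth, and equality testing is Theorem \ref{mainthmcor}. Your observation that the only genuine difficulty is the equality test is exactly the paper's point of view.
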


Algorithms for the unit element and inverting follow easily
from  Theorem \ref{WPgpfin}. 
Multiplication and exponentiation are dealt with in the next section.
See Theorem \ref{mainthmcor} for equality testing.

\section{Multiplying and exponentiating invertible $A$-lattices}
\label{multexpinvlatsect}

This section generalizes Section 15 of \cite{LwS}.
All $A$-lattices in the inputs and
outputs of the algorithms are specified via an LLL-reduced basis.
Direct generalizations of
Algorithms 15.1, 15.2, and 15.3 of \cite{LwS} give the following 
(relying on Lemma \ref{invlemnew} above wherever \cite{LwS} relied
on Lemma 11.5 of \cite{LwS}).

\begin{thm}
\label{LMmultalg}
\begin{enumerate}[leftmargin=*]
\item
There is a deterministic polynomial-time algorithm that,
given a CM-order $A$ of rank $n$
and invertible $A$-lattices $L$ and $M$, outputs 
$L \otimes_{A} M$ 
and an $n\times n\times n$ array of integers to describe the
multiplication map 
$L \times M \to L \otimes_{A} M.$
\item
There is a deterministic polynomial-time algorithm that,
given a CM-order $A$, 
an ideal $\a$ of $A$ of finite index, 
invertible $A$-lattices $L$ and $M$, and
elements $d\in L/\a L$ and $f\in M/\a M$, computes 
$
L \otimes_{A} M
$ 
and the element
$
d\otimes f\in (L \otimes M)/\a (L \otimes M).
$
\item
There is a deterministic polynomial-time algorithm that,
given a CM-order $A$, 
a positive integer $r$, 
an invertible $A$-lattice $L$,
an ideal $\a$ of $A$ of finite index, 
and $d\in L/\a L$, outputs 
$L^{\otimes r}$ and 
$d^{\otimes r}\in L^{\otimes r}/\a L^{\otimes r}$. 
\end{enumerate}
\end{thm}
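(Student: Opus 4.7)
The plan follows the approach of Algorithms 15.1--15.3 of \cite{LwS}, with our Lemma \ref{invlemnew} playing the role there played by Lemma 11.5 of \cite{LwS}, and Proposition \ref{LLLlem} controlling coefficient growth. The key structural input is Proposition \ref{invertequivcor}(i), which furnishes an explicit formula for $\varphi_{L \otimes_A M}$ from $\varphi_L$ and $\varphi_M$; once we have a concrete realization of $L \otimes_A M$ as an $A$-module, equipping it with the correct lattice structure is immediate.

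For part (i), I would first run Algorithm \ref{elemprop2} on $L$ and on $M$ to obtain elements $e_L \in L$ and $e_M \in M$ with $(A/2A)e_L = L/2L$ and $(A/2A)e_M = M/2M$, together with the associated fractional ideals $I = \{a \in A_\Q : ae_L \in L\}$ and $J = \{a \in A_\Q : ae_M \in M\}$ and the elements $w_L = (e_L\overline{e_L})^{-1}$ and $w_M = (e_M\overline{e_M})^{-1}$, so that $L \cong L_{(I, w_L)}$ and $M \cong L_{(J, w_M)}$ by Lemma \ref{invlemnew}. A direct computation using Proposition \ref{invertequivcor}(i) shows that $L \otimes_A M \cong L_{(IJ,\, w_L w_M)}$, with the multiplication map $L \times M \to L \otimes_A M$ corresponding, under the identifications $L = Ie_L$, $M = Je_M$, $L \otimes_A M = IJ \cdot (e_L \otimes e_M)$, to the product map $(a,b) \mapsto ab$ on $A_\Q$. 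One then computes a $\Z$-basis of $IJ$ by taking pairwise products of the given $\Z$-bases of $I$ and $J$ and reducing to Hermite normal form, computes the Gram matrix via $\langle x, y\rangle = \Tr_{A_\Q/\Q}(x\overline{y}\, w_L w_M)$, applies LLL, and reads off the $n \times n \times n$ multiplication table on the reduced basis. Proposition \ref{LLLlem} bounds the bit-size of the output data polynomially in the input length, using Lemma \ref{invDelta} to pin down $\det(L \otimes_A M) = |\Delta_{A/\Z}|$.

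Part (ii) is handled by the same construction, additionally carrying $d$ and $f$ as classes in $L/\a L$ and $M/\a M$; the explicit multiplication map produced in (i) is reduced modulo $\a$ to yield $d \otimes f \in (L \otimes_A M)/\a(L \otimes_A M)$, all intermediate objects being already of controlled size.

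Part (iii) is the subtle one: a literal iterated application of (i) to compute $L^{\otimes r}$ would run in time polynomial in $r$ rather than in $\log r$. The remedy is repeated squaring: write $r$ in binary, form $L^{\otimes 2^i}$ together with $d^{\otimes 2^i}$ by successively tensoring $L^{\otimes 2^{i-1}}$ with itself via (i) (and squaring the coset representative via (ii)), then assemble the factors indicated by the binary expansion of $r$ with $O(\log r)$ further applications of (i) and (ii). The main obstacle -- and the whole reason for re-running LLL at each step -- is showing that the intermediate lattices $L^{\otimes 2^i}$ admit LLL-reduced bases of polynomial bit-size uniformly in $i$; this follows because each such tensor power is again an invertible $A$-lattice by Proposition \ref{invertequivcor}(i), has determinant $|\Delta_{A/\Z}|$ by Lemma \ref{invDelta}, and therefore is subject to the uniform bounds of Proposition \ref{LLLlem}. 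Maintaining the coset representative $d^{\otimes 2^i}$ as a residue modulo $\a$ at every step likewise keeps it within a fixed finite ring and avoids blow-up.
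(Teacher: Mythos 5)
Your proposal is correct and follows essentially the same route as the paper, which simply invokes direct generalizations of Algorithms 15.1--15.3 of \cite{LwS} with Lemma \ref{invlemnew} replacing Lemma 11.5 of \cite{LwS}: realize $L$ and $M$ as ideal lattices $L_{(I,w_L)}$, $L_{(J,w_M)}$, multiply to get $L_{(IJ,w_Lw_M)}$, use Proposition \ref{LLLlem} (via Lemma \ref{invDelta}) to bound all intermediate data, and exponentiate by repeated squaring while carrying cosets modulo $\a$. The only slip is notational: with $w_L=(e_L\overline{e_L})^{-1}$ the Gram matrix of the product is $\Tr_{A_\Q/\Q}(x\bar{y}/(w_Lw_M))$, not $\Tr_{A_\Q/\Q}(x\bar{y}\,w_Lw_M)$, per Notation \ref{IwGlatdefn}.
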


\section{The extended tensor algebra $\Lambda$}
\label{tensorsect}
We next define
the extended tensor algebra $\Lambda$, which is a single algebraic structure
that comprises all rings and lattices that our main algorithm needs. 
Suppose $A$ is a CM-order and $L$ is an invertible $A$-lattice.
Let $L^{\otimes 0} = A$,  and for all $m \in \Z_{>0}$ let
$$L^{\otimes m} =  
{L} \otimes_{A} \cdots \otimes_{A} {L} 
\quad \text{(with $m$ $L$'s)},$$  
and 
$$L^{\otimes (-m)} = \overline{L}^{\otimes m}= 
\overline{L} \otimes_{A} \cdots 
\otimes_{A} \overline{L}.$$ 
For simplicity, we denote $L^{\otimes m}$ by $L^m$.
If $m\in \Z$, then $L^{m}$ is an invertible $A$-lattice
by Proposition \ref{invertequivcor}, 
and 
$\overline{L^m} = \overline{L}^m = L^{-m}.$

Let 
$$
\Lambda = T_A(L)  = \bigoplus_{i\in\Z} L^{i},
$$ 
an $A$-algebra
with involution $\bar{\,\,}$.
The following result is analogous to Proposition 16.1 of \cite{LwS},
and its proof is straightforward.

\begin{prop}
\label{LambdaProp}
Suppose $A$ is a CM-order and $L$ is an invertible $A$-lattice. Then:
\begin{enumerate} 
\item 
the extended tensor algebra 
$\Lambda$ is a commutative ring containing $A$ as a subring;
\item 
for all  $j\in \Z$, the action of $A$ on $L^j$ becomes multiplication in $\Lambda$;
\item 
$\Lambda$ has an involution $x\mapsto \overline{x}$ extending both the involution
of $A$ and the map $L \isom \overline{L}$;
\item 
if $j\in \Z$, then the map 
$L^j \times \overline{L^j} \to L^j \otimes_A \overline{L^j} \isom A$
induced by the isomorphism $\varphi_{L^j}$  
becomes
multiplication in $\Lambda$, with $\overline{L^j}={L}^{-j}$;
\item 
if $j\in \Z$ and $e \in L^j$ is short, then $\overline{e} = e^{-1}$ in $L^{-j}$;
\item
if $e \in L$ is short, then 
$\Lambda = A[e,e^{-1}],$
where the right side is the subring of $\Lambda$ generated by
$A$, $e$, and $e^{-1}$, which is a Laurent polynomial ring.
\end{enumerate}
\end{prop}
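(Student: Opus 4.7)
The plan is to build the multiplication on $\Lambda$ one homogeneous piece at a time, and then deduce (i)--(vi) from canonical compatibilities already in hand from Proposition \ref{invertequivcor}. For $i,j\in\Z$ I construct an $A$-module map $\mu_{i,j}:L^{i}\otimes_A L^{j}\to L^{i+j}$ as follows: when $i,j\ge 0$ or $i,j\le 0$ this is the canonical associativity isomorphism (with $L^{0}=A$), and for mixed signs I cancel the excess using the isomorphism $\varphi_{L^{k}}:L^{k}\otimes_A L^{-k}\isom A$ of Proposition \ref{invertequivcor}(i,ii) applied $\min(|i|,|j|)$ times. Then $\mu=\bigoplus \mu_{i,j}$ extends by $A$-bilinearity to a single map $\Lambda\otimes_A\Lambda\to\Lambda$.

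The main obstacle is verifying associativity, i.e.\ $\mu_{i+j,k}\circ(\mu_{i,j}\otimes\id)=\mu_{i,j+k}\circ(\id\otimes\mu_{j,k})$ for all $i,j,k$. The argument reduces to checking it on homogeneous tensors drawn from $\{L,\overline{L},A\}$; the $A$--factor and same-sign cases are just associativity of $\otimes_A$, and the mixed cases follow from naturality together with Proposition \ref{invertequivcor}(i), which identifies $\varphi_{L^{k+1}}$ with $\varphi_{L}\otimes\varphi_{L^{k}}$ and hence guarantees that one gets the same map regardless of the order in which the $L\otimes_A\overline{L}\to A$ cancellations are carried out. Commutativity of $\Lambda$ follows similarly from the symmetric isomorphism $L^{i}\otimes_A L^{j}\cong L^{j}\otimes_A L^{i}$ being compatible with these cancellations, and $1\in A=L^{0}$ is an identity since $\mu_{0,j}$ is just the $A$-action on $L^{j}$. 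This simultaneously establishes (i) and (ii).

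For (iii), define $\bar{\,}$ on $\Lambda$ to be the direct sum of the semilinear isomorphisms $L^{m}\to\overline{L^{m}}=L^{-m}$; the identity $\overline{L\otimes_A M}=\overline{L}\otimes_A\overline{M}$ combined with $\varphi_{\overline{L}}(\bar x\otimes y)=\overline{\varphi_L(x\otimes\bar y)}$ from Proposition \ref{invertequivcor}(ii) shows $\overline{\alpha\beta}=\bar\alpha\bar\beta$, so $\bar{\,}$ is a ring involution extending the ones on $A$ and on $L$. Part (iv) is tautological from the construction: $\mu_{j,-j}$ is by definition $\varphi_{L^{j}}$, viewing $L^{-j}$ as $\overline{L^{j}}$. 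Part (v) is then immediate: if $e\in L^{j}$ is short, Definition \ref{shortdef} gives $\varphi_{L^{j}}(e\otimes\bar e)=1$, i.e.\ $e\cdot\bar e=1$ in $\Lambda$, hence $\bar e=e^{-1}$.

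Finally for (vi), if $e\in L$ is short then Theorem \ref{shortthm}(ii) gives $L=Ae$, and regularity of $e$ (Proposition \ref{shortequiv}) makes $A\to L$, $a\mapsto ae$, an $A$-module isomorphism. Iterating, $L^{m}=Ae^{m}$ with $e^{m}$ regular for $m\ge 0$, while (v) gives $L^{-m}=A\bar e^{m}=Ae^{-m}$ for $m\ge 0$. Since $\Lambda=\bigoplus_{m\in\Z} Ae^{m}$ as an internal direct sum of $A$-submodules and multiplication sends $ae^{m}\cdot be^{n}$ to $abe^{m+n}$, the subring $A[e,e^{-1}]$ equals $\Lambda$ and is a free Laurent polynomial ring over $A$ in the variable $e$. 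The only substantive calculation in the whole proof is the coherence check underlying associativity; everything else is a direct appeal to Proposition \ref{invertequivcor} and the definitions.
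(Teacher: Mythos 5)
Your proof is correct and supplies exactly the details the paper omits: the paper's own ``proof'' merely declares the result straightforward by analogy with Proposition 16.1 of \cite{LwS}. Your route --- building the graded multiplication from iterated $\varphi_L$-cancellations, checking coherence/associativity via the multiplicativity $\varphi_{L\otimes_A M}=\varphi_L\cdot\varphi_M$ of Proposition \ref{invertequivcor}, and deducing (vi) from Theorem \ref{shortthm}(ii) together with regularity of $e$ --- is precisely the intended one.
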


In \cite{Kronecker} we show the following result, which we will use
in Proposition \ref{Bgraded} below. (In \cite{Kronecker}, the group
 $\Gamma$ was written multiplicatively.)

\begin{prop}[\cite{Kronecker}, Theorem 1.5(ii,iii)]
\label{ingredient2}
Suppose $B = \bigoplus_{\gamma\in \Gamma} B_\gamma$ 
is an order that is graded by an additively written finite abelian group $\Gamma$
(i.e.,  
the additive subgroups $B_\gamma$ of $B$ satisfy
$B_\gamma\cdot B_{\gamma'} \subset B_{\gamma+\gamma'}$
for all $\gamma,\gamma'\in \Gamma$, and
the additive group homomorphism  
$\bigoplus_{\gamma\in \Gamma} B_\gamma \to B$ sending 
$(x_\gamma)_{\gamma\in \Gamma}$ to $\sum_{\gamma\in \Gamma}x_\gamma$
is bijective).
Suppose $B_0$ is connected.
Then  $B$ is connected and 
$\mu(B) \subset \bigcup_{\gamma\in \Gamma} B_\gamma$.
\end{prop}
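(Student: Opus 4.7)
The plan is to use the character theory of $\Gamma$. Setting $n = \exp(\Gamma)$, $K = \Q(\zeta_n)$, and $\hat\Gamma = \Hom(\Gamma, K^\times)$, each $\chi\in\hat\Gamma$ yields a $K$-algebra automorphism $\sigma_\chi$ of $B_K = B\otimes_\Z K$ by $\sigma_\chi(x) = \chi(\gamma)x$ on $x\in B_\gamma\otimes K$. The fixed subring under $\hat\Gamma$ is $(B_0)_K$, and one has the compatibility $\tau\sigma_\chi = \sigma_{\tau\circ\chi}\tau$ for $\tau\in\Gal(K/\Q)$, which will be the engine for descending from $K$ back to $\Q$.

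For the connectedness of $B$, I would take an idempotent $e\in B$ and study the commuting family $\{\sigma_\chi(e) : \chi\in\hat\Gamma\}$ of idempotents in $B_K$ and the finite Boolean algebra they generate. The group $\hat\Gamma$ acts on this family by translation of indices, and on the atoms of the Boolean algebra by the associated permutation. The sum of the atoms in a single $\hat\Gamma$-orbit is $\hat\Gamma$-invariant, hence lies in $(B_0)_K$; by the compatibility above it is also $\Gal(K/\Q)$-invariant, hence lies in $(B_0)_\Q$. The heart of the argument---and what I expect to be the main obstacle---is a descent step showing that these orbit sums actually lie in $B_0$ and not merely in $(B_0)_\Q$: one has to exploit that the generators $\sigma_\chi(e)$ satisfy the integral equation $X^2-X = 0$ and that their symmetric combinations have $\Z$-integral coefficients, handling the primes dividing $|\Gamma|$ (where the naive averaging $\tfrac{1}{|\Gamma|}\sum_\chi\sigma_\chi$ fails) separately from the other primes, where $\hat\Gamma$ is \'etale and averaging is legitimate. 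Once this descent is available, connectedness of $B_0$ forces each orbit sum to be $0$ or $1$; since the orbit sums partition $1$, exactly one orbit is ``live''. A further argument using the $\hat\Gamma$-action on atoms together with the relation between $e$ and the atoms whose defining subset of $\hat\Gamma$ contains the trivial character then pins $e$ down to $\{0,1\}$.

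For the homogeneity of roots of unity, I would use that $B_\Q$ is now known to be a commutative Artinian ring without nontrivial idempotents, hence local, with maximal ideal $\mathfrak{m}$. In characteristic zero the standard argument (convergence of $\log$ on $1+\mathfrak{m}$) shows that reduction modulo $\mathfrak{m}$ embeds $\mu(B_\Q)$ into the finite cyclic group $\mu(B_\Q/\mathfrak{m})$, so $\mu(B)\subseteq\mu(B_\Q)$ is itself cyclic. For $u\in\mu(B)$ and $\chi\in\hat\Gamma$, the element $\sigma_\chi(u)u^{-1}\in B_K^\times$ is a root of unity; I would argue that it lies in $K^\times$ by examining its image in each local factor of $B_K$, using that all such factors arise from the single local ring $B_\Q$ by extension of scalars, so the cyclic structure of their torsion units is controlled in a compatible way. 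Writing $\sigma_\chi(u) = \lambda_\chi u$ with $\lambda_\chi\in K^\times$ and comparing $\Gamma$-homogeneous components yields $(\chi(\gamma)-\lambda_\chi)u_\gamma = 0$ for every $\gamma\in\Gamma$ and every $\chi\in\hat\Gamma$. Since the characters of $\Gamma$ separate its points, at most one $\gamma$ admits $u_\gamma\ne 0$, and therefore $u\in B_\gamma$.

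The principal difficulty I foresee is the integral descent in the connectedness step: idempotents of $(B_0)_\Q$ need not lie in $B_0$ (the suborder $\Z\times_{\F_2}\Z$, already connected but with extra idempotents in its $\Q$-algebra, illustrates this), so forcing the orbit sums down to $B_0$ requires genuine use of the $\Z$-integral structure of $B$ and the specific combinatorics of the orbit $\{\sigma_\chi(e)\}$. That is, I expect, where the bulk of the proof in \cite{Kronecker} is spent.
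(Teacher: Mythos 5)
The paper itself does not prove this proposition --- it is imported verbatim from \cite{Kronecker} --- so your proposal can only be judged on its own terms. Your plan for connectedness is workable, but you have misplaced the difficulty. The ``integral descent'' you flag as the main obstacle is automatic: each atom of the Boolean algebra generated by $\{\sigma_\chi(e)\}_{\chi\in\hat\Gamma}$ is a $\Z$-coefficient polynomial in the $\sigma_\chi(e)$, hence lies in the subring $B\otimes_\Z\Z[\zeta_n]$ of $B_K$, and a Galois-invariant element of $B\otimes_\Z\Z[\zeta_n]$ lies in $B\otimes_\Z(\Z[\zeta_n]\cap\Q)=B$; no averaging by $1/|\Gamma|$ and no case division on the primes dividing $|\Gamma|$ is needed. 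Two genuine repairs are needed instead. First, a single $\hat\Gamma$-orbit sum of atoms need not be Galois-invariant: $\tau$ sends the atom indexed by $S\subset\hat\Gamma$ to the one indexed by $\tau(S)$, and this action by group automorphisms of $\hat\Gamma$ permutes translation classes of subsets without fixing them, so you must take orbits under the group generated by $\hat\Gamma$ and $\Gal(K/\Q)$ jointly. Second, the final step that ``pins $e$ down to $\{0,1\}$'' is asserted, not proved; it can be completed by observing that all subsets $S$ indexing nonzero atoms lie in the unique orbit with sum $1$ and hence have a common cardinality $k$, so that $|\Gamma|\,e_0=\sum_{\chi}\sigma_\chi(e)=\sum_S(\#S)\,a_S=k\cdot 1$, and integrality of $B$ forces $k\in\{0,|\Gamma|\}$, i.e.\ $e\in\{0,1\}$.

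The argument for homogeneity of roots of unity has a genuine gap. Connectedness of $B$ does not imply that $B_\Q$ has no nontrivial idempotents: $B=\Z[x]/(x^2-1)$ is connected while $B_\Q\cong\Q\times\Q$ (your own example $\Z\times_{\F_2}\Z$ makes the same point). So $B_\Q$ need not be local, and $\mu(B)$ need not be cyclic: the group ring $B=\Z[(\Z/2\Z)^2]$ is connected, is graded by $\Gamma=(\Z/2\Z)^2$ with $B_0=\Z$, and has $\mu(B)\supseteq\{\pm g\}\cong(\Z/2\Z)^3$. With the locality gone, the entire weight of part (iii) rests on the unproved claim that $\sigma_\chi(u)u^{-1}\in K^\times$. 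For homogeneous $u\in B_\gamma$ this element equals $\chi(\gamma)$, so the claim is essentially a restatement of the homogeneity you are trying to prove; the justification offered (that ``the cyclic structure of their torsion units is controlled in a compatible way'') relies on the false locality and does not address the fact that a product of fields contains many non-scalar roots of unity, such as $(1,-1)\in K\times K$. A correct proof of (iii) requires a genuinely different input at this point.
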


The following result is analogous to Proposition 16.2 of \cite{LwS},
and will be used in Proposition \ref{almostmainalgorpf}.

\begin{prop}
\label{Bgraded}
Suppose $A$ is a CM-order, $L$ is an invertible $A$-lattice,
$r\in\Z_{>0}$, $y\in L^{r}$, and $y\bar{y}=1$.
Let $\Lambda = T_A(L)$ and  
 $B = \Lambda/(y -1)\Lambda$.  
Then:
\begin{enumerate}
\item
the map $\bigoplus_{i=0}^{r-1} L^i \to B$ induced by the natural map
$\bigoplus_{i=0}^{r-1} L^i \subset \Lambda \to B$ is an 
$A$-module isomorphism that exhibits the commutative ring $B$ as  
a $\Z/r\Z$-graded order;
\item
$B$ is a CM-order, with involution $\bar{\,\,}$ on $B$ induced by 
the involution $\bar{\,\,}$ on $\Lambda$;
\item
$\mu(B) = \{ \beta\in B : \beta\bar{\beta}=1\}$;
\item
if $A$ is connected, then $B$ is connected and 
$\mu(B) \subset \bigcup_{i=0}^{r-1} L^{i}$
(identifying $B$ with $\bigoplus_{i=0}^{r-1} L^i$ as in \rm{(i)}).
\end{enumerate}
\end{prop}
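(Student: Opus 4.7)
The plan is to work through the four parts in order, using the fact that $y\bar{y}=1$ makes $y$ a unit of the graded ring $\Lambda$, with inverse $\bar{y}\in L^{-r}$, so that $(y-1)\Lambda$ plays the role of a ``periodization'' ideal.

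For (i), I first observe that $y$ is short in the invertible $A$-lattice $L^r$, so by Theorem \ref{shortthm}(ii) we have $L^r = Ay$, and hence $L^{i+r} = L^i\cdot L^r = yL^i$ for all $i\in\Z$. Surjectivity of $\bigoplus_{i=0}^{r-1} L^i \to B$ then follows by writing any $z\in L^j$ as $z = y^q w$ with $w \in L^{j-qr}$ (for $0\le j-qr\le r-1$) and noting that $y\equiv 1$ in $B$. For injectivity, suppose $\sum_{i=0}^{r-1} x_i = (y-1)\lambda$ with $\lambda = \sum_j \lambda_j$ of finite support. Equating degree-$i$ components gives $x_i = y\lambda_{i-r}-\lambda_i$ for $i\in[0,r-1]$ and $\lambda_i = y\lambda_{i-r}$ otherwise; iterating the latter upward and using that $y$ is a unit in $\Lambda$ (hence non-zero-divisor), together with the finite support of $\lambda$, forces $\lambda_j=0$ for $j\in[0,r-1]$, and iterating downward similarly forces $\lambda_{i-r}=0$, so each $x_i=0$. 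The induced product sends $L^i\times L^j\to L^{(i+j)\bmod r}$ (using $\bar{y}$ to pull down when $i+j\ge r$), exhibiting $B$ as a $\Z/r\Z$-graded order.

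For (ii), observe that $\overline{y-1} = \bar{y}-1 = -\bar{y}(y-1)$, so the involution on $\Lambda$ preserves $(y-1)\Lambda$ and descends to an involution on $B$. I then verify that $B$ is a CM-order by appealing to Proposition \ref{involCMprop}: it suffices to check $\Tr_{B_\Q/\Q}(b\bar{b})>0$ for $0\ne b\in B_\Q$. By Lemma \ref{Linvremarks} each $L^i_\Q$ is free of rank $1$ over $A_\Q$, so $B_\Q$ is free of rank $r$ over $A_\Q$. For any $x\in L^i_\Q$ with $i\not\equiv 0\pmod r$, multiplication by $x$ cyclically shifts the $\Z/r\Z$-grading of $B_\Q$ and contributes no diagonal entries, so $\Tr_{B_\Q/\Q}(x)=0$; for $x\in A_\Q$, $\Tr_{B_\Q/\Q}(x)=r\cdot\Tr_{A_\Q/\Q}(x)$. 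Writing $b=\sum_{i=0}^{r-1}b_i$ with $b_i\in L^i_\Q$, the degree-$0$ part of $b\bar{b}$ in $B$ equals $\sum_i b_i\bar{b_i}\in A$, and the off-diagonal parts $b_i\overline{b_j}$ ($i\ne j$) lie in strictly positive degrees of $B$. Hence
\[
\Tr_{B_\Q/\Q}(b\bar{b}) = r\sum_i \Tr_{A_\Q/\Q}(b_i\bar{b_i}) = r\sum_i \langle b_i,b_i\rangle_{L^i} > 0.
\]

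Parts (iii) and (iv) are then short. Applying Proposition \ref{muAequiv} (equivalence of (i) and (iv)) to the CM-order $B$ yields $\mu(B) = \{\beta\in B : \beta\bar{\beta}=1\}$, which is (iii). For (iv), part (i) exhibits $B$ as a $\Z/r\Z$-graded order with $B_0 = A$ connected, so Proposition \ref{ingredient2} from \cite{Kronecker} immediately gives that $B$ is connected and $\mu(B)\subset\bigcup_{i=0}^{r-1} L^i$. The main obstacle is the injectivity argument in (i), where careful bookkeeping of the $\Z$-degrees of $\lambda_j$ is essential and one must repeatedly use that $y$ is a unit in $\Lambda$; the trace computation in (ii) is the next most delicate point, since one must be careful to distinguish the ambient product in $\Lambda$ from its image in the quotient $B$.
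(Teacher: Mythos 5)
Your proof is correct and follows essentially the same route as the paper: part (i) is the paper's ``straightforward exercise'' carried out in detail, part (ii) reduces to positivity of $\Tr_{B_\Q/\Q}(b\bar b)=r\sum_i\langle b_i,b_i\rangle$ via Proposition \ref{involCMprop} exactly as in the paper, and parts (iii) and (iv) invoke Propositions \ref{muAequiv} and \ref{ingredient2} just as the paper does. No gaps.
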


\begin{proof}
Part (i) is a straightforward exercise.

Each $L^{i}$ has an $A$-lattice structure
$\langle x,y \rangle = \Tr_{A/\Z}(x\bar{y})$, 
where $x\overline{y} = \varphi_{L^i}(x \otimes \bar{y})$.
If 
$\beta = (\beta_0,\ldots,\beta_{r-1}) \in \bigoplus_{i=0}^{r-1} L^i = B$,
then $\overline{\beta_i} \in L^{-i}$ and
$y\overline{\beta_i} \in L^{r-i}$, 
but $\overline{\beta_i} = y\overline{\beta_i}$ in $B$, so 
$$
\bar{\beta} = (\overline{\beta_0},\overline{\beta_{r-1}},\ldots,\overline{\beta_1})
= (\overline{\beta_0},y\overline{\beta_{r-1}},\ldots,y\overline{\beta_1}) 
\in \bigoplus_{i=0}^{r-1} L^i = B.
$$
By Proposition \ref{involCMprop} applied to $E = B_\Q$
and Remark \ref{CMordiffresult},
to prove (ii)
it suffices to prove that for all
$\beta$ 
we have
$\Tr_{B/\Z}(\beta\bar{\beta}) = r\cdot\sum_{i=0}^{r-1} \langle \beta_i,\beta_i \rangle$.
If $a\in A = L^{0}$, then 
$$
\Tr_{B/\Z}(a) = \sum_{i=0}^{r-1} \tt(\text{action of $a$ on $L^{i}$})
= r\cdot\Tr_{A/\Z}(a).
$$
If $c\in  L^{i}$ with $0 < i < r$, then 
$\Tr_{B/\Z}(c) = 0.$ Thus, 
$\Tr_{B/\Z}(\beta) = r\cdot\Tr_{A/\Z}(\beta_0)$.
If $\beta\bar{\beta} = (\alpha_i)_{i=0}^{r-1}$, then $\alpha_0 = \sum_{i=0}^{r-1} \beta_i\overline{\beta_i}$.
Thus,
$$
\Tr_{B/\Z}(\beta\bar{\beta}) = r\cdot\Tr_{A/\Z}(\alpha_0) = 
r\cdot\Tr_{A/\Z}(\sum_{i=0}^{r-1} \beta_i\overline{\beta_i}) = 
r\cdot\sum_{i=0}^{r-1} \Tr_{A/\Z}(\beta_i\overline{\beta_i}) = 
r\cdot\sum_{i=0}^{r-1} \langle \beta_i,\beta_i \rangle.
$$
This proves (ii).
Part (iii) follows from Proposition \ref{muAequiv} and (ii).
Part (iv) follows from 
Proposition \ref{ingredient2} with $\Gamma = \Z/r\Z$, where $B_0 = L^0 = A$.
\end{proof}

The algorithm associated to the following result is Algorithm 13.2 of \cite{RoU}.

\begin{prop}[\cite{RoU}, Theorem 1.2]
\label{ingredient1}
There is a deterministic polynomial-time algorithm that, given an order $B$,
produces a set $S$ of generators for the group $\mu(B)$ of roots of unity in
$B^\ast$, as well as a set of defining relations for $S$.
\end{prop}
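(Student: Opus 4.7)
The plan is to first reduce to the case of computing roots of unity in a single number field, and then to combine these via the constraint of lying in $B$ rather than in $B_\Q$. I begin by applying Algorithms 5.5 and 7.2 of \cite{Qalgs} to decide separability and compute all $\m \in \Spec(B_\Q)$, yielding an explicit ring isomorphism $B_\Q \cong K_1 \times \cdots \times K_r$ with each $K_i = B_\Q/\m_i$ a number field, along with a $\Q$-basis for each $K_i$.

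For each component $K_i$ I would determine the finite cyclic group $\mu(K_i)$. Since a primitive $n$-th root of unity in $K_i$ forces $\phi(n) \le [K_i:\Q]$, only polynomially many candidate orders $n$ need to be checked; for each one factors the cyclotomic polynomial $\Phi_n$ over $K_i$ by standard polynomial-time factoring algorithms for number fields. This produces a generator $\zeta_i \in K_i$ of $\mu(K_i)$ of some order $n_i$, so that $\mu(B_\Q) \cong G := \bigoplus_{i=1}^r \Z/n_i\Z$ via $(a_1,\ldots,a_r) \mapsto (\zeta_1^{a_1},\ldots,\zeta_r^{a_r})$, with defining relations $n_i e_i = 0$.

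The remaining step is to compute $\mu(B) = \mu(B_\Q) \cap B$ as a subgroup $H \le G$. Membership of a specific element $\prod_i \zeta_i^{a_i} \in B_\Q$ in $B$ is a routine integrality test, since $B$ is a full $\Z$-lattice in $B_\Q$ given with an explicit $\Z$-basis. To extract $H$ in polynomial time one works prime by prime: for each prime $p$ dividing $|G|$, the $p$-primary subgroup $H_p \le G_p$ can be built up by repeatedly identifying, via the structure theory of finite abelian $p$-groups, elements of maximal order in $G_p$ whose image lies in $B$. Assembling the $H_p$ yields $H$, and running Smith normal form on the resulting presentation gives generators for $\mu(B)$ and a complete set of defining relations, as required.

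The principal obstacle is the final subgroup-reconstruction step: although $G$ may have exponentially large order, one must avoid enumerating its elements and instead invoke the membership test only polynomially often. This requires the careful inductive descent through the $p$-primary layers carried out in \cite{RoU}; the preceding steps (decomposition of $B_\Q$, computation of $\mu(K_i)$, and the concluding Smith normal form) are standard once $H$ is in hand.
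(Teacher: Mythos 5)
This proposition is not proved in the present paper at all: it is imported verbatim from \cite{RoU} (Theorem 1.2 there), with the text pointing to Algorithm 13.2 of that reference for the algorithm. So what you are really being asked for is the content of the cited paper, and your write-up should be judged on whether it actually supplies that content.

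Your reduction to computing $H=\mu(B)\cap B=\mu(B_\Q)\cap B$ inside $G=\mu(B_\Q)\cong\prod_i\mu(K_i)$ is a reasonable framing, and computing each $\mu(K_i)$ by factoring the polynomially many cyclotomic polynomials $\Phi_n$ with $\phi(n)\le[K_i:\Q]$ over $K_i$ does work in deterministic polynomial time. The genuine gap is the final step, which is exactly where the whole difficulty of the theorem lives. The group $G$ can have order exponential in the input size (take $B_\Q$ a product of many distinct cyclotomic fields), so $H$ must be produced without enumeration; but your procedure --- ``repeatedly identifying \ldots elements of maximal order in $G_p$ whose image lies in $B$'' --- gives no method for \emph{finding} such elements. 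Membership of a given element in $B$ is indeed an easy integrality test, but a subgroup of a finite abelian group is not pinned down by polynomially many membership queries, and locating even one nontrivial element of a sparse subgroup $H_p\le G_p$ by testing candidates is an exponential search. The natural structural shortcut --- reduce modulo the conductor $\mathfrak{f}$ of $B$ in the maximal order, so that $g\in B$ becomes a linear condition on $g\bmod\mathfrak{f}$ and the problem becomes integer linear algebra --- is blocked because computing the maximal order (equivalently, enough of the factorization of the discriminant) is not known to be possible in deterministic polynomial time. Your closing sentence defers precisely this step to ``the careful inductive descent \ldots carried out in \cite{RoU}'', which is to say you are citing the theorem you set out to prove. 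A smaller point: the statement allows arbitrary orders, so before decomposing $B_\Q$ into fields you must pass to the reduced quotient and justify that $\mu(B_\Q)\to\mu(B_\Q/\mathrm{nil}(B_\Q))$ is injective (true in characteristic zero, since a unipotent root of unity is trivial), which your sketch does not address.
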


The following algorithm will be applied repeatedly in Algorithm \ref{connmainalgor}.
It generalizes Algorithms 17.4 and 19.1(vii)--(ix) of \cite{LwS}. 

\begin{algorithm}
\label{almostmainalgor}
Given a connected CM-order $A$ of rank $n$, 
an invertible $A$-lattice $L$,
a positive integer $r$,  
an element
$\epsilon \in L/2^{n+1}L$ 
such that $(A/2^{n+1}A)\epsilon = L/2^{n+1}L$,
and an element $s\in A/2^{n+1}A$ such that 
the coset
$s\epsilon^r
\in L^r/2^{n+1}L^r$ contains a (unique)
short vector, 
the algorithm 
decides whether $L$ has a short vector,  
and if so,  
determines an element $t\in A/2^{n+1}A$ such that
the coset $t\epsilon$ 
contains a (unique) short vector.

Steps:
\begin{enumerate}
\item
Pick an element $\ee$ in the coset $\epsilon$ 
and let $q = (L: A\ee)$.
Apply 
the algorithm associated to Proposition \ref{elemprop} 
to find $e_q\in L$ such that $Ae_q + qL = L$. 
Let $I = \{ a\in A_\Q : a\ee \in L\}$
and $w = (\ee\overline{\ee})^{-1} \in A_\Q^\ast$, 
compute $w^r$, 
compute $\beta = e_q/\ee \in A_\Q \subset \Lambda_\Q$,
and for $0\le i \le r$ 
compute $I^i = A + A\beta^i$.
\item
Apply Algorithm \ref{findingekmalg2} with $\a = 2^{n+1}A$ and $L = L_{(I^r,w^r)}$
and $C = s + 2^{n+1}L_{(I^r,w^r)}$ to compute 
the unique short vector $\nu \in C$.
\item
Construct the order $B = \bigoplus_{i=0}^{r-1} I^i$ with 
multiplication 
$$
I^i\times I^j \to I^{i+j}, \quad (x,y)\mapsto xy
\quad \text{if $i+j<r$}
$$ 
and 
$$
I^i\times I^j \to I^{i+j-r}, \quad (x,y)\mapsto xy/\nu
\quad \text{if $i+j\ge r$}.
$$
\item
Apply the algorithm from   
Proposition \ref{ingredient1}
to compute a set
of generators $\{ \zeta_1,\ldots,\zeta_m \}$ for $\mu(B)$.
\item
Applying the degree map $\deg : \mu(B) \to \Z/r\Z$
that takes $\zeta\in \mu(B)$ to $j\in\Z$ such that $\zeta\in I^{j}$, 
either find integers $s_i$ such that $\sum_{i=1}^m s_i\deg(\zeta_i) = 1$,
or if no such integers exist
output ``no'' and stop.
Letting $\alpha = \prod_{i=1}^m \zeta_i^{s_i} \in \mu(B)$, 
use linear algebra over $\Z$ to  compute 
$t\in A/2^{n+1}A$ that maps to $\alpha$ mod $2^{n+1}I$
under the isomorphism $A/2^{n+1}A \isom I/2^{n+1}I$ induced by $a \mapsto a$,
and output ``yes'' and $t$.
\end{enumerate}
\end{algorithm}

\begin{prop}
\label{almostmainalgorpf}
Algorithm \ref{almostmainalgor} is correct and runs in  
time at most polynomial in $r$ plus the length of the input. 
\end{prop}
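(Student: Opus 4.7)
The plan is to verify correctness by tracking the algorithm through the identification of $\Lambda$ with $\bigoplus_{j\in\Z} I^j \subset A_\Q$ furnished by Lemma~\ref{invlemnew}, and then applying Proposition~\ref{Bgraded} to the order $B$ constructed in step~(iii). The hypothesis $(A/2^{n+1}A)\epsilon = L/2^{n+1}L$ implies, via Lemma~\ref{reglem2}, that the representative $e$ chosen in step~(i) is regular; by Lemma~\ref{invlemnew} it yields $L = Ie$ with $w := (e\bar{e})^{-1} \in (A^+_\Q)_{\gg 0}$ and $I\bar{I} = Aw$. Multiplication by $e^j$ then gives $A$-lattice isomorphisms $L^j \cong L_{(I^j, w^j)}$ for every $j \in \Z$, and globally realizes $\Lambda$ as the subring $\bigoplus_{j\in\Z} I^j$ of $A_\Q$.

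For step~(ii), the coset $s + 2^{n+1}L_{(I^r, w^r)}$ contains a unique short vector by the input assumption, and the ideal $\mathfrak{a} = 2^{n+1}A$ satisfies hypothesis~\eqref{betabetage} of Algorithm~\ref{findingekmalg2} because $(2^{n/2} + 1)^2 n \le 2^{2(n+1)}$, as noted after Proposition~\ref{findingekmprop2}. Hence Proposition~\ref{findingekmprop2} guarantees that $\nu \in I^r \subset A_\Q$ is correctly computed; intrinsically it corresponds to a short vector $\tilde{\nu} = \nu e^r \in L^r$ satisfying $\tilde{\nu}\,\overline{\tilde{\nu}} = 1$.

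For steps~(iii)--(v), the ring $B$ built in step~(iii) is a concrete model of the quotient $\Lambda/(\tilde{\nu} - 1)\Lambda$: identifying $\tilde{\nu}$ with $1$ translates, in the $I^j$-model, exactly to identifying $u \in I^{i+j}$ with $u/\nu \in I^{i+j-r}$ whenever $i + j \ge r$. By Proposition~\ref{Bgraded}, $B$ is a CM-order with involution inherited from $\Lambda$; since $A$ is connected, $B$ is connected and $\mu(B) \subset \bigcup_{j=0}^{r-1} I^j$, so the degree map $\deg : \mu(B) \to \Z/r\Z$ of step~(v) is a group homomorphism. The crux is the claim that $L$ has a short vector if and only if $1 \in \image(\deg)$: given a short $x \in L$, the element $a = x/e \in I = I^1$ satisfies $a\bar{a} = w$ in $A_\Q$, and pushing the involution of $\Lambda$ through the $I^j$-identification shows that the product of $a$ with its involution-image in $B$ equals $a\bar{a}/w = 1$, so $a \in \mu(B)$ has degree~$1$ by Propositions~\ref{muAequiv} and~\ref{Bgraded}(iii); conversely every such $a$ returns a short vector $ae \in L$. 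Bezout's identity applied to the integers $\deg(\zeta_i)$ then decides in step~(v) whether $1 \in \image(\deg)$ using the generators of $\mu(B)$ produced by Proposition~\ref{ingredient1}, and in the affirmative case constructs $\alpha \in \mu(B)$ of degree~$1$; the integer $t \in A/2^{n+1}A$ corresponding to $\alpha$ modulo $2^{n+1}I$ under the isomorphism $A/2^{n+1}A \cong I/2^{n+1}I$ of Lemma~\ref{fractideallem}(ii) (with generator the coset of $1$) then satisfies $te \equiv \alpha e \pmod{2^{n+1}L}$, so $t\epsilon$ contains the short vector $\alpha e$.

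For the running time, steps~(i), (ii), and~(iv) invoke polynomial-time subroutines (Propositions~\ref{elemprop} and~\ref{ingredient1} and Algorithm~\ref{findingekmalg2}), while steps~(iii) and~(v) reduce to linear algebra. The ideals $I^j$ for $0 \le j \le r$ and the order $B$ of $\Z$-rank $rn$ can be represented by data of size polynomial in $r$ plus the input length once their bases are LLL-reduced, by Proposition~\ref{LLLlem} and Theorem~\ref{LMmultalg}. The main obstacle is the precise identification of short vectors of $L$ with degree-$1$ elements of $\mu(B)$, which requires carefully pushing the involution on $\Lambda$ through the quotient by $\tilde{\nu} - 1$ and combining Propositions~\ref{Bgraded} and~\ref{muAequiv}.
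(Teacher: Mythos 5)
Your proposal follows essentially the same route as the paper's proof: Lemma \ref{invlemnew} to pass to the ideal model $L\cong L_{(I,w)}$, Proposition \ref{findingekmprop2} for step (ii), and Propositions \ref{Bgraded} and \ref{muAequiv} to identify short vectors of $L$ with degree-one elements of $\mu(B)$, with Bezout deciding whether degree one is attained. The one step you do not address is the correctness of the formula $I^i = A + A\beta^i$ that step (i) of the algorithm actually uses to compute the powers of $I$: your argument works with $I^i$ as the abstract $i$-th power of the fractional ideal $I$, but the algorithm never computes that object directly, and the identity is not automatic (a priori $(A+A\beta)^i = A + A\beta + \cdots + A\beta^i$, and one must show the intermediate terms are redundant). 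The paper disposes of this by invoking the argument of Proposition 19.2 of \cite{LwS}, starting from the observation that $qL\subset A\ee$ and $Ae_q+qL=L$ force $L=A\ee+Ae_q$, hence $I=A+A\beta$. Everything else in your write-up --- including the ``short vector exists iff $1\in\image(\deg)$'' characterization, the computation showing that the product of $a=x/\ee$ with its involution-image in $B$ is $a\bar a/w$, and the identification $A/2^{n+1}A\cong I/2^{n+1}I$ induced by $a\mapsto a$ (valid because $A+2^{n+1}I=I$) --- agrees with the paper, and in places is more explicit than the paper's own proof.
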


\begin{proof}
By  Lemma \ref{reglem2}(i)  
with $m=2^{n+1}$ we have that $q < \infty$. 
Then $e_q$ exists by Corollary \ref{egothicacor}.
By Lemma \ref{invlemnew}
we have that $w \in A_\Q^\ast$ and that the map $I \to L$, $a \mapsto a\ee$
induces an $A$-isomorphism from $L_{(I,w)}$ to $L$.
That $I^i = A + A\beta^i$ 
follows exactly as in the proof of Proposition 19.2 of \cite{LwS},
with $A$ in place of $\Z\langle G\rangle$ and making use of 
Lemma \ref{reglem2}(i).

The short vector $\nu$ in Step (ii) is unique by Proposition \ref{findingekmprop2},
and $\nu \ee^r \in L^r$ is the unique short vector in the coset 
$s\epsilon^r$.

By Proposition \ref{Bgraded}(iv), the degree map in Step (v) makes sense.
Since $\deg(\alpha)=1$ we have $\alpha\in I$. 
Since 
$A\ee + 2^{n+1}L = L$, we have  
$A + 2^{n+1}I = I$, and it follows that the map 
$A/2^{n+1}A \to I/2^{n+1}I$ induced by $a \mapsto a$ is an isomorphism.
By Proposition \ref{Bgraded}(iii), 
the vector $z = \alpha \ee \in L$ satisfies $z\bar{z}=1$, and 
is the unique short vector
in the coset  
$t\epsilon$ 
by Proposition \ref{findingekmprop2}.

Computing  
$w^r$ and $\beta^i$ in Step (i),
and all computations involving $B$,
 entail the $r$ entering the runtime. 
\end{proof}

\section{Some elementary number theory}
\label{Hope2sect}

\begin{defn}
\label{cbndefn}
Let $c(n)= n^2$ for $n\ge 2$, let 
$b(n) = 4(\log~n)^2$ for $n\ge 3$,
and let $c(1)=b(1)=2$ and $b(2)=3$.
\end{defn}
Note that $b(n) \le c(n)$, and $c$ and $b$ are
each monotonically increasing.
Let
$$
\psi(x,y) = \#\{ m\in\Z : 0 < m\le x, \text{ each prime $p|m$ satisfies $p\le y$}\}.
$$

\begin{thm}[Konyagin-Pomerance, Theorem 2.1 of \cite{KP}]
\label{Abtien0}
If $x \ge 4$ and $2 \le y \le x$, then
$
\psi(x,y) > x^{1-\log\log x/\log y}. 
$
\end{thm}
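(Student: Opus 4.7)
The plan is to bound $\psi(x,y)$ from below by exhibiting an explicit large family of $y$-smooth integers in $[1,x]$ and counting them with multiplicity. Set $k=\lfloor \log x/\log y\rfloor$, so that $y^k\le x$. For any ordered $k$-tuple $(q_1,\dots,q_k)$ of (not necessarily distinct) primes $q_i\le y$, the product $q_1\cdots q_k$ is a $y$-smooth positive integer not exceeding $x$. Conversely, a given $y$-smooth integer $n=\prod_p p^{a_p}$ with $\sum_p a_p=k$ comes from exactly $k!/\prod_p a_p!\le k!$ ordered tuples. Hence
\[
\psi(x,y)\;\ge\;\pi(y)^k/k!.
\]

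The next step is to turn this into the stated shape. I would apply an elementary Chebyshev-type lower bound $\pi(y)\ge c\,y/\log y$ with an explicit $c>0$ (valid for $y\ge y_0$), together with Stirling's inequality $k!\le (k/e)^k\sqrt{2\pi k}\,e^{1/(12k)}$. Using $k\log y\le \log x$ and $y^k\le x$, the right-hand side becomes
\[
\frac{\pi(y)^k}{k!}\;\ge\;\frac{(c\,y/\log y)^k\,e^k}{k^k\sqrt{2\pi k}\,e^{1/(12k)}}\;\ge\;\frac{(ce)^k\,x}{(\log x)^k\sqrt{2\pi k}\,e^{1/12}}.
\]
The decisive identity is
\[
(\log x)^{\log x/\log y}\;=\;x^{\log\log x/\log y},
\]
so, modulo the slack between $k$ and $\log x/\log y$, this lower bound matches $x^{1-\log\log x/\log y}$ times an extra factor $(ce)^k/\sqrt{k}$ that is $\ge 1$ once $k$ (hence $x$) is not too small, yielding strict inequality.

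The main obstacles will be the edge cases, which is where the argument really earns its keep. First, very small $y$ (say $y\in\{2,3\}$): the Chebyshev bound $\pi(y)\ge cy/\log y$ is essentially vacuous here, and the ordered-tuple count $\pi(y)^k/k!=1/k!$ for $y=2$ is useless. One must replace it by a direct count of exponent vectors — that is, count lattice points in the simplex $\{(a_p)_{p\le y}: \sum_p a_p\log p\le \log x\}$, for example by choosing independently $a_p\in\{0,1,\dots,\lfloor\log x/(\pi(y)\log p)\rfloor\}$ and noting that every such vector gives an integer $\le x$. For $y=2$ this reduces to comparing $\lfloor\log_2 x\rfloor+1$ against $x^{1-\log\log x/\log 2}$, which one checks by elementary calculus. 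Second, the rounding error $k-\log x/\log y\in(-1,0]$ must be absorbed; it costs at most a factor of $y$, which is easily covered by the geometric factor $(ce)^k$ once $k$ is moderately large. Third, the range $x$ just above $4$ is handled by a finite computation. Threading these pieces together while producing a \emph{strict} inequality, rather than just an asymptotic estimate, is the technical core; the Konyagin–Pomerance argument likely achieves this via a single careful counting that avoids case splits.
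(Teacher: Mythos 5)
First, a remark on the comparison itself: the paper does not prove this statement — it is imported verbatim from Konyagin--Pomerance \cite{KP} and used as a black box — so your proposal has to stand on its own as a proof of the cited result.

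It does not, and the gap is not in the edge cases you flag (tiny $y$, $x$ near $4$, rounding) but in the main regime of \emph{large} $y$. Your central bound is $\psi(x,y)\ge \pi(y)^k/k!$ with $k=\lfloor \log x/\log y\rfloor$, and your displayed chain of inequalities silently replaces $y^k$ by $x$ from \emph{below}; but $y^k\le x$, and all one actually has is $y^k\ge x/y$. The resulting deficit is a factor of up to $y=x^{1/u}$ (with $u=\log x/\log y$), and the geometric factor $(ce)^k$ absorbs it only when $(ce)^u\ge x^{1/u}$, i.e.\ when $u\gtrsim \sqrt{\log x}$, equivalently $y\le \exp(O(\sqrt{\log x}))$. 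For larger $y$ the method fails outright, and your parenthetical ``once $k$ (hence $x$) is not too small'' is the misconception: $k$ is small precisely when $y$ is a large power of $x$, which happens for arbitrarily large $x$. Concretely, for $y\in(x^{1/2},x]$ one has $k=1$ and your bound is just $\pi(y)\asymp y/\log y$, which for $y=x^{0.9}$ is about $x^{0.9}/\log x$, far below the target $x/(\log x)^{10/9}$; even $y=x^{1/2.1}$ fails, since $\pi(y)^2/2\asymp x^{2/2.1}/(\log x)^2$ is eventually smaller than $x/(\log x)^{2.1}$. In that range one needs a genuinely different count — e.g.\ for $\sqrt{x}<y\le x$ the identity $\psi(x,y)=\lfloor x\rfloor-\sum_{y<p\le x}\lfloor x/p\rfloor$ together with an explicit Mertens bound gives $\psi(x,y)\ge x(1-\log u)-O(x/\log x)$, and intermediate $u$ needs its own treatment — none of which appears in your sketch. (The instance the paper actually uses, $\psi(c(n),b(n))>n$ with $x=n^2$ and $y=4(\log n)^2$, does lie in the small-$y$ regime where your tuple count is the right tool; but as a proof of the theorem as stated, for all $2\le y\le x$, the argument does not go through.)
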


\begin{cor}
\label{Abtien}
For all $n\in\Z_{>0}$ we have 
$$
\psi(c(n),b(n)) > n. 
$$
\end{cor}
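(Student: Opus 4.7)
The plan is to apply Theorem \ref{Abtien0} directly with $x = c(n) = n^2$ and $y = b(n) = 4(\log n)^2$, after first disposing of the boundary cases $n = 1, 2$ by inspection (the theorem requires $x \ge 4$, and $c, b$ are defined specially for $n \le 2$).

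For $n = 1$, we have $c(1) = b(1) = 2$, and every integer $m$ with $1 \le m \le 2$ has all prime factors at most $2$, so $\psi(2, 2) = 2 > 1$. For $n = 2$, we have $c(2) = 4$ and $b(2) = 3$, and each of $1, 2, 3, 4$ is either $1$ or has all prime factors in $\{2, 3\}$, giving $\psi(4, 3) = 4 > 2$.

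For $n \ge 3$, I would first verify the hypotheses of Theorem \ref{Abtien0}: $x = n^2 \ge 9 \ge 4$, and $y = 4(\log n)^2 \ge 4(\log 3)^2 > 2$. The inequality $y \le x$ reduces to $2 \log n \le n$, which holds for $n \ge 3$ since $f(n) = n - 2 \log n$ satisfies $f'(n) = 1 - 2/n > 0$ for $n > 2$ and $f(3) > 0$. The key step is the computation of the exponent:
\[
\log \log x = \log(2 \log n), \qquad \log y = \log 4 + 2 \log \log n = 2(\log 2 + \log \log n) = 2 \log(2 \log n),
\]
so $\log \log x / \log y = 1/2$. Theorem \ref{Abtien0} then gives
\[
\psi(c(n), b(n)) > (n^2)^{\,1 - 1/2} = n,
\]
as desired.

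There is no genuine obstacle. The whole content of the argument is the observation that $c(n) = n^2$ and $b(n) = 4(\log n)^2$ have been calibrated precisely so that $\log y = 2 \log \log x$, forcing the Konyagin--Pomerance exponent $1 - \log \log x / \log y$ to come out to exactly $1/2$; the rest is routine verification of hypotheses and a finite check at $n \in \{1, 2\}$.
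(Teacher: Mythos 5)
Your proof is correct and follows exactly the paper's approach: apply the Konyagin--Pomerance bound with $x = n^2$ and $y = 4(\log n)^2$ for $n \ge 3$, noting that the exponent evaluates to $1/2$, and check $n = 1, 2$ by hand. You have merely written out the hypothesis verification and the exponent computation that the paper leaves implicit.
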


\begin{proof}
For $n>2$ this follows 
by setting $x = n^2$ and  $y = 4(\log~n)^2$
in Theorem \ref{Abtien0}. 
For $n=1$ and $2$ this can be checked by hand.
\end{proof}

\begin{prop} 
\label{Hope2}
For each $n\in\Z_{>0}$, each prime divisor of 
$$
\gcd\{ \h^n-1 : \h\in\Z_{>0}, \quad \h \le b(n) \}
$$
is less than $c(n)$.
\end{prop}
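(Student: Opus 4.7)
The plan is to argue by contradiction: suppose $\ell$ is a prime with $\ell \ge c(n)$ that divides $h^n - 1$ for every positive integer $h \le b(n)$, and derive a contradiction by combining pigeonhole with the smooth-number count in Corollary~\ref{Abtien}.

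First I would note that $\ell > b(n)$. Otherwise, taking $h = \ell \le b(n)$ yields $h^n - 1 \equiv -1 \pmod{\ell}$, contradicting $\ell \mid h^n - 1$. In particular, every prime $p \le b(n)$ is coprime to $\ell$ and satisfies $p^n \equiv 1 \pmod{\ell}$.

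Next I would invoke Corollary~\ref{Abtien} with the set
$$
S = \{\,m \in \Z : 0 < m \le c(n),\ \text{every prime factor of } m \text{ is at most } b(n)\,\},
$$
which has cardinality $\psi(c(n), b(n)) > n$. Writing each $m \in S$ as a product of primes $p \le b(n)$ and using the previous observation multiplicatively shows $m^n \equiv 1 \pmod{\ell}$ for every $m \in S$. Since $\Z/\ell\Z$ is a field, the equation $x^n = 1$ has at most $n$ solutions in $(\Z/\ell\Z)^\ast$. With $|S| > n$, pigeonhole produces distinct $m_1, m_2 \in S$ with $m_1 \equiv m_2 \pmod{\ell}$. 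Then $m_1 - m_2$ is a nonzero integer of absolute value at most $c(n) - 1 < c(n) \le \ell$, so $\ell$ cannot divide it, a contradiction. Hence every prime divisor $\ell$ of the gcd satisfies $\ell < c(n)$.

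I do not anticipate a substantive obstacle: once Corollary~\ref{Abtien} is in hand, the argument is a single pigeonhole step on the image in $(\Z/\ell\Z)^\ast$ of $b(n)$-smooth integers up to $c(n)$. The only point to verify separately is the base case $\ell > b(n)$, needed so that the smooth integers in $S$ actually lie in $(\Z/\ell\Z)^\ast$; the cases $n \in \{1, 2\}$ (where $b(n), c(n)$ are defined by hand) are then covered uniformly by the same argument since $\psi(c(n), b(n)) > n$ already in those cases.
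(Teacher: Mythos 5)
Your proof is correct and is essentially the paper's own argument: both use Corollary \ref{Abtien} to get more than $n$ smooth integers $m \le c(n)$ with $m^n \equiv 1 \pmod{\ell}$, then pigeonhole against the at most $n$ roots of $x^n = 1$ in $\F_\ell$ to force $\ell$ to divide a nonzero difference of size at most $c(n)-1$. The only cosmetic difference is that you frame it as a contradiction (and separately note $\ell > b(n)$, which is not actually needed since the multiplicativity step works directly), whereas the paper concludes $\ell \le c(n)-1$ outright.
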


\begin{proof}
Suppose $\ell$ is a prime divisor of 
$\gcd\{ \h^n-1 : \h\in\Z, \h \le b(n) \}$.
Then $\h^n \equiv 1\bmod{\ell}$ for all integers $\h \le b(n)$.
Let
$S$ denote the set of $m\in\Z_{>0}$ with $m\le c(n)$ such that
all prime divisors  $p$ of $m$ satisfy $p \le b(n)$.
Then $\# S = \psi(c(n),b(n)) > n$ by Corollary \ref{Abtien},
and 
for all $a\in S$ we have $a^n \equiv 1\bmod{\ell}$.
So if all elements of $S$ are pairwise incongruent mod $\ell$,
then
$\#\{ x\in\F_\ell : x^n=1\} \ge \# S > n$,
which cannot be. So there exist $s,t\in S$ with $s\neq t$ 
and $s\equiv t\bmod{\ell}$.
Thus, $\ell$ divides $|s-t|$, and $|s-t| \le c(n) -1$.
\end{proof}

\begin{cor} 
\label{Hope2Cor}
Suppose 
$n\in\Z_{>0}$ and $\ell$ is a prime number such that $\ell > c(n)$.
Then there exists a prime number
$p \le b(n)$ such that $p^n \not\equiv 1\bmod \ell$.
\end{cor}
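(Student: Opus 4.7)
The plan is to derive Corollary \ref{Hope2Cor} almost immediately from Proposition \ref{Hope2}, with only a short argument needed to pass from an arbitrary witness integer to a prime witness.

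First I would apply Proposition \ref{Hope2} directly. Since $\ell$ is prime with $\ell > c(n)$, $\ell$ cannot equal any prime divisor of $\gcd\{h^n - 1 : h \in \Z_{>0},\ h \le b(n)\}$, because all such divisors are strictly less than $c(n)$. Therefore $\ell$ does not divide this gcd, which means there exists some positive integer $h \le b(n)$ with $\ell \nmid h^n - 1$, i.e., $h^n \not\equiv 1 \bmod \ell$.

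The only remaining step is to upgrade this $h$ to a prime $p$. I would factor $h = p_1 p_2 \cdots p_k$ into primes with multiplicity. Each $p_i$ divides $h$, so in particular $p_i \le h \le b(n)$. From $h^n = p_1^n p_2^n \cdots p_k^n$ and $h^n \not\equiv 1 \bmod \ell$, at least one factor must satisfy $p_i^n \not\equiv 1 \bmod \ell$ (otherwise their product would be $\equiv 1$). Taking $p = p_i$ for such an index gives the required prime bounded by $b(n)$.

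There is essentially no obstacle here; the entire content lies in Proposition \ref{Hope2}, which was itself the substantial step based on Konyagin--Pomerance. The only minor point to verify is that the factorization step is not vacuous, i.e., that the integer $h$ produced by Proposition \ref{Hope2} is at least $2$ so it has a prime factor. Since $h = 1$ gives $h^n - 1 = 0$ and therefore provides no information about $\ell$, the witness $h$ we extract must satisfy $h \ge 2$; equivalently, since $b(n) \ge 2$ for all $n \ge 1$ (as $b(1) = 2$, $b(2) = 3$, $b(n) = 4(\log n)^2 \ge 2$ for $n \ge 3$), the range $2 \le h \le b(n)$ is nonempty and the argument proceeds without issue.
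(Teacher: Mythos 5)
Your proof is correct and follows the paper's own argument exactly: invoke Proposition \ref{Hope2} to get a witness $h\le b(n)$ with $h^n\not\equiv 1\bmod\ell$, then pass to a prime divisor of $h$ using the multiplicativity of $h\mapsto h^n\bmod\ell$. Your extra remark that the witness must satisfy $h\ge 2$ is a sensible detail the paper leaves implicit.
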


\begin{proof}
By Proposition \ref{Hope2}, there exists a positive integer
$h \le b(n)$ such that $h^n \not\equiv 1\bmod \ell$.
Then $h$ has a prime divisor $\zzzz \le b(n)$ such that 
$\zzzz^n \not\equiv 1\bmod \ell$.
\end{proof}

The next result replaces our use of Linnik's theorem in \cite{LenSil,LwS},
and allows us to prove upper bounds for the runtime that are much better
than those proved in \cite{GS,LenSil,LwS,Kirchner}.
We use it to prove Proposition \ref{usablesetsalgpf}.

\begin{prop} 
\label{Hope1}
Suppose 
$A$ is an order and $n=\rk_\Z(A)\in\Z_{>0}$.
Then for each prime number $\ell > c(n)$ there is a maximal ideal $\L$ of $A$ 
such that 
$\Norm(\L) \not\equiv 1\bmod{\ell}$ and
$\fchar(A/\L)\le b(n)$.
\end{prop}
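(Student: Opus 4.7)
The plan is to locate a maximal ideal lying over a small rational prime chosen via the elementary number theory already set up in the preceding corollary. First, I would apply Corollary \ref{Hope2Cor} to the given $\ell > c(n)$ to obtain a rational prime $p \le b(n)$ with $p^n \not\equiv 1 \bmod \ell$. Any maximal ideal $\L$ of $A$ containing $p$ will automatically satisfy $\fchar(A/\L) = p \le b(n)$, so the task reduces to exhibiting one such $\L$ with $\Norm(\L) \not\equiv 1 \bmod \ell$.

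To do that, I would work inside the finite $\F_p$-algebra $\bar A = A/pA$, which has $\F_p$-dimension $n$ since the additive group of $A$ is $\Z^n$. Its maximal ideals correspond bijectively to the maximal ideals $\L_1, \ldots, \L_k$ of $A$ that contain $p$. Because $\bar A$ is Artinian, it decomposes canonically as a product $\bar A \cong \prod_{i=1}^k R_i$ of local Artinian $\F_p$-algebras, where $R_i$ has residue field $A/\L_i \cong \F_{p^{f_i}}$; thus $\Norm(\L_i) = p^{f_i}$. Writing $d_i = \dim_{\F_p} R_i$, we have $\sum_i d_i = n$.

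Now I would argue by contradiction. Assume $p^{f_i} \equiv 1 \bmod \ell$ for every $i$. The key structural input is the divisibility $f_i \mid d_i$: filtering $R_i$ by the powers of its maximal ideal $\m_i$, each graded piece $\m_i^j/\m_i^{j+1}$ is annihilated by $\m_i$ and is therefore a vector space over $R_i/\m_i \cong \F_{p^{f_i}}$, so its $\F_p$-dimension is a multiple of $f_i$; summing over $j$ yields $f_i \mid d_i$. Hence $p^{d_i} = (p^{f_i})^{d_i/f_i} \equiv 1 \bmod \ell$ for each $i$, and multiplying gives $p^n = \prod_i p^{d_i} \equiv 1 \bmod \ell$, contradicting the choice of $p$. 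Some $\L_i$ must therefore satisfy $\Norm(\L_i) \not\equiv 1 \bmod \ell$, which is the required maximal ideal.

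The substantive work is really done by Corollary \ref{Hope2Cor}, which in turn rests on the Konyagin--Pomerance estimate in Theorem \ref{Abtien0}; once that input is available, the main obstacle is just making the reduction from $\Norm(\L_i) \equiv 1 \bmod \ell$ to $p^n \equiv 1 \bmod \ell$, and this is handled cleanly by the Artinian decomposition together with the filtration argument that establishes $f_i \mid d_i$.
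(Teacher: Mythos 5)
Your proof is correct and follows essentially the same route as the paper: both invoke Corollary \ref{Hope2Cor} to get a prime $p\le b(n)$ with $p^n\not\equiv 1\bmod\ell$ and then show that if every maximal ideal over $p$ had norm $\equiv 1\bmod\ell$, multiplicativity of the counting in $A/pA$ would force $p^n\equiv 1\bmod\ell$. The only (cosmetic) difference is that the paper organizes the count via a composition series of $A/pA$ as an $A$-module, which gives $\prod_i\N(\L_i)=p^n$ directly, whereas you use the product decomposition into local Artinian rings plus the divisibility $f_i\mid d_i$; both are valid.
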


\begin{proof}
By Corollary \ref{Hope2Cor}, there exists a 
prime number $\zzzz \le b(n)$ such that 
$\zzzz^n \not\equiv 1\bmod \ell$.
Take a sequence of ideals
$$
\a_0 = A \supsetneq \a_1 \supsetneq \a_2 \cdots  \supsetneq \a_m = \zzzz A
$$
such that each $\a_{i-1}/\a_i$ is a simple $A$-module.
Then $\a_{i-1}/\a_i \cong A/\L_i$ as $A$-modules, 
for some maximal ideal $\L_i$ of $A$ with $\fchar(A/\L_i)=p$.
Now,
$$
\prod_{i=1}^m \N(\L_i) = 
\prod_{i=1}^m \#(A/\L_i) = 
\prod_{i=1}^m\#(\a_{i-1}/\a_i) = 
\#(A/\zzzz A) = 
\zzzz^n \not\equiv 1\bmod{\ell}. 
$$
Thus $\N(\L_i) \not\equiv 1\bmod{\ell}$ for some $i$. 
\end{proof}

We now give a heuristic argument that  
$b(n)$ can be replaced with $5$
in Corollary~\ref{Hope2Cor}.
If $\ell$ is a prime 
let $G_\ell = \langle 2,3,5 \bmod{\ell}\rangle \subset (\Z/\ell\Z)^\times$, and 
if $m\in\Z_{>0}$ let
$$
T_m = \{ \text{primes } \ell \, : \, \ell > m^2, \, \ell > 5, \text{ and } m = \#G_\ell \}.
$$
If $b(n)$ cannot be replaced with $5$ in Corollary~\ref{Hope2Cor},
then there exists $n\in\Z_{>0}$ and a prime number $\ell > c(n) \ge n^2$
such that 
$p^n \equiv 1\bmod \ell$ for all $p \in \{2, 3,5\}$; 
if $g$ is a generator of the cyclic group $G_\ell$,
then $g^n \equiv 1\bmod \ell$, so if $m = \#G_\ell$ then
$m$ divides $n$  
and it follows that 
$\ell \in T_m$.
Thus it would suffice to show that $T_m$ is empty for all $m\in\Z_{>0}$.
Let $T_{m,x} = \{ \ell \in T_m : \ell > x\}$.
If $\ell\in T_{m,x}$ then we can write $\ell = km+1$ with 
$k\in \Z$ and $k \ge m$ (since $\ell > m^2$) and $k\ge x/m$ (since $\ell > x$).
Heuristically, a given pair $(k,m)$ gives an $\ell\in T_m$ with
``probability'' at most $c/k^3$ with an absolute positive constant $c$, since 
the probability that $\ell$ is prime is at most $1$ and
the probability that $2^m \equiv 3^m \equiv 5^m \equiv 1\bmod \ell$ 
once $\ell$ is prime might naively be 
estimated as $1/k^3$, with
the constant $c$ accounting for effects coming from quadratic
reciprocity. 
So one ``expects'' the set $\bigcup_{m \ge 1} T_{m,x}$ to have size at most
\begin{multline*}
c\sum_{m \ge 1}\left(\sum_{k\ge\max\{m,x/m\}} \frac{1}{k^3}\right) \le 
c'\sum_{m \ge 1} \frac{1}{\max\{m,x/m\}^2} = \\
c'\left(\sum_{m \le \sqrt{x}} \frac{m^2}{x^2} + \sum_{m > \sqrt{x}} \frac{1}{m^2}\right) \le
c''\left(\frac{\sqrt{x}^3}{x^2} +  \frac{1}{\sqrt{x}}\right)
= \frac{2c''}{\sqrt{x}},
\end{multline*}
which is less than $1$ for all sufficiently large $x$.
For all primes $\ell$ from $7$ to $100$ million,
we easily check that $\ell < m^2 = (\#G_\ell)^2$
(in fact, $\ell < (\#\langle 2,3 \bmod{\ell}\rangle)^{1.85}$),
so $\ell \notin T_m$. 
Similarly, $b(n)$ can be replaced with $5$, heuristically,
in Proposition~\ref{Hope1} and Theorem \ref{newLinnik}.
However, if one deletes the $5$ in the definition of $G_\ell$,
then conjecturally infinitely many $T_m$ are non-empty,
by essentially the above argument, but not a single such $m$ is known.

\section{Finding auxiliary ideals}
\label{Hope2sectbis}

Corollary 2.8 of \cite{Ronyai} gives a polynomial-time algorithm
that on input a prime $p$ and a finite dimensional commutative $\F_p$-algebra
(specified by structure constants), computes its nilradical.
Corollary 3.2 of \cite{Ronyai} gives an algorithm
that on input a prime $p$ and a finite dimensional semisimple commutative $\F_p$-algebra
$R$, computes its minimal ideals in
time at most polynomial in $p$ plus $\dim_{\F_p}(R)$. 
Combining these gives the following result.

\begin{thm}[\cite{Ronyai}]
\label{Ronyaialgs}
There is an algorithm that on input 
a prime $p$ and a finite dimensional commutative $\F_p$-algebra $R$,
computes the prime ideals of $R$ in time at most polynomial in $p$ plus 
the length of the input.
\end{thm}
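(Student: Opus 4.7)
The plan is simply to paste the two cited results from \cite{Ronyai} together, using the standard structural fact that prime ideals of an Artinian ring are maximal and correspond bijectively to the primitive idempotents of its semisimple quotient by the nilradical. Concretely, I would first feed the pair $(p,R)$ to the algorithm of Corollary 2.8 of \cite{Ronyai} to compute an $\F_p$-basis for the nilradical $N$ of $R$; this already runs in time polynomial in the length of the input.

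Next I would form the quotient $\bar R = R/N$: given a basis of $R$ and a basis of $N$, one computes (by linear algebra over $\F_p$) a basis of $\bar R$ and the structure constants of $\bar R$ in polynomial time, and $\dim_{\F_p}(\bar R) \le \dim_{\F_p}(R)$. Since $\bar R$ is now a finite-dimensional \emph{semisimple} commutative $\F_p$-algebra, I can apply Corollary 3.2 of \cite{Ronyai} to $(p,\bar R)$ to compute the minimal ideals of $\bar R$; by the bound in that corollary, this takes time polynomial in $p + \dim_{\F_p}(\bar R)$, hence polynomial in $p$ plus the input length.

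The minimal ideals of $\bar R$ determine its primitive idempotents $e_1,\dots,e_r$ and the decomposition $\bar R \cong \prod_{i=1}^r \bar R\cdot e_i$ into finite fields, so the maximal ideals of $\bar R$ are the ideals $\m_i = \bigoplus_{j\ne i}\bar R\cdot e_j$, each readable off from the list of minimal ideals by a single linear-algebra computation. Finally, since $R$ is Artinian, every prime ideal of $R$ is maximal and contains $N$; hence the prime ideals of $R$ are exactly the preimages $\p_i \subset R$ of the $\m_i$ under the natural surjection $R \twoheadrightarrow \bar R$. An $\F_p$-basis of $\p_i$ is obtained in polynomial time by combining an $\F_p$-basis of $N$ with lifts of an $\F_p$-basis of $\m_i$, which gives the desired output.

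There is essentially no obstacle beyond bookkeeping: the two non-trivial algorithmic steps are black-boxed in the cited corollaries, and the only thing to verify is that the intermediate quotient $\bar R$ and the final lifts of the $\m_i$ can be computed in polynomial time, which is routine linear algebra over $\F_p$. The runtime bound claimed in the theorem is the maximum of the two cited runtimes, both of which are polynomial in $p$ plus the length of the input.
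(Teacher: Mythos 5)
Your proposal is correct and follows exactly the route the paper intends: the paper's justification consists of citing Corollary 2.8 (nilradical) and Corollary 3.2 (minimal ideals of the semisimple quotient) of R\'onyai and stating that ``combining these gives the result.'' You have simply written out that combination --- passing to $R/N$, reading off the maximal ideals as complements of the minimal ideals, and pulling back along $R\twoheadrightarrow R/N$ --- which is the same argument with the routine linear algebra made explicit.
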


Recall the definition of vigilant from Definition \ref{vigilantdef}
and the functions $b$ and $c$ from Definition \ref{cbndefn}.

\begin{defn}
\label{usabledef}
Suppose $A$ is a reduced order of rank $n > 0$.
We will call a set $\SS$ {\em usable} for $A$ if 
$\SS$ consists of vigilant sets $S$ for $A$ such that:
\begin{enumerate}
\item
$\fchar(A/\p) \le b(n)$ for all $S\in \SS$ and all $\p\in S$, 
\item
for each prime number $\ell > c(n)$ there exists $S\in\SS$ such that 
for all $\L\in S$ we have $\N(\L) \not\equiv 1 \bmod \ell$, and
\item
the set 
$$S_0 = \{ \p\in\Maxspec(A) : 2 \in\p\}
$$
belongs to $\SS$.
\end{enumerate}
\end{defn}

If $\r\in\Minspec(A)$, let 
$
d_\r = \rank_\Z(A/\r).
$

The next algorithm will be used in Algorithm \ref{Usetalgor}.

\begin{algorithm}
\label{usablesetsalg}
Given a  
reduced order $A$ of rank $n>0$,
the algorithm outputs a  finite set $\SS$ that is usable for $A$.

Steps:
\begin{enumerate}
\item
Apply Algorithm 7.2 of \cite{Qalgs} to find $\Minspec(A) = \Spec(A_\Q)$.
\item
Find  $\beta\in\Z$ such that $|\beta - \max_{\r\in\Minspec(A)} b(d_\r)| < 1$.
\item
For each prime number $p \le \beta$ apply 
the algorithm associated with Theorem \ref{Ronyaialgs}
to find all prime ideals of the finite commutative
$\F_p$-algebra $A/pA$, i.e., 
find the set $\MM$ of all $\L\in\Maxspec(A)$ such that 
$\fchar(A/\L) \le \beta$.
For each  $\L\in\MM$, mark which $\r\in\Minspec(A)$ 
satisfy $\r\subset\L$.
\item
With input the finite set
$$
\tilde{S} = \{ \text{primes $\ell \le c(n)$} \}
\cup \{ \Norm(\L)-1 : \L\in \MM \} 
\subset \Z_{>0},
$$
apply the Coprime Base Algorithm from \cite{CBA}
to obtain a finite set $T\subset \Z_{>1}$ and a map
$e : \tilde{S} \times T \to \Z_{\ge 0}$ such that
\begin{enumerate}
\item
for all $t,t'\in T$ with $t\neq t'$ we have $\gcd(t,t')=1$, and
\item
for all $s\in \tilde{S}$ we have $s = \prod_{t\in T}t^{e(s,t)}$.
\end{enumerate}
\item
Define a set $T'$ of integers coprime to all primes $\ell \le c(n)$  by 
$$
T = T' \amalg   \{ \text{primes $\ell \le c(n)$} \}.
$$
For all $\L\in\MM$ and 
$t\in T$, 
define $h_\L(t) = e(\N(\L) - 1,t) \in \Z_{\ge 0}$, i.e.,
$$\N(\L) - 1 = \prod_{t\in T}t^{h_\L(t)}.$$
With $S_0$ as in Definition \ref{usabledef}(iii), define  
$$T'' = \{ t\in T' : \max\{ h_\L(t) : \L\in S_0\} > 0 \}.$$
If 
$T''$ is empty, output
$\SS = \{ S_0\}$ and stop. 
Otherwise, proceed as follows.
\item
For each $t\in T''$  
and each $\r\in \Minspec(A)$,
find $\L_{t,\r}\in\MM$ from Step (i) such that
$h_{\L_{t,\r}}(t)=0$ and $\r\subset \L_{t,\r}$. 
Let 
$$
S_t = \{ \L_{t,\r} : \r\in \Minspec(A) \} \subset \MM
$$
and output 
$$\SS = \{ S_0\} \cup \{ S_t : t\in T'' \}.$$
\end{enumerate}
\end{algorithm}

\begin{prop}
\label{usablesetsalgpf}
Algorithm \ref{usablesetsalg} is correct and runs in  
polynomial time. 
\end{prop}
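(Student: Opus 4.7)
The plan is to verify the four requirements in Definition \ref{usabledef} and then bound the runtime. Vigilance of $S_0 = V(2A)$ is immediate from Lemma \ref{mAgood} with $m=2$, and its inclusion in $\SS$ is built into Steps (v)/(vi), taking care of Condition (iii) of Definition \ref{usabledef}. Each $S_t$ for $t\in T''$ is vigilant by the construction in Step (vi), provided the ideals $\L_{t,\r}$ actually exist (the core obstacle, treated below). For Condition (i) each $\L\in\MM$ has $\fchar(A/\L)\le\beta$; since $d_\r\le n$ and $b$ is monotone non-decreasing we have $\max_\r b(d_\r)\le b(n)$, and with $\beta=\lfloor\max_\r b(d_\r)\rfloor$ one obtains $\beta\le b(n)$. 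For $\L\in S_0$ the characteristic is $2\le b(n)$.

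The heart of the argument is the existence of $\L_{t,\r}$ for $t\in T''$ together with Condition (ii). The decisive property of the coprime base $T$ is that, for every $t\in T$ and every $s\in\tilde S$, $\gcd(t,s)\in\{1,t\}$: this is immediate from $s=\prod_{t'\in T}(t')^{e(s,t')}$ and pairwise coprimality of the $t'$, which forces $\gcd(t,s)=\gcd(t,t^{e(s,t)})$. Consequently $h_\L(t)=0$ is equivalent to $\gcd(t,\N(\L)-1)=1$, and also to ``some prime factor of $t$ fails to divide $\N(\L)-1$''. Now given $t\in T''$ and $\r\in\Minspec(A)$, choose any prime $\ell\mid t$; since $t\in T'$ is coprime to every prime $\le c(n)$, one has $\ell>c(n)\ge c(d_\r)$, and Proposition \ref{Hope1} applied to $A/\r$ (of rank $d_\r$) produces $\L\supset\r$ with $\fchar(A/\L)\le b(d_\r)\le\beta$ and $\ell\nmid\N(\L)-1$. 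So $\L\in\MM$ and $h_\L(t)=0$, establishing existence of $\L_{t,\r}$. For Condition (ii) fix a prime $\ell>c(n)$: if no $\L\in S_0$ has $\ell\mid\N(\L)-1$, then $S_0$ itself is good for $\ell$; otherwise pick $\L_0\in S_0$ with $\ell\mid\N(\L_0)-1$, and note that the unique $t^\ast\in T$ divisible by $\ell$ lies in $T'$, and lies in $T''$ because $\gcd(t^\ast,\N(\L_0)-1)$ is divisible by $\ell$ so must equal $t^\ast$, giving $h_{\L_0}(t^\ast)>0$. For every $\L\in S_{t^\ast}$, $h_\L(t^\ast)=0$ forces $\ell\nmid\N(\L)-1$, as required.

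For runtime: Step (i) is polynomial by Algorithm 7.2 of \cite{Qalgs}. In Step (iii) the number of primes $p\le\beta=O((\log n)^2)$ is polylogarithmic in $n$, and for each of them Theorem \ref{Ronyaialgs} computes $\Maxspec(A/pA)$ in time polynomial in $p+n$. Hence $|\MM|=O(n(\log n)^2)$, $|\tilde S|$ is polynomial in $n$, and each $\N(\L)-1\le\beta^n$ has polynomial bit-size; the coprime base algorithm of \cite{CBA} then runs in polynomial time, as do Steps (v) and (vi), which are searches over sets of polynomial size. The main obstacle is the existence step for $\L_{t,\r}$: a naive attempt to apply Proposition \ref{Hope1} prime-by-prime fails because $t$ may have many prime factors, and it is precisely the coprime-base structure that upgrades single-prime control (provided by Konyagin--Pomerance via Proposition \ref{Hope1}) into the simultaneous control needed.
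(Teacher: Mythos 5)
Your proof is correct and follows essentially the same route as the paper's: existence of $\L_{t,\r}$ via Proposition \ref{Hope1} applied to $A/\r$ using a single prime divisor $\ell$ of $t$ (which already forces $h_{\L}(t)=0$), vigilance of $S_0$ from Lemma \ref{mAgood}, the case split between $S_0$ and $S_{t}$ for condition (ii), and a step-by-step polynomial runtime count. Your explicit justification of the dichotomy $\gcd(t,s)\in\{1,t\}$ from the coprime base is a point the paper leaves implicit, but it is the same argument.
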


\begin{proof}
That Step (vi) can find, for each $t\in T''$ and each $\r\in\Minspec(A)$,
a maximal ideal $\L_{t,\r}$ in $A$ such that
$h_{\L_{t,\r}}(t)=0$ and $\r\subset \L_{t,\r}$ can be seen as follows.
Since $t\in T'' \subset T$ we have $t>1$.
Suppose $\ell$ is a prime divisor of $t$. Since $t\in T'$, we have
$\ell > c(n) \ge c(d_\r)$
so by Proposition \ref{Hope1} applied with $A/\r$ in place of $A$
there is a maximal ideal $\L_{t,\r}$ of $A$ that contains $\r$ such that  
$\fchar(A/\L_{t,\r}) \le b(d_\r)$  and $\N(\L_{t,\r}) \not\equiv 1 \bmod \ell$.
Thus $\N(\L_{t,\r}) \not\equiv 1 \bmod t$,  
so $h_{\L_{t,\r}}(t)=0$.

The sets $S_t$ for $t\in T''$ were constructed to be vigilant. 
The set $S_0$ is vigilant by Lemma \ref{mAgood} with $m=2$.  

To see that $\SS$ is usable, first note that
if $S\in \SS$ and $\p\in S$ then
$\fchar(A/\p) \le \beta \le b(n)$.
Let $\ell$ be a prime number $> c(n)$.
We will  
show that there exists $S\in\SS$ such that 
for all $\L\in S$ we have
$\Norm(\L) \not\equiv 1\bmod{\ell}$.
If $\ell$ divides some $t\in T''$,
then take $S=S_t$. 
If $\ell$ does not divide any element of $T''$, take $S = S_0$.

Step (i) runs in polynomial time by Theorem 1.10 of \cite{Qalgs}.

The primes $p$ in Step (iii) are so small in size and number that
the appeals to Theorem \ref{Ronyaialgs} run in time at most polynomial in the
length of the input specifying $A$.

Step (iv) runs in polynomial time 
since the Coprime Base Algorithm in \cite{CBA} does.
Steps (v) and (vi) run in polynomial time since $T''$ is a subset of
$T$, which was computed via a polynomial-time algorithm.
\end{proof}

\section{Using the auxiliary ideals}
\label{applyauxprssect}

Recall the definition of $S_0$ in Definition \ref{usabledef}(iii).

\begin{defn}
\label{kadefn}
Suppose $A$ is an order of rank $n>0$. If  
$\a = \prod_{\p\in \Maxspec(A)} \p^{t_\a(\p)}$ is an ideal in $A$
with $t_\a(\p)\in\Z_{\ge 0}$, and $\a \neq 2^{n+1}A$,
let
$$
k(\a) = \lcm_{\p} \{ (\N(\p) -1)p_\p^{t_\a(\p)-1}\}
$$
where $p_\p = \fchar(A/\p)$ denotes the prime number in $\p$, 
and $\N(\p) = \#(A/\p)$,
and the $\lcm$ is over the maximal ideals $\p$ with $t_\a(\p)\in\Z_{> 0}$.
Let
$$
k({2^{n+1}A}) = \frac{2^{2n} \lcm_{\p\in S_0} \{ \N(\p) -1\}}{\prod_{\p\in S_0}\N(\p)}.
$$
\end{defn}

The number $k(\a)$ is the analogue of the number
$k(m)$ that was defined in Notation 18.1 of \cite{LwS} 
for positive integers $m$.

\begin{lem}
\label{exponAa}
Let $A$ be an order of rank $n>0$.
The exponent of $(A/2^{n+1}A)^\ast$ divides
$k({2^{n+1}A})$ 
and is less than $2^{2n}$.
If $\a = \prod_{\p\in \Maxspec(A)} \p^{t_\a(\p)}$ is an ideal in $A$
with $t_\a(\p)\in\Z_{\ge 0}$, then 
the exponent of the group $(A/\a)^\ast$ divides 
$k(\a)$.
\end{lem}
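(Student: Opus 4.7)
The plan is to localize via the Chinese Remainder Theorem and then control each local unit-group exponent by a binomial-coefficient calculation. Since distinct maximal ideals $\p,\q$ of the order $A$ satisfy $\p+\q=A$, their powers are also comaximal, yielding $A/\a\cong \prod_\p A/\p^{t_\a(\p)}$ and $\exp((A/\a)^\ast)=\lcm_\p \exp((A/\p^{t_\a(\p)})^\ast)$. The short exact sequence $1\to 1+\p/\p^t\to(A/\p^t)^\ast\to (A/\p)^\ast\to 1$ reduces the general case to showing $\exp(1+\p/\p^t)\mid p_\p^{t-1}$, since $\exp((A/\p)^\ast)\mid N(\p)-1$ is immediate from $|(A/\p)^\ast|=N(\p)-1$.

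The key calculation is a binomial expansion. For $x\in\p$ I would expand
\begin{equation*}
(1+x)^{p_\p^{t-1}}=1+\sum_{i=1}^{p_\p^{t-1}}\binom{p_\p^{t-1}}{i}x^i
\end{equation*}
and argue each summand with $i\ge 1$ lies in $\p^t$. Kummer's identity gives $v_{p_\p}\bigl(\binom{p_\p^{t-1}}{i}\bigr)=t-1-v_{p_\p}(i)$, and since $p_\p\in\p$ the $i$th summand lies in $\p^{t-1-v_{p_\p}(i)+i}$. The elementary inequality $p_\p^v\ge v+1$ (for $p_\p\ge 2$ and $v\ge 0$) yields $i-v_{p_\p}(i)\ge 1$ for $i\ge 1$, so each summand lies in $\p^t$. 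Taking the $\lcm$ over $\p$ then gives $\exp((A/\a)^\ast)\mid k(\a)$.

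For $\a=2^{n+1}A$, the same CRT decomposes $R:=A/2^{n+1}A\cong\prod_{\p\in S_0}R_\p$ into finite local rings with residue field of characteristic $2$; writing $N(\p)=2^{f_\p}$ and noting that $|(A/\p)^\ast|$ is odd while $|1+\m_\p|$ is a $2$-power, one has $R_\p^\ast\cong(A/\p)^\ast\times(1+\m_\p)$, and so $\exp(R^\ast)=L\cdot E$ with $E:=\lcm_\p\exp(1+\m_\p)$ a $2$-power. Setting $e_\p=\max\{e:2R_\p\subseteq\m_\p^e\}$ and iterating the identity $(1+x)^2\in 1+\m_\p^{\min(2v,\,v+e_\p)}$ for $x\in\m_\p^v$ bounds $\exp(1+\m_\p)$ by a $2$-power $2^{k_\p}$ determined by the $\m_\p$-adic filtration and the nilpotency index of $\m_\p$; using $\m_\p^{\ell^{(2)}_\p}\subseteq 2R_\p$ (a length calculation in $(A/2A)_\p$), the identity $\sum_\q f_\q\ell^{(2)}_\q=n$, and the elementary inequality $\lceil\log_2 e\rceil\le e-1$, one would check $k_\p\le 2n-\log_2 P$. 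Hence $E\mid 2^{2n}/P$ and $\exp(R^\ast)\mid L\cdot 2^{2n}/P=k(2^{n+1}A)$.

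Finally, since $S_0$ is non-empty by Lemma \ref{mAgood}, we have $L\le\prod_{\p\in S_0}(N(\p)-1)<\prod_{\p\in S_0}N(\p)=P$, so $k(2^{n+1}A)=2^{2n}L/P<2^{2n}$ and $\exp(R^\ast)\le k(2^{n+1}A)<2^{2n}$. The main obstacle is the tight bound $k_\p\le 2n-\log_2 P$ for the $2^{n+1}A$ case: tracking how squaring interacts with both the $\m_\p$-adic filtration and the global constraint $2^{n+1}=0$ in $R_\p$ requires careful bookkeeping, especially in non-Dedekind local cases where the nilpotency index of $\m_\p$ is not simply $(n+1)e_\p$. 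Once the filtration bound on $\exp(1+\m_\p)$ is established, the combinatorial inequalities combine with CRT to finish, and the strict inequality follows immediately.
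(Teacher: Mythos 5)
Your proof of the second assertion (the general ideal $\a$) is correct and is in substance the paper's own argument: the paper also reduces by CRT to $(A/\p^t)^\ast$ and then bounds the exponent of $\ker((A/\p^t)^\ast\to(A/\p)^\ast)$ by $p_\p^{t-1}$, via the filtration by $\ker((A/\p^t)^\ast\to(A/\p^i)^\ast)$ with each successive quotient of exponent $p_\p$; your single binomial expansion with Kummer's formula is just a compressed version of the same estimate.

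The first assertion, however, is where your write-up has a genuine gap, and you flag it yourself: the inequality $k_\p\le 2n-\log_2 P$ bounding $\exp(1+\m_\p)$ is asserted but not proved, and the route you sketch (iterating $(1+x)^2\in 1+\m_\p^{\min(2v,v+e_\p)}$ along the $\m_\p$-adic filtration, together with length computations and nilpotency indices of $\m_\p$) is exactly where the difficulty sits. In a non-maximal order $2^{n+1}A$ need not be a product of powers of maximal ideals, the nilpotency index of $\m_\p$ in $R_\p$ is not controlled by $(n+1)e_\p$, and it is not clear your bookkeeping closes. The paper sidesteps all of this by filtering with the $2$-adic rather than the $\m_\p$-adic filtration: set $G=(A/2^{n+1}A)^\ast$, $\ccc=\bigcap_{\p\in S_0}\p$, $U_0=\ker(G\to(A/\ccc)^\ast)$ and $U_i=\ker(G\to(A/2^iA)^\ast)$ for $1\le i\le n+1$. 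Then $\exp(G/U_0)=\lcm_{\p\in S_0}\{\N(\p)-1\}$, the quotient $U_0/U_1$ embeds in the multiplicative group $1+\ccc/2A$, whose exponent divides its order $\#(\ccc/2A)=2^n/\prod_{\p\in S_0}\N(\p)$, and each $U_i/U_{i+1}$ for $1\le i\le n$ has exponent dividing $2$; multiplying these gives exactly $k(2^{n+1}A)$, and $\exp(G)\le\exp((A/2A)^\ast)\cdot 2^n<2^{2n}$ falls out at once. If you want to keep your local decomposition, the fix is to replace powers of $\m_\p$ by the filtration $1+\m_\p\supseteq 1+2A_\p\supseteq 1+4A_\p\supseteq\cdots$, i.e., to import the same $2$-adic filtration into each $R_\p$; using the order (not just the exponent) of the first quotient $\m_\p/2A_\p$ is the step that produces the factor $1/\prod\N(\p)$ that your $\m_\p$-adic bookkeeping struggles to recover.
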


\begin{proof}
Let $G = (A/2^{n+1}A)^\ast$, let $\ccc = \bigcap_{\p\in S_0} \p$,
let $U_0$ be the kernel of the natural map $G \to (A/\ccc)^\ast$, and
for $i \in \{ 1,\ldots,n+1 \}$  
let $U_i$ be the kernel of the natural map $G \to (A/2^iA)^\ast$.
We have
$
G \supset U_0 \supset U_1 \supset \cdots \supset U_{n+1} = 1.
$
Further, 
$$
G/U_0 \cong (A/\ccc)^\ast \cong \prod_{\p\in S_0}(A/\p)^\ast,
$$
which has exponent $\lcm_{\p\in S_0} \{ \N(\p) -1\}$.
Since $U_{0}/U_1 \cong 1 + \ccc/2A$,
we have 
$$
\#(U_{0}/U_1) = \#(\ccc/2A) = \frac{2^n}{\prod_{\p\in S_0}\N(\p)}.
$$ 
For $1 \le i \le n$, the group $U_i/U_{i+1}$ has exponent $2$.
Thus the exponent of $G$ divides
$k({2^{n+1}A})$.
Since $G/U_1 \cong (A/2A)^\ast$, 
the exponent of $G$ is less than
$2^n\#(A/2A) = 2^{2n}$.

For the final result, 
suppose $\p\in \Maxspec(A)$ and $t=t_\a(\p) > 0$.
Now let $U_0 = (A/\p^t)^\ast$ and 
for $i \in \{ 1,\ldots,t \}$  
let $U_i$ be the kernel of the natural map $(A/\p^t)^\ast \to (A/\p^i)^\ast$.
Then $U_0 \supset U_1 \supset \cdots \supset U_t = 1$, so the
exponent of $U_0$ divides the product of the
exponents of the groups $U_{i-1}/U_i$ for $i=1,\ldots,t$.  
The exponent of $U_{0}/U_1$ is $\#((A/\p)^\ast) = \N(\p) -1$.
For $i>1$ the exponent of $U_{i-1}/U_i$ is $p_\p$.
Thus the exponent of $(A/\p^t)^\ast$ divides 
$(\N(\p) -1)p_\p^{t-1}$.

Applying the Chinese Remainder Theorem
to the coprime ideals $\p^{t_\a(\p)}$ for which $t_\a(\p)> 0$,  
we have a ring isomorphism 
$A/\a \isom \prod_{\p | \a}A/\p^{t_\a(\p)}$.
It follows that the exponent of $(A/\a)^\ast$ divides the lcm
of the exponents of the groups $(A/\p^{t_\a(\p)})^\ast$, 
which combined with the previous paragraph proves the last
result.
\end{proof}

Recall the definitions of ``good'' from Definition \ref{gooddefn}
and of $c(n)$ from Definition \ref{cbndefn}.
The next algorithm will be invoked in Algorithm \ref{ingredient4}.

\begin{algorithm}
\label{Usetalgor}
Given a connected CM-order $A$ of rank $n$, 
the algorithm outputs:
\begin{itemize}
\item 
a finite set $U$ of good ideals 
$\a$ of $A$ such that $2^{n+1}A\in U$,
\item
$k(\a)$ for each $\a\in U$, 
\item
$k = \gcd_{\a\in U} \{ k(\a) \}$, 
\item
an integer $f(\a)$ for each $\a\in U$  
\end{itemize}
such that:
\begin{enumerate}
\item[{(a)}] 
for all $\a\in U$, 
all invertible $A$-lattices $L$ and all
$\beta\in (\a L)\smallsetminus \{ 0\}$ we have
$\langle \beta,\beta \rangle \ge (2^{n/2}+1)^2\cdot n$,
\item[(b)]
$k =  \sum_{\a\in U} f(\a)k(\a)$,
\item[(c)]
every prime divisor $\ell$ of $k$ satisfies $\ell \le c(n)$,
\item[(d)]
$\log_2(k) \le 2n$.
\end{enumerate}
\end{algorithm}

Steps:
\begin{enumerate}
\item
Run Algorithm \ref{usablesetsalg} to obtain a finite set $\SS$ that is usable for $A$.
\item
For each $S\in\SS \smallsetminus \{ S_0\}$, let $\a_S = \prod_{\p\in S} \p^{n(n+1)}$,
and put $\a_{S_0} = 2^{n+1}A$.
Output $U = \{ \a_S : S\in\SS\}$  
and the integers $k(\a)$ for each $\a\in U$.
\item
Use the extended Euclidean algorithm to compute $k$ and to find integers $f(\a)$
that satisfy (b).
\end{enumerate}

\begin{prop}
\label{Usetalgorpf}
Algorithm \ref{Usetalgor} is correct 
and runs in polynomial time. 
\end{prop}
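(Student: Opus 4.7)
The plan is to verify the four output conditions (a)--(d) in turn and then address polynomial runtime. Let $\SS$ be the usable set produced in Step (i); Step (ii) then outputs $U = \{2^{n+1}A\} \cup \{\a_S : S \in \SS\smallsetminus\{S_0\}\}$ where $\a_S = \prod_{\p\in S}\p^{n(n+1)}$, and Step (iii) handles (b) directly via the extended Euclidean algorithm.

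For (a), I split on the type of $\a$. If $\a = 2^{n+1}A$, the remark after Proposition \ref{findingekmprop2} applies: using that $L$ is an integral lattice of rank $n$ (Lemma \ref{Linvremarks}), every nonzero $\beta \in 2^{n+1}L$ satisfies $\langle\beta,\beta\rangle \ge 2^{2(n+1)} \ge (2^{n/2}+1)^2 n$. For $\a = \a_S$ with $S \in \SS\smallsetminus\{S_0\}$, I invoke Proposition \ref{xxlowerbd}(ii): the set $S$ is vigilant by usability, each exponent equals $n(n+1)$, and invertibility of $L$ ensures $\varphi_L$ takes values in $A$. Each $\a \in U$ is good: $2^{n+1}A$ by Lemma \ref{mAgood}, and $\a_S$ because $V(\a_S) \supseteq S$ is vigilant and $\#(A/\a_S) < \infty$.

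For (c), suppose $\ell \mid k$ is a prime with $\ell > c(n)$; I derive a contradiction. By usability there is some $S \in \SS$ with $\N(\L) \not\equiv 1 \bmod \ell$ for all $\L \in S$, and I will show $\ell \nmid k(\a_S)$. If $S \ne S_0$, then $k(\a_S) = \lcm_{\p\in S}\{(\N(\p)-1)p_\p^{n(n+1)-1}\}$, so $\ell \mid k(\a_S)$ forces either $\ell \mid \N(\p)-1$ for some $\p \in S$ (contradicting the choice of $S$) or $\ell = p_\p$; but $p_\p \le b(n) \le c(n) < \ell$, another contradiction. If $S = S_0$, note that $\prod_{\p \in S_0}\N(\p)$ is a power of $2$ while $\ell > c(n) \ge 2$ is odd, so $\ell \mid k(2^{n+1}A)$ forces $\ell \mid \lcm_{\p\in S_0}\{\N(\p)-1\}$, giving again $\ell \mid \N(\p)-1$ for some $\p \in S_0$. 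For (d), since $2^{n+1}A \in U$ I have $k \mid k(2^{n+1}A)$, and using $\lcm \le \prod$,
$$k(2^{n+1}A) \le \frac{2^{2n} \prod_{\p\in S_0}(\N(\p)-1)}{\prod_{\p\in S_0}\N(\p)} \le 2^{2n},$$
so $\log_2 k \le 2n$.

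Polynomial runtime follows from: Step (i) by Proposition \ref{usablesetsalgpf}; Step (ii) by computing lcms of at most $n$ integers $\N(\p)-1$ and $p_\p^{n(n+1)-1}$ whose bit-lengths are polynomial since $p_\p \le b(n)$ and $\N(\p) \le p_\p^n$; and Step (iii) by extended Euclidean on these polynomially-bounded integers. The subtlest point will be case (c) with $S = S_0$, where isolating the odd part of $k(2^{n+1}A)$ is what forces the contradiction with usability.
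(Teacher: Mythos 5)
Your proof is correct and follows essentially the same route as the paper's: (a) via Proposition \ref{xxlowerbd}(ii), (c) via usability together with the definition of $k(\a)$, and (d) via $k\le k(2^{n+1}A)\le 2^{2n}$. Your separate treatment of $\a=2^{n+1}A$ in (a), using the remark after Proposition \ref{findingekmprop2} rather than Proposition \ref{xxlowerbd}(ii) (which presumes $\a$ is a product of maximal ideals), is a slightly more careful handling of a case the paper's one-line citation glosses over.
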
 

\begin{proof}
Since $\SS$ is usable, each $S\in\SS$ is vigilant.
It follows that each ideal $\a_S\in U$ is good. 
By Proposition \ref{xxlowerbd}(ii) we have (a).

Suppose $\ell$ is a prime number and $\ell > c(n)$.
Since $\SS$ is usable, there exists $S\in\SS$ such
that $\N(\p)\not\equiv 1 \bmod \ell$ and $\fchar(A/\p) \le b(n)$
for all $\p\in S$.
By Definition \ref{kadefn}, the positive integer
$k(\a_S)$ is not divisible by $\ell$.
Thus $k$ is not divisible by $\ell$, giving (c).

We have $k \le k(2^{n+1}A) \le 2^{2n}$,
giving (d).
\end{proof}

The following algorithm will be used in Algorithm \ref{connmainalgor}.
In the algorithm, the ideals $\a$ and $\b$ are the analogues of 
the prime numbers $m$ and $\ell$ 
of Algorithm 18.7 of \cite{LwS}, while $k(\a)$ is
the analogue of $k(m)$.

\begin{algorithm}
\label{ingredient4}
Given a connected CM-order $A$ of rank $n$ and an invertible $A$-lattice $L$,
the algorithm  either outputs ``$L$ has no short vector'' or
finds:
\begin{itemize}
\item
a positive integer $k$ each of whose 
prime factors is at most $c(n)$ and such that $\log_2(k) \le 2n$,
\item
an element $e_2\in L$ such that $Ae_2 + 2L = L$, and
\item
an element $s\in A/{2^{n+1}A}$ such that the coset $s\cdot (e_2^k + 2^{n+1}L^k) \in L^k/2^{n+1}L^k$
contains a short vector in $L^k$.
\end{itemize}

Steps:
\begin{enumerate}
\item
Apply Algorithm \ref{Usetalgor} to obtain  
a finite set
$U$ of good ideals $\a$ of $A$,
and $k(\a)$ and $f(\a)$ for each $\a\in U$, 
and $k = \gcd_{\a\in U} \{ k(\a) \}$
satisfying (a-d) of Algorithm \ref{Usetalgor}.
\item
Apply  
the algorithm associated to Proposition \ref{elemprop} 
to find $e_2\in L$ such that $Ae_2 + 2L = L$.
Let $\b = 2^{n+1}A$ 
and let $e_{\b}=e_2 + \b L \in L/\b L$.
\item
For each $\a\in U \smallsetminus \{\b\}$, do the following. 
Apply  
the algorithm associated to Proposition \ref{elemprop} 
to find $e_\a \in L/\a L$
such that $(A/\a)\cdot e_\a = L/\a L$.
Compute the $A$-lattice $L^{k(\a)}$ and the cosets
$e_\a^{k(\a)} \in L^{k(\a)}/\a L^{k(\a)}$ and
$e_{\b}^{k(\a)} \in L^{k(\a)}/{\b} L^{k(\a)}$.
Run Algorithm \ref{findingekmalg2}
to decide whether the coset $e_\a^{k(\a)}$ contains a vector $\nu_\a$
satisfying $\langle \nu_\a,\nu_\a\rangle = n$.
If no such $\nu_\a$ exists, 
terminate with ``no''. 
Otherwise, find
$s_\a\in (A/{\b})^\ast$ such that
$$
\nu_\a + {\b} L^{k(\a)} = s_\a\cdot e_{\b}^{k(\a)}
$$
and find a positive integer  $g(\a) \in f(\a)+\Z\cdot k(\b)$.
\item
Compute  
$$
s = 
\prod_{{\pile{\a \in U}{\a \neq \b}}} s_\a^{g(\a)} \in (A/{\b})^\ast.
$$
\item
Use  
the algorithm associated with Theorem \ref{LMmultalg}(iii)
to compute the $A$-lattice $L^k$
and the coset $e_{\b}^k\in L^k/{\b} L^k$.
\item
Compute $s\cdot e_{\b}^k = s(e_2^k+2^{n+1}L^k) \in L^k/{\b} L^k$. 
Apply Algorithm \ref{findingekmalg2} to compute
all $w\in s\cdot e_{\b}^k \subset L^k$ satisfying 
$w\bar{w}=1$. If there are none, output ``no''.
Otherwise, output $k$, $e_2$, and $s$.
\end{enumerate}
\end{algorithm}

\begin{prop}
\label{ingredient4pf}
Algorithm \ref{ingredient4} is correct and runs in  
polynomial time. 
\end{prop}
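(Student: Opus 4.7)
The plan has two parts: correctness and polynomial-time bounds. For correctness I trace what happens when $L$ admits a short vector $z$, and then handle the converse by contraposition. As a preliminary observation, Step~(ii) yields $Ae_2+2L=L$; iterating (from $2L\subset Ae_2+4L$ and so on) gives $Ae_2+\b L=L$, so $e_\b$ generates $L/\b L$ as an $A/\b$-module, and by Lemma~\ref{fractideallem}(ii) applied to the invertible $A$-lattice $L^{k(\a)}$ the element $e_\b^{k(\a)}$ generates $L^{k(\a)}/\b L^{k(\a)}$. Condition~(a) of Algorithm~\ref{Usetalgor} provides the hypothesis of Algorithm~\ref{findingekmalg2} in Step~(iii); for Step~(vi), with ideal $\b=2^{n+1}A$, the hypothesis is immediate from $\langle 2^{n+1}x,2^{n+1}x\rangle\ge 2^{2(n+1)}\ge(2^{n/2}+1)^2 n$.

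Now suppose $L$ has a short vector $z$. By Theorem~\ref{shortthm}(ii), $z$ generates $L$ as an $A$-module, so for each $\a\in U\smallsetminus\{\b\}$ there exists $a_\a\in(A/\a)^*$ with $z\equiv a_\a e_\a\pmod{\a L}$, whence $z^{k(\a)}\equiv a_\a^{k(\a)} e_\a^{k(\a)}\equiv e_\a^{k(\a)}\pmod{\a L^{k(\a)}}$ by Lemma~\ref{exponAa}. Since $z\bar z=1$ gives $\langle z^{k(\a)},z^{k(\a)}\rangle=n$, Proposition~\ref{findingekmprop2} forces $\nu_\a=z^{k(\a)}$. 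Writing $z\equiv b e_\b\pmod{\b L}$ with $b\in(A/\b)^*$, this yields $s_\a=b^{k(\a)}$ in $A/\b$, and hence $s=b^{\sum_{\a\ne\b} g(\a)k(\a)}$ in $(A/\b)^*$; since $g(\a)\equiv f(\a)\pmod{k(\b)}$ and the exponent of $(A/\b)^*$ divides $k(\b)$ by Lemma~\ref{exponAa}, one may replace each $g(\a)$ by $f(\a)$ to obtain $s=b^{\sum_{\a\ne\b}f(\a)k(\a)}=b^{k-f(\b)k(\b)}=b^k$. Therefore $z^k\equiv s e_\b^k\pmod{\b L^k}$, so Step~(vi) finds $z^k$ and outputs the asserted $(k,e_2,s)$. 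Conversely, a ``no'' response at Step~(iii) or Step~(vi) would contradict the foregoing if such a $z$ existed, and a positive output directly exhibits a coset element $w$ with $w\bar w=1$, which is short by Proposition~\ref{shortequiv} applied to $L^k$.

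For the runtime, every sub-routine invoked (Algorithm~\ref{Usetalgor}, Proposition~\ref{elemprop}, Theorem~\ref{LMmultalg}, Algorithm~\ref{findingekmalg2}, and the computations of $g(\a)$ and $s$ by repeated squaring) is polynomial, so it remains to bound the bit-lengths of the relevant exponents. These are polynomial: $\log_2 k\le 2n$ by property~(d) of Algorithm~\ref{Usetalgor}; for each $\a\in U$, $\log_2 k(\a)$ is polynomial in $n$ via Definition~\ref{kadefn} together with $\#S\le n$, $\fchar(A/\p)\le b(n)$, and $\N(\p)\le b(n)^n$; and the $g(\a)$ can be taken of bit-length $O(n)$ since they need only be positive and congruent to $f(\a)$ modulo $k(\b)$.

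The main obstacle I anticipate is not any single hard step but the bookkeeping needed to verify the identity $s=b^k$ in $(A/\b)^*$: it requires simultaneously the uniqueness of $\nu_\a$ from Proposition~\ref{findingekmprop2} to identify $\nu_\a$ with $z^{k(\a)}$ rather than some other vector of squared length~$n$, the divisibility of the exponent of $(A/\b)^*$ by $k(\b)$ from Lemma~\ref{exponAa} to collapse the $g(\a)$ back to $f(\a)$ and then kill the $f(\b)k(\b)$ term, and the compatibility of tensor powers with reduction modulo $\b$ from Theorem~\ref{LMmultalg}(iii). Each ingredient is already in place, but assembling them in the right order is the real content of the proof.
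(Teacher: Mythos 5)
Your proof is correct and follows essentially the same route as the paper's: identify $\nu_\a$ with $z^{k(\a)}$ via the uniqueness in Proposition \ref{findingekmprop2}, kill the unit ambiguity with Lemma \ref{exponAa}, and combine the congruences through the B\'ezout relation $k=\sum f(\a)k(\a)$ to land $z^k$ in the coset $s\cdot e_{\b}^k$. The only differences are cosmetic (you derive $(A/\b)e_\b=L/\b L$ by iteration rather than Nakayama, and phrase the $s_\a$ bookkeeping via a single unit $b$ with $z\equiv be_\b$), plus a more explicit runtime accounting than the paper bothers to give.
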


\begin{proof}
Each prime divisor of the positive integer $k$ output by 
Algorithm \ref{Usetalgor} is at most $c(n)$,
and $\log_2(k) \le 2n$. 

Since $L$ is invertible, by Corollary \ref{egothicacor}
the algorithm associated to Proposition \ref{elemprop} 
will find $e_2$ and $e_\a$ in Steps (ii) and (iii).
Since $L = Ae_2 + 2L$, it follows from Nakayama's Lemma that
$(A/\b)\cdot e_\b = L/\b L$, with  $e_\b$ defined as in Step (ii).

Take $z\in L$ with $z\bar{z} =1$.
Then $Az=L$ by Theorem \ref{shortthm}(ii).

Suppose $\a\in U$. 
Since 
$(A/\a)\cdot (z+\a L) = L/\a L=(A/\a)\cdot e_\a$, it follows that 
$z+\a L \in (A/\a)^\ast \cdot e_\a$.
By Lemma \ref{exponAa} we have
\begin{equation}
\label{xeaeqn}
z^{k(\a)}\in e_\a^{k(\a)}.
\end{equation}
Since $z^{k(\a)}\overline{z^{k(\a)}} = 1$, 
by Proposition \ref{shortequiv} we have
$\langle z^{k(\a)},z^{k(\a)}\rangle = n$.
Thus, Step (iii) will find a vector $\nu_\a$ for each $\a\neq\b$,
as long as $L$ has a short vector $z$.
The vector $\nu_\a$ is regular by Lemma \ref{reglem2} applied to
$L^{k(\a)}$ in place of $L$, 
and $\nu_\a$ is short 
by Proposition \ref{shortequiv}. 
We have $\nu_\a = z^{k(\a)}$ by the uniqueness property in 
Proposition \ref{findingekmprop2} 
(using property (a) of Algorithm \ref{Usetalgor}), and 
$$
z^{k(\a)}\bmod {\b} = \nu_\a \bmod {\b} = s_\a\cdot e_{\b}^{k(\a)}.
$$

Since $g(\a) \in f(\a)+\Z\cdot k(\b)$,
by Lemma \ref{exponAa} we have
$$
s = \prod_{{\b} \neq \a\in U} s_\a^{g(\a)} = \prod_{{\b} \neq \a\in U} s_\a^{f(\a)} \in (A/{\b})^\ast.
$$

Applying \eqref{xeaeqn} with $\a = {\b}$ gives
$z^{k({\b})}\bmod {\b} = 1\cdot e_{\b}^{k({\b})}\in\Lambda/{\b}\Lambda$.
Letting $s_\b = 1$, then 
\begin{multline*}
z^{k}\bmod {\b} = \prod_{\a\in U} (z^{k(\a)}\bmod {\b})^{f(\a)} 
= \prod_{\a\in U} \left(s_\a\cdot e_{\b}^{k(\a)}\right)^{f(\a)} \\
= \left(\prod_{\b\neq \a\in U} s_\a^{f(\a)}\right) e_{\b}^{k} 
= s\cdot e_{\b}^k
\in\Lambda/{\b}\Lambda,
\end{multline*}
so $z^{k}$  
is a short vector in the coset $s\cdot e_{\b}^k = s\cdot (e_2^k + 2^{n+1}L^k)$. 
Thus if $L$ has a short vector $z$, then Step (vi) outputs 
an element $s\in A/{2^{n+1}A}$ such that the coset $s\cdot (e_2^k + 2^{n+1}L^k)$
contains a short vector in $L^k$.

The algorithm runs in polynomial time since each step does.
\end{proof}

\section{Main algorithm}
\label{mainalgorsect}

Our main algorithm is Algorithm \ref{mainalgor}, which
first makes a reduction to the case of connected orders and
then calls on Algorithm \ref{connmainalgor}.

\begin{algorithm}
\label{connmainalgor}
Given a {\em connected} CM-order $A$  
and an invertible $A$-lattice $L$,
the algorithm decides whether or not $L$ is $A$-isomorphic
to the standard $A$-lattice, and if so, 
outputs a short vector $z\in L$
and an $A$-isomorphism $A \to L$ given by $a\mapsto a z$.

Steps:
\begin{enumerate}
\item
Apply Algorithm \ref{ingredient4}.  
If it outputs ``$L$ has no short vector'', terminate with ``no''.
Otherwise, Algorithm \ref{ingredient4} outputs $k$,  
$e_2$, and $s$.
Let $t_0=s$.
\item
Factor $k$. Let $p_1,\ldots,p_m$ be the prime divisors of $k$ with multiplicity,
and let $q_0 = k$. 
For $i=1,\ldots,m$ in succession, 
compute $q_i = q_{i-1}/p_i$, 
the lattice $L^{q_i}$, and 
the coset $e_2^{q_i} + 2^{n+1}L^{q_i} \in L^{q_i}/2^{n+1}L^{q_i}$,
and apply Algorithm \ref{almostmainalgor} 
where in place of inputs $L$, $r$, $\epsilon$, and $s$ one takes
$L^{q_i}$, $p_i$, 
$e_2^{q_i} + 2^{n+1}L^{q_i}$,
and $t_{i-1}$, respectively,
and where the output $t$ 
is called $t_i$.
If Algorithm \ref{almostmainalgor} ever outputs ``no'', terminate with ``no''.
\item
Otherwise output ``yes'', the short vector $z$ in the coset
$t_m\cdot (e_2 + 2^{n+1}L)$ where $t_m \in A/2^{n+1}A$ is the output 
of the last run of Algorithm \ref{almostmainalgor}, 
and the  
map $A \to L$, $a\mapsto a z$.
\end{enumerate}
\end{algorithm}

\begin{rem}
When we iterate Algorithm \ref{almostmainalgor} in Algorithm \ref{connmainalgor}, 
it often happens that we compute the same short vector in the same lattice twice,
namely in Step (v) of Algorithm \ref{almostmainalgor}
to compute $\alpha$ and then in Step (ii) 
of the next iteration of Algorithm \ref{almostmainalgor}
to compute $\nu$. 
However, that happens for two {\em different} representations of the same lattice, say  
$L^{h}$ and $(L^{h/p})^{\otimes p}$,
that are not easy to identify with each other
(but with the {\em same} $s \in A/2^{n+1}A$).
\end{rem}

\begin{rem}
The vector $z$ in Step (iii) of  Algorithm \ref{connmainalgor}
could be computed either using
Algorithm \ref{findingekmalg2}, or by taking $z=\alpha_m \in L$ where 
$\alpha_m$ is the element $\alpha$ computed in Step (iv) 
of the last run of Algorithm \ref{almostmainalgor}.
\end{rem}

\begin{prop}
\label{connmainalgorpf}
Algorithm \ref{connmainalgor} is correct and runs in  
polynomial time. 
\end{prop}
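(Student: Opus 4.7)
The proof will address correctness and polynomial runtime separately, each relying on the correctness of the subroutines Algorithm \ref{ingredient4} (Proposition \ref{ingredient4pf}) and Algorithm \ref{almostmainalgor} (Proposition \ref{almostmainalgorpf}). Since $L$ is invertible by assumption, Theorem \ref{shortthm}(iii) reduces the problem to deciding whether $L$ admits a short vector and exhibiting one in that case. The heart of the argument is a coset-tracking induction on $i \in \{0, 1, \ldots, m\}$: whenever $L$ admits a short vector $z$, the element $t_i \in A/2^{n+1}A$ obtained after the $i$-th iteration of Algorithm \ref{almostmainalgor} in Step (ii) satisfies that the coset $t_i \cdot (e_2^{q_i} + 2^{n+1}L^{q_i})$ in $L^{q_i}/2^{n+1}L^{q_i}$ contains the short vector $z^{q_i}$ of $L^{q_i}$. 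The base case $i=0$ is precisely the output guarantee of Algorithm \ref{ingredient4} for the value $s = t_0$. For the inductive step, the canonical identification $(L^{q_i})^{\otimes p_i} = L^{q_{i-1}}$ sends $(e_2^{q_i})^{p_i}$ to $e_2^{q_{i-1}}$ and sends $t_{i-1} \cdot (e_2^{q_i} + 2^{n+1}L^{q_i})^{p_i}$ to $t_{i-1} \cdot (e_2^{q_{i-1}} + 2^{n+1}L^{q_{i-1}})$, which by induction contains $z^{q_{i-1}} = (z^{q_i})^{p_i}$. This is exactly the input hypothesis required by Algorithm \ref{almostmainalgor}, which then outputs $t_i$ such that $t_i \cdot (e_2^{q_i} + 2^{n+1}L^{q_i})$ contains a short vector of $L^{q_i}$; by the uniqueness assertion of Proposition \ref{findingekmprop2}, that short vector must be $z^{q_i}$. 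Taking $i = m$ yields a short vector $z \in L^{q_m} = L$, and then $a \mapsto az$ is the desired $A$-isomorphism $A \isom L$ by Theorem \ref{shortthm}(i).

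Conversely, suppose $L$ has no short vector. Let $i^* \in \{0, 1, \ldots, m\}$ be the smallest index such that $L^{q_{i^*}}$ has no short vector; such an $i^*$ exists because $L^{q_m} = L$. If $i^* = 0$, then Algorithm \ref{ingredient4} outputs ``no'' at its final step and we are done. Otherwise the coset-tracking induction applies through iteration $i^* - 1$ (since $L^{q_{i^*-1}}$ still has a short vector, namely one whose $p_{i^*}$-th tensor power is the short vector furnished by Algorithm \ref{ingredient4}), so the input hypothesis to Algorithm \ref{almostmainalgor} at iteration $i^*$ is legitimate. However, by Proposition \ref{Bgraded} parts (iii) and (iv), any $\alpha \in \mu(B)$ of degree one in the $\Z/p_{i^*}\Z$-grading would be a short vector of $L^{q_{i^*}}$, contradicting the choice of $i^*$. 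Hence Step (v) of Algorithm \ref{almostmainalgor} fails to produce integers $s_j$ with $\sum s_j \deg(\zeta_j) = 1$ and outputs ``no''.

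For the polynomial runtime, Algorithm \ref{ingredient4} runs in polynomial time by Proposition \ref{ingredient4pf}. By property (d) of Algorithm \ref{Usetalgor}, the number $m$ of iterations in Step (ii) satisfies $m \le \log_2(k) \le 2n$; each iteration calls Algorithm \ref{almostmainalgor} with $r = p_i \le c(n) = n^2$, and hence runs in polynomial time by Proposition \ref{almostmainalgorpf}. The intermediate lattices $L^{q_i}$ are all invertible by Proposition \ref{invertequivcor}(i) and can be computed together with the required cosets $e_2^{q_i} + 2^{n+1}L^{q_i}$ using Theorem \ref{LMmultalg}(iii); stored via LLL-reduced bases, they have polynomially bounded descriptions by Proposition \ref{LLLlem}. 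The main subtlety --- and the only point beyond routinely invoking the subroutines --- is the coset-tracking induction above; once that is in place, the remainder of the verification is essentially bookkeeping.
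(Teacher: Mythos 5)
Your proof is correct and follows essentially the same route as the paper: an induction over the iterations of Step (ii) showing that each coset $t_i\cdot(e_2^{q_i}+2^{n+1}L^{q_i})$ contains a short vector of $L^{q_i}$ whenever one exists, combined with the runtime bounds $m\le\log_2 k\le 2n$ and $p_i\le c(n)$. One small over-claim: the uniqueness assertion of Proposition \ref{findingekmprop2} says a coset contains at most one short vector, but does not by itself force that vector to be $z^{q_i}$ (it could a priori be $\zeta z^{q_i}$ for some $\zeta\in\mu(A)$); however, your induction only needs that the coset contain \emph{some} short vector, which is exactly the output guarantee of Algorithm \ref{almostmainalgor}, so nothing breaks.
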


\begin{proof}
The $i$-th iteration of Step (ii) has as input 
the invertible $A$-lattice $L^{q_i} = L^{k/(p_1\cdots p_{i})}$,
and finds a coset containing a short vector in 
$L^{q_{i-1}} = L^{k/(p_1\cdots p_{i-1})}$,
as long as $L$ contains a short vector.
The output $z$, after $m$ iterations, is a short vector 
in the coset $t_m(e_2+2^{n+1}L)$. 

Recall that the size of the input describing $A$ is 
at least $n^3$.
Since each prime divisor of $k$ is at most $c(n)$ 
(as defined in Definition \ref{cbndefn}),
and  $\log_2(k) \le 2n$, one can
factor $k$ in polynomial time.
By Proposition \ref{almostmainalgorpf},
Algorithm \ref{almostmainalgor} runs in  
time at most polynomial in $r$ plus the length of the input. 
In Step (ii), in the $i$-th run of Algorithm \ref{almostmainalgor}
we have $r=p_i \le c(n)$.
It follows that Step (ii) runs in polynomial time. 
\end{proof}

The following result is Theorem 1.1 of \cite{RoU};
its associated algorithm is Algorithm 6.1 of \cite{RoU}.

\begin{prop}[\cite{RoU}]
\label{idempotsalg}
There is a deterministic polynomial-time algorithm that, given an order $A$,
lists all primitive idempotents of A.
\end{prop}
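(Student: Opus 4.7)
My plan is to reduce the computation of primitive idempotents of $A$ to the decomposition of $A_\Q$ together with a lattice-based extraction of a Boolean algebra. First I would apply Algorithm~7.2 of \cite{Qalgs} to compute $A_\Q\cong\prod_{i=1}^m K_i$ and obtain the primitive idempotents $\epsilon_1,\ldots,\epsilon_m$ of $A_\Q$, expressed on the given $\Z$-basis of $A$. Every idempotent of $A$ is also an idempotent of $A_\Q$, hence of the form $\sum_{i\in S}\epsilon_i$ for a unique $S\subseteq\{1,\ldots,m\}$; the primitive idempotents of $A$ therefore correspond to the blocks of a unique partition of $\{1,\ldots,m\}$, and computing this partition is the goal.

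Next I form the finite-index sublattice
$$
\Lambda \; = \; \bigl\{(c_1,\ldots,c_m)\in\Z^m : \textstyle\sum_i c_i\epsilon_i \in A\bigr\}\;\subset\;\Z^m,
$$
whose intersection $\Lambda\cap\{0,1\}^m$ is in bijection with the idempotents of $A$ and forms a finite Boolean algebra under coordinatewise AND and XOR (since products and symmetric differences of idempotents of $A$ are again idempotents of $A$). The primitive idempotents correspond to the atoms of this Boolean algebra, of which there are $k\le m$. A $\Z$-basis of $\Lambda$ is computable by Hermite normal form in polynomial time, the denominators of the $\epsilon_i$ in $A$-coordinates being polynomially bounded in terms of $|\Delta_{A/\Z}|$.

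To extract the atoms without enumerating the possibly exponentially large Boolean algebra, I would work modulo $2$. The reduction map $\Lambda\cap\{0,1\}^m\to\F_2^m$ is injective, with image an $\F_2$-subspace $V$ of dimension $k$. Given any $\F_2$-basis of $V$ lifted to explicit $\{0,1\}$-witnesses in $\Lambda$, the atoms of the Boolean algebra are recovered as the equivalence classes on $\{1,\ldots,m\}$ under the relation ``$i\sim j$ iff every basis vector has equal $i$-th and $j$-th coordinates''; this final step runs in $O(km)$ time, and the corresponding sums of $\epsilon_i$ are the sought primitive idempotents.

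The main obstacle is computing $V$ itself: a mod-$2$ coset of $\Lambda$ need not contain a $\{0,1\}$-representative, so a priori $V\subsetneq\Lambda/(\Lambda\cap 2\Z^m)$. The key structural fact making $V$ tractable is that the set of cosets admitting a $\{0,1\}$-lift is closed under addition in $\F_2^m$ (again because idempotents of $A$ are closed under XOR), hence itself an $\F_2$-subspace. Starting from the trivial witness $\mathbf{1}\in\Lambda\cap\{0,1\}^m$ given by $1\in A$, I would build an $\F_2$-basis of $V$ incrementally by repeatedly searching for a new $\{0,1\}$-witness in $\Lambda$ not in the span of the current basis; each such search is a short-vector problem in a polynomially bounded sublattice (targets have $\ell^2$-norm at most $\sqrt{m}$) and is solvable in polynomial time by LLL-based enumeration, with bit-size control in the style of Proposition~\ref{LLLlem}. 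Combining these steps yields the desired deterministic polynomial-time algorithm.
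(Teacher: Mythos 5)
The paper does not prove this statement at all: it is quoted from \cite{RoU} (Theorem 1.1 there, with Algorithm 6.1 as the associated algorithm), so your proposal has to be judged on its own merits as a self-contained argument. Most of your setup is sound: the idempotents of $A$ are exactly the $\{0,1\}$-vectors of the finite-index lattice $\Lambda\subset\Z^m$, they form a Boolean algebra closed under AND and XOR, their mod-$2$ image $V$ is an $\F_2$-subspace, and the atoms are recovered from any XOR-generating set by the equivalence relation you describe. The problem is the one step you yourself flag as the main obstacle and then dispatch in a single sentence: ``searching for a new $\{0,1\}$-witness in $\Lambda$ not in the span of the current basis'' is \emph{not} a polynomial-time solvable short-vector problem. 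Deciding whether a lattice contains a nonzero $\{0,1\}$-vector (let alone one in a prescribed complement of a subspace of $\F_2^m$) is a knapsack-type problem; LLL only approximates the shortest vector to within an exponential factor, and enumerating all lattice vectors of norm $\le\sqrt{m}$ is exponential in general. The unique-short-vector-in-a-coset technique used elsewhere in this paper (Algorithm \ref{findingekmalg2}) only works because the ambient lattice $\a L$ is arranged to have minimum distance strictly larger than the target norm; your $\Lambda$ has no such guarantee --- it already contains $\mathbf{1}$, of norm exactly $\sqrt{m}$, and typically many other short non-$\{0,1\}$ vectors. So the crux of the algorithm is missing.

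There is a standard way to close the gap that avoids lattice search entirely: the primitive idempotents of $A$ correspond to the connected components of $\Spec(A)$, so after computing $\Minspec(A)=\Spec(A_\Q)$ and the $\epsilon_i$ via \cite{Qalgs}, form the graph on $\Minspec(A)$ with an edge between $\r$ and $\r'$ whenever $\r+\r'\neq A$ (a finite-index/HNF computation), and take its connected components; the block $C$ of the resulting partition yields the primitive idempotent $\sum_{\r_i\in C}\epsilon_i$. This computes your partition of $\{1,\dots,m\}$ directly from ideal arithmetic, with no need to locate $\{0,1\}$-vectors in $\Lambda$.
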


\begin{algorithm}
\label{mainalgor}
Given a CM-order $A$ 
and an  
$A$-lattice $L$,
the algorithm decides whether or not $L$ is $A$-isomorphic
to the standard $A$-lattice, and if so, 
outputs a short vector $z\in L$
and an $A$-isomorphism $A \to L$ given by $a\mapsto a z$.

Steps:
\begin{enumerate}
\item
Apply Algorithm \ref{elemprop2} to test $L$ for invertibility.
If $L$ is not invertible, terminate with ``no''. 
\item
Apply the algorithm from Proposition \ref{idempotsalg} to
compute the primitive idempotents of $A$, and apply
Lemma \ref{decomposeLA} to
decompose $A$ as a product of finitely many
connected rings $A = \prod_i A_i$
and
decompose $L$ as an orthogonal sum  $L=\bigperp_i L_i$
where $L_i$ is an invertible $A_i$-lattice.
\item
Apply Algorithm \ref{connmainalgor} to each 
$L_i$. 
If it ever terminates with ``no'', terminate with ``no $A$-isomorphism exists''.
Otherwise, it outputs maps $A_i \to L_i$, $a\mapsto a z_i$ for each $i$.
Output ``yes'', $z=(z_i)_i \in A = \prod_i A_i$, and the map 
$A = \prod_i A_i \to L=\bigperp_i L_i$, 
$(a_i)_i \mapsto (a_iz_i)_i$.
\end{enumerate}
\end{algorithm}

Proposition \ref{invertequivcor}(iii) now enables us to convert Algorithm \ref{mainalgor}
into an algorithm to test whether two $A$-lattices are $A$-isomorphic (and
produce an isomorphism). This is our analogue of Algorithm 19.4 of
\cite{LwS}.

\begin{algorithm}
\label{algorLM}
Given a CM-order $A$
and invertible $A$-lattices $L$ and $M$, the algorithm
decides whether or not $L$
and $M$ are isomorphic as $A$-lattices, and if so, gives 
such an $A$-isomorphism.
\begin{enumerate}

\item 
Compute $L \otimes_{A} \overline{M}$.
\item
Apply Algorithm \ref{mainalgor} to find   
an $A$-isomorphism $A \isom L \otimes_{A} \overline{M},$ or a proof that none exists. In the latter case, terminate with ``no''.
\item
Using this map and the map 
$\overline{M} \otimes_{A} M \to A$, 
$\overline{y} \otimes x \mapsto \overline{y} \cdot x,$
output the composition of the (natural) maps
$$M \isom A \otimes_{A} M \isom
L \otimes_{A} \overline{M}\otimes_{A} M
\isom L \otimes_{A} A \isom L.$$
\end{enumerate}
\end{algorithm}

It is clear that 
Algorithms \ref{mainalgor} and \ref{algorLM} are correct and run in  
polynomial time. 
Theorems \ref{mainIwthm} and \ref{mainIwthmcor} now follow from 
Algorithms \ref{mainalgor} and \ref{algorLM}
and Theorem \ref{I1I2isom}.

\end{document}